\theoremstyle{plain}
\newtheorem{theorem}{Theorem}[section]
\newtheorem{lemma}[theorem]{Lemma}
\newtheorem{corollary}[theorem]{Corollary}
\newtheorem{conjecture}[theorem]{Conjecture}
\theoremstyle{definition}
\newtheorem{definition}[theorem]{Definition}
\newtheorem{example}[theorem]{Example}
\newtheorem{question}[theorem]{Question}
\newtheorem{remark}[theorem]{Remark}
\newtheorem{notation}[theorem]{Notation}
\newtheorem{problem}[theorem]{Problem}
\DeclareMathOperator{\CAT}{CAT}
\newcommand{\cd}{\operatorname{cd}}
\DeclareMathOperator{\ch}{ch}
\DeclareMathOperator{\coker}{coker}
\DeclareMathOperator{\con}{con}
\DeclareMathOperator{\cone}{cone}
\DeclareMathOperator{\End}{end}
\DeclareMathOperator{\hur}{hur}
\DeclareMathOperator{\id}{id}
\DeclareMathOperator{\ind}{ind}
\DeclareMathOperator{\map}{map}
\DeclareMathOperator{\NK}{NK}
\DeclareMathOperator{\Out}{Out}
\DeclareMathOperator{\pr}{pr}
\DeclareMathOperator{\res}{res}
\DeclareMathOperator{\tr}{tr}
\DeclareMathOperator{\sign}{sign}
\DeclareMathOperator{\Spin}{Spin}
\DeclareMathOperator{\topo}{top}
\DeclareMathOperator{\vcd}{vcd}
\DeclareMathOperator*{\colim}{colim}
\newcommand{\All}{\mathcal{ALL}}
\newcommand{\Com}{\mathcal{COM}}
\newcommand{\Comop}{\mathcal{COMOP}}
\newcommand{\Cyc}{\mathcal{CYC}}
\newcommand{\Fin}{{\mathcal{FIN}}}
\newcommand{\MFin}{{\mathcal{MFIN}}}
\newcommand{\TR}{\mathcal{TR}}
\newcommand{\VCyc}{\mathcal{VCYC}}
\newcommand{\IC}{\mathbb{C}} 
 \newcommand{\IF}{\mathbb{F}}
 \newcommand{\IH}{\mathbb{H}}
\newcommand{\II}{\mathbb{I}}
\newcommand{\IP}{\mathbb{P}} \newcommand{\IQ}{\mathbb{Q}}
\newcommand{\IR}{\mathbb{R}}
\newcommand{\IZ}{\mathbb{Z}}
 \newcommand{\calb}{\mathcal{B}}
\newcommand{\calbc}{\mathcal{BC}}
 \newcommand{\calf}{\mathcal{F}}
\newcommand{\FGI}{\mathcal{FGI}}
\newcommand{\calfj}{\mathcal{F\!J}} \newcommand{\calg}{\mathcal{G}}
\newcommand{\calh}{\mathcal{H}} 
 \newcommand{\calk}{\mathcal{K}}
\newcommand{\call}{\mathcal{L}} 
\newcommand{\caln}{\mathcal{N}} 
\newcommand{\calp}{\mathcal{P}} 
 \newcommand{\cals}{\mathcal{S}}
\newcommand{\calt}{\mathcal{T}}
\newcommand{\bfA}{{\mathbf A}} 
\newcommand{\bfE}{{\mathbf E}} \newcommand{\bff}{{\mathbf f}}
\newcommand{\bfH}{{\mathbf H}} 
 \newcommand{\bfK}{{\mathbf K}}
 \newcommand{\bfL}{{\mathbf L}}
\newcommand{\bfKO}{\mathbf{KO}}
 \newcommand{\bfP}{{\mathbf P}}
\newcommand{\bfS}{{\mathbf S}}
\newcommand{\bfTHH}{\mathbf{THH}}
\newcommand{\bfTC}{\mathbf{TC}}
\newcommand{\EGF}[2]{E_{#2}(#1)}               
\newcommand{\JGF}[2]{J_{#2}(#1)}               
\newcommand{\eub}[1]{\underline{E}#1}
\newcommand{\jub}[1]{\underline{J}#1}
\newcommand{\OrGF}[2]{\Or_{#2}(#1)}
\newcommand{\inj}{\operatorname{inj}}
\newcommand{\Wh}{\operatorname{Wh}}
\newcommand{\cok}{\operatorname{cok}}
\newcommand{\HS}{\operatorname{HS}}
\newcommand{\class}{\operatorname{class}}
\DeclareMathAlphabet{\matheurm}{U}{eur}{m}{n}
\newcommand{\pt}{\{\bullet\}}
\newcommand{\K}{\mathbf{K}} \newcommand{\THH}{\mathbf{THH}}
\newcommand{\TC}{\mathbf{TC}}
\newcommand{\Groupoids}{{\matheurm{Groupoids}}}
\newcommand{\Or}{\matheurm{Or}} 
\newcommand{\Spectra}{\matheurm{Spectra}}
\newcommand{\emphred}[1]{\emph{\color{red} #1}}
\newcommand{\blue}[1]{{\color{blue} #1}}
\newcommand{\green}[1]{{\color{green} #1}}
\newcommand{\red}[1]{{\color{red} #1}}
\newcommand{\xycomsquare}[8]                   
{\xymatrix{#1 \ar[r]^{#2} \ar[d]^{#4} &
    #3 \ar[d]^{#5}  \\
    #6\ar[r]^{#7} & #8 }}
\newcounter{commentcounter}
\newcommand{\comment}[1]                      
{\stepcounter{commentcounter}
{\bf Comment \arabic{commentcounter} (by W.)}: {\ttfamily #1} }
\let\c@equation=\c@theorem\makeatother
\begin{document}
\title[On the Farrell-Jones and related Conjectures]
{On the Farrell-Jones and related Conjectures}
\author{Wolfgang L\"uck}
\address{Mathematisches Institut
\newline\indent
Universit\"at M\"unster
\newline\indent
D-48149 M\"unster, Germany
\newline \indent
http://www.math.uni-muenster.de/u/lueck}
\email{lueck@math.uni-muenster.de}

\date{\today}
\begin{abstract} These extended notes are based on a series of six 
lectures presented  at the summer school 
\green{``Cohomology of groups and algebraic $K$-theory''} which took place in 
\green{Hangzhou, China} from July 1 until July 12 in 2007.
They give an introduction to the \emphred{Farrell-Jones} and the \emphred{Baum-Connes
Conjecture}.
\smallskip

\noindent
Key words:  $K$- and $L$-groups of group rings and group $C^*$-algebras,
Farrell-Jones Conjecture, Baum-Connes Conjecture, 
\smallskip

\noindent
Mathematics subject classification 2000: 19A31, 19B28, 19D99, 19G24,
19K99, 46L80, 57R67.
\end{abstract}

\maketitle
\setcounter{section}{-1}
\section{Introduction}\label{one}

These extended notes are based on a series of six lectures presented
at the summer school \green{``Cohomology of groups and algebraic
  $K$-theory''} which took place in \green{Hangzhou, China} from July
1 until July 12 in 2007.  They contain an introduction to the
\emphred{Farrell-Jones} and the \emphred{Baum-Connes Conjecture}.

Given a group $G$, the
Farrell-Jones Conjecture and the Baum-Connes Conjecture respectively 
predict the values of the \red{algebraic $K$-} and
\red{$L$-theory} of the \red{group ring $RG$} and of the
\red{topological K-theory} of the \red{reduced group $C^*$-algebra} respectively.  These
are very hard to compute directly. These conjectures identify them via
\red{assembly maps} to much easier to handle equivariant homology
groups of certain classifying spaces. This is the \red{computational
  aspect} of these conjectures.

But also the \red{structural aspect} is very important. The
assembly maps have geometric or analytic interpretations.  Hence the
Farrell-Jones Conjecture and the Baum-Connes Conjecture imply many
very well-known conjectures such as the ones due to \green{Bass},
\green{Borel}, \green{Kaplansky}, and \green{Novikov}. The point is
that the Farrell-Jones Conjecture and the Baum-Connes Conjecture have
been proven for many groups for which the other conjectures were a
priori not known.

The prerequisites consist of a solid knowledge of homology theory
and $CW$-complexes and of a basic knowledge of rings, modules,
homological algebra, groups, group homology, finite dimensional representation theory of
finite groups, group actions, categories, homotopy
groups, and manifolds. The challenge for the reader but also the
beauty, impact and fascination of these conjectures come from the broad scope of
mathematics which they address and which is needed for proofs and applications.

For a more advanced survey on the Farrell-Jones and the Baum-Cones
Conjecture we refer to \green{L\"uck-Reich}~\cite{Lueck-Reich(2005)}.
There more details are given and more aspects are discussed but it is
addressed to a more advanced reader and requires much more previous knowledge.

We fix some notation. Ring will always mean associative ring with unit
(which is not necessarily commutative).  Examples are the ring of
integers $\IZ$, the fields of rational numbers $\IQ$, of real numbers
$\IR$ and of complex numbers $\IC$, the finite field $\IF_p$ of $p$
elements, and the group ring $RG$ for a ring $R$ and a group $G$.
Ring homomorphisms are unital. Modules are understood to be left
modules unless explicitly stated differently.  Groups are understood
to be discrete unless explicitly stated differently.

The notes are organized as the six lectures in Hangzhou have been:

\tableofcontents

The author wants to express his deep gratitude to all the organizers of the summerschool for
their excellent work, support and hospitality.


\section{The role of  lower and middle K-theory in topology}

The outline of this section is:
\begin{itemize}

  \item  Introduce the \red{projective class group $K_0(R)$}.

  \item Discuss its algebraic and topological significance 
  (e.g., \red{finiteness obstruction}).

  \item Introduce \red{$K_1(R)$} and the \red{Whitehead group $\Wh(G)$}.

  \item Discuss its algebraic and topological significance 
  (e.g., \red{$s$-cobordism theorem}).

  \item Introduce \red{negative $K$-theory} and the 
  \red{Bass-Heller-Swan decomposition}.

  \end{itemize}

\begin{definition}[\blue{Projective $R$-module}]
  An $R$-module $P$ is called \emphred{projective}
  if it satisfies one  of the following equivalent conditions:
  \begin{enumerate}

  \item $P$ is a direct summand in a free $R$-module;

  \item The following lifting problem has always a solution
$$\xymatrix{
  M \ar[r]^-{p} & N \ar[r] & 0
  \\
  & P \ar@{-->}[lu]^-{\overline{f}} \ar[u]_-{f} & }
$$

\item If $0 \to M_0 \to M_1 \to M_2 \to 0$ is an exact sequence of
  $R$-modules, then $0 \to \hom_R(P,M_0) \to \hom_R(P,M_1) \to
  \hom_R(P,M_2) \to 0$ is exact.

\end{enumerate}

\end{definition}

\begin{example}[\blue{Principal ideal domains}]
  Over a field or, more generally, over a principal ideal domain every
  projective module is free. If $R$ is a principal ideal domain, then
  a finitely generated $R$-module is projective (and hence free) if
  and only if it is torsionfree. For instance $\IZ/n$ is for $n \ge 2$
  never projective as $\IZ$-module.
\end{example}

\begin{example}[\blue{Product of rings}]
  Let $R$ and $S$ be rings and $R \times S$ be their product. Then $R
  \times \{0\}$ is a finitely generated projective $R \times S$-module
  which is not free.
\end{example}

\begin{example}[\blue{Trivial representation of a finite group}]
  Let $F$ be a field of characteristic $p$ for $p$ a
  prime number or $p = 0$.  Let $G$ be a finite group.  Then $F$ with the
  trivial $G$-action is a projective $FG$-module if and only if $p =
  0$ or $p$ does not divide the order of $G$.  It is a free
  $FG$-module only if $G$ is trivial.
\end{example}

\begin{definition}[\blue{Projective class group}]
  Let $R$ be a ring.  Define its
  \emphred{projective class group} $K_0(R)$ to be the abelian group
  whose generators are isomorphism classes $[P]$ of finitely generated
  projective $R$-modules $P$ and whose relations are $[P_0] + [P_2] =
  [P_1]$ for every exact sequence $0 \to P_0 \to P_1 \to P_2 \to 0$ of
  finitely generated projective $R$-modules.
\end{definition}

The projective class group $K_0(R)$ is the same as the
\red{Grothendieck construction} applied to the abelian monoid of
isomorphism classes of finitely generated projective $R$-modules under
direct sum. There do exists rings $R$ with $K_0(R) = 0$, e.g., 
$R = \End(F)$ for a field $F$.

\begin{definition}[\blue{Reduced projective class group}]
  The \emphred{reduced projective class group $\widetilde{K}_0(R)$} is
  the quotient of $K_0(R)$ by the subgroup generated by the classes of
  finitely generated free $R$-modules, or, equivalently, the cokernel
  of $K_0(\IZ) \to K_0(R)$.
\end{definition}

\begin{remark}[\blue{Stably finitely generated free modules}]
  Let $P$ be a finitely generated projective $R$-module.  It is
  \red{stably finitely generated free}, i.e., $P \oplus R^m \cong R^n$
  for appropriate $m,n \in \IZ$, if and only if ${[P]} = 0$ in
  $\widetilde{K}_0(R)$.  Hence $\widetilde{K}_0(R)$ measures the
  deviation of finitely generated projective $R$-modules from being
  stably finitely generated free.

  There exists finitely generated projective $R$-modules which are 
  stably finitely generated free but not finitely generated free. 
  An example is $R = C(S^2)$ and $P = C(TS^2)$, where $C(S^2)$ is the
  ring of continuous functions $S^2 \to \IR$ and $C(TS^2)$ is 
  the $C(S^2)$-module of sections of the tangent bundle of $S^2$. 
  However, in most of the  applications the relevant question is
  whether a finitely generated projective $R$-module is stable 
  finitely generated free and not whether  it is finitely generated free.
\end{remark}

\begin{remark}[\blue{Universal dimension function}]
  The assignment $P \mapsto [P] \in K_0(R)$ is the \emphred{universal
    additive invariant} or \emphred{dimension function} for finitely
  generated projective $R$-modules in the following sense. Given an
  abelian group $A$ and an assignment associating to a finitely
  generated projective $R$-module $P$ an element $a(P) \in A$ such
  that $a(P_0) -a(P_1) + a(P_2) = 0$ holds for any exact sequence of
  finitely generated projective $R$-modules $0 \to P_0 \to P_1 \to P_2
  \to 0$, there exists precisely one homomorphism of abelian
  groups $\phi \colon K_0(R) \to A$ satisfying $\phi([P]) = a(P)$ for
  every finitely generated projective $R$-module $P$.
\end{remark}

\begin{remark}[\blue{Induction}] Let $f \colon R \to S$ be a ring
  homomorphism.  Consider $S$ as a $S$-$R$-bimodule via $f$.
   Given an $R$-module $M$, let $f_* M$ be the
  $S$-module $S \otimes_R M$.  We obtain a homomorphism of abelian
  groups
$$f_* \colon K_0(R) \to K_0(S), \quad [P] \mapsto [f_*P]$$
called \emphred{induction} or \emphred{change of rings homomorphism}.
Thus $K_0$ becomes a covariant functor from the category of rings to
the category of abelian algebras.
\end{remark}

\begin{lemma}[\blue{$K_0$ and products}]
  Let $R$ and $S$ be rings. Then the two projections from $R \times S$
  to $R$ and $S$ induce isomorphisms
$$K_0(R \times S) \xrightarrow{\cong} K_0(R) \times K_0(S).$$
\end{lemma}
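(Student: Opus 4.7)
\bigskip
\noindent\textbf{Proof plan.}
The plan is to show that the two projections $p_R\colon R\times S\to R$ and $p_S\colon R\times S\to S$ induce inverse (on $K_0$) of an explicit map defined via the central orthogonal idempotents $e_R=(1,0)$ and $e_S=(0,1)$ of $R\times S$. Since $1=e_R+e_S$, $e_R^2=e_R$, $e_S^2=e_S$ and $e_Re_S=0$, every $(R\times S)$-module $M$ splits canonically as $M=e_RM\oplus e_SM$. Because $e_RM$ is annihilated by $e_S$, its module structure factors through $p_R$ and makes it into an $R$-module; symmetrically $e_SM$ is an $S$-module. Conversely, given an $R$-module $P$ and an $S$-module $Q$, the abelian group $P\oplus Q$ becomes an $(R\times S)$-module via $(r,s)\cdot(p,q):=(rp,sq)$. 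These two constructions are mutually inverse and exact, so they provide an equivalence of exact categories between f.g.\ projective $(R\times S)$-modules and pairs of f.g.\ projective $R$- and $S$-modules.

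Concretely, first I would verify that the functors $M\mapsto(e_RM,e_SM)$ and $(P,Q)\mapsto P\oplus Q$ send finitely generated projective modules to finitely generated projective modules: a direct summand of $(R\times S)^n=R^n\oplus S^n$ decomposes under $e_R,e_S$ into a direct summand of $R^n$ and a direct summand of $S^n$, and conversely if $P\oplus P'\cong R^n$ and $Q\oplus Q'\cong S^m$ then $(P\oplus Q)\oplus(P'\oplus Q'\oplus R^{m}\oplus S^{n})\cong(R\times S)^{n+m}$ after padding with free summands, so $P\oplus Q$ is f.g.\ projective over $R\times S$. Since the decomposition $M=e_RM\oplus e_SM$ is functorial and preserves exactness in each variable, the assignment $[M]\mapsto([e_RM],[e_SM])$ descends to a well-defined homomorphism $\Phi\colon K_0(R\times S)\to K_0(R)\times K_0(S)$, and $([P],[Q])\mapsto[P\oplus Q]$ descends to a homomorphism $\Psi$ in the other direction. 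The identities $\Phi\circ\Psi=\mathrm{id}$ and $\Psi\circ\Phi=\mathrm{id}$ are immediate from the canonical splittings.

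Finally I would identify $\Phi$ with $((p_R)_*,(p_S)_*)$. For this I use that $R$, viewed as an $R$-$(R\times S)$-bimodule via $p_R$, is isomorphic to $(R\times S)\cdot e_R/(e_S\text{-torsion})=(R\times S)e_R$ as a right $(R\times S)$-module and as an $R$-module on the left. Consequently, for a f.g.\ projective $(R\times S)$-module $P$,
\[
(p_R)_*[P]=[R\otimes_{R\times S}P]=[(R\times S)e_R\otimes_{R\times S}P]=[e_RP],
\]
and symmetrically for $(p_S)_*$. Therefore $((p_R)_*,(p_S)_*)=\Phi$ is an isomorphism with inverse $\Psi$.

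The only step that requires slight care is the bookkeeping identifying $R\otimes_{R\times S}P$ with $e_RP$ (and verifying that the isomorphism $r\otimes p\mapsto r\cdot e_Rp$ is well defined on both sides and natural); everything else follows from the orthogonal-idempotent decomposition and functoriality of induction.
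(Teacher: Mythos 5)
Your proof is correct and is the standard argument via the orthogonal central idempotents $e_R=(1,0)$ and $e_S=(0,1)$: the category of finitely generated projective $(R\times S)$-modules splits as the product of the corresponding categories over $R$ and $S$, and the identification $(p_R)_*[M]=[R\otimes_{R\times S}M]=[(R\times S)e_R\otimes_{R\times S}M]=[e_RM]$ shows this equivalence is implemented on $K_0$ by the two projections. The paper states this lemma without giving a proof, so there is nothing to compare against; your argument is the expected one and is complete.
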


\begin{theorem}[\blue{Morita equivalence}]
  Let $R$ be a ring and $M_n(R)$ be the ring of $(n,n)$-matrices over
  $R$.  We can consider $R^n$ as a $M_n(R)$-$R$-bimodule and as a
  $R$-$M_n(R)$-bimodule by scalar and matrix multiplication.  
  Tensoring with these yields mutually inverse isomorphisms
$$\begin{array}{rcllll}
K_0(R) & \xrightarrow{\cong} & K_0(M_n(R)), 
& \quad [P] &\mapsto & [{_{M_n(R)}{R^n}_R} \otimes_R P];
\\
K_0(M_n(R)) & \xrightarrow{\cong} & K_0(R), 
& \quad [Q] &\mapsto & [{_R{R^n}_{M_n(R)}} \otimes_{M_n(R)} Q].
\end{array}
$$
\end{theorem}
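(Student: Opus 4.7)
The plan is to show that the two tensor-product constructions are well-defined maps on $K_0$ and then verify that they are mutually inverse by producing natural bimodule isomorphisms on the underlying modules.

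First I would check that the functors $F \colon P \mapsto {_{M_n(R)}{R^n}_R} \otimes_R P$ and $G \colon Q \mapsto {_R{R^n}_{M_n(R)}} \otimes_{M_n(R)} Q$ send finitely generated projective modules to finitely generated projective modules, so that they descend to well-defined homomorphisms on $K_0$. Since tensor products preserve direct sums and carry free modules to finitely generated projective modules, it suffices to observe that $F(R) = R^n$ is finitely generated projective as a left $M_n(R)$-module (in fact $(R^n)^{\oplus n} \cong M_n(R)$, realized by the column decomposition), and dually $G(M_n(R)) = R^n$ is free of rank $n$ as a left $R$-module. Exact sequences of finitely generated projective modules split, so both $F$ and $G$ are exact on such sequences, which makes the assignments $[P]\mapsto[F(P)]$ and $[Q]\mapsto[G(Q)]$ compatible with the defining relations of $K_0$.

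Next I would establish the two bimodule isomorphisms that make the compositions $G\circ F$ and $F\circ G$ naturally isomorphic to the identity functors. Concretely, I want
\[
{_R{R^n}_{M_n(R)}} \otimes_{M_n(R)} {_{M_n(R)}{R^n}_R} \;\cong\; {_RR_R}
\]
given by the pairing $(v,w) \mapsto v \cdot w^{T}$ (row times column, i.e.\ the scalar $\sum_i v_i w_i$), and
\[
{_{M_n(R)}{R^n}_R} \otimes_R {_R{R^n}_{M_n(R)}} \;\cong\; {_{M_n(R)}M_n(R)_{M_n(R)}}
\]
given by $(w,v)\mapsto w\cdot v^T$ (column times row, i.e.\ the rank-one matrix with entries $w_iv_j$). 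Both maps are manifestly well-defined $R$-balanced resp.\ $M_n(R)$-balanced bilinear maps, and one checks they are bijections by writing down explicit inverses in terms of the standard basis vectors $e_1,\dots,e_n$ of $R^n$.

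Tensoring these isomorphisms with an arbitrary $P$ (resp.\ $Q$) and using associativity of tensor products yields natural isomorphisms $G(F(P))\cong P$ and $F(G(Q))\cong Q$ of $R$-modules (resp.\ $M_n(R)$-modules). On $K_0$ this gives $G_\ast \circ F_\ast = \id_{K_0(R)}$ and $F_\ast\circ G_\ast = \id_{K_0(M_n(R))}$, so both maps are isomorphisms. The only real obstacle is the bookkeeping of left/right module structures in the two bimodule isomorphisms above; once those are nailed down, everything else is formal from the universal property of $K_0$ already recorded in the excerpt.
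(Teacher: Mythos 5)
The paper states this theorem without giving a proof, so there is no argument of the author's to compare against. Your proof is the standard Morita-equivalence argument and it is correct: you identify the inverse-bimodule isomorphisms $\,_R R^n \otimes_{M_n(R)} {}_{M_n(R)}R^n \cong R$ (row times column) and $\,_{M_n(R)}R^n \otimes_R {}_R R^n \cong M_n(R)$ (column times row), verify that the functors land in finitely generated projectives (using $({R^n})^{\oplus n}\cong M_n(R)$ and split exactness of short exact sequences of projectives), and conclude by associativity of the tensor product. The only cosmetic wrinkle is the transpose notation in ``$v\cdot w^T$'' and ``$w\cdot v^T$'' — if $v$ is already a row and $w$ a column, the intended products are simply $vw\in R$ and $wv\in M_n(R)$, as your parenthetical glosses make clear.
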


\begin{example}[\blue{Principal ideal domains}]
  Let $R$ be a principal ideal domain.  Let $F$ be its quotient field.
  Then we obtain mutually inverse isomorphisms
$$\begin{array}{rclrcl}
  \IZ &\xrightarrow{\cong} & K_0(R), & \quad n &\mapsto & n \cdot [R];
  \\
  K_0(R) &\xrightarrow{\cong} & \IZ , & \quad [P] &\mapsto & \dim_F(F \otimes_R P).
\end{array}
$$
\end{example}

\begin{example}[\blue{Representation ring}]
  Let $G$ be a finite group and let $F$ be a field of characteristic
  zero.  Then the \red{representation ring $R_F(G)$} is the same as
  $K_0(FG)$.  Taking the character of a representation yields an
  isomorphism
$$R_{\IC}(G) \otimes_{\IZ} \IC = K_0(\IC G) \otimes_{\IZ} \IC
\xrightarrow{\cong} \class(G,\IC),$$ where $\red{\class(G;\IC)}$ is
the complex vector space of \red{class functions} $G \to \IC$, i.e.,
functions, which are constant on conjugacy classes. We refer for instance
to the book of \green{Serre}~\cite{Serre(1977)} for more information about
the representation theory of finite groups.
\end{example}

\begin{definition}[\blue{Dedekind domain}] \label{def:Dedekind_domain}
A commutative ring $R$ is called \emphred{Dedekind domain}
if it is an integral domain, i.e., contains no non-trivial zero-divisors,
and for every pair of ideals $I \subseteq J$ of $R$ 
there exists an ideal $K \subseteq R$ with $I = JK$.
\end{definition}

A ring is called \emphred{hereditary}
if every ideal is projective, or, equivalently,
if every submodule of a projective $R$-module is projective.

\begin{theorem}[\blue{Characterization of Dedekind domains}]
\label{the:Characterization_of_Dedekind_domains}
The following assertions are equivalent for a commutative integral domain with
quotient field $F$:

\begin{enumerate}

\item $R$ is a Dedekind domain;

\item $R$ is hereditary;

\item Every finitely generated torsionfree $R$-module is projective;

\item $R$ is Noetherian and integrally closed in its quotient field
  $F$ and every non-zero prime ideal is maximal.

\end{enumerate}
\end{theorem}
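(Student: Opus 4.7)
My plan is to prove the equivalence via the cycle $(1) \Longrightarrow (4) \Longrightarrow (3) \Longrightarrow (2) \Longrightarrow (1)$. The workhorse throughout is the fact, for a nonzero ideal $I$ of an integral domain with quotient field $F$, that $I$ is projective as an $R$-module if and only if it is \emph{invertible} as a fractional ideal, meaning $I \cdot I^{-1} = R$ where $I^{-1} := \{x \in F \mid xI \subseteq R\}$. I would establish this first: the dual basis lemma gives one direction, while a representation $1 = \sum_{i=1}^{n} a_i b_i$ with $a_i \in I$, $b_i \in I^{-1}$ produces the other and simultaneously exhibits $I = (a_1, \ldots, a_n)$ as finitely generated. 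This double conclusion is what makes the whole argument run.

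For $(1) \Longrightarrow (4)$, I would first derive invertibility from the cancellation axiom: given $0 \neq I \subseteq R$, pick $0 \neq a \in I$ and apply (1) to $(a) \subseteq I$ to obtain $(a) = IK$; then $a^{-1}K$ inverts $I$ in $F$. Noetherianness is immediate from the previous remark. For integral closedness, if $\alpha \in F$ is integral over $R$, then $R[\alpha]$ is a finitely generated fractional ideal satisfying $R[\alpha] \cdot R[\alpha] = R[\alpha]$, and multiplying by $R[\alpha]^{-1}$ forces $R[\alpha] = R$. For the dimension-one condition, a chain $0 \neq \mathfrak{p} \subsetneq \mathfrak{q}$ of primes would yield $\mathfrak{p} = \mathfrak{q}K$ with $K \subsetneq R$, contradicting primality of $\mathfrak{p}$.

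For $(4) \Longrightarrow (3)$, I would invoke the classical structure theorem for finitely generated torsionfree modules over a Dedekind domain: such an $M$ of rank $n$ embeds in $F \otimes_R M \cong F^n$, and one inductively splits off a fractional-ideal summand via projection to a coordinate, exhibiting $M$ as a direct sum of invertible fractional ideals, each of which is projective by the central equivalence. The step $(3) \Longrightarrow (2)$ then reduces to showing every ideal of $R$ is projective: ideals are torsionfree, finitely generated ones are projective by (3), and Noetherianness---which I would extract by first proving $(3) \Longrightarrow (4)$ along the same lines as $(1) \Longrightarrow (4)$---extends this to all ideals. Finally $(2) \Longrightarrow (1)$ is a one-liner: every ideal is projective hence invertible, so for $I \subseteq J$ one simply takes $K := J^{-1}I \subseteq R$ and verifies $JK = I$.

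The main obstacle is the structure theorem underlying $(4) \Longrightarrow (3)$; this is where the real commutative-algebraic content enters, and it cannot be bypassed. A secondary subtlety is the handling of non-finitely-generated ideals in $(3) \Longrightarrow (2)$, which I would address by routing through $(3) \Longrightarrow (4)$: integral closedness and dimension one follow by the arguments used in $(1) \Longrightarrow (4)$, while Noetherianness comes from the fact that invertible ideals are finitely generated, applied to ascending chains of finitely generated sub-ideals. Apart from these two points, every step is a short manipulation of the projective-iff-invertible correspondence.
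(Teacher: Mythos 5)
Your key lemma (a nonzero ideal of a domain is projective if and only if it is invertible, and then automatically finitely generated), and the implications $(1) \Rightarrow (4)$, $(4) \Rightarrow (3)$, and $(2) \Rightarrow (1)$, are all sound. The gap is exactly where you flagged it: extracting Noetherianness from $(3)$ alone. The mechanism you propose---``invertible ideals are finitely generated, applied to ascending chains of finitely generated sub-ideals''---does not yield a contradiction: each term of a strictly ascending chain of finitely generated ideals is invertible and finitely generated, but that says nothing about the chain stabilizing, and the union need not be finitely generated. In fact the implication $(3) \Rightarrow (2)$ is \emph{false} without an extra hypothesis. Let $V$ be a valuation domain whose value group is $\IQ$. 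By Kaplansky's theorem every finitely generated module over a valuation ring is a direct sum of cyclic modules, so every finitely generated torsionfree $V$-module is free and $(3)$ holds; but the maximal ideal $\mathfrak{m}$ of $V$ is not finitely generated (a finite generating set would contain a generator of least value, contradicting the divisibility of $\IQ$), hence not projective by your own key lemma, so $(2)$ fails and $V$ is not Dedekind.

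The fix is that condition $(3)$ must be read together with ``$R$ is Noetherian''---a standing hypothesis in the cited source, which the paper's one-line proof (a pointer to Curtis--Reiner plus the remark that a finitely generated torsionfree module over a domain embeds in some $R^n$, giving $(2)\Rightarrow(3)$) does not make explicit. Once Noetherianness is supplied, your cycle closes immediately and the detour through $(4)$ becomes unnecessary: every ideal of a Noetherian domain is finitely generated and torsionfree, hence projective by $(3)$, which is precisely $(2)$.
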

\begin{proof}
  This follows from~\cite[Proposition~4.3 on page 76 and
  Proposition~4.6 on page~77]{Curtis-Reiner(1981)} and the fact that a
  finitely generated torsionfree module over an integral domain $R$
  can be embedded into $R^n$ for some integer $n \ge 0$
  (see \green{Auslander-Buchsbaum}~\cite[Proposition~3.3 in Chapter~9 
  on page~321]{Auslander-Buchsbaum(1974)}).
   \end{proof}

\begin{example}[\blue{Ring of integers}]
Recall that an \emphred{algebraic number field}
is a finite algebraic extension of $\IQ$ and the
\emphred{ring of integers} in $F$ is the integral closure of $\IZ$ in
$F$. The ring of integers in an algebraic number field is a Dedekind domain.
(see~\cite[Theorem~1.4.18 on page~22]{Rosenberg(1994)}). 
\end{example}

\begin{example}[\blue{Dedekind domains}]
  Let $R$ be a Dedekind domain.  We call two ideals $I$ and $J$ in $R$
  equivalent if there exists non-zero elements $r$ and $s$ in $R$ with
  $rI = sJ$.  The \emphred{ideal class group C(R)} is the abelian group of
  equivalence classes of ideals under multiplication of ideals.  Then
  $C(R)$ is finite and we obtain an isomorphism
$$C(R) \xrightarrow{\cong} \widetilde{K}_0(R), \quad [I] \mapsto [I].$$
A proof of the claim above 
can be found for instance in~\cite[Corollary~11 on page 14]{Milnor(1971)}
and~\cite[Theorem~1.4.12 on page~20 and and Theorem~1.4.19 on page~23]{Rosenberg(1994)}.

The structure of the finite abelian group
$$C(\IZ[\exp(2\pi i /p)]) \cong \widetilde{K}_0(\IZ[\exp(2\pi i /p)]) 
\cong \widetilde{K}_0(\IZ[\IZ/p])$$ is only known for small prime
numbers $p$ (see~\cite[Remark 3.4 on page~30]{Milnor(1971)}).
\end{example}

\begin{theorem}[\green{Swan (1960)}]
  If $G$ is finite, then $\widetilde{K}_0(\IZ G)$ is finite.
\end{theorem}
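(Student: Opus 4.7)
The plan is to combine Maschke's theorem, Swan's local freeness theorem for projective $\IZ G$-modules, and the Jordan--Zassenhaus finiteness theorem for lattices over $\IZ$-orders. Since $G$ is finite and $\IQ$ has characteristic zero, Maschke's theorem tells us that $\IQ G$ is a finite-dimensional semisimple $\IQ$-algebra, and Wedderburn then gives $\IQ G \cong \prod_{i=1}^{r} M_{n_i}(D_i)$ for some finite-dimensional division $\IQ$-algebras $D_i$; in particular $K_0(\IQ G)$ is finitely generated free abelian of rank $r$. Moreover $\IZ G$ sits inside $\IQ G$ as a $\IZ$-order, in the sense that it is free of rank $|G|$ over $\IZ$ and satisfies $\IQ \otimes_{\IZ} \IZ G = \IQ G$.

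Next I would invoke Swan's theorem that any finitely generated projective $\IZ G$-module $P$ is locally free of a well-defined constant rank $n_P = \dim_{\IQ}(\IQ \otimes_{\IZ} P)/|G|$, meaning that $\IZ_{(p)} \otimes_{\IZ} P$ is a free $\IZ_{(p)} G$-module of rank $n_P$ for every prime $p$, and in particular $\IQ \otimes_{\IZ} P \cong (\IQ G)^{n_P}$. This produces a well-defined rank homomorphism $\mathrm{rk}\colon K_0(\IZ G) \to \IZ$ sending $[\IZ G]$ to $1$. Hence $K_0(\IZ G)$ splits as $\IZ \cdot [\IZ G] \oplus \ker(\mathrm{rk})$, and the canonical map identifies $\widetilde{K}_0(\IZ G)$ with $\ker(\mathrm{rk})$.

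The crux of the argument is then to show that $\ker(\mathrm{rk})$ is finite. A Bass-type stable cancellation theorem, valid because $\IZ G$ has finite stable range, implies that every finitely generated projective $\IZ G$-module of rank $n \geq 2$ is isomorphic to $P' \oplus \IZ G$ with $P'$ of rank $n-1$. Iterating, every element of $\widetilde{K}_0(\IZ G)$ can be represented in the form $[P]-[\IZ G]$ for a finitely generated projective $P$ satisfying $\IQ \otimes_{\IZ} P \cong \IQ G$, i.e., for a $\IZ G$-lattice in the $\IQ G$-module $\IQ G$. The Jordan--Zassenhaus theorem, applied to the $\IZ$-order $\IZ G$ in the semisimple $\IQ$-algebra $\IQ G$ and to the single module $V = \IQ G$, then guarantees that there are only finitely many isomorphism classes of such lattices. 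Therefore $\widetilde{K}_0(\IZ G)$ is finite.

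The main obstacle is the Jordan--Zassenhaus theorem itself. Its proof proceeds prime by prime: after $p$-adic completion, $\widehat{\IZ}_p G$ is a semiperfect $\IZ_p$-order, so Krull--Schmidt produces finiteness of genera classes of lattices inside a fixed rational module, and one then patches the local data via a strong approximation / reduction-theory argument to deduce global finiteness. Swan's local freeness theorem is of comparable depth, resting on the fact that the idempotents of $\IF_p G$ lift to $\IZ_p G$ together with the Swan character formula $\chi_P(g)=0$ for $g \neq 1$. Once both classical inputs are available, the assembly above gives the finiteness of $\widetilde{K}_0(\IZ G)$ without further difficulty.
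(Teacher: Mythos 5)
Your proposal is correct and is essentially the route Swan takes in the cited reference: local freeness of finitely generated projective $\IZ G$-modules (the main content of Swan's Theorem~8.1), reduction to rank one via a Serre/Bass-type splitting for module-finite $\IZ$-algebras, and then Jordan--Zassenhaus to bound the number of rank-one lattices in $\IQ G$. The paper itself only points to Swan's article and gives no independent argument, so there is nothing further to compare.
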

\begin{proof}
See~\cite[Theorem~8.1 and Proposition~9.1]{Swan(1960a)}.
\end{proof}

Let $X$ be a compact space.  Let $K^0(X)$ be the Grothendieck group of
isomorphism classes of finite-dimensional complex vector bundles over
$X$.  This is the zero-th term of a generalized cohomology theory
$K^*(X)$ called \red{topological $K$-theory}.  It is $2$-periodic,
i.e., $K^n(X) = K^{n+2}(X)$, and satisfies $K^0(\pt) = \IZ$ and
$K^1(\pt) = \{0\}$, where $\pt$ is the space consisting of one point.

Let $C(X)$ be the ring of continuous functions from $X$ to $\IC$.

\begin{theorem}[\green{Swan (1962)}] 
If $X$ is a compact space, then there is an isomorphism
$$K^0(X) \xrightarrow{\cong} K_0(C(X)).$$
\end{theorem}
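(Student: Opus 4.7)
The plan is to construct an explicit inverse pair of maps between $K^0(X)$ and $K_0(C(X))$ via the section functor. First I would define the map $\Gamma \colon K^0(X) \to K_0(C(X))$ by sending the class of a complex vector bundle $E \to X$ to the class $[\Gamma(E)]$ of its $C(X)$-module of continuous sections. The content to verify at this stage is that $\Gamma(E)$ is actually a finitely generated projective $C(X)$-module. The key step is the classical fact that over a compact Hausdorff space $X$, every finite-dimensional complex vector bundle $E$ is a direct summand of a trivial bundle $X \times \IC^n$; this is proved using a finite trivializing cover together with a subordinate partition of unity to build an embedding into a trivial bundle and then splitting off $E$ via a Hermitian metric. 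Applying $\Gamma$ to such a splitting yields a $C(X)$-module direct sum decomposition $C(X)^n \cong \Gamma(E) \oplus \Gamma(E^\perp)$, so $\Gamma(E)$ is a direct summand of a free module and hence projective. Direct sums of bundles go to direct sums of section modules, so $\Gamma$ descends to a well-defined homomorphism of Grothendieck groups.

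Next I would construct an inverse $\Phi \colon K_0(C(X)) \to K^0(X)$. Given a finitely generated projective $C(X)$-module $P$, choose an isomorphism $P \oplus Q \cong C(X)^n$; this corresponds to an idempotent $e \in M_n(C(X))$, which can be viewed as a continuous family of idempotent endomorphisms $e_x \in M_n(\IC)$ indexed by $x \in X$. Let $E_e \subset X \times \IC^n$ be the subset $\{(x,v) : e_x v = v\}$. I would check that $E_e$ is a complex vector bundle: continuity of $x \mapsto e_x$ implies local constancy of the rank of $e_x$ (on each connected component of $X$), and local triviality is obtained from standard perturbation: near $x_0$, the projections $e_x$ can be conjugated to $e_{x_0}$ by continuously varying invertibles, producing local frames for $E_e$. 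Define $\Phi([P]) = [E_e]$, and check independence from the choice of idempotent via the Whitehead-style observation that two idempotents representing the same stable isomorphism class of modules are conjugate after stabilization.

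Finally I would verify that $\Gamma \circ \Phi$ and $\Phi \circ \Gamma$ are the identity. For $\Gamma \circ \Phi$, note that sections of $E_e$ are exactly vectors in $C(X)^n$ fixed by the endomorphism induced by $e$, i.e., $\Gamma(E_e) \cong e \cdot C(X)^n \cong P$. For $\Phi \circ \Gamma$, start with a bundle $E$, embed it as a summand $E \subset X \times \IC^n$ with orthogonal projection idempotent $e \in M_n(C(X))$, and observe that $\Gamma(E) = e \cdot C(X)^n$ and the bundle reconstructed from $e$ is canonically $E$ itself.

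The main obstacle is the surjectivity step, namely showing that the subset $E_e \subset X \times \IC^n$ cut out by a continuous idempotent really is a locally trivial vector bundle. This requires the perturbation argument that a small continuous deformation of an idempotent is conjugate to the original one by a continuously varying invertible matrix, which uses that the idempotents of fixed rank in $M_n(\IC)$ form a smooth manifold (a complex Grassmannian) on which $GL_n(\IC)$ acts transitively. Once this is in place, the rest of the argument is essentially formal bookkeeping with idempotents and module summands.
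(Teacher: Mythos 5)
Your proposal is correct. The paper gives no proof, only the citation to Swan's 1962 paper; your section-functor argument, with the partition-of-unity embedding of a bundle into a trivial bundle, the idempotent description of $K_0(C(X))$, and the perturbation argument establishing local triviality of $E_e$, is precisely the standard proof found in Swan's paper and in the usual textbook treatments of this theorem.
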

\begin{proof} See~\cite{Swan(1962)}.
\end{proof}

\begin{definition}[\blue{Finitely dominated}]
  A $CW$-complex $X$ is called \emphred{finitely dominated} if there
  exists a finite  $CW$-complex $Y$ together with maps $i
  \colon X \to Y$ and $r \colon Y \to X$ satisfying $r\circ i \simeq
  \id_X$.
\end{definition}

Obviously a finite $CW$-complex is finitely dominated.

\begin{problem}
  Is a given finitely dominated $CW$-complex homotopy equivalent to a
  finite $CW$-complex?
\end{problem}

A finitely dominated $CW$-complex $X$ defines an element
$$o(X) \in K_0(\IZ[\pi_1(X)])$$
called its \emphred{finiteness obstruction} as follows.  Let
$\widetilde{X}$ be the universal covering.  The fundamental group $\pi
= \pi_1(X)$ acts freely on $\widetilde{X}$.  Let $C_*(\widetilde{X})$
be the cellular chain complex.  It is a free $\IZ\pi$-chain complex.
Since $X$ is finitely dominated, there exists a finite projective
$\IZ\pi$-chain complex $P_*$ with $P_* \simeq_{\IZ \pi}
C_*(\widetilde{X})$. Finite projective means 
that every $P_i$ is finitely generated projective and $P_i \not= 0$
holds only for finitely many element $i \in \IZ$.

\begin{definition}[\green{Wall's} \blue{finiteness obstruction}]
  Define
$$o(X) := \sum_{n} (-1)^n \cdot [P_n] \in K_0(\IZ \pi).$$
\end{definition}

This definition is indeed independent of the choice of $P_*$.

\begin{theorem}[\green{Wall (1965)}] 
  A finitely dominated $CW$-complex $X$ is homotopy equivalent to a
  finite $CW$-complex if and only if its reduced finiteness
  obstruction $\widetilde{o}(X) \in \widetilde{K}_0(\IZ[\pi_1(X)])$
  vanishes.

  Given a finitely presented group $G$ and $\xi \in K_0(\IZ G)$, there
  exists a finitely dominated $CW$-complex $X$ with $\pi_1(X) \cong G$
  and $o(X) = \xi$.
\end{theorem}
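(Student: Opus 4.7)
The plan is to treat the two assertions in turn.

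\textbf{Necessity of $\widetilde{o}(X) = 0$.} If $X \simeq Y$ for a finite $CW$-complex $Y$, lift to a $\pi$-equivariant homotopy equivalence of universal covers $\widetilde{X} \simeq \widetilde{Y}$; this induces a chain-homotopy equivalence $C_*(\widetilde{X}) \simeq C_*(\widetilde{Y})$, and $C_*(\widetilde{Y})$ is already finite and free over $\IZ\pi$. Using it as the $P_*$ in the definition of $o(X)$ yields
\[
o(X) = \sum_n (-1)^n \cdot \mathrm{rk}(C_n(\widetilde{Y})) \cdot [\IZ\pi],
\]
which lies in the image of $K_0(\IZ) \to K_0(\IZ\pi)$, so $\widetilde{o}(X) = 0$.

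\textbf{Sufficiency.} This is the genuinely difficult direction. Finite domination ensures that $\pi := \pi_1(X)$ is finitely presented, so we may replace $X$ by a homotopy equivalent complex with finite $2$-skeleton, and also that $C_*(\widetilde{X})$ is chain-homotopy equivalent to a finite projective $\IZ\pi$-chain complex. I would proceed by induction on dimension, at each stage attaching finitely many cells along chosen generators of kernels so as to make $C_k(\widetilde{X})$ finitely generated free for $k$ below some target dimension $n$. The process terminates with a modified $X$ whose cellular chain complex is finite and free except in the top dimension, where the remaining module is a finitely generated projective $P$. The vanishing of $\widetilde{o}(X)$ is precisely the statement that $P$ is stably free, and the decisive geometric step is to realize a stabilization $P \oplus \IZ\pi^a \cong \IZ\pi^b$ by attaching cancelling pairs of $n$- and $(n+1)$-cells; this operation does not change the homotopy type of $X$ but converts $P$ into a free module. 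At this point $C_*(\widetilde{X})$ is finite and free throughout, and a standard comparison argument produces a finite $CW$-complex homotopy equivalent to $X$.

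\textbf{Realization.} Since $G$ is finitely presented, pick a finite $2$-complex $K$ with $\pi_1(K) \cong G$. Write $\xi = [P] - r \cdot [\IZ G]$ with $P$ finitely generated projective, choose $Q$ and $m$ with $P \oplus Q \cong \IZ G^m$, and let $e \in M_m(\IZ G)$ be the idempotent projecting onto $P$. Starting from $K$, wedge on $m$ copies of $S^N$ for a large $N$ and attach $m$ cells of dimension $N+1$ via attaching maps whose induced $\IZ\pi$-boundary realizes the matrix $1 - e$ on $\IZ\pi^m$; the freedom to prescribe such attaching maps is available once $N$ is large enough that $\pi_N$ of the previous skeleton contains the required free summand of $\IZ\pi^m$. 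The resulting complex $Y$ is finitely dominated (the idempotent $e$ provides a chain-level self-domination) with $o(Y) = [P]$ modulo a contribution from $K$ that lies in the image of $K_0(\IZ) \to K_0(\IZ\pi)$, and that contribution is absorbed by wedging on a suitable number of trivial $S^N \cup D^{N+1}$ pairs to produce the desired $X$ with $o(X) = \xi$. The main obstacle throughout is the sufficiency part: translating the algebraic condition $\widetilde{o}(X) = 0$ into a coherent sequence of cell-level modifications that preserve $\pi_1$ and the homotopy type, and aligning the stabilization of the top projective module with an explicit cancelling cell-pair attachment.
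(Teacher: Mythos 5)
The paper itself gives no proof here; it simply cites Wall~\cite{Wall(1965a)},~\cite{Wall(1966)}, so there is nothing to compare against and your sketch stands on its own. Your necessity argument is correct. Your sufficiency sketch is imprecise about exactly which module is projective (the cellular chain groups $C_k(\widetilde X)$ of a $CW$-complex are always free; the finitely generated projective module in Wall's argument is the relative homotopy group $\pi_{n+1}(f)$ of an $n$-connected map $f\colon K\to X$ from a finite complex, equivalently the top term of a finite projective complex chain-equivalent to $C_*(\widetilde X)$), but the strategy you describe --- stabilize that projective to a free module by wedging $n$-spheres onto $K$, attach $(n+1)$-cells to kill the resulting free relative homotopy group, and conclude by Whitehead --- is the right one and is essentially Wall's.

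The realization step, however, contains a genuine error. The space $Y$ you build --- $K$ with $m$ copies of $S^N$ wedged on and $m$ $(N{+}1)$-cells attached along $1-e$ --- is a \emph{finite} $CW$-complex, and every finite $CW$-complex has $\widetilde o(Y)=0$: one may take $P_*=C_*(\widetilde Y)$ itself, a finite \emph{free} complex, in the definition of $o(Y)$. Concretely, the two-term piece $\IZ\pi^m\xrightarrow{1-e}\IZ\pi^m$ in degrees $N{+}1$ and $N$ splits $\IZ\pi$-linearly as $\bigl(Q\xrightarrow{\mathrm{id}}Q\bigr)\oplus\bigl(P\xrightarrow{0}P\bigr)$, so its contribution to $o(Y)$ is $(-1)^{N+1}[P]+(-1)^N[P]=0$, not $[P]$. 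To realize a class with nonzero reduced part you must leave the finite world: after the $S^N$-wedge, attach $m$ cells in \emph{every} dimension $N+1,N+2,N+3,\dots$, with the $\IZ\pi$-boundary maps alternating $1-e,\ e,\ 1-e,\dots$. The tail of $C_*(\widetilde X)$ in degrees $\ge N$ is then the standard infinite free resolution of $P$, so $C_*(\widetilde X)$ is chain equivalent to $C_*(\widetilde K)$ plus $P$ concentrated in degree $N$, giving $\widetilde o(X)=(-1)^N[P]$; and $X$ is finitely dominated by its $(N{+}1)$-skeleton via the folding retraction that the idempotence $e^2=e$ makes possible (equivalently, $X$ is the infinite mapping telescope of a self-map of $K\vee\bigvee^m S^N$ realizing the homotopy idempotent $\mathrm{id}\oplus e$ on $\pi_N$). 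The free part of $\xi$ is then absorbed by wedging trivial $S^N\cup D^{N+1}$ pairs, as you say.
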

\begin{proof}
See~\cite{Wall(1965a)} and~\cite{Wall(1966)}.
\end{proof}

A finitely dominated simply connected $CW$-complex is always homotopy
equivalent to a finite $CW$-complex since $\widetilde{K}_0(\IZ) =
\{0\}$.

\begin{corollary}[\blue{Geometric characterization of $\widetilde{K}_0(\IZ G) = \{0\}$}]
  The following statements are equivalent for a finitely presented
  group $G$:
  \begin{enumerate}
  \item Every finite dominated $CW$-complex with $G \cong \pi_1(X)$ is
    homotopy equivalent to a finite $CW$-complex;

  \item $\widetilde{K}_0(\IZ G) = \{0\}$.

  \end{enumerate}
\end{corollary}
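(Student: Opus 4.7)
The plan is to derive this corollary as a direct consequence of Wall's theorem stated immediately above, which packages all the real work. The two implications just amount to reading Wall's equivalence in each direction, together with the realization statement for prescribed obstruction classes.

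For the implication (2)$\Rightarrow$(1), I would start with a finitely dominated $CW$-complex $X$ with $\pi_1(X) \cong G$, form its finiteness obstruction $o(X) \in K_0(\IZ G)$ and its reduced version $\widetilde{o}(X) \in \widetilde{K}_0(\IZ G)$. Under the assumption $\widetilde{K}_0(\IZ G) = \{0\}$ there is nothing to check: $\widetilde{o}(X)$ lives in the trivial group and is therefore zero, so Wall's theorem delivers a homotopy equivalence from $X$ to a finite $CW$-complex.

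For the converse (1)$\Rightarrow$(2), I would take an arbitrary class $\xi \in \widetilde{K}_0(\IZ G)$ and wish to show $\xi = 0$. Choose a lift $\widehat{\xi} \in K_0(\IZ G)$. Since $G$ is finitely presented, the realization part of Wall's theorem gives a finitely dominated $CW$-complex $X$ with $\pi_1(X) \cong G$ and $o(X) = \widehat{\xi}$, hence $\widetilde{o}(X) = \xi$. By hypothesis (1), $X$ is homotopy equivalent to a finite $CW$-complex; but for a finite $CW$-complex the cellular chain complex $C_*(\widetilde{X})$ is itself a finite free $\IZ\pi$-chain complex, so we may take $P_* = C_*(\widetilde{X})$ in the definition of $o(X)$, and $\sum_n (-1)^n [\IZ\pi^{c_n}]$ represents a multiple of $[\IZ\pi]$, hence vanishes in $\widetilde{K}_0(\IZ G)$. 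Homotopy invariance of the finiteness obstruction (implicit in Wall's theorem, since the obstruction is well-defined on homotopy types) then gives $\widetilde{o}(X) = 0$, so $\xi = 0$.

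The main (small) obstacle is purely bookkeeping: one must make sure that $o(X)$ depends only on the homotopy type of $X$, so that the obstruction of a space which happens to be homotopy equivalent to a finite complex is indeed zero in $\widetilde{K}_0$. Since Wall's theorem as quoted above already encodes this (the vanishing of $\widetilde{o}(X)$ is stated as necessary as well as sufficient), the corollary follows with no further input.
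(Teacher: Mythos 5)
Your proof is correct and follows the only natural route, which is also the one the paper implicitly has in mind: the corollary is stated without an explicit proof precisely because it is an immediate consequence of Wall's theorem, read once in each direction (the ``if and only if'' part for $(2)\Rightarrow(1)$ and the necessity part together with the realization statement for $(1)\Rightarrow(2)$). Your direct computation that the obstruction of a finite $CW$-complex vanishes in $\widetilde{K}_0$ is a harmless, correct extra step, though you could equally have just invoked the necessity direction of Wall's theorem.
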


\begin{conjecture}[\blue{Vanishing of $\widetilde{K}_0(\IZ G)$  for torsionfree $G$}]
  If $G$ is torsionfree, then
$$\widetilde{K}_0(\IZ G) = \{0\}.$$
\end{conjecture}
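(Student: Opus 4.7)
The strategy is to deduce this conjecture from the (much stronger) Farrell--Jones Conjecture in algebraic $K$-theory with $\IZ$-coefficients, which is one of the main subjects of these notes and will be introduced in detail in later sections. The plan has three steps: first, invoke Farrell--Jones to rewrite $K_n(\IZ G)$ as an equivariant homology group; second, use the torsionfreeness of $G$ to collapse the family of virtually cyclic subgroups down to the trivial family; third, read off the vanishing of $\widetilde{K}_0$ from the edge term of an Atiyah--Hirzebruch spectral sequence.

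Concretely, the Farrell--Jones Conjecture asserts that the assembly map
$$H_n^G\bigl(\EGF{G}{\VCyc};\K_{\IZ}\bigr) \;\xrightarrow{\cong}\; K_n(\IZ G)$$
is an isomorphism for all $n \in \IZ$. When $G$ is torsionfree, every virtually cyclic subgroup of $G$ is either trivial or infinite cyclic. Applying the transitivity principle for assembly, I would reduce to checking the analogous statement for $H = \IZ$, where the classical Bass--Heller--Swan decomposition
$$K_n(\IZ[\IZ]) \;\cong\; K_n(\IZ) \oplus K_{n-1}(\IZ) \oplus \NK_n(\IZ) \oplus \NK_n(\IZ),$$
together with the vanishing $\NK_n(\IZ) = 0$ (since $\IZ$ is regular), shows that the family $\VCyc$ may be replaced by the trivial family without loss. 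One therefore obtains an isomorphism
$$H_n\bigl(BG;\K(\IZ)\bigr) \;\xrightarrow{\cong}\; K_n(\IZ G).$$

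For $n=0$, a short Atiyah--Hirzebruch argument using the vanishing $K_{-i}(\IZ) = 0$ for all $i \geq 1$ identifies $H_0(BG;\K(\IZ)) = K_0(\IZ) = \IZ$, and the assembly map sends the canonical generator to $[\IZ G]$. Combined with the isomorphism above, this forces $K_0(\IZ G) = \IZ\cdot[\IZ G]$, that is, $\widetilde{K}_0(\IZ G) = 0$.

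The principal obstacle is of course that the Farrell--Jones Conjecture itself is still an open problem in general; consequently the statement above is currently a theorem only for those classes of groups (hyperbolic groups, $\CAT(0)$-groups, many arithmetic lattices, and so on) for which the Farrell--Jones Conjecture has been established. A direct algebraic attack, bypassing Farrell--Jones, would have to produce for every finitely generated projective $\IZ G$-module $P$ explicit integers $m,n$ with $P \oplus \IZ G^m \cong \IZ G^n$, and no such method is available in this generality.
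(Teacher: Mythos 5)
Your argument matches the paper's own treatment: the statement is presented as an open conjecture, and the paper later proves exactly the implication you outline --- $G \in \calfj_K(\IZ)$ together with the regularity of $\IZ$ yields $\widetilde{K}_0(\IZ G)=0$ via the Atiyah--Hirzebruch spectral sequence for $H_*(BG;\bfK(\IZ))$, with the reduction from the family $\VCyc$ to the trivial family justified by the Bass--Heller--Swan decomposition (see Theorem~\ref{the:Lower_and_middle_K-theory_torsionfree} and Example~\ref{exa:K-FJC_for_Z_and_Fin}). You also correctly flag that the resulting proof is conditional on the Farrell--Jones Conjecture, which is precisely the paper's point of view.
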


For more information about the finiteness obstruction we refer for
instance to~\cite{Ferry(1981a)}, \cite{Ferry-Ranicki(2001)},
\cite{Lueck(1987b)}, \cite{Mislin(1995)}, \cite{Ranicki(1985)},
\cite{Rosenberg(2005)}, \cite{Varadarajan(1989a)}, \cite{Wall(1965a)}
and \cite{Wall(1966)}.

\begin{definition}[\blue{$K_1$-group}]
  \label{def:K_1(R)}
  Define the $\red{K_1(R)}$ to be the abelian group whose generators
  are conjugacy classes $[f]$ of automorphisms $f\colon P \to P$ of
  finitely generated projective $R$-modules with the following
  relations:
  \begin{enumerate}
  \item Given an exact sequence $0 \to (P_0,f_0) \to (P_1,f_1) \to
    (P_2,f_2) \to 0$ of automorphisms of finitely generated projective
    $R$-modules, we get $[f_0] + [f_2] = [f_1]$;

  \item $[g \circ f] = [f] + [g]$.
  \end{enumerate}
\end{definition}

\begin{theorem}[\blue{$K_1(R)$ and matrices}] There is a natural isomorphism
$$K_1(R) \cong GL(R)/[GL(R),GL(R)],$$
where the target is the abelianization of the general linear group
$GL(R) = \bigcup_{n \ge 1} GL_n(R)$. 
\end{theorem}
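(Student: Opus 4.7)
The plan is to build mutually inverse homomorphisms between $K_1(R)$ and $GL(R)^{ab} := GL(R)/[GL(R),GL(R)]$ and verify they are well-defined.

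First I would construct a map $\Phi \colon GL(R)^{ab} \to K_1(R)$ by sending a matrix $A \in GL_n(R)$ to the class $[A \colon R^n \to R^n]$. To see this descends: relation (ii) with $g=B$ and $f=A$ (and $g = B^{-1}$) immediately yields $[B A B^{-1}] = [A]$, so conjugation is killed; and the same relation gives $[\id_R] = 0$, so the stabilization $A \mapsto A \oplus \id_R \in GL_{n+1}(R)$ preserves the class via relation (i) applied to the split exact sequence $0 \to R^n \to R^{n+1} \to R \to 0$. Thus $\Phi$ is well-defined on $GL(R) = \bigcup_n GL_n(R)$, and it factors through the abelianization because relation (ii) combined with conjugation-invariance gives $[AB] = [A] + [B] = [BA]$.

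Next I would construct the inverse $\Psi \colon K_1(R) \to GL(R)^{ab}$. Given a generator $[f \colon P \to P]$, choose a finitely generated projective $Q$ and an iso $P \oplus Q \cong R^n$; then $f \oplus \id_Q$ becomes an element of $GL_n(R)$, and I send $[f]$ to its image in $GL(R)^{ab}$. Independence of the choice of $(Q,n)$ follows by passing to a common stabilization: two choices give elements of $GL_n(R)$ and $GL_m(R)$ that, after stabilizing into $GL_{n+m}(R)$, are conjugate via an iso of $P \oplus Q \oplus R^m$ with $P \oplus Q' \oplus R^n$ built from the two trivializations, and conjugation is trivial in the abelianization. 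Relation (ii) is clear since $\Psi([g \circ f])$ is the class of the product, which equals $\Psi([g]) + \Psi([f])$ in $GL(R)^{ab}$.

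The main obstacle is relation (i). Given an exact sequence $0 \to (P_0,f_0) \to (P_1,f_1) \to (P_2,f_2) \to 0$, projectivity of $P_2$ splits it, so after choosing a splitting $P_1 \cong P_0 \oplus P_2$ the automorphism $f_1$ becomes upper triangular with diagonal $(f_0,f_2)$. Stabilizing so that both $P_0$ and $P_2$ are complemented to free modules, I would factor the resulting matrix in $GL_N(R)$ as
\[
\begin{pmatrix} f_0 & 0 \\ 0 & \id \end{pmatrix}
\begin{pmatrix} \id & \ast \\ 0 & \id \end{pmatrix}
\begin{pmatrix} \id & 0 \\ 0 & f_2 \end{pmatrix}.
\]
The crux is that the middle factor, being an elementary matrix, lies in the commutator subgroup $[GL(R),GL(R)]$ by the Whitehead lemma (the identity $e_{ij}(r) = [e_{ik}(r), e_{kj}(1)]$ for a third index $k$, available after stabilization). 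Hence it vanishes in $GL(R)^{ab}$, giving $\Psi([f_1]) = \Psi([f_0]) + \Psi([f_2])$. The splitting choice does not matter because two splittings differ by an upper unitriangular automorphism, again elementary after stabilizing.

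Finally I would verify $\Phi \circ \Psi = \id$ and $\Psi \circ \Phi = \id$. The composition $\Phi \circ \Psi$ sends $[f]$ to $[f \oplus \id_Q] = [f] + [\id_Q] = [f]$ by relation (i) and $[\id] = 0$; the composition $\Psi \circ \Phi$ sends $A \in GL_n(R)$ to itself viewed in $GL(R)^{ab}$. Naturality in $R$ is immediate from the construction.
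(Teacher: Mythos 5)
Your proof is correct and is the standard argument for this identification (the paper itself does not prove it, merely citing Rosenberg, Theorem~3.1.7); the structure---build $\Phi$ and $\Psi$ by stabilizing a finitely generated projective to a free module, and use the Whitehead commutator identity $[e_{ik}(a),e_{kj}(b)]=e_{ij}(ab)$ to kill the unipotent block produced by splitting the exact sequence in relation (i)---is exactly the one that reference follows. Two cosmetic nits: the middle factor $\left(\begin{smallmatrix}\id & \ast\\ 0 & \id\end{smallmatrix}\right)$ is a \emph{product} of elementary matrices rather than a single one (still a commutator after stabilization, so the conclusion stands), and the well-definedness of $\Psi$ is cleanest if you realize both stabilized representatives on the same module $P\oplus Q\oplus P\oplus Q'\cong R^{n+m}$ via $\alpha\oplus\alpha'$ and conjugate by the automorphism swapping the two copies of $P$, which is what your phrase about ``an iso built from the two trivializations'' amounts to.
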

\begin{proof} See~\cite[Theorem~3.1.7 on page~113]{Rosenberg(1994)}.
\end{proof}

\begin{remark}[\blue{$K_1(R)$ and row and column operations}]
\label{rem:K_1(R)_and_operations}
  An invertible matrix $A \in GL(R)$ can be reduced by \red{elementary
  row and column operations} and \red{(de-)stabilization} to the
  empty matrix if and only if $[A] = 0$ holds in the
  \emphred{reduced $K_1$-group}
$$\red{\widetilde{K}_1(R)} := K_1(R)/\{\pm 1\} = 
\cok\left(K_1(\IZ) \to K_1(R)\right).$$
\end{remark}

\begin{remark}[\blue{$K_1(R)$ and determinants}]
If $R$ is commutative, the determinant induces an epimorphism
$$\det \colon K_1(R) \to R^{\times},$$
which in general is not bijective.

The assignment $A \mapsto [A] \in K_1(R)$ can be thought of as the
\red{universal determinant for $R$}, where $R$ is not necessarily
commutative. Namely, given any abelian group $A$ together with an
assignment which associates to an $R$-automorphism $f\colon P \to P$ of a
finitely generated projective $R$-module an element $[f]$ such that the obvious
analogues of the relations appearing in Definition~\ref{def:K_1(R)}
hold, there exists precisely one homomorphism of abelian groups $\phi
\colon K_1(R) \to A$ sending $[f]$ to $a(f)$ for every
$R$-automorphism $f$ of a finitely generated projective $R$-module.
\end{remark}

There do exists rings $R$ with $K_1(R) = 0$, e.g. $R = \End(F)$ for a field $F$.

\begin{remark}[$K_1(R)$ of principal ideal domains]
\label{rem:K_1(R)_of_principal_ideal_domains}
There exists principal ideal domains $R$ such that $\det \colon K_1(R)
\to R^{\times}$ is not bijective. For instance
\green{Grayson}~\cite{Grayson(1981)} gives such an example, namely, take
$\IZ[x]$ and invert $x$ and all polynomials of the shape $x^m -1$ for
$m \ge 1$.  Other examples can be found in
\green{Ischebeck}~\cite{Ischebeck(1980)}.
\end{remark}

\begin{theorem}[\blue{$K_1$ of ring of integers},
\green{Bass-Milnor-Serre (1967)}]
Let $R$ be the ring of integers in an algebraic number field. 
Then the determinant induces an isomorphism
$$\det \colon K_1(R) \xrightarrow{\cong} R^{\times}.$$
\end{theorem}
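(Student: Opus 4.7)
The plan is to treat surjectivity and injectivity of $\det \colon K_1(R) \to R^\times$ separately. Surjectivity is immediate: the inclusion $R^\times = GL_1(R) \hookrightarrow GL(R)$ descends to a splitting $R^\times \to K_1(R)$ of $\det$, so every unit $u \in R^\times$ is hit by the class of the $1 \times 1$ matrix $(u)$. All the substance of the theorem lies in the injectivity, which is equivalent to the vanishing of
\[
SK_1(R) \;:=\; \ker\bigl(K_1(R) \xrightarrow{\det} R^\times\bigr) \;=\; SL(R)/[GL(R),GL(R)].
\]

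My first reduction is Whitehead's lemma, which holds for any ring and identifies $[GL(R),GL(R)]$ with the subgroup $E(R) = \bigcup_{n} E_n(R)$ generated by the elementary matrices $e_{ij}(r) = I + rE_{ij}$ ($i \ne j$, $r \in R$). This is a formal step, after which the task becomes proving the equality $SL(R) = E(R)$; and since stabilization is compatible with $E_n \subseteq E_{n+1}$ and $SL_n \subseteq SL_{n+1}$, it suffices to establish $SL_n(R) = E_n(R)$ for all $n \ge 3$ and pass to the colimit.

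Second, I would prove $SL_n(R) = E_n(R)$ for $n \ge 3$ by reducing any $A \in SL_n(R)$ to the identity through elementary row and column operations. The strategy is to clear the first column of $A$ to $(1,0,\dots,0)^T$ and then argue inductively on the matrix size. Since the entries $(a_1,\dots,a_n)$ of the first column generate the unit ideal of $R$, the reduction comes down to an arithmetic lemma --- essentially the triviality of the Mennicke symbol over $R$ --- which I would establish using that $R$ is a Dedekind domain (hence has finite ideal class group) together with strong approximation and reciprocity in the number field $F$.

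The main obstacle is precisely this last arithmetic input. It is the heart of the Bass--Milnor--Serre solution of the congruence subgroup problem for $SL_n$ over $R$ in the case of the trivial congruence ideal, and purely ring-theoretic arguments cannot suffice: as Remark~\ref{rem:K_1(R)_of_principal_ideal_domains} recalls, there exist principal ideal domains for which $\det \colon K_1(R) \to R^\times$ is not injective, so the Dedekind property alone is insufficient --- one really needs the number-theoretic structure of a ring of integers in an algebraic number field.
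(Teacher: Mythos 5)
Your outline correctly reproduces the skeleton of the Bass--Milnor--Serre argument, which is essentially all the paper itself does, since its ``proof'' consists of a citation to~\cite[4.3]{Bass-Milnor-Serre(1967)}. The reductions you list are sound: surjectivity of $\det$ via the $GL_1$-splitting, Whitehead's lemma identifying $[GL(R),GL(R)]$ with $E(R)$, and the passage from $SK_1(R)=0$ to the stabilized equality $SL_n(R)=E_n(R)$, which for a Dedekind domain is legitimate once $n\ge 3$ by Bass's stable range theorem (Krull dimension $1$ gives stable range $2$). You are also right that the whole content then concentrates in the Mennicke symbol, and right to flag --- with the paper's own Remark~\ref{rem:K_1(R)_of_principal_ideal_domains} in hand --- that no purely ring-theoretic hypothesis like Dedekind or PID can carry the argument.

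That said, there is a genuine gap, and you yourself locate it: the assertion that the Mennicke symbol over $\calo_F$ is trivial is named but not proved. This is not a minor omission --- it is the theorem. Two things are compressed into the phrase ``the reduction comes down to an arithmetic lemma.'' First, the fact that $SK_1$ of a Dedekind domain is computed by the universal Mennicke symbol group is itself a substantial structural theorem of Bass (not a formal consequence of Whitehead's lemma); you use it tacitly. Second, the actual vanishing over rings of integers requires the Dirichlet unit theorem, strong approximation, and (for totally imaginary fields, where the full congruence subgroup problem has a nontrivial kernel $\mu(F)$) a reciprocity-law argument showing that the kernel nevertheless dies at the level of the trivial conductor. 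Gesturing at ``strong approximation and reciprocity'' does not close this; one must actually exhibit, for each relatively prime pair $(a,b)$ in $R$, the chain of Mennicke relations reducing $\left[\begin{smallmatrix}a&*\\ b&*\end{smallmatrix}\right]$ to the identity. So the proposal is an honest and well-organized roadmap to the cited source rather than a proof; to complete it you would need to import the number-theoretic core of Bass--Milnor--Serre \S4, which is exactly what the paper outsources by citation.
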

\begin{proof}
See~\cite[4.3]{Bass-Milnor-Serre(1967)}.
\end{proof}

\begin{definition}[\blue{Whitehead group}]
 The \emphred{Whitehead group} of a group $G$ is defined to be
$$\red{\Wh(G)} = K_1(\IZ G)/\{\pm g \mid g \in G\}.$$
\end{definition}

\begin{lemma}
  We have $\Wh(\{1\}) = \{0\}$.
\end{lemma}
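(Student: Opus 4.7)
The plan is to simply unwind the definition of $\Wh(\{1\})$ and apply the previously stated computation of $K_1$ for rings of integers in algebraic number fields.

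First I would observe that for the trivial group $G = \{1\}$, the group ring $\IZ G$ is just $\IZ$ itself, and the subgroup $\{\pm g \mid g \in G\}$ of $K_1(\IZ G)$ is just the image of $\{\pm 1\} \subseteq \IZ^{\times}$, viewed as $1{\times}1$ invertible matrices. So by definition $\Wh(\{1\}) = K_1(\IZ)/\{\pm 1\}$.

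Next I would invoke the Bass-Milnor-Serre theorem stated just above: since $\IZ$ is the ring of integers in the algebraic number field $\IQ$, the determinant furnishes an isomorphism $\det \colon K_1(\IZ) \xrightarrow{\cong} \IZ^{\times}$. Now $\IZ^{\times} = \{\pm 1\}$, and under this isomorphism the classes $[\pm 1] \in K_1(\IZ)$ correspond to $\pm 1 \in \IZ^{\times}$. Hence the subgroup $\{\pm 1\} \subseteq K_1(\IZ)$ already exhausts all of $K_1(\IZ)$, and the quotient $\Wh(\{1\}) = K_1(\IZ)/\{\pm 1\}$ is trivial.

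There is no real obstacle here; everything is a formal consequence of the identification of $K_1$ of a ring of integers with its unit group. If one wished to avoid invoking Bass-Milnor-Serre, one could alternatively give a direct argument using the identification $K_1(R) \cong GL(R)/[GL(R),GL(R)]$ together with the Euclidean algorithm in $\IZ$ to show that every elementary matrix class is trivial and every invertible matrix can be reduced to a $1{\times}1$ matrix with entry $\pm 1$; but this is precisely the content of the cited theorem in the special case $R = \IZ$.
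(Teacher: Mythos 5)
Your argument is correct, but you take a different route from the paper. You invoke the Bass--Milnor--Serre theorem (the isomorphism $\det \colon K_1(R) \xrightarrow{\cong} R^{\times}$ for $R$ the ring of integers in an algebraic number field) to conclude $K_1(\IZ) \cong \IZ^{\times} = \{\pm 1\}$, after which the quotient is obviously trivial. The paper instead gives the elementary, self-contained argument that you mention only as an aside at the end: since $\IZ$ is Euclidean, every invertible matrix over $\IZ$ can be reduced by elementary row and column operations and destabilization to a $(1,1)$-matrix $(\pm 1)$, and such operations do not change the class in $K_1$. The paper's choice is the more economical one pedagogically --- it proves exactly what is needed with elementary linear algebra, rather than citing a theorem whose full strength (all number rings, congruence subgroup considerations) is vastly more than the situation requires. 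Your route has the advantage of fitting cleanly into the chain of results just stated in the text, but it does make the triviality of $\Wh(\{1\})$ look like a consequence of something deep when in fact it is entirely elementary. Both proofs are valid; the paper's is preferable if one wants the reader to see why the statement is true at the level of matrices.
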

\begin{proof}
  The ring $\IZ$ possesses an Euclidean algorithm.  Hence every
  invertible matrix over $\IZ$ can be reduced via elementary row and
  column operations and destabilization to a $(1,1)$-matrix $(\pm 1)$.
  For every ring such operations do not change the class of a matrix in $K_1(R)$.
  \end{proof}

Let $G$ be a finite group.  Let $F$ be $\IQ$, $\IR$ or $\IC$.  Define
\red{$r_{F}(G)$} to be the number of irreducible $F$-representations
{of $G$}.  This is the same as the number of $F$-conjugacy classes of
elements of $G$.  Here \red{$g_1 \sim_{\IC} g_2$} if and only if
\red{$g_1 \sim g_2$}, i.e., $gg_1g^{-1} = g_2$ for some $g \in G$.  We
have \red{$g_1 \sim_{\IR} g_2$} if and only if $g_1 \sim g_2$ or $g_1
\sim g_2^{-1}$ holds.  We have \red{$g_1 \sim_{\IQ} g_2$} if and only
if $\langle g_1 \rangle$ and $\langle g_1 \rangle$ are conjugated as
subgroups of $G$.

\begin{theorem}[\blue{$\Wh(G)$ for finite groups $G$}]\ \newline
\vspace*{-4mm}
  \begin{enumerate}

  \item The Whitehead group $\Wh(G)$ is a finitely generated abelian
    group;

  \item Its rank is $r_{\IR}(G) - r_{\IQ}(G)$.

  \item The torsion subgroup of $\Wh(G)$ is the kernel of the map
    $K_1(\IZ G) \to K_1(\IQ G)$. 
  \end{enumerate}
\end{theorem}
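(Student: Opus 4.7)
The plan is to bootstrap from three classical inputs: Maschke's theorem, Morita invariance of $K_1$ (the argument parallels the one for $K_0$ given in the Morita theorem above), and Bass's generalization of the Dirichlet unit theorem to orders in semisimple $\IQ$-algebras.

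First I would observe that $\{\pm g \mid g \in G\} \subseteq K_1(\IZ G)$ is a finite subgroup, since $G$ is finite and each generator has finite order. Thus $\Wh(G)$ differs from $K_1(\IZ G)$ only by a finite quotient, and all three claims reduce to the corresponding assertions about $K_1(\IZ G)$: finite generation, rank equal to $r_{\IR}(G) - r_{\IQ}(G)$, and torsion subgroup equal to $\ker(K_1(\IZ G) \to K_1(\IQ G))$. By Maschke, $\IQ G$ and $\IR G$ are semisimple; Wedderburn decomposes
$$\IQ G \cong \prod_{j=1}^{r_{\IQ}(G)} M_{m_j}(E_j), \qquad \IR G \cong \prod_{i=1}^{r_{\IR}(G)} M_{n_i}(D_i),$$
with each $E_j$ a skew field having number-field center $F_j$ and each $D_i \in \{\IR, \IC, \IH\}$. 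Morita invariance together with compatibility of $K_1$ with finite products then yields $K_1(\IQ G) = \prod_j K_1(E_j)$ and $K_1(\IR G) = \prod_i K_1(D_i)$. Since $\IZ G$ is a $\IZ$-order in the semisimple $\IQ$-algebra $\IQ G$, a theorem of Bass says that $K_1(\IZ G)$ is finitely generated, proving (1).

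For (2), I would compose the reduced norms $K_1(E_j) \to F_j^{\times}$ with the logarithms of absolute values at the archimedean places of each $F_j$ to obtain a homomorphism $K_1(\IZ G) \to \prod_j \IR^{r_1(F_j) + r_2(F_j)}$. By the generalized Dirichlet unit theorem of Bass, its image is a full lattice inside the product of the hyperplanes cut out by the product-formula relation on each $F_j$, so the image has rank $\sum_j (r_1(F_j) + r_2(F_j) - 1)$. The crucial combinatorial identity is that each archimedean place $v$ of $F_j$ contributes exactly one simple factor of $\IR G$ (the split type of $E_j$ at $v$ only determines which of $\IR, \IC, \IH$ appears), so $\sum_j (r_1(F_j) + r_2(F_j)) = r_{\IR}(G)$, and the rank comes out to $r_{\IR}(G) - r_{\IQ}(G)$.

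For (3), the kernel of the map constructed in the previous paragraph is automatically the torsion subgroup of $K_1(\IZ G)$ (the image is a lattice). To identify this kernel with $\ker(K_1(\IZ G) \to K_1(\IQ G))$, I would observe that the rank map factors through $K_1(\IQ G) = \prod_j K_1(E_j) \to \prod_j F_j^{\times} \to \prod_j \IR^{r_1(F_j) + r_2(F_j)}$, and then show that this factorization is injective modulo torsion: the Hasse--Schilling--Maass theorem controls the kernel and image of the reduced norm $K_1(E_j) \to F_j^{\times}$ up to $SK_1(E_j)$, and classical Dirichlet for $F_j$ gives injectivity-modulo-torsion of $\calo_{F_j}^{\times} \to \IR^{r_1(F_j) + r_2(F_j)}$. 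The main obstacle is this last identification, as it requires the reduced-norm theorem for central simple algebras over global fields and control of the finite groups $SK_1(E_j)$ arising as summands; once these are available, everything else is bookkeeping relating the arithmetic of the centers $F_j$ to the archimedean structure of $\IR G$.
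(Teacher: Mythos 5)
The paper states this theorem without proof, deferring to Oliver's book, so I can only compare against the standard argument. Your treatment of (1) and (2) is essentially it: Maschke and Wedderburn decompose $\IQ G$ and $\IR G$, Morita invariance reduces $K_1$ to the division-algebra factors, and Bass's Dirichlet-type unit theorem for $\IZ$-orders together with the place-counting identity $r_{\IR}(G) = \sum_j \left(r_1(F_j) + r_2(F_j)\right)$ gives finite generation and the rank.

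Part (3) has a genuine gap, located in your opening reduction. You claim that (3) reduces to the assertion that the torsion subgroup of $K_1(\IZ G)$ equals $SK_1(\IZ G) := \ker\left(K_1(\IZ G) \to K_1(\IQ G)\right)$, but that assertion is false: the finite group $\{\pm g \mid g \in G\}$ lies in the torsion of $K_1(\IZ G)$ yet maps nontrivially into $K_1(\IQ G)$ (for instance $-1 \mapsto -1 \in \IQ^{\times}$ under the augmentation factor $\IQ G \to \IQ$), so $SK_1(\IZ G)$ is a \emph{proper} subgroup of the torsion of $K_1(\IZ G)$. Correspondingly, your lattice argument correctly identifies $\ker\left(K_1(\IZ G) \to \IR^N\right)$ with the torsion of $K_1(\IZ G)$, but that is strictly larger than $SK_1(\IZ G)$; and the Hasse--Schilling--Maass step only shows that the torsion of $K_1(\IZ G)$ maps to roots of unity in the centers $F_j$, not that its image there is exactly the image of $\pm G$. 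The latter is precisely what is needed to conclude that the torsion of $K_1(\IZ G)$ equals $SK_1(\IZ G)\cdot\{\pm g\}$, hence that the torsion of $\Wh(G)$ is $SK_1(\IZ G)$. That identification is a separate theorem about torsion units of $\IZ G$ and their reduced-norm images (due to Bass and Wall), not a consequence of the Dirichlet/HSM framework you have set up.
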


In contrast to $\widetilde{K}_0(\IZ G)$ the Whitehead group $\Wh(G)$
is computable (see \green{Oliver}~\cite{Oliver(1989)}).

\begin{definition}[\blue{$h$-cobordism}]
  An \emphred{$h$-cobordism} over a closed manifold $M_0$ is a compact
  manifold $W$ whose boundary is the disjoint union $M_0 \amalg M_1$
  such that both inclusions $M_0 \to W$ and $M_1 \to W$ are homotopy
  equivalences.
\end{definition}

\begin{theorem}[\blue{$s$-Cobordism Theorem}, \green{Barden, Mazur,
    Stallings, Kirby-Siebenmann}]\ \newline
\vspace*{-8mm}
 \begin{enumerate}
  \item Let $M_0$ be a closed (smooth) manifold of dimension $\ge 5$.
    Let $(W;M_0,M_1)$ be an $h$-cobordism over $M_0$.

    Then $W$ is homeomorphic (diffeomorpic) to $M_0 \times [0,1]$
    relative $M_0$ if and only if its \red{Whitehead torsion}
$$\red{\tau(W,M_0)} \in \Wh(\pi_1(M_0)).$$
vanishes;

\item Let $G$ be a finitely presented group $G$, $n$ an integer $n \ge
  5$ and $x$ an element in $\Wh(G)$. Then there exists an
  $n$-dimensional $h$-cobordism $(W;M_0,M_1)$ over $M_0$ with
  $\tau(W,M_0) = x$.
\end{enumerate}
\end{theorem}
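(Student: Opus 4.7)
The plan for part (1) is to reduce the $h$-cobordism $W$ to a handle presentation in which only handles of two adjacent middle dimensions survive, and then to identify the boundary matrix between these handles with a representative of $\tau(W,M_0)$. First, I would choose a smooth handle decomposition of $W$ relative to $M_0$ and lift it to the universal cover $\widetilde W$; this turns $C_*(\widetilde W,\widetilde M_0)$ into a based free $\IZ\pi$-chain complex (with $\pi := \pi_1(M_0)$) whose basis is canonical up to the ambiguity $\{\pm g \mid g \in \pi\}$. Because $M_0 \hookrightarrow W$ is a homotopy equivalence, this complex is contractible, and its torsion class in $\Wh(\pi)$ is by definition $\tau(W,M_0)$.

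The geometric core of (1) is handle cancellation. After reordering so that the handle indices are non-decreasing, I would eliminate $0$- and $1$-handles: each $0$-handle cancels against an adjacent $1$-handle, and each remaining $1$-handle is traded for a $3$-handle (this step forces $\dim W \ge 6$ and uses transversality). Applying the same procedure to the inverted cobordism $(W;M_1,M_0)$ removes the $(n-1)$- and $n$-handles. The cobordism can therefore be presented using only handles of two adjacent indices $k,k+1$ with $2 \le k \le n-3$, whose incidence matrix $A \in GL(\IZ\pi)$ represents $\tau(W,M_0)$. When $\tau(W,M_0) = 0$, Remark~\ref{rem:K_1(R)_and_operations} reduces $A$ to the empty matrix by elementary row and column operations and (de-)stabilisations; each such algebraic move is realised geometrically by a handle slide, the introduction of a trivially linked cancelling pair, or a Whitney-trick cancellation, all of which require $\dim W \ge 5$. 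Once no handles remain, $W$ is diffeomorphic (or homeomorphic, in the topological category) to $M_0 \times [0,1]$ relative $M_0$. The reverse implication is immediate: a product cobordism admits a handle-free decomposition, so its torsion vanishes.

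For the realisation statement (2), I would start from $M_0 \times [0,1]$ for any chosen closed $n$-manifold $M_0$ with $\pi_1(M_0) \cong G$ (such an $M_0$ exists because $G$ is finitely presented and $n \ge 5$). Choose a representative $A \in GL_r(\IZ G)$ of $x$, and attach $r$ trivial $k$-handles to $M_0 \times \{1\}$ for some middle index $k$ with $2 \le k \le n-3$. Then attach $r$ $(k+1)$-handles whose attaching spheres are embedded so that their incidence matrix with the previously attached $k$-handles equals $A$; this is possible in the given dimension range because any prescribed framed homotopy class of embedded spheres can be realised by a genuine embedding. The relative chain complex of the resulting cobordism $W$ is acyclic (since $A$ is invertible) and $\pi_1$ is preserved (since $k \ge 2$, by van Kampen), so the inclusions of $M_0$ and, dually, of $M_1$ into $W$ are homology, and hence by Whitehead's theorem homotopy, equivalences. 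By construction $\tau(W,M_0) = [A] = x$.

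The main obstacle is the removal of $0$- and $1$-handles in part (1): this is the step that forces the dimension hypothesis $\dim M_0 \ge 5$ and relies on the trading trick together with a delicate application of the Whitney trick in the middle dimensions to realise elementary row and column operations by ambient isotopy. Everything else reduces either to a chain-complex computation, to the dictionary between algebraic operations on $A$ and geometric handle manipulations, or to a direct construction of attaching spheres realising a prescribed incidence pattern.
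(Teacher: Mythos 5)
The paper does not prove this theorem; it states it (with attribution to Barden, Mazur, Stallings, Kirby--Siebenmann) and later points to standard references for Whitehead torsion and the $s$-cobordism theorem, e.g.\ Milnor, Kervaire, Cohen, Rourke--Sanderson, and L\"uck's surgery notes. Your sketch is essentially the standard handle-theoretic proof found in those references: pass to a handle decomposition relative to $M_0$, eliminate handles outside two adjacent middle indices (cancel $0$- against $1$-handles, trade $1$- for $3$-handles using that $\pi_1(M_0)\to\pi_1(W)$ is an isomorphism, and argue dually from $M_1$), identify the remaining incidence matrix in $GL(\IZ\pi)$ with a representative of $\tau(W,M_0)$, and, when that class vanishes, realise the reduction of the matrix to the empty matrix by handle slides, creation of cancelling pairs, and Whitney-trick cancellation; the converse and the realisation statement (2) are as you describe. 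The outline is correct and goes by the same route that the cited sources take; the only things worth making more explicit are that $\tau(W,M_0)$ is independent of the chosen handle decomposition (needed for the "only if" direction) and, in part (2), that the attaching spheres of the $(k{+}1)$-handles can be embedded to hit the prescribed $\IZ G$-incidence pattern, which needs $k$ to be chosen in the general-position/Whitney range relative to $n$.
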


\begin{corollary}[\blue{Geometric characterization of $\Wh(G) = \{0\}$}]
The following statements are equivalent for a finitely presented group $G$
and a fixed integer $n \ge 6$

\begin{enumerate}

\item Every compact $n$-dimensional $h$-cobordism $W$ with $G \cong \pi_1(W)$ is trivial;

\item $\Wh(G) = \{0\}$.

\end{enumerate}
\end{corollary}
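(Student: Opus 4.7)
The plan is to derive the corollary as a direct application of both halves of the $s$-Cobordism Theorem stated immediately above, using that the dimension hypothesis $n \ge 6$ ensures $M_0$ has dimension $n-1 \ge 5$ so the theorem applies.

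For the implication (2)$\Rightarrow$(1), I would begin with an arbitrary compact $n$-dimensional $h$-cobordism $(W;M_0,M_1)$ with $\pi_1(W) \cong G$. Since the inclusion $M_0 \hookrightarrow W$ is a homotopy equivalence (by definition of $h$-cobordism), it induces an isomorphism on fundamental groups, so $\pi_1(M_0) \cong \pi_1(W) \cong G$. Consequently the Whitehead torsion satisfies $\tau(W,M_0) \in \Wh(\pi_1(M_0)) \cong \Wh(G) = \{0\}$, so the torsion vanishes. By part~(1) of the $s$-Cobordism Theorem (applicable since $\dim M_0 = n-1 \ge 5$), $W$ is homeomorphic (diffeomorphic) to $M_0 \times [0,1]$ relative $M_0$, i.e., $W$ is trivial.

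For the converse (1)$\Rightarrow$(2), I would fix an arbitrary element $x \in \Wh(G)$ and aim to show $x = 0$. Invoking part~(2) of the $s$-Cobordism Theorem with the given finitely presented group $G$ and the fixed $n \ge 6$, I obtain an $n$-dimensional $h$-cobordism $(W;M_0,M_1)$ with $\pi_1(M_0) \cong G$ (implicit in the requirement $\tau(W,M_0)=x \in \Wh(G)$) and realizing $\tau(W,M_0) = x$. As above, $\pi_1(W) \cong \pi_1(M_0) \cong G$, so the hypothesis (1) forces $W$ to be trivial. Applying the "only if" direction of part~(1) of the $s$-Cobordism Theorem to this trivial $h$-cobordism yields $\tau(W,M_0) = 0$, hence $x = 0$. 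Since $x$ was arbitrary, $\Wh(G) = \{0\}$.

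There is no genuine obstacle here; the corollary is a bookkeeping consequence of the two parts of the $s$-Cobordism Theorem, combined with the elementary observation that the homotopy equivalence $M_0 \hookrightarrow W$ identifies $\pi_1(M_0)$ with $\pi_1(W)$. The only point requiring mild care is the dimension count: the realization part produces $W$ of dimension $n$ whose boundary component $M_0$ has dimension $n-1$, so the classification part can be invoked precisely when $n-1 \ge 5$, which is exactly the hypothesis $n \ge 6$ stated in the corollary.
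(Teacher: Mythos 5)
The paper gives no explicit proof of this corollary, treating it as an immediate consequence of the $s$-Cobordism Theorem; your argument supplies exactly the bookkeeping that is implicit there. Both directions, the dimension count $n-1\ge 5$, and the identification $\pi_1(W)\cong\pi_1(M_0)$ via the homotopy equivalence $M_0\hookrightarrow W$ are handled correctly, so this is the intended proof.
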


\begin{conjecture}[\blue{Vanishing of $\Wh(G)$  for torsionfree $G$}] 
If $G$ is torsionfree, then
$$\Wh(G) = \{0\}.$$
\end{conjecture}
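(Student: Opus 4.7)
The statement is a conjecture, open in general, so my plan is to reduce it to the algebraic $K$-theoretic Farrell-Jones Conjecture (which the paper will introduce later) applied to the particular torsionfree group $G$.

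First I would invoke Farrell-Jones for $K_*(\IZ G)$. For a torsionfree group $G$ every virtually cyclic subgroup is either trivial or infinite cyclic, and the Nil-terms $\NK_*(\IZ)$ vanish by the Bass-Heller-Swan decomposition, since $\IZ$ is regular. Hence one may replace the family $\VCyc$ by the trivial family, and the conjecture reduces to the statement that the classical assembly map
$$H_n(BG; \bfK_\IZ) \xrightarrow{\cong} K_n(\IZ G)$$
is an isomorphism for every $n \in \IZ$.

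Second, to extract $\Wh(G) = \{0\}$ I would focus on the case $n = 1$ and use the Atiyah-Hirzebruch spectral sequence $E^2_{p,q} = H_p(BG; K_q(\IZ)) \Rightarrow H_{p+q}(BG; \bfK_\IZ)$. Since $K_{-i}(\IZ) = 0$ for $i \ge 1$ (again by Bass-Heller-Swan and regularity), the only groups on the $E^2$-page contributing to $H_1(BG; \bfK_\IZ)$ are $E^2_{0,1} = K_1(\IZ) = \{\pm 1\}$ and $E^2_{1,0} = H_1(BG; \IZ) = G^{\operatorname{ab}}$. Tracing through the construction of the assembly map, the images of these summands in $K_1(\IZ G)$ are precisely the classes of $\pm 1$ and of the group elements $g \in G$; but these are by definition exactly the classes quotiented out when forming $\Wh(G) = K_1(\IZ G)/\{\pm g \mid g \in G\}$. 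Surjectivity of the assembly map then forces $\Wh(G) = \{0\}$.

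The hard part is of course the Farrell-Jones Conjecture itself, which is presently known only for restricted classes of groups (hyperbolic, $\CAT(0)$, solvable, and so on) rather than for arbitrary torsionfree $G$; any torsionfree group outside the known range remains inaccessible. A secondary, genuinely technical, point is the identification of the image of the assembly map in degree $1$ with the subgroup of trivial units $\pm G$, which requires unwinding the definition of the assembly map but poses no serious difficulty. Outside the reach of Farrell-Jones no uniform approach to the $\Wh$-vanishing conjecture is currently available.
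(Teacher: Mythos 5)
Your proposal matches the paper's own treatment: the conjecture is open, and the paper derives $\Wh(G)=\{0\}$ as a consequence of $G\in\calfj_K(\IZ)$ via exactly the Atiyah--Hirzebruch spectral sequence argument you give — the only contributions to $H_1(BG;\bfK(\IZ))\cong K_1(\IZ G)$ are $E^2_{0,1}=\{\pm 1\}$ and $E^2_{1,0}=G/[G,G]$, whose images are the trivial units $\pm g$, forcing the Whitehead group to vanish. The reduction from $\VCyc$ to the trivial family (via regularity of $\IZ$ and Bass--Heller--Swan) that you sketch is also discussed in the paper when it relates the torsionfree formulation to the general one.
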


\begin{conjecture}[\blue{Poincar\'e Conjecture}]
  Let $M$ be an $n$-dimensional topological manifold which is a
  \red{homotopy sphere}, i.e., homotopy equivalent to $S^n$.

  Then $M$ is homeomorphic to $S^n$.
\end{conjecture}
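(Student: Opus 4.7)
The plan is to split the argument by dimension, since the techniques needed differ drastically. For $n = 1$ and $n = 2$ the statement is classical: a closed $1$-manifold is $S^1$, and the classification of closed surfaces forces a simply connected closed surface to be homeomorphic to $S^2$. I would dispose of these cases at the start with a one-line reference.

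For $n \ge 5$ I would derive the statement directly from the $s$-cobordism theorem stated above. Let $M$ be a homotopy $n$-sphere. Since $M \simeq S^n$ and $n \ge 2$, $M$ is simply connected, so $\Wh(\pi_1(M)) = \Wh(\{1\}) = \{0\}$ by the lemma proved earlier. Next I would choose two disjoint locally flat embedded closed $n$-disks $D_0, D_1 \subset M$ and set
$$W := M \setminus \bigl(\operatorname{int} D_0 \cup \operatorname{int} D_1\bigr),$$
so that $\partial W = S^{n-1} \amalg S^{n-1}$. A short van Kampen and Mayer-Vietoris argument (using that $M$ minus two points is homotopy equivalent to $S^{n-1}$ when $M \simeq S^n$) shows that $W$ is simply connected with the homology of $S^{n-1}$, so by Hurewicz and Whitehead both inclusions $S^{n-1} \hookrightarrow W$ are homotopy equivalences. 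Thus $(W; S^{n-1}, S^{n-1})$ is an $h$-cobordism, and its Whitehead torsion automatically vanishes as it lives in $\Wh(\{1\}) = \{0\}$. The $s$-cobordism theorem then gives a homeomorphism $W \cong S^{n-1} \times [0,1]$, and regluing the two disks exhibits $M$ as $D^n \cup_{S^{n-1}} (S^{n-1} \times [0,1]) \cup_{S^{n-1}} D^n \cong S^n$.

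The main obstacle will be the low-dimensional cases $n = 3$ and $n = 4$, which lie entirely outside the $K$-theoretic framework of this section. For $n = 4$ the construction above still produces an $h$-cobordism, but the $s$-cobordism theorem as stated requires dimension $\ge 5$; the handle-cancellation step breaks down because the Whitney trick fails in dimension $4$. One would have to invoke Freedman's topological surgery theory, in particular his construction of embedded $2$-disks in simply connected $4$-manifolds via infinite towers of Casson handles. For $n = 3$ even this strategy fails: there is no handle-decomposition machinery of the required type, and one needs Perelman's proof of the geometrization conjecture via Ricci flow with surgery. In a survey at this level I would signpost these two deep external inputs rather than attempt to reproduce them.
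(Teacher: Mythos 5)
Your proof follows the paper's strategy almost exactly: remove two disjoint disks, observe that the complement is a simply connected $h$-cobordism with vanishing Whitehead torsion, apply the $s$-cobordism theorem, and reassemble. However, there is a genuine gap at the final step, precisely where the paper is most careful. You write that regluing exhibits $M$ as $D^n \cup_{S^{n-1}} (S^{n-1}\times[0,1]) \cup_{S^{n-1}} D^n \cong S^n$, but this elides the gluing maps. The $s$-cobordism theorem produces a homeomorphism $f\colon W \to \partial D_0 \times [0,1]$ that may be taken to be the identity on $\partial D_0 \times\{0\}$, but on the other end $f|_{\partial D_1}\colon \partial D_1 \to \partial D_0 \times\{1\}$ is merely \emph{some} homeomorphism of spheres. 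To reattach $D_1$ and still land on the standard sphere you must extend this boundary homeomorphism over the disk. This is exactly the Alexander trick (cone off the boundary homeomorphism), which the paper invokes explicitly and then deliberately returns to in the remark on exotic spheres: the Alexander trick is the one step that works topologically but fails smoothly, and it is the reason the argument cannot be upgraded to a diffeomorphism. Leaving it implicit hides the crux.

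A second, smaller point: you claim the argument covers all $n \ge 5$, but the $s$-cobordism theorem as stated applies to an $h$-cobordism over a closed manifold of dimension $\ge 5$. Here that manifold is $S^{n-1}$, so the hypothesis is $n-1 \ge 5$, i.e.\ $n \ge 6$; the paper is careful to say it only sketches the proof for $n \ge 6$. The case $n=5$ requires a separate argument. Finally, your dispatch of $n\le 2$ and your discussion of the Freedman ($n=4$) and Perelman ($n=3$) inputs are correct and match what the paper records in its subsequent remark, but they are outside the theorem being proved and serve as framing rather than as part of the proof.
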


\begin{theorem} For $n \ge 5$ the Poincar\'e Conjecture is true.
\end{theorem}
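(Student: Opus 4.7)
The plan is to reduce the Poincar\'e Conjecture in dimension $n$ to the $s$-Cobordism Theorem stated just above, exploiting $\Wh(\{1\}) = \{0\}$. Given a topological $n$-manifold $M$ homotopy equivalent to $S^n$, I fix two disjoint locally flat embedded closed disks $D_0, D_1 \subseteq M$ and consider the compact cobordism
$$W \;:=\; M \setminus \bigl(\operatorname{int}(D_0) \cup \operatorname{int}(D_1)\bigr),$$
whose boundary is $\partial D_0 \sqcup \partial D_1 \cong S^{n-1} \sqcup S^{n-1}$. Since $M$ is open in dimension $n \ge 5$ for such embeddings, the interiors can be removed cleanly.

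Step 1 ($h$-cobordism property). Since $M$ is simply connected, both disks are simply connected, and the pieces overlap in collars, a van Kampen argument gives that $W$ is simply connected for $n \ge 3$. Mayer-Vietoris applied to the decomposition $M = W \cup (D_0 \sqcup D_1)$, together with $H_*(M) \cong H_*(S^n)$ and contractibility of the disks, forces $W$ to have the integral homology of $S^{n-1}$ and forces each inclusion $\partial D_i \hookrightarrow W$ to be an isomorphism on $H_*$. The Whitehead theorem upgrades these to homotopy equivalences, so $W$ is an $h$-cobordism with both ends identified with $S^{n-1}$.

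Step 2 (killing torsion and applying the $s$-Cobordism Theorem). Since $\pi_1(S^{n-1})$ is trivial for $n - 1 \ge 2$, the lemma $\Wh(\{1\}) = \{0\}$ forces the Whitehead torsion $\tau(W, \partial D_0) \in \Wh(\pi_1(\partial D_0))$ to vanish automatically. Provided $n - 1 \ge 5$, i.e.\ $n \ge 6$, the topological $s$-Cobordism Theorem yields a homeomorphism $W \cong \partial D_0 \times [0,1]$ rel $\partial D_0$. Regluing the two disks along $\partial W$ presents $M$ as
$$\bigl(S^{n-1} \times [0,1]\bigr) \cup_{S^{n-1} \times \{0,1\}} \bigl(D^n \sqcup D^n\bigr),$$
which is manifestly homeomorphic to $S^n$ (use the Alexander trick to extend the boundary identifications to the disks, so that each end disk together with its collar is again a standard disk).

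The main obstacle is the endpoint $n = 5$: the construction produces a $5$-dimensional $h$-cobordism between two copies of $S^4$, and the $s$-Cobordism Theorem as stated requires boundary dimension $\ge 5$, so it does not apply. This case must be handled by an external input, either Freedman's topological $5$-dimensional $s$-cobordism theorem (whose proof rests on $4$-dimensional disk embedding techniques well beyond the methods of this section), or by observing that the Kirby-Siebenmann obstruction of a homotopy $5$-sphere $M$ lies in $H^4(M;\IZ/2) = 0$, so $M$ admits a PL structure and one can apply Smale's PL handle-cancellation argument in dimension $5$ directly. Either route completes the theorem, but neither is internal to the framework developed in this section; the bulk of the proof for $n \ge 6$ is carried entirely by the $s$-Cobordism Theorem plus the triviality of $\Wh(\{1\})$.
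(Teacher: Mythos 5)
Your argument is essentially the paper's own proof: remove two disjoint embedded disks, observe the complement is a simply connected $h$-cobordism, invoke $\Wh(\{1\}) = \{0\}$ and the $s$-cobordism theorem to trivialize it, and reglue the disks via the Alexander trick. The paper likewise only sketches the case $n \ge 6$, so your extra discussion of the $n = 5$ endpoint (Freedman, or Kirby--Siebenmann plus the PL argument) is a welcome but inessential supplement to the same approach.
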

\begin{proof} We sketch the proof for $n \ge 6$.  Let $M$ be an
  $n$-dimensional homotopy sphere.  Let $W$ be obtained from $M$ by
  deleting the interior of two disjoint embedded disks $D^n_1$ and
  $D^n_2$. Then $W$ is a simply connected $h$-cobordism.  Since
  $\Wh(\{1\})$ is trivial, we can find a homeomorphism $f \colon W
  \xrightarrow{\cong} \partial D^n_1 \times [0,1]$ which is the
  identity on $\partial D^n_1 = \partial D^n_1 \times \{0\}$.  By the
  \red{Alexander trick}, i.e., by coning the 
  homeomorphism of $\partial D^n$ to the cone of 
  $\partial D^n$ which is $D^n$,  we can extend the homeomorphism
  $f|_{\partial D^n_2}  \colon  \partial D^n_2 \xrightarrow{\cong} 
  \partial D^n_1 = \partial  D^n_1 \times \{1\}$ 
  to a homeomorphism $g \colon D^n_2 \to D^n_1$.
  The three homeomorphisms $id_{D^n_1}$, $f$ and $g$ fit together to a
  homeomorphism $h \colon M \to D^n_1 \cup_{\partial D^n_1 \times
    \{0\}} \partial D^n_1 \times [0,1] \cup_{\partial D^n_1 \times
    \{1\}} D^n_1$. The target is obviously homeomorphic to $S^n$.
\end{proof}

\begin{remark}[\blue{Exotic spheres}]
  The argument above does not imply that for a smooth manifold $M$ we
  obtain a diffeomorphism $g \colon M \to S^n$.  The problem is that
  the Alexander trick does not work smoothly.  Indeed, there exists so
  called \red{exotic spheres}, i.e., closed smooth manifolds which are
  homeomorphic but not diffeomorphic to $S^n$. For
  more information about exotic spheres we refer for instance
  to~\cite{Kervaire-Milnor(1963)},~\cite{Lance(2000)},~\cite{Levine(1983)}
  and~\cite[Chapter~6]{Lueck(2002c)}.
\end{remark}

\begin{remark}[\blue{The Poincar\'e Conjecture and the $s$-cobordism theorem
in low dimensions}]
The Poincar\'e Conjecture has been proved in dimension $4$ by 
\green{Freedman}~\cite{Freedman(1982)} 
and in dimension $3$ by \green{Perelman}
(see~\cite{Perelman(2002entropy)}, \cite{Perelman(2003extinction)} 
and for more details for instance~\cite{Kleiner-Lott(2006)}, 
\cite{Morgan-Tian(2006)}).
It is true in dimensions $1$ and $2$ for elementary reasons.

The $s$-cobordism theorem is known to be false (smoothly) for $n =
\dim(M_0) = 4$ in general, by the work of 
\green{Donaldson}~\cite{Donaldson(1987a)}, but it is true for $n = \dim(M_0) = 4$ for so
called ``good'' fundamental groups in the topological category by
results of \green{Freedman}~\cite{Freedman(1982)}, \cite{Freedman(1983)}. The
trivial group is an example of a ``good'' fundamental group.
Counterexamples in the case $n = \dim(M_0) = 3$ are constructed by
\green{Cappell-Shaneson}~\cite{Cappell-Shaneson(1985)}.
\end{remark}

\begin{remark}[\blue{Surgery program}]
The $s$-cobordism theorem is a key ingredient in the \red{surgery
program} for the classification of closed manifolds due to
\green{Browder, Novikov, Sullivan} and \green{Wall}. 
For more information about surgery theory we refer for instance to
\cite{Browder(1972)}, 
\cite{Cappell-Ranicki-Rosenberg(2000)}, 
\cite{Cappell-Ranicki-Rosenberg(2001)}, 
\cite{Farrell-Goettsche-Lueck(2002a)}, 
\cite{Farrell-Goettsche-Lueck(2002b)},
\cite{Karoubi(2004)},
\cite{Kreck(1999)}, 
\cite{Ranicki(2002a)},
\cite{Stark(2000)},
\cite{Stark(2002)},
and 
\cite{Wall(1999)}.

\end{remark}

More information about Whitehead torsion and the $s$-cobordism theorem 
can be found for instance in~\cite{Cohen(1973)},
\cite{Kervaire(1965)}, \cite[Chapter 1]{Lueck(2002c)},
\cite{Milnor(1965b)}, \cite{Milnor(1966)}, 
\cite[page 87-90]{Rourke-Sanderson(1982)}.  

\begin{definition}[\blue{Bass-Nil-groups}]
Define for $n = 0,1$
$$\red{\NK_n(R)} := \coker\left(K_n(R) \to K_n(R[t])\right).$$
\end{definition}

\begin{theorem}[\blue{Bass-Heller-Swan decomposition for $K_1$}, 
  \green{Bass-Heller-Swan(1964)}]
  There is an isomorphism, natural in $R$,
  \begin{eqnarray*}
    & K_0(R) \oplus K_1(R) \oplus \NK_1(R) \oplus \NK_1(R)
    \xrightarrow{\cong}  K_1(R[t,t^{-1}]) = K_1(R[\IZ]).&
  \end{eqnarray*}
\end{theorem}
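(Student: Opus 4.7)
The plan is to construct four natural split injections into $K_1(R[t,t^{-1}])$ together with a matching set of retractions, and then to reduce the surjectivity of the sum to a Higman linearization argument.

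First I would exhibit the four embeddings. The ring inclusion $R \hookrightarrow R[t,t^{-1}]$ induces $K_1(R) \to K_1(R[t,t^{-1}])$. The assignment $[P] \mapsto [t \cdot \id_{R[t,t^{-1}] \otimes_R P}]$ defines $K_0(R) \to K_1(R[t,t^{-1}])$; well-definedness follows because $R[t,t^{-1}]$ is flat over $R$, so short exact sequences of finitely generated projective $R$-modules produce short exact sequences of $R[t,t^{-1}]$-automorphisms, and the defining relation of $K_1$ yields additivity. For the two $\NK_1$-embeddings, note that the augmentation $R[t] \to R$, $t \mapsto 0$, splits $R \hookrightarrow R[t]$, so $K_1(R[t]) \cong K_1(R) \oplus \NK_1(R)$; composing the resulting section $\NK_1(R) \hookrightarrow K_1(R[t])$ with the map induced by $R[t] \hookrightarrow R[t,t^{-1}]$ gives one embedding, and the analogous construction with $R[t^{-1}]$ gives the other.

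Next I would construct the retractions. Evaluation $t \mapsto 1$ gives the retraction $K_1(R[t,t^{-1}]) \to K_1(R)$ onto the first summand. The most delicate piece is the Bass connecting homomorphism $\partial\colon K_1(R[t,t^{-1}]) \to K_0(R)$. Given $\alpha \in GL_n(R[t,t^{-1}])$, I would choose $N$ large enough that $\beta := t^N \alpha \in M_n(R[t])$; the key technical input is that the cokernel $P := R[t]^n / \beta R[t]^n$ carries a natural finitely generated projective $R$-module structure (which follows from $\beta$ becoming invertible after inverting $t$ together with a finite-length argument). One then sets $\partial[\alpha] := [P] - nN \cdot [R]$, verifies it is additive and independent of the choices, and checks the splitting identity $\partial[t \cdot \id_{R[t,t^{-1}] \otimes_R P}] = [P]$. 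Finally, the two retractions onto $\NK_1(R)$ are produced by subtracting off the contributions already captured by $K_1(R)$ and $K_0(R)$, showing the remainder lies in the combined image of $K_1(R[t]) \oplus K_1(R[t^{-1}]) \to K_1(R[t,t^{-1}])$, and then projecting through the canonical splittings $K_1(R[t^{\pm 1}]) \cong K_1(R) \oplus \NK_1(R)$.

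The technical heart of the argument, and the main obstacle, is surjectivity of the sum of the four embeddings. I would attack this via Higman linearization (applied to $R[t]$ and $R[t^{-1}]$, bridged by the multiplication-by-$t^N$ trick): up to stabilization, every class of $K_1(R[t,t^{-1}])$ is represented by a product of invertible matrices of the form $1 - t\sigma$ and $1 - t^{-1}\tau$ together with $t \cdot \id$ and a matrix over $R$. A case analysis on the nilpotent versus unit parts of $\sigma$ and $\tau$ then matches each such representative with contributions from the four summands. Conceptually this realizes the decomposition as the splitting of the Bass fundamental sequence
$$0 \to K_1(R) \oplus \NK_1(R) \oplus \NK_1(R) \to K_1(R[t,t^{-1}]) \xrightarrow{\partial} K_0(R) \to 0.$$
Orthogonality of each retraction with the three embeddings not splitting it is then a direct calculation on the explicit representatives above, completing the proof; naturality in $R$ follows automatically since every ingredient is constructed from ring-theoretic data alone.
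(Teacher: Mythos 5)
The paper itself gives no proof of this theorem; the ``proof'' consists only of references to Bass--Heller--Swan (1964), Bass (1968, Chapter~XII), and Rosenberg (1994, Theorem~3.2.22). Your sketch reproduces, in outline, precisely the argument found in the latter two references: the four split injections $K_0(R),\,K_1(R),\,\NK_1(R),\,\NK_1(R)\hookrightarrow K_1(R[t,t^{-1}])$, the Bass connecting homomorphism $\partial\colon K_1(R[t,t^{-1}])\to K_0(R)$ built from the cokernel of $t^N\alpha$, the Higman linearization step for surjectivity, and the interpretation of the result as a splitting of the Bass fundamental sequence. So the proposal is correct and matches the approach the paper is implicitly relying on.

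One point deserves a caveat: the assertion that $P=R[t]^n/\beta R[t]^n$ is a finitely generated projective $R$-module is not quite delivered by ``$\beta$ becomes invertible after inverting $t$ together with a finite-length argument.'' What one actually needs is Bass's lemma that a finitely generated $R[t]$-module annihilated by a power of $t$ and of projective dimension at most one over $R[t]$ is automatically projective as an $R$-module; its proof uses the filtration of $P$ by powers of $t$ together with a careful diagram chase, and is genuinely more than a one-liner. Likewise, well-definedness and additivity of $\partial$, and the Higman linearization reduction, each require real work. You correctly flag these as the technical heart of the argument, so I would classify this as deliberately glossed-over rather than a gap in the plan; but if the proof were to be written out, those are the places where the bulk of the effort would have to go.
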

\begin{proof}
See for instance~\cite{Bass-Heller-Swan(1964)} (for regular rings),
\cite[Chapter XII]{Bass(1968)}, 
\cite[Theorem~3.2.22 on page~149]{Rosenberg(1994)}.
\end{proof}

Notice that the Bass-Heller-Swan decomposition for $K_1$ gives the possibility
to define $K_0(R)$ in terms of $K_1$. This motivates the following definition.

\begin{definition}[\blue{Negative $K$-theory}]
Define inductively for $n = -1, -2, \ldots$
$$\red{K_n(R)} :=
\coker\left(K_{n+1}(R[t]) \oplus K_{n+1}(R[t^{-1}])\to K_{n+1}(R[t,t^{-1}])\right).$$

Define for $n = -1, -2, \ldots$
$$\red{\NK_n(R)} := \coker\left(K_n(R) \to K_n(R[t])\right).$$
\end{definition}

\begin{theorem}[\blue{Bass-Heller-Swan decomposition for negative $K$-theory}]
\label{the_Bass-Heller_Swan_negative}
For $n \le 1$ there is an isomorphism, natural in $R$,
\begin{eqnarray*}
& K_{n-1}(R) \oplus K_n(R) \oplus \NK_n(R) \oplus \NK_n(R)
 \xrightarrow{\cong}  K_n(R[t,t^{-1}]) = K_n(R[\IZ]).&
\end{eqnarray*}
\end{theorem}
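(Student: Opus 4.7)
The plan is to proceed by downward induction on $n$, with base case $n = 1$ given by the classical Bass-Heller-Swan decomposition for $K_1$ stated just above. For the inductive step I assume the theorem at level $n+1$ for every ring and deduce it at level $n$ for $n \le 0$.

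The strategy exploits the fact that the definition of $K_{n-1}(R)$ for $n \le 0$ is tailored so that the cokernel piece of the formula is built in: by construction there is a surjection $K_n(R[t,t^{-1}]) \twoheadrightarrow K_{n-1}(R)$ whose kernel equals the image of the map
\[\alpha \colon K_n(R[t]) \oplus K_n(R[t^{-1}]) \longrightarrow K_n(R[t,t^{-1}]).\]
I would first use the augmentation splittings $t \mapsto 0$ and $t^{-1} \mapsto 0$ together with the definition of $\NK_n(R)$ to decompose the source of $\alpha$ as $K_n(R)^2 \oplus \NK_n(R)^2$. Next I would show that $\ker(\alpha)$ is exactly the antidiagonal copy of $K_n(R)$, so that $\im(\alpha) \cong K_n(R) \oplus \NK_n(R)^2$. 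Finally I would produce a natural section of the quotient $K_n(R[t,t^{-1}]) \twoheadrightarrow K_{n-1}(R)$ via cup product with the unit $[t] \in K_1(\IZ[t,t^{-1}])$, yielding the desired direct sum decomposition.

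The main obstacle is verifying the Mayer-Vietoris type exactness, namely that $\ker(\alpha)$ coincides with the antidiagonal $K_n(R)$. In positive degrees this exactness is part of the classical Bass-Heller-Swan theorem and comes from the localization sequence attached to $R[t] \to R[t,t^{-1}]$; in non-positive degrees it must instead be derived by bootstrapping from the inductive hypothesis applied to polynomial extensions of $R$, which is somewhat delicate since every ingredient $K_m$ with $m < 0$ is itself defined as a cokernel and must be unpacked. A conceptually cleaner route avoiding this obstacle is to set up the whole theory on the level of non-connective $K$-theory spectra via the Bass construction, where the splitting holds uniformly for all $n \in \IZ$ as a formal consequence of the spectrum-level fundamental theorem; see~\cite[Chapter~XII]{Bass(1968)}.
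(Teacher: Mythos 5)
The paper itself gives no proof of this theorem; it is stated without argument as a standard fact (the usual reference being Bass's book or Rosenberg's text cited nearby). So the comparison is only against your proposal.

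Your high-level strategy is the classically correct one: downward induction rooted in the $K_1$ case, exploiting the fact that the definitional cokernel already supplies the $K_{n-1}(R)$ piece, and then analyzing the map $\alpha$. This is in the spirit of Bass's ``contracted functor'' formalism. But there is a genuine gap at precisely the step you flag, and your proposed repair does not close it. First, the claim that $\ker(\alpha)$ is exactly the antidiagonal copy of $K_n(R)$ is the real content of the theorem, and ``bootstrapping from the inductive hypothesis applied to polynomial extensions of $R$'' runs into a circularity you do not address: applying the level-$(n{+}1)$ hypothesis to $R[t]$, $R[t^{-1}]$ or $R[t,t^{-1}]$ produces formulas whose right-hand sides involve $K_n(R[t])$, $K_n(R[t^{-1}])$, $K_n(R[t,t^{-1}])$ themselves, i.e.\ exactly the unknown groups. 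Bass's machinery avoids this by showing that the contraction operation $F \mapsto LF := \coker\bigl(F(-[t]) \oplus F(-[t^{-1}]) \to F(-[t,t^{-1}])\bigr)$ preserves the class of ``contracted functors'' with a fixed choice of natural splitting, so that the whole package $\{K_n\}_{n\le 1}$ inherits the structure by iteration; that preservation statement is the lemma your induction silently presupposes. Second, the case $n=0$ does not actually fit your scheme: the exactness of $K_0(R) \to K_0(R[t])\oplus K_0(R[t^{-1}]) \to K_0(R[t,t^{-1}])$ at the middle, and the fact that the two $\NK_0$-summands inject independently, is the classical fundamental theorem for $K_0$ and is \emph{not} a formal consequence of the $K_1$ statement you take as your base case; it needs its own input. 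Third, the asserted section ``cup product with $[t]$'' presupposes a multiplicative structure on negative $K$-groups that again has to be constructed (it is part of the contracted-functor formalism, not free). Your closing remark that a spectrum-level Bass delooping makes the splitting uniform in $n$ is true and is the cleanest modern route, but that construction is not carried out here, and the reference to Bass's book Chapter~XII is really to the algebraic contracted-functor argument rather than to a spectrum-level one. In short: right strategy, honest identification of the difficulty, but the key lemma is asserted rather than proved.
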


\begin{definition}[\blue{Regular ring}]
A ring $R$ is called \emphred{regular} if it is Noetherian and every finitely
generated $R$-module possesses a finite projective resolution.
\end{definition}

Principal ideal domains are regular. In particular $\IZ$ and any field are regular.
If $R$ is regular, then $R[t]$ and $R[t,t^{-1}] = R[\IZ]$ are regular.
If $R$ is Noetherian, then $RG$ is not in general Noetherian.
Theorem~\ref{the_Bass-Heller_Swan_negative} implies

\begin{theorem}[\blue{Bass-Heller-Swan decomposition for regular rings}]
Suppose that $R$ is regular.  Then
\begin{eqnarray*}
K_n(R) & = & 0 \quad  \text{for } n \le -1;
\\
\NK_n(R) & = & 0 \quad \text{for } n \le 1,
\end{eqnarray*}
and the Bass-Heller-Swan decomposition
reduces for $n \le 1$ to the natural isomorphism
$$K_{n-1}(R) \oplus K_n(R) \xrightarrow{\cong} K_n(R[t,t^{-1}]) = K_n(R[\IZ]).$$
\end{theorem}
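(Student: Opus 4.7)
The plan is to combine Theorem~\ref{the_Bass-Heller_Swan_negative} (the general Bass-Heller-Swan decomposition) with the classical vanishing statements $\NK_0(R)=\NK_1(R)=0$ for regular $R$, and then to propagate these vanishing results downward through the inductive definition of negative $K$-theory. First I would record the preliminary closure property: if $R$ is regular then so are $R[t]$ and $R[t,t^{-1}]=R[\IZ]$. This is needed so that every time we pass from $R$ to $R[t^{\pm 1}]$ in the recursive definition, we stay inside the class of rings where our vanishing results apply.

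The main obstacle, and the only non-formal ingredient, is the vanishing $\NK_n(R)=0$ for $R$ regular and $n\in\{0,1\}$. Equivalently, the inclusion $R\hookrightarrow R[t]$ induces isomorphisms $K_n(R)\xrightarrow{\cong}K_n(R[t])$ for $n=0,1$. The standard argument goes through the resolution theorem: because $R[t]$ is regular, every finitely generated $R[t]$-module admits a finite resolution by finitely generated projective $R[t]$-modules, so $K_n(R[t])$ can be computed from the category of all finitely generated $R[t]$-modules; one then uses that every finitely generated $R$-module, viewed as an $R[t]$-module via $t\mapsto 0$, has a two-step Koszul-type resolution by modules induced from $R$. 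I would simply cite this as the classical fundamental theorem (e.g.\ \cite[Chapter XII]{Bass(1968)} or \cite[Theorem~3.2.22 on page~149]{Rosenberg(1994)}). Once this is in hand, Theorem~\ref{the_Bass-Heller_Swan_negative} for $n=1$ collapses to the natural isomorphism
\[
K_0(R)\oplus K_1(R)\xrightarrow{\cong}K_1(R[t,t^{-1}]),
\]
and similarly for $n=0$ we get $K_{-1}(R)\oplus K_0(R)\cong K_0(R[t,t^{-1}])$.

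Next I would use the recursive definition to compute $K_{-1}(R)$ and then iterate. Applied to $R$ regular, the definition reads
\[
K_{-1}(R)=\coker\bigl(K_0(R[t])\oplus K_0(R[t^{-1}])\to K_0(R[t,t^{-1}])\bigr).
\]
By the vanishing $\NK_0=0$ for the regular rings $R$, $R[t]$, $R[t^{-1}]$, one has $K_0(R[t])=K_0(R[t^{-1}])=K_0(R)$, while the simplified Bass-Heller-Swan identifies $K_0(R[t,t^{-1}])\cong K_{-1}(R)\oplus K_0(R)$. A direct check of the two inclusion maps shows that their difference hits the $K_0(R)$-summand isomorphically, so the cokernel is exactly the $K_{-1}(R)$-summand; combined with the fact that $R[t,t^{-1}]$ is regular, the collapsed Bass-Heller-Swan for $R[t,t^{-1}]$ forces this summand to be $0$. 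Hence $K_{-1}(R)=0$.

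Finally I would induct. Assume $K_m(S)=0$ for all regular $S$ and all $-1\ge m>n$, where $n\le -2$. Since $R[t]$, $R[t^{-1}]$, and $R[t,t^{-1}]$ are all regular, the inductive hypothesis gives $K_{n+1}(R[t])=K_{n+1}(R[t^{-1}])=K_{n+1}(R[t,t^{-1}])=0$, and the defining cokernel for $K_n(R)$ is therefore $0$. This also yields $\NK_n(R)=\coker(0\to 0)=0$ for all $n\le -1$. Combining all vanishing, Theorem~\ref{the_Bass-Heller_Swan_negative} reduces exactly to the asserted two-term decomposition $K_{n-1}(R)\oplus K_n(R)\xrightarrow{\cong}K_n(R[t,t^{-1}])$ for $n\le 1$.
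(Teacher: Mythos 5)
Your overall plan---cite the classical $\NK_0(R)=\NK_1(R)=0$ for regular $R$, note that regularity is preserved under $R\mapsto R[t]$ and $R\mapsto R[t,t^{-1}]$, then push the vanishing downward through the recursive definition of negative $K$-theory---is the right strategy, and your downward induction for $n\le -2$ and the resulting $\NK_n(R)=0$ for $n\le -1$ are fine once the base case is in place. But the base case argument for $K_{-1}(R)=0$ is circular and does not establish anything.

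Here is the problem. You write $K_0(R[t])=K_0(R[t^{-1}])=K_0(R)$, then invoke the (already collapsed) Bass--Heller--Swan isomorphism $K_0(R[t,t^{-1}])\cong K_{-1}(R)\oplus K_0(R)$, observe that the two inclusion maps land in the $K_0(R)$-summand and cover it, and conclude that the cokernel is the $K_{-1}(R)$-summand. That is correct---but that cokernel \emph{is} $K_{-1}(R)$ by definition, so you have only rederived the tautology $K_{-1}(R)=K_{-1}(R)$. The assertion that ``the collapsed Bass--Heller--Swan for $R[t,t^{-1}]$ forces this summand to be $0$'' has no content: applying the decomposition to the ring $R[t,t^{-1}]$ describes $K_0(R[t,t^{-1}][s,s^{-1}])$ and says nothing about the $K_{-1}(R)$-summand of $K_0(R[t,t^{-1}])$. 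The decomposition $K_{n-1}(R)\oplus K_n(R)\cong K_n(R[t,t^{-1}])$ is compatible with \emph{any} value of $K_{n-1}(R)$; it cannot by itself force vanishing.

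What is actually needed, and what you have not supplied, is the separate classical input that for regular $R$ the change-of-rings map $K_0(R)\to K_0(R[t,t^{-1}])$ is an isomorphism (equivalently, that $K_0(R[t])\oplus K_0(R[t^{-1}])\to K_0(R[t,t^{-1}])$ is surjective). This is \emph{not} a formal consequence of $\NK_0(R)=\NK_1(R)=0$: the latter only controls $K_0(R)\to K_0(R[t])$, not what happens after inverting $t$. The extra surjectivity is a genuine theorem---part of the fundamental theorem of algebraic $K$-theory for regular rings---usually proved via the resolution theorem together with the identification $K_*=G_*$ for regular Noetherian rings and the localization/homotopy-invariance properties of $G$-theory (or, at the level of $K_0$, via Grothendieck's result that projectives over $R[t,t^{-1}]$ are extended from $R$). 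Once you invoke that input for the base case $K_{-1}(R)=0$, the rest of your induction goes through as written.
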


There are also \red{higher algebraic $K$-groups $K_n(R)$} for $n \ge 2$
due to \green{Quillen (1973)}.
They are defined as homotopy groups of certain spaces or spectra.
We refer to the lectures of \green{Grayson}.
Most of the well known  features of $K_0(R)$ and $K_1(R)$ extend
to both negative and higher algebraic $K$-theory. 
For instance the Bass-Heller-Swan decomposition holds also for higher algebraic
$K$-theory.

\begin{remark}[\blue{Similarity between $K$-theory and group homology}]
  Notice the following formulas for a regular ring $R$ and a
  generalized homology theory $\calh_*$, which look similar:
\begin{eqnarray*}
K_n(R[\IZ]) & \cong & K_n(R) \oplus K_{n-1}(R);
\\
\calh_n(B\IZ) & \cong & \calh_n(\pt) \oplus \calh_{n-1}(\pt).
\end{eqnarray*}
If $G$ and $K$ are groups, then we have the following formulas,
which look similar:
\begin{eqnarray*}
\widetilde{K}_n(\IZ[G \ast K]) & \cong & 
\widetilde{K}_n(\IZ G) \oplus \widetilde{K}_n(\IZ K);
\\
\widetilde{\calh}_n(B(G \ast K)) & \cong & 
\widetilde{\calh}_n(BG) \oplus \widetilde{\calh}_n(BK).
\end{eqnarray*}
\end{remark}

\begin{question}[\blue{$K$-theory of group rings and group homology}]
Is there a relation between $K_n(RG)$ and group homology of $G$?
\end{question}


\section{The Isomorphism Conjectures in the torsionfree case}

The outline of this section is:

\begin{itemize}

  \item We introduce \red{spectra} and how they yield \red{homology theories}.
  
  \item We state the \red{Farrell-Jones-Conjecture} and the 
  \red{Baum-Connes Conjecture}
   for torsionfree groups.

  \item We discuss applications of these conjectures such as the 
  \red{Kaplansky Conjecture}
  and the \red{Borel Conjecture}.

  \item We explain that the formulations for torsionfree groups 
  cannot extend to arbitrary groups.

\end{itemize}

Given two pointed spaces $X = (X,x_0)$ and $Y = (Y,y_0)$, 
their \emphred{one-point-union} and their
\emphred{smash product} are defined to be the pointed spaces
\begin{eqnarray*}
\red{X \vee Y} & := & \{(x,y_0) \mid x \in X\} \cup 
\{(x_0,y) \mid y \in Y\} \subseteq X \times Y;
\\
\red{X \wedge Y} & := & (X \times Y)/ (X \vee Y).
\end{eqnarray*}
We have $S^{n+1} \cong S^n \wedge S^1$.

\begin{definition}[\blue{Spectrum}]
A \emphred{spectrum}
$$\bfE = \{(E(n),\sigma(n)) \mid n \in \IZ\}$$
is a sequence of pointed spaces
$\{E(n) \mid n \in \IZ\}$ together with pointed maps
called \emphred{structure maps}
$$\sigma(n) \colon  E(n) \wedge S^1 \longrightarrow E(n+1).$$
A \emphred{map of spectra}
$$\bff \colon  \bfE \to \bfE^{\prime}$$
is a sequence of maps
$f(n) \colon  E(n) \to E^{\prime}(n)$
which are compatible with the structure maps $\sigma(n)$, i.e.,
$f(n+1) \circ \sigma(n) = 
\sigma^{\prime}(n) \circ \left(f(n) \wedge \id_{S^1}\right)$
holds for all $n \in \IZ$.
\end{definition}

\begin{example}[\red{Sphere spectrum}]
The \emphred{sphere spectrum $\bfS$} has as $n$-th space $S^n$ 
and as $n$-th structure map the homeomorphism
$S^n \wedge S^1 \xrightarrow{\cong} S^{n+1}$.
\end{example}

\begin{example}[\red{Suspension spectrum}]
  Let $X$ be a pointed space.  Its \emphred{suspension spectrum
    $\Sigma^{\infty} X$} is given by the sequence of spaces $\{X
  \wedge S^n\mid n \ge 0\}$ with the homeomorphisms $(X \wedge S^n)
  \wedge S^1 \cong X \wedge S^{n+1}$ as structure maps.  We have $\bfS
  = \Sigma^{\infty} S^0$.
\end{example}

\begin{definition}[\blue{$\Omega$-spectrum}]
Given a spectrum $\bfE$, we can consider instead of the structure map 
$\sigma(n) \colon  E(n) \wedge S^1 \to E(n+1)$ its adjoint
$$\sigma'(n) \colon E(n) \to \Omega E(n+1) = \map(S^1,E(n+1)).$$
We call $\bfE$ an \emphred{$\Omega$-spectrum} 
if each map $\sigma'(n)$ is a weak homotopy equivalence.
\end{definition}

\begin{definition}[\blue{Homotopy groups of a spectrum}]
Given a spectrum $\bfE$, define for $n \in \IZ$ its \emphred{$n$-th homotopy group} 
$$\red{\pi_n(\bfE)} := \colim_{k \to \infty} \pi_{k+n}(E(k))$$
to be the abelian group which is given by the colimit over the 
directed system indexed by  $\IZ$ with $k$-th structure map
$$\pi_{k+n}(E(k)) \xrightarrow{\sigma'(k)}  \pi_{k+n}(\Omega E(k+1)) 
=  \pi_{k+n+1}(E(k+1)).$$ 
\end{definition}

Notice that a spectrum can have in contrast to a space non-trivial 
negative homotopy groups.
If $\bfE$ is an $\Omega$-spectrum, then
$\pi_n(\bfE) = \pi_n(E(0))$ for all $n \ge 0$.

\begin{example}[\red{Eilenberg-MacLane spectrum}]
Let $A$ be an abelian group. 
The $n$-th \emphred{Eilenberg-MacLane space $K(A,n)$}
associated to $A$ for $n \ge 0$ is a $CW$-complex with
$\pi_m(K(A,n)) = A$ for $m = n$ and $\pi_m(K(A,n)) = \{0\}$ for $m \not= n$. 

The associated
\emphred{Eilenberg-MacLane spectrum $\bfH(A)$} has as $n$-th space 
$K(A,n)$ and as $n$-th structure map a homotopy equivalence
$K(A,n) \to \Omega K(A,n+1)$.
\end{example}

\begin{example}[\red{Algebraic $K$-theory spectrum}]
For a ring $R$ there is the \emphred{algebraic $K$-theory spectrum $\bfK(R)$}
with the property 
$$\pi_n(\bfK(R)) = K_n(R) \quad \text{ for } n \in \IZ.$$
For its definition see~\cite{Carlsson(2004)}, \cite{Loday(1976)},
and~\cite{Pedersen-Weibel(1985)}.

\end{example}

Next we state the $L$-theoretic version. Since we will not focus on
$L$-theory in these lectures, we will use
$L$-theory as a black box and will later explain its relevance when we
discuss applications. At least we mention that $L$-theory may be
thought of a kind of $K$-theory not for finitely generated projective
modules and their automorphisms but for quadratic forms over finitely
generated projective modules and their automorphisms modulo hyperbolic
forms.

\begin{example}[\red{Algebraic $L$-theory spectrum}]
For a ring with involution $R$ there is the \emphred{algebraic $L$-theory 
spectrum $\bfL^{\langle -\infty\rangle}(R)$}
with the property 
$$\pi_n(\bfL^{\langle -\infty\rangle}(R)) = 
L_n^{\langle-\infty\rangle}(R) \quad \text{ for } n \in \IZ.$$
For its construction we refer for instance to \green{Quinn}~\cite{Quinn(1970)} and  
\green{Ranicki}~\cite{Ranicki(1992)}. 
\end{example}

\begin{example}[\red{Topological $K$-theory spectrum}]
By \emphred{Bott periodicity} there is a homotopy equivalence
$$\beta \colon BU \times \IZ \xrightarrow{\simeq} \Omega^2(BU \times \IZ).$$

The \emphred{topological $K$-theory spectrum} $\bfK^{\topo}$ has in
even degrees $BU \times \IZ$ and in odd degrees $\Omega(BU \times
\IZ)$.  The structure maps are given in even degrees by the map
$\beta$ and in odd degrees by the identity $\id \colon \Omega(BU
\times \IZ) \to \Omega(BU \times \IZ)$.
\end{example}

\begin{definition}[\blue{Homology theory}]
Let $\Lambda$ be a commutative ring, for instance $\IZ$ or $\IQ$.
A \emphred{homology theory $\calh_*$} with values in $\Lambda$-modules is a
covariant functor  from the category of
$CW$-pairs to the category of $\IZ$-graded
$\Lambda$-modules together with natural transformations
$$\partial_n(X,A)\colon \calh_n(X,A) \to \calh_{n-1}(A)$$
for $n \in \IZ$
satisfying the following axioms:
\begin{itemize}
\item 
\red{Homotopy invariance}

\item 
\red{Long exact sequence of a pair}

\item 
\red{Excision}\\[1mm] 
If $(X,A)$ is a $CW$-pair and $f \colon A \to B$ is a cellular map , then
$$\calh_n(X,A) \xrightarrow{\cong} \calh_n(X \cup_f B,B).$$

\item  \red{Disjoint union axiom}
$$\bigoplus_{i \in I} \calh_n(X_i) \xrightarrow{\cong} 
\calh_n\left(\coprod_{i \in I} X_i\right).$$

\end{itemize}
\end{definition}

\begin{definition}[\blue{Smash product}]
Let $\bfE$ be a spectrum and $X$ be a pointed space.
Define the \emphred{smash product} $X \wedge \bfE$ to be the spectrum whose
$n$-th space is $X \wedge E(n)$ and whose $n$-th structure map is
$$X \wedge E(n)  \wedge S^1 \xrightarrow{\id_X \wedge \sigma(n)} X \wedge E(n+1).$$
\end{definition}

\begin{theorem}[\blue{Homology theories and spectra}]
Let $\bfE$ be a spectrum. 
Then we obtain a homology theory \red{$H_*(-;\bfE)$}
by
$$H_n(X,A;\bfE) := 
\pi_n\left(\left(X \cup_A \cone(A)\right) \wedge \bfE\right).$$
It satisfies
$$H_n(\pt;\bfE) = \pi_n(\bfE).$$
\end{theorem}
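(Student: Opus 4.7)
The plan is to verify the four axioms (homotopy invariance, long exact sequence of a pair, excision, disjoint union) together with the calculation $H_n(\pt;\bfE) = \pi_n(\bfE)$. Throughout I use that $\pi_*$ of a spectrum is an abelian group, that smashing with a pointed space preserves pointed homotopies, and that $-\wedge \bfE$ distributes over arbitrary wedges and turns pushouts along cofibrations into pushouts of spectra.

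First, for the point calculation: by convention $X \cup_A \cone(A) = X_+$ when $A = \emptyset$, so for $X = \pt$ this is $S^0$. Since $S^0 \wedge E(k) = E(k)$ levelwise with compatible structure maps, one reads off $H_n(\pt;\bfE) = \colim_k \pi_{k+n}(E(k)) = \pi_n(\bfE)$. Homotopy invariance is immediate: a homotopy of maps of pairs $(X,A) \to (Y,B)$ induces a pointed homotopy of mapping cones, and then of the smash with each $E(k)$, so the induced maps on $\pi_{k+n}$ agree. For excision, the cone functor commutes with pushouts along cofibrations, giving a natural homeomorphism
$$X \cup_A \cone(A) \;\cong\; (X \cup_f B) \cup_B \cone(B),$$
and the excision isomorphism follows after smashing with $\bfE$ and passing to $\pi_n$.

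The core step is the long exact sequence. The Barratt--Puppe cofiber sequence
$$A_+ \longrightarrow X_+ \longrightarrow X \cup_A \cone(A) \longrightarrow \Sigma(A_+) \longrightarrow \Sigma(X_+) \longrightarrow \cdots$$
remains a cofiber sequence after applying $- \wedge E(k)$ at each level. Taking $\pi_{k+n}$ and invoking the Freudenthal suspension theorem, the resulting sequences become long exact in a range that grows with $k$, so the colimit over $k$ is exact. Combined with the suspension isomorphism $\pi_n(\Sigma Y \wedge \bfE) \cong \pi_{n-1}(Y \wedge \bfE)$ built into the definition of $\pi_n$ of a spectrum, this gives the desired long exact sequence of the pair, and the connecting homomorphism $\partial_n$ is read off as the map induced by $X \cup_A \cone(A) \to \Sigma(A_+)$. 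For the disjoint union axiom, $(\coprod_i X_i)_+ = \bigvee_i (X_i)_+$, the smash product distributes over wedges, and the homotopy groups of a wedge of spectra form the direct sum, yielding
$$\pi_n\Bigl(\bigvee_i (X_i)_+ \wedge \bfE\Bigr) \;\cong\; \bigoplus_i \pi_n\bigl((X_i)_+ \wedge \bfE\bigr).$$

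The hard part will be the long exact sequence: passing from a cofiber sequence of pointed spaces to a long exact sequence of the colimits of unstable homotopy groups requires either a careful application of Freudenthal at each stage of the colimit, or the machinery of the stable homotopy category in which fiber and cofiber sequences coincide. The other three axioms and the point computation are essentially formal consequences of standard properties of the smash product and mapping cone.
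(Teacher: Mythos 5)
The paper states this theorem without proof (it is invoked as a standard fact), so there is no proof in the source to compare against; you are supplying the argument the paper omits. Your outline follows the standard route --- Barratt--Puppe plus Freudenthal (or the stable category) for the long exact sequence, distributivity of $\wedge$ over wedges for the disjoint union axiom, and the observation $\pt_+ = S^0$ for the point calculation --- and it is essentially correct. You also correctly flag the one genuinely nontrivial step, namely turning a cofiber sequence of spaces into a long exact sequence after smashing with $\bfE$ and passing to the colimit; the cleanest formulation is that $A_+ \wedge \bfE \to X_+ \wedge \bfE \to (X\cup_A \cone(A)) \wedge \bfE$ is a cofiber sequence of spectra and that cofiber sequences of spectra induce long exact sequences on $\pi_*$ (which is exactly where Blakers--Massey/Freudenthal is hidden). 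The wedge-axiom step also silently uses that spheres are compact so that maps into an infinite wedge factor through finite subwedges; worth a word if you want a complete argument.

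One small but real error: in the excision step you assert a natural \emph{homeomorphism}
$X \cup_A \cone(A) \cong (X\cup_f B)\cup_B \cone(B)$. These two spaces are not homeomorphic in general --- one carries a cone on $A$, the other a cone on $B$ --- as one can see already from $X=D^2$, $A=S^1$, $B=\pt$. What is true, and all you need, is a natural \emph{homotopy equivalence}: since $(X,A)$ is a CW pair and $f$ is cellular, the collapse maps $X\cup_A \cone(A) \to X/A$ and $(X\cup_f B)\cup_B \cone(B) \to (X\cup_f B)/B$ are homotopy equivalences, and $X/A \to (X\cup_f B)/B$ \emph{is} a homeomorphism. Composing gives the natural homotopy equivalence, and after smashing with each $E(k)$ and taking $\pi_{k+n}$ this still induces the required isomorphism, because smashing with a pointed space and $\pi_*$ are homotopy-invariant. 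So the conclusion stands; only the word ``homeomorphism'' should be replaced by ``homotopy equivalence'', with the factorization through $X/A$ made explicit.
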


\begin{example}[\blue{Stable homotopy theory}]
  The homology theory associated to the sphere spectrum $\bfS$ is
  \emphred{stable homotopy $\pi_*^s(X)$}.  The groups $\pi_n^s(\pt)$
  are finite abelian groups for $n \not=0$ by a result of \green{Serre
    (1953)}.  Their structure is only known for small $n$.
\end{example}

\begin{example}[\blue{Singular homology theory with coefficients}]
The homology theory associated to the Eilenberg-MacLane spectrum $\bfH(A)$
is \red{singular homology with coefficients in $A$}.
\end{example}

\begin{example}[\blue{Topological $K$-homology}]
The homology theory associated to the topological $K$-theory spectrum 
$\bfK^{\topo}$ is \emphred{$K$-homology $K_*(X)$}. We have
$$K_n(\pt) \cong  \left\{
\begin{array}{lll}
\IZ & & n \text{ even};
\\
\{0\} & & n \text{ odd}.
\end{array}
\right.
$$
\end{example}

Next we give the formulation of the Farrell-Jones Conjecture for
$K$- and $L$-theory and the Baum-Connes Conjecture 
in the case of a torsionfree group. The general formulations for arbitrary 
groups will require more prerequisites and will be presented later.  
We begin with the $K$-theoretic version. Recall:
\begin{itemize}
\item \red{$K_n(RG)$} is the algebraic $K$-theory of  the group ring $RG$;
\item \red{$\bfK(R)$} is the (non-connective) algebraic $K$-theory spectrum of $R$;
\item $H_n(\pt;\bfK(R)) \cong  \pi_n(\bfK(R)) \cong K_n(R)$ for $n \in \IZ$.
\item \red{BG} is the \red{classifying space} of the group $G$, i.e., 
the base space of the universal $G$-principal $G$-bundle $G \to EG \to BG$. 
Equivalently, $BG = K(G,1)$. The space $BG$ is unique up to homotopy.
\end{itemize}

\begin{conjecture}[\blue{$K$-theoretic Farrell-Jones Conjecture for torsionfree groups}]
\label{con:FJC_for_K_torsionfree}
The \emphred{$K$-theoretic Farrell-Jones Conjecture} with coefficients
in the regular ring $R$ for the torsionfree group $G$ predicts that
the \red{assembly map}
$$H_n(BG;\bfK(R)) \to K_n(RG)$$
is bijective for all $n \in \IZ$.
\end{conjecture}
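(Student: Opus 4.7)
The plan proceeds in three stages; the punchline is that Conjecture~\ref{con:FJC_for_K_torsionfree} is in fact open as stated, and the strategy below describes the approach (due to Farrell--Jones, Bartels--Farrell--L\"uck--Reich, Bartels--L\"uck and others) that has so far established it for large classes of torsionfree groups such as hyperbolic groups, $\CAT(0)$-groups, and lattices in virtually connected Lie groups.

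First, I would reformulate the assembly map in fully equivariant terms via the Davis--L\"uck construction. Extend $R\mapsto\bfK(R)$ to an $\Or(G)$-spectrum $\bfK_R$ by $G/H\mapsto \bfK(R[H])$ and form the equivariant homology theory $H^G_*(-;\bfK_R)$ that it represents on $G$-$CW$-complexes. A direct computation gives $H^G_n(G/G;\bfK_R)\cong K_n(RG)$, and because $G$ is torsionfree all isotropy groups of $EG$ are trivial, so $H^G_n(EG;\bfK_R)\cong H_n(BG;\bfK(R))$. The map induced by $EG\to G/G$ is precisely the assembly map of the conjecture. The problem is thus to show that this equivariant assembly is a weak equivalence of spectra.

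Second, I would translate the question into controlled algebra (Pedersen--Weibel, Ranicki--Rosenberg, Bartels--Farrell--L\"uck--Reich). The target $K_n(RG)$ is interpreted as the $K$-theory of a Waldhausen category of $G$-equivariant $RG$-modules with arbitrary $G$-control, while the source is the $K$-theory of the subcategory of modules with ``$G$-compact'' control relative to a proper $G$-$CW$-model of $EG$. The assembly map is then the map on $K$-theory induced by this inclusion, and its bijectivity is equivalent to the cofibre of the inclusion having contractible $K$-theory spectrum; informally, one must show that every ``uncontrolled'' module can be pushed into the controlled subcategory up to $K$-theoretic equivalence.

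Finally, and most importantly, one supplies geometric input. Assume that $G$ acts properly and cocompactly on a contractible space $X$ (a $\CAT(0)$ space, the Rips complex of a hyperbolic group, a Teichm\"uller space, \ldots) carrying a suitable flow. For every $\epsilon>0$ one constructs finite-dimensional $G$-equivariant open covers of $G\times X$ whose elements are ``long and thin'' in the direction of the flow, and combines them with a transfer map produced from the contractibility of $X$. The resulting contracting chain homotopies annihilate the cofibre produced in Stage 2 and complete the argument. This last step is where the main obstacle lies: constructing finite-dimensional, $G$-equivariant, flow-controlled covers requires substantial geometric group theory, and it is precisely the absence of such geometric structure for an arbitrary torsionfree $G$ that currently prevents a proof of the conjecture in full generality.
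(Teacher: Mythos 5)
You have correctly recognized that the statement is a conjecture, not a theorem, so there is no ``proof'' in the paper to compare against; the paper simply states it. That said, your account of the machinery is essentially the paper's own: the Davis--L\"uck construction of the $\Or(G)$-spectrum $\bfK_R$ and the equivariant homology theory $H_*^G(-;\bfK_R)$ with $H_n^G(G/H;\bfK_R)\cong K_n(RH)$ is exactly how the paper sets up the assembly map in Sections~4 and 5, and your summary of controlled algebra and flow-controlled covers matches the paper's discussion of ``Methods of proof'' in Section~6. One point worth adding: the paper's way of ``explaining'' the torsionfree version is not a direct argument but a reduction. In the remark ``Relating the torsionfree versions to the general versions'' the paper observes that for torsionfree $G$ every virtually cyclic subgroup is trivial or infinite cyclic, so the Transitivity Principle together with the Bass--Heller--Swan decomposition (which kills $\NK_n(R)$ for regular $R$) shows that Conjecture~\ref{con:FJC_for_K_torsionfree} is a formal consequence of the general $K$-theoretic Farrell--Jones Conjecture~\ref{con:FJC_for_K}. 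That is the logical status of the statement inside the paper: not proved, but reduced to the $\VCyc$-version, and then verified for the classes of groups in the status table, precisely by the geometric strategy you describe.
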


Recall:
\begin{itemize}
\item \red{$L_n^{\langle -\infty \rangle}(RG)$} is the algebraic
  $L$-theory of $RG$ with decoration $\langle -\infty \rangle$;

\item \red{$\bfL^{\langle -\infty \rangle}(R)$} is the algebraic
  $L$-theory spectrum of $R$ with decoration $\langle -\infty
  \rangle$;

\item $H_n(\pt;\bfL^{\langle -\infty \rangle}(R)) \cong
  \pi_n(\bfL^{\langle -\infty \rangle}(R)) \cong L_n^{\langle -\infty
    \rangle}(R)$ for $n \in \IZ$.
\end{itemize}

\begin{conjecture}[\blue{$L$-theoretic Farrell-Jones Conjecture for torsionfree groups}]
\label{con:FJC_for_L_torsionfree}
The \emphred{$L$-theoretic Farrell-Jones Conjecture} 
with coefficients in the ring with involution $R$ for the torsionfree 
group $G$ predicts that
the \red{assembly map}
$$H_n(BG;\bfL^{\langle -\infty \rangle}(R)) \to L_n^{\langle -\infty \rangle}(RG)$$
is bijective for all $n \in \IZ$. 
\end{conjecture}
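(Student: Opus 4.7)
The plan is to recognize the assembly map as the map induced on an equivariant homology theory by the projection $EG \to \pt$, and then to verify this induced map is an isomorphism by geometric input on $G$. First I would construct (or invoke) a $G$-equivariant spectrum over the orbit category $\Or(G)$ whose value on $G/H$ is $\bfL^{\langle -\infty\rangle}(RH)$, so that there is an associated $G$-equivariant homology theory $H_*^G(-;\bfL^{\langle -\infty\rangle}(R))$ with $H_n^G(G/H;\bfL^{\langle -\infty\rangle}(R))\cong L_n^{\langle -\infty\rangle}(RH)$. Because $G$ is torsionfree, every isotropy group of $EG$ is trivial and the Borel construction identifies $H_n^G(EG;\bfL^{\langle -\infty\rangle}(R))$ with $H_n(BG;\bfL^{\langle -\infty\rangle}(R))$; the assembly map of Conjecture~\ref{con:FJC_for_L_torsionfree} then appears as the map induced by $EG \to \pt$ on $H_*^G$.

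Second, I would try to establish bijectivity by a controlled/descent argument, following the pattern pioneered by Farrell--Hsiang and systematized by Bartels--L\"uck--Reich. The idea is to lift the statement to a comparison of spectra obtained from bounded or continuously controlled $L$-theory, then use the geometry of a suitable $G$-space $X$ (a flow space, a compactification of $EG$, or a coarse model thereof) to produce a transfer that splits the assembly map. Concretely, one wants long, thin covers of $X \times G$ by open sets with controlled isotropy together with asymptotic contractions; these data assemble into the homotopy coherences needed to invoke the abstract ``Farrell--Jones machine'' and conclude.

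The main obstacle is the construction and control of such a flow space in the required generality. For hyperbolic $G$ one uses the Gromov boundary and geodesic flow; for $\CAT(0)$ groups the CAT$(0)$-geometry provides contractions along geodesics; in both cases verifying the finite $\calf$-amenability condition is the genuinely hard step and consumes most of the technical work. Transporting these arguments from $K$-theory to $L^{\langle -\infty\rangle}$-theory introduces further complications, notably that $L$-theory of additive categories with involution must be handled and that the $\langle -\infty\rangle$-decoration is chosen precisely so that the Bass--Heller--Swan-type splittings and the transfer axioms behave well.

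I should be candid that in full generality this is an open conjecture: no proof is known for arbitrary torsionfree $G$, and the torsionfree formulation above cannot extend verbatim to groups with torsion (one must replace $BG = E_{\{1\}}G$ by $E_{\VCyc}G$). Thus a realistic proof proposal covers only a geometric class of groups at a time, and any complete treatment would really be an aggregation of geometric inputs feeding into the common Bartels--L\"uck--Reich framework.
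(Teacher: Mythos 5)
The statement you were asked to prove is a \emph{conjecture}, and the paper states it as such; it neither proves nor claims to prove it, so there is no internal proof to compare against. You correctly recognize this in your last paragraph, and that candour is the right call: a ``proof'' of Conjecture~\ref{con:FJC_for_L_torsionfree} for all torsionfree $G$ does not exist, and any honest answer must say so. Your reduction in the first paragraph is accurate and matches the paper's own point of view: for torsionfree $G$ one has $EG = \eub{G} = \EGF{G}{\VCyc}$ up to the step from $\Fin$ to $\VCyc$, the Davis--L\"uck functor $\bfL_R^{\langle -\infty\rangle} \colon \Groupoids \to \Spectra$ gives an equivariant homology theory with $H_n^G(G/H;\bfL_R^{\langle -\infty\rangle}) \cong L_n^{\langle -\infty\rangle}(RH)$, and the assembly map in the conjecture is the map induced by $EG \to \pt$. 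Your second and third paragraphs correctly identify the controlled-topology/flow-space strategy (Farrell--Hsiang, Bartels--L\"uck--Reich) as the mechanism by which the conjecture is actually verified for specific geometric classes such as subgroups of hyperbolic groups (Theorem~\ref{the:FJC_L_hyperbolic} in the paper) and conjecturally $\CAT(0)$-groups, and you rightly flag the passage from $K$- to $L^{\langle -\infty\rangle}$-theory and the role of the decoration as genuine technical issues. In short: you have not proved the statement, but you have correctly diagnosed that it is unprovable in this generality, correctly described the reduction to equivariant homology and the general-group formulation via $\EGF{G}{\VCyc}$, and correctly summarized the methods that establish it for the known cases. That is the appropriate response to being asked to ``prove'' an open conjecture.
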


Recall:
\begin{itemize}
\item \red{$K_n(BG)$} is the topological $K$-homology of $BG$, where
  $K_*(-) = H_*(-;\bfK^{\topo})$ for $\bfK^{\topo}$ the topological
  $K$-theory spectrum.

\item \red{$K_n(C^*_r(G))$} is the topological $K$-theory of the
  reduced complex group $C^*$-algebra \red{$C^*_r(G)$} of $G$ which is
  the closure in the norm topology of $\IC G$ considered as subalgebra
  of $\calb(l^2(G))$.

\end{itemize}

\begin{conjecture}[\blue{Baum-Connes Conjecture for torsionfree groups}]
\label{con:BCC_torsionfree}
The \emphred{Baum-Connes Conjecture} for the torsionfree group $G$ predicts that
the \red{assembly map}
$$K_n(BG) \to K_n(C^*_r(G))$$
is bijective for all $n \in \IZ$.

There is also a \red{real version} of the Baum-Connes Conjecture
$$KO_n(BG) \to K_n(C^*_r(G;\IR)).$$ 
\end{conjecture}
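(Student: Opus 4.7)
The statement labeled \texttt{con:BCC\_torsionfree} is presented as a \emph{conjecture}, not a theorem, and indeed the Baum--Connes Conjecture in full generality remains open. Thus the task cannot be ``prove it'' but rather ``sketch a natural strategy and identify where such a strategy must break down.'' The plan I would follow is the \emph{Dirac/dual-Dirac method} originating with Mishchenko, Kasparov, and refined by Higson--Kasparov and others.

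First I would give a genuine construction of the assembly map inside equivariant $KK$-theory. Because $G$ is torsionfree, the classifying space $EG$ for proper $G$-actions coincides with the universal cover $\widetilde{BG}$, a contractible free $G$-CW complex. One identifies $K_n(BG)$ with the equivariant $K$-homology $KK^G_n(C_0(EG),\IC)$ via the Kasparov descent and Green--Julg-type theorems, and constructs the assembly map as the composition
$$KK^G_n(C_0(EG),\IC) \xrightarrow{\;j^G\;} KK_n(C_0(EG)\rtimes_r G,\; C^*_r(G)) \xrightarrow{\;-\otimes [p]\;} KK_n(\IC,C^*_r(G)) = K_n(C^*_r(G)),$$
where $[p]$ is a projection coming from a cutoff function on $EG$. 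The real version proceeds identically with $KKO$ and real $C^*$-algebras.

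Second, I would try to produce a ``gamma element'' $\gamma \in KK^G(\IC,\IC)$ together with a factorization of $\gamma$ through a proper $G$-space carrying a Dirac operator (the \emph{dual-Dirac} construction). The formal output of the method is: if $\gamma = 1$ in $KK^G(\IC,\IC)$, then the assembly map is an isomorphism; if only the descent $j^G(\gamma)$ acts as the identity on $K_*(C^*_r(G))$ one still gets surjectivity; injectivity (Novikov-type results) follows from the mere existence of $\gamma$ and a dual-Dirac splitting. Concretely, for $G$ acting properly, isometrically, and \emph{metrically properly} on an affine Hilbert space (a-T-menability), one follows Higson--Kasparov to build $\gamma$ from the Bott element of the Hilbert space and verify $\gamma=1$; for groups acting on CAT(0) cube complexes or on trees, the same mechanism applies. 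Injectivity is also accessible when $G$ admits a uniform embedding into Hilbert space, via coarse Baum--Connes (Yu).

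The hard part, and the reason the conjecture is still open, is exactly the step $\gamma = 1$. For groups with Kazhdan's property (T) the trivial representation is isolated in the unitary dual, so the element $\gamma$ constructed from any proper isometric Hilbert space action must satisfy $\gamma \neq 1$ in $KK^G(\IC,\IC)$; the Higson--Kasparov machinery literally cannot deliver surjectivity. Going beyond property (T) requires either Lafforgue's Banach $KK$-theory (which handles hyperbolic groups and many property (T) groups) or entirely new geometric input; and Gromov's ``monster'' groups containing expanders are known to produce counterexamples to certain natural variants (coarse Baum--Connes, Baum--Connes with coefficients), showing that no purely formal argument can close the gap. Thus any complete proof would have to combine the analytic $KK$-theoretic framework above with deep geometric information about $G$ going well beyond what the torsionfree hypothesis alone supplies.
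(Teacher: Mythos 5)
This is a conjecture, not a theorem, and the paper offers no proof of it; you correctly recognized that and reframed the task as surveying the standard strategy and its obstructions. Your account of the Dirac/dual-Dirac ($\gamma$-element) framework, the identification $\eub{G}=EG$ in the torsionfree case, Higson--Kasparov's theorem for a-T-menable groups, Lafforgue's Banach $KK$-theory for property (T) and hyperbolic groups, and the Higson--Lafforgue--Skandalis counterexamples to the version with coefficients for groups with expanders is accurate and consistent with what the paper itself says later (in the sections on methods of proof and on the status of the conjectures, and in Remark~\ref{rem:open_cases}). No correction is needed.
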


In order to illustrate the depth of the Farrell-Jones Conjecture
and the Baum-Connes Conjecture, 
we present some conclusions which are interesting in their own right.
 
\begin{notation}
Let \red{$\calfj_K(R)$} and \red{$\calfj_L(R)$} respectively 
be the class of groups which satisfy the 
$K$-theoretic and $L$-theoretic respectively
Farrell-Jones Conjecture for the coefficient ring (with involution) $R$.

Let \red{$\calbc$} be the class of groups which satisfy the Baum-Connes Conjecture. 
\end{notation}

\begin{theorem}[\blue{Lower and middle $K$-theory of group rings in the torsionfree case}] 
\label{the:Lower_and_middle_K-theory_torsionfree}
Suppose that $G$ is torsionfree.

\begin{enumerate} 
\item \label{the:Lower_and_middle_K-theory_torsionfree:n_le_0}
If $R$ is regular and $G \in \calfj_K(R)$, then   
\begin{enumerate}
\item $K_n(RG) = 0$ for $n \le -1$;
\item The change of rings map $K_0(R) \to K_0(RG)$ is bijective;
\item In particular $\widetilde{K}_0(RG)$ is trivial if and only if
      $\widetilde{K}_0(R)$ is trivial.
\end{enumerate}

\item \label{the:Lower_and_middle_K-theory_torsionfree:Wh}
If $G \in \calfj_K(\IZ)$, then the  Whitehead group $\Wh(G)$ is trivial.
\end{enumerate}
\end{theorem}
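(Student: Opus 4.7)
The plan is to apply the $K$-theoretic Farrell-Jones Conjecture (Conjecture~\ref{con:FJC_for_K_torsionfree}) to identify $K_n(RG)$ with $H_n(BG;\bfK(R))$, and then to analyze the latter via the Atiyah-Hirzebruch spectral sequence
\[
E^2_{p,q} = H_p(BG;K_q(R)) \;\Longrightarrow\; H_{p+q}(BG;\bfK(R)).
\]
The crucial simplification is that the Bass-Heller-Swan vanishing for regular rings (stated just before this theorem) gives $K_q(R) = 0$ for every $q \le -1$, so all non-zero entries of $E^2_{*,*}$ lie in the upper half-plane $q \ge 0$.

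For assertion (a), if $n \le -1$ then every bidegree $(p,q)$ with $p+q = n$ and $p \ge 0$ satisfies $q\le -1$ and therefore lies on a vanishing row; hence $H_n(BG;\bfK(R)) = 0$ and the Farrell-Jones isomorphism yields $K_n(RG) = 0$. For assertion (b), the only entry contributing to total degree $0$ is $E^2_{0,0} = K_0(R)$; both the incoming differentials $d_r\colon E^r_{r,1-r}\to E^r_{0,0}$ have source on a vanishing row, and the outgoing differentials land in negative columns, so $E^{\infty}_{0,0} = K_0(R)$ and $H_0(BG;\bfK(R)) \cong K_0(R)$. After identifying the edge homomorphism $K_0(R) \to H_0(BG;\bfK(R))$ with the change of rings map induced by $R \to RG$, assertion (b) follows; assertion (c) is then immediate from functoriality of $\widetilde{K}_0 = \coker(K_0(\IZ)\to K_0(-))$.

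For the Whitehead group I specialize to $R = \IZ$, which is regular. In total degree $1$ only two entries contribute: $E^2_{0,1} = K_1(\IZ) = \{\pm 1\}$ and $E^2_{1,0} = H_1(BG;\IZ) = G^{\mathrm{ab}}$. No differential affects $E^2_{1,0}$, since its only potential source $E^2_{3,-1}$ sits on a vanishing row, so $E^{\infty}_{1,0} = G^{\mathrm{ab}}$, while $E^{\infty}_{0,1}$ is a (possibly proper) quotient of $\{\pm 1\}$. This produces a short exact sequence
\[
0 \;\longrightarrow\; E^{\infty}_{0,1} \;\longrightarrow\; K_1(\IZ G) \;\longrightarrow\; G^{\mathrm{ab}} \;\longrightarrow\; 0,
\]
in which the left edge map is the change of rings $K_1(\IZ) \to K_1(\IZ G)$ sending $\pm 1$ to $[\pm 1]$, and the right edge map sends the class $[g] \in K_1(\IZ G)$ of a unit $g \in G \subset (\IZ G)^{\times}$ to its class in $G^{\mathrm{ab}}$. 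In particular $[g] \mapsto [g]$ gives a splitting $G^{\mathrm{ab}} \to K_1(\IZ G)$. Therefore the subgroup $\{\pm g \mid g \in G\} \subseteq K_1(\IZ G)$ surjects onto $G^{\mathrm{ab}}$ via $g \mapsto [g]$ and onto $E^{\infty}_{0,1}$ via $\pm 1$; since the sequence is split, these two surjections force $\{\pm g \mid g \in G\}$ to fill out all of $K_1(\IZ G)$, and hence $\Wh(G) = K_1(\IZ G)/\{\pm g \mid g \in G\} = 0$.

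The step that will require the most care is identifying the edge homomorphisms of the Atiyah-Hirzebruch spectral sequence with the geometrically defined maps, namely the change of rings $K_*(R) \to K_*(RG)$ on one side and the augmentation-type map $K_1(\IZ G) \to G^{\mathrm{ab}}$ on the other. For this one needs to know the assembly map $H_n(BG;\bfK(R)) \to K_n(RG)$ concretely enough to compare it with the edge maps coming from the skeletal filtration of $BG$, for instance through the description of assembly as the map induced by $EG \to \{\mathrm{pt}\}$ on $\bfK(R)$-homology together with its naturality in $G$. Once this compatibility is in hand, the remaining argument is just low-dimensional bookkeeping in the spectral sequence.
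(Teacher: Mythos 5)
Your argument is essentially the one the paper gives: apply the Farrell--Jones isomorphism $K_n(RG)\cong H_n(BG;\bfK(R))$, observe that regularity makes the Atiyah--Hirzebruch spectral sequence $E^2_{p,q}=H_p(BG;K_q(R))$ first-quadrant, and read off the statements from the low-degree corner. Your handling of the Whitehead group is, if anything, slightly more careful than the printed proof in not silently identifying $E^\infty_{0,1}$ with $\{\pm1\}$, but it reaches the same conclusion by the same mechanism, namely that the subgroup $\{\pm g\mid g\in G\}$ contains the image of $K_1(\IZ)$ and surjects onto $G^{\mathrm{ab}}$.
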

\begin{proof}
The idea of the proof is to study the \red{Atiyah-Hirzebruch spectral sequence}.
It converges to $H_n(BG;\bfK(R))$ which is isomorphic to $K_n(RG)$ by the
assumption that $G$ satisfies the Farrell-Jones Conjecture. The $E^2$-term is given by
$$E^2_{p,q} = H_p(BG,K_q(R)).$$
\eqref{the:Lower_and_middle_K-theory_torsionfree:n_le_0}
Since $R$ is regular by assumption, we get $K_q(R) = 0$ for $q \le -1$.
Hence the spectral sequence is a first quadrant spectral sequence.
This implies $K_n(RG) \cong H_n(BG;\bfK(R)) = 0$ for $n \le -1$ and that
the edge homomorphism yields an isomorphism
$$K_0(R) = H_0(\pt,K_0(R)) \xrightarrow{\cong} H_0(BG;\bfK(R)) \cong K_0(RG).$$
\\[1mm]
\eqref{the:Lower_and_middle_K-theory_torsionfree:Wh}
We have $K_0(\IZ) = \IZ$ and $K_1(\IZ) = \{\pm 1\}$.
We get an exact sequence
\begin{multline*}
0 \to H_0(BG;K_1(\IZ)) = \{\pm 1\} \to H_1(BG;\bfK(\IZ)) \cong K_1(\IZ G)
\\
\to H_1(BG;K_0(\IZ)) = G/{[G,G]}\to 0.
\end{multline*}
This implies
$\Wh(G) := K_1(\IZ G)/\{\pm g\mid g \in G\} = 0$.
\end{proof}

We summarize that we get for a torsionfree group $G \in \calfj_K(\IZ)$:
\begin{enumerate}

\item $K_n(\IZ G) = 0$ for $n \le -1$;

\item $\widetilde{K}_0(\IZ G) = 0$;

\item $\Wh(G) = 0$;

\item Every finitely dominated $CW$-complex $X$ with $G = \pi_1(X)$ is homotopy equivalent
      to a finite $CW$-complex;

\item Every compact $h$-cobordism $W$ of dimension $\ge 6$ with $\pi_1(W) \cong G$ 
      is trivial;

\item If $G$ belongs to $\calfj_K(\IZ)$, then it is of type FF if and only if
      it is of type FP (\green{Serre's} \red{problem}).     

\end{enumerate}


\begin{conjecture}[\blue{Kaplansky Conjecture}] 
The \red{Kaplansky Conjecture} says for a torsionfree group $G$ and
an integral domain $R$ that $0$ and $1$ are the only idempotents in $RG$.
\end{conjecture}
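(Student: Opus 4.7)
The plan is to derive the Kaplansky Conjecture as a consequence of the Baum-Connes Conjecture (Conjecture~\ref{con:BCC_torsionfree}), via a trace argument on the reduced group $C^*$-algebra. First, I would reduce to the case $R = \IC$: any idempotent in $RG$ is an idempotent in $FG$, where $F$ is the field of fractions of the integral domain $R$, and if $F$ has characteristic zero it embeds into $\IC$. (The positive characteristic case is not directly covered by this approach and would require a separate argument.)

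Next, given an idempotent $e \in \IC G$, I would view it as an idempotent in $C^*_r(G)$ and replace it, inside its similarity class, by a self-adjoint projection $p$ with $[e] = [p] \in K_0(C^*_r(G))$. The canonical trace $\tau \colon C^*_r(G) \to \IC$, defined on $\IC G$ by $\tau(\sum_g c_g g) = c_1$, is faithful, positive, and normalized, so $\tau(p) \in [0,1] \cap \IR$. The crucial step is then to show $\tau(p) \in \IZ$: I would argue that the composition
\[
K_0(BG) \xrightarrow{\text{assembly}} K_0(C^*_r(G)) \xrightarrow{\tau_*} \IR
\]
factors through the augmentation $K_0(BG) \to K_0(\pt) = \IZ$. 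Combined with the Baum-Connes Conjecture, which asserts that the assembly map is an isomorphism, this would yield $\tau(p) \in \IZ \cap [0,1] = \{0,1\}$, and faithfulness of $\tau$ would then force $p$, and hence $e$, to be either $0$ or $1$.

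The hard part is the integrality assertion that $\tau_*$ takes integer values on the image of the assembly map, a statement sometimes called \emph{Kadison's trace conjecture}. The standard route is via Atiyah's $L^2$-index theorem: one identifies $\tau_*$ applied to a topological $K$-homology class of $BG$ with a genuine topological index, which is manifestly an integer. A parallel approach would avoid $C^*$-algebras altogether and instead invoke the $K$-theoretic Farrell-Jones Conjecture for $R = \IC$: by Theorem~\ref{the:Lower_and_middle_K-theory_torsionfree} the change of rings map $K_0(\IC) \to K_0(\IC G)$ is an isomorphism, so $[\IC G\, e]$ equals $n \cdot [\IC G]$ for some $n \in \IZ$, and a Hattori--Stallings-type trace computation pins down $n \in \{0,1\}$ and forces $e \in \{0,1\}$.
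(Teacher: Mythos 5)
You are attempting to prove a statement that the paper presents only as a \emph{conjecture}; there is no proof of the Kaplansky Conjecture in the paper, and it remains open.  What the paper actually proves, in the two theorems immediately following, are conditional implications: that the $K$-theoretic Farrell--Jones Conjecture for a skew-field $F$ (under additional hypotheses on $F$ and $G$) implies the absence of nontrivial idempotents in $FG$, and that the Baum--Connes Conjecture for a torsionfree $G$ implies the absence of nontrivial idempotents in $C^*_r(G)$ and hence in $\IC G$.  Your proposal correctly reproduces the skeleton of both of those arguments: the trace-plus-$L^2$-index route for Baum--Connes, and the $K_0$-isomorphism route for Farrell--Jones using the augmentation and a rank comparison.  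In that sense your reasoning tracks the paper closely.

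The gap is that you have proved an implication, not the conjecture.  Both Baum--Connes and Farrell--Jones are open for general torsionfree groups, so neither branch of your argument is unconditional.  Beyond that, two smaller points are worth flagging.  First, your reduction to $\IC$ only handles characteristic zero (you say so yourself), whereas the Kaplansky Conjecture as stated allows any integral domain $R$; the paper's FJC-based theorem covers positive characteristic only under extra hypotheses (sofic $G$, constraints on torsion).  Second, in the Farrell--Jones branch the step from $[\IC G\, e] = n\cdot[\IC G]$ to $e \in \{0,1\}$ is not purely a Hattori--Stallings computation: the paper's proof crucially invokes \emph{stable finiteness} of $FG$ (if $AB = I$ then $BA = I$) to pass from $(e)^k \oplus FG^m \cong FG^m$ to $(e)^k = 0$.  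Stable finiteness of group algebras is itself a nontrivial input --- classical (Kaplansky) over fields of characteristic zero, and requiring soficity in positive characteristic --- so it should be named explicitly rather than absorbed into a trace calculation.
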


In the next theorem we will use the notion of a \emphred{sofic group} that was introduced by
Gromov and originally called \emphred{subamenable group}.  Every residually amenable group is
sofic but the converse is not true.  The class of sofic groups is closed under taking
subgroups, direct products, free amalgamated products, colimits and inverse limits, and,
if $H$ is a sofic normal subgroup of $G$ with amenable quotient $G/H$, then $G$ is sofic.
This is a very general notion, e.g., no group is known which is not sofic. 
For more information about the notion of a sofic group we refer
to~\cite{Elek-Szabo(2006)}. The next result is taken 
from~\cite[Theorem~0.12]{Bartels-Lueck-Reich(2007appl)}.

\begin{theorem}[\blue{The Farrell-Jones Conjecture and the Kaplansky Conjecture},
\green{Bartels-L\"uck-Reich(2007)}] 

Let $F$ be a skew-field and let $G$ be a group with $G \in \calfj_K(F)$.
Suppose that one of the following conditions is satisfied:

\begin{enumerate} 
\item $F$ is commutative and has characteristic zero and $G$ is torsionfree;

\item $G$ is torsionfree and sofic;

\item
 The characteristic of $F$ is $p$, all finite subgroups of $G$ are 
$p$-groups and $G$ is sofic.
\end{enumerate}

Then $0$ and $1$ are the only idempotents in $FG$.
\end{theorem}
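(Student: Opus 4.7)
The strategy is to combine the Farrell-Jones Conjecture's control over $K_0(FG)$ with a positivity argument for the Kaplansky trace $\tau_G \colon FG \to F$, $\sum_g a_g g \mapsto a_1$. Given an idempotent $e \in FG$, form the finitely generated projective module $P := FG \cdot e$ with class $[P] \in K_0(FG)$. Recall the Hattori-Stallings rank $HS \colon K_0(FG) \to \bigoplus_{[g] \in \con(G)} F$: it satisfies $HS([FG^n]) = n \cdot [1]$, and $HS([P])$ is represented by $e$ itself in $FG/[FG,FG]_F$, so its $[1]$-component equals $\tau_G(e)$.

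The first ingredient is to show that $\tau_G(e) \in \IZ$ in all three cases. In cases (1) and (2), $G$ is torsionfree and the skew-field $F$ is regular, so Theorem~\ref{the:Lower_and_middle_K-theory_torsionfree} gives that the change-of-rings map $K_0(F) \to K_0(FG)$ is an isomorphism; since $K_0(F) = \IZ$, one gets $[P] = n \cdot [FG]$ in $K_0(FG)$, and comparing Hattori-Stallings ranks yields $\tau_G(e) = n \in \IZ$. In case~(3), $G$ may have torsion, and one needs the full version of the FJC using $E_{\mathcal{VC}}G$: since every finite subgroup $H \le G$ is a $p$-group and $\operatorname{char}(F) = p$, the group ring $FH$ is local and so $K_0(FH) = \IZ$ is generated by $[FH]$. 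An analysis of the assembly map, tracking the contribution of each finite subgroup via induction, then produces an integer-valued homomorphism $K_0(FG) \to \IZ$ whose value on $[P]$ is $\tau_G(e)$, giving $\tau_G(e) \in \IZ$ again.

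The second ingredient is that $\tau_G(e) \in [0,1]$. In case~(1), a finitely generated subfield of $F$ embeds into $\IC$, so we may view $e \in \IC G \subset \caln(G)$, the group von Neumann algebra; its canonical trace is faithful and positive, extends $\tau_G$, and every idempotent in $\caln(G)$ is similar to a self-adjoint projection whose trace lies in $[0,1]$. In cases (2) and (3) one uses Elek's theorem that for sofic $G$ the Kaplansky trace on $FG$ extends to a faithful positive rank function on an ambient continuous regular ring coming from a sofic approximation, again giving $\tau_G(e) \in [0,1]$. Combining both ingredients yields $\tau_G(e) \in \IZ \cap [0,1] = \{0,1\}$. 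If $\tau_G(e) = 0$, faithfulness applied to a projection similar to $e$ in the ambient algebra forces $e = 0$; if $\tau_G(e) = 1$, apply the same argument to $1 - e$. The main obstacle is case~(3): the FJC for groups with torsion must be made to interact correctly with the Kaplansky trace via the finite-subgroup contributions, and Elek's sofic positivity is the delicate technical heart of cases (2) and~(3), while cases (1) and (2) are more transparent because the FJC input is then simply the isomorphism $K_0(F) \xrightarrow{\cong} K_0(FG)$ combined with classical von Neumann algebra theory.
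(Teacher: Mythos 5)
Your proposal takes a genuinely different route from the paper's, and it contains a gap that is fatal in positive characteristic, which is exactly the delicate part of the theorem. The paper's argument avoids traces entirely: it uses the augmentation $\epsilon\colon FG\to F$ to reduce to the case $\epsilon(p)=0$, then invokes the rational surjectivity of $i_*\colon K_0(F)\otimes_\IZ\IQ\to K_0(FG)\otimes_\IZ\IQ$ supplied by the Farrell-Jones Conjecture to produce an isomorphism $(p)^k\oplus FG^m\cong i_*(P)\oplus FG^n$, applies $i_*\circ\epsilon_*$ to annihilate the $(p)$-summand, arrives at $(p)^k\oplus FG^m\cong FG^m$, and finishes by invoking stable finiteness of $FG$. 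Soficity enters the proof at exactly one place: via Elek--Szab\'o it guarantees that $FG$ is stably finite (a purely algebraic property of the ring), while in case (1) stable finiteness comes from Kaplansky's classical analytic argument for commutative characteristic-zero coefficients.

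Your trace-based argument is essentially the Kadison--Kaplansky strategy and it does work in case (1): there the finitely many coefficients of $e$ generate a subfield embeddable in $\IC$, the FJC identifies $K_0(F)\cong K_0(FG)$, the Hattori--Stallings comparison forces $\tau_G(e)\in\IZ$, and faithfulness and positivity of the canonical trace on $\caln(G)$ pin it to $\{0,1\}$. But in characteristic $p$ the statement ``$\tau_G(e)\in\IZ\cap[0,1]$'' is meaningless: $\tau_G(e)$ lives in a field of characteristic $p$ and has no real-number interpretation, and the Hattori--Stallings argument only determines $\tau_G(e)$ modulo $p$. The invocation of ``Elek's theorem that the Kaplansky trace extends to a faithful positive rank function'' does not repair this, because the sofic (Sylvester) rank function on the regular closure is a different invariant from $\tau_G$: it takes values in $[0,1]\subset\IR$ and, in characteristic $p$, bears no comparison to the $F$-valued Kaplansky trace. (The two do agree for self-adjoint idempotents over $\IC$, which is why the argument works in case (1), but that coincidence is special to characteristic zero.) The sketched ``integer-valued homomorphism $K_0(FG)\to\IZ$ whose value on $[P]$ is $\tau_G(e)$'' in case (3) again conflates integers with elements of $\IF_p$. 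What soficity actually buys you here is stable finiteness of $FG$; once you have that, you should cancel the free summands directly as the paper does rather than routing through a trace, which simply does not exist with the properties you need outside characteristic zero.
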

\begin{proof}
Let $p$ be an idempotent in $FG$. We want to show $p \in \{0,1\}$.
Denote by $\epsilon \colon FG \to F$ the augmentation homomorphism
sending $\sum_{g \in G} r_g \cdot g$ to $\sum_{g \in G} r_g$.  Obviously 
$\epsilon(p) \in F$ is $0$ or $1$. Hence it  suffices to show
$p = 0$ under the assumption that $\epsilon(p) = 0$.  

Let $(p)  \subseteq FG$ be the ideal generated by $p$ which is a finitely
generated projective $FG$-module. Since $G \in \calfj_K(F)$, we can conclude that 
$$i_* \colon K_0(F)  \otimes_{\IZ} \IQ \to K_0(FG) \otimes_{\IZ} \IQ$$ 
is surjective. Hence we can find a finitely generated projective
$F$-module $P$ and integers $k,m,n \ge 0$ satisfying
$$(p)^k \oplus FG^m \cong_{FG} i_*(P) \oplus FG^n.$$
If we now apply $i_{\ast} \circ \epsilon_{\ast}$ and use 
$\epsilon \circ i = \id$, $i_{\ast} \circ \epsilon_{\ast} ( FG^l ) \cong FG^l$ and $\epsilon(p) =0$
we obtain
$$FG^m \cong i_{\ast} (P) \oplus FG^n.$$
Inserting this in the first equation yields
$$(p)^k \oplus FG^m \cong FG^m.$$ 
Our assumptions on $F$ and $G$ imply that $FG$ is \red{stably finite}, i.e.,
if $A$ and $B$ are square matrices over $FG$ with $AB= I$, then $BA = I$.
This implies  $(p)^k = 0$ and hence $p = 0$.
\end{proof}

\begin{theorem}[\blue{The Baum-Connes Conjecture and the Kaplansky Conjecture}] 
Let $G$ be a torsionfree group with $G \in \calbc$.
Then $0$ and $1$ are the only idempotents in $C^*_r(G)$ and in particular in $\IC G$.
\end{theorem}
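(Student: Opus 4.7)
The plan is to mirror the Farrell-Jones version of the Kaplansky Conjecture just proved, replacing the stable-finiteness argument by the canonical faithful trace on $C^*_r(G)$, which is available in the $C^*$-setting.

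First I would reduce to the case of a self-adjoint projection: a standard $C^*$-algebraic perturbation argument shows that every idempotent $q$ in a unital $C^*$-algebra is similar to a projection $p = p^\ast = p^2$ with $[q]=[p]$ in $K_0$, and $q \in \{0,1\}$ precisely when the associated $p$ does. Next I would invoke the canonical normalized trace
$$\tau \colon C^*_r(G) \to \IC, \qquad \tau(a) = \langle a\delta_e, \delta_e \rangle_{l^2(G)},$$
which is positive and faithful. Positivity together with $0 \le p \le 1$ yields $\tau(p) \in [0,1]$, while faithfulness gives $\tau(p)=0 \Rightarrow p=0$ and $\tau(1-p)=0 \Rightarrow p=1$. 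Thus the whole statement reduces to proving $\tau(p) \in \IZ$.

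The heart of the argument is to show that the induced homomorphism $\tau_\ast \colon K_0(C^*_r(G)) \to \IR$ takes integer values. Since $G \in \calbc$, the Baum-Connes assembly map $K_0(BG) \xrightarrow{\cong} K_0(C^*_r(G))$ is an isomorphism, so the question reduces to: does
$$K_0(BG) \xrightarrow{\mathrm{asmb}} K_0(C^*_r(G)) \xrightarrow{\tau_\ast} \IR$$
factor through $\IZ \hookrightarrow \IR$? This is the form of the Trace Conjecture appropriate to torsionfree groups and is the main obstacle of the proof.

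To establish this integrality I would use that, since $G$ is torsionfree, $BG$ is a connected $K(G,1)$ and the Atiyah-Hirzebruch spectral sequence computing $K_\ast(BG)=H_\ast(BG;\bfK^{\topo})$ yields an edge homomorphism $K_0(BG) \to H_0(BG;\IZ)=\IZ$ realized by the map $K_0(BG) \to K_0(\mathrm{pt})$ induced by $BG \to \mathrm{pt}$. The goal is to identify $\tau_\ast \circ \mathrm{asmb}$ with this edge map followed by $\IZ \hookrightarrow \IR$: on the edge piece one checks directly that the assembly of a rank-$n$ trivial bundle is $n \cdot [1_{C^*_r(G)}]$ of trace $n \in \IZ$, while on higher filtration pieces the vanishing of $\tau_\ast \circ \mathrm{asmb}$ reflects the fact that $\tau$ annihilates commutators in $C^*_r(G)$. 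Once integrality is in hand, $\tau(p) \in \IZ \cap [0,1] = \{0,1\}$, and faithfulness of $\tau$ forces $p \in \{0,1\}$; the statement for $\IC G$ then follows at once from the inclusion $\IC G \subseteq C^*_r(G)$.
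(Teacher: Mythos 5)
Your overall skeleton matches the paper's: reduce to a self-adjoint projection, use the canonical faithful trace $\tau$, deduce $\tau(p)\in[0,1]$ from positivity and $p\in\{0,1\}$ from faithfulness once $\tau(p)\in\IZ$, and reduce everything to the integrality of $\tau_\ast\circ\mathrm{asmb}\colon K_0(BG)\to\IR$. Your explicit reduction from idempotents to projections is a helpful addition that the paper leaves implicit (it uses $0\le p\le 1$, which presupposes $p$ is a projection).

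The gap is at the heart of the matter: how to establish that $\tau_\ast\circ\mathrm{asmb}$ agrees with $K_0(BG)\to K_0(\pt)=\IZ$. The paper cites Atiyah's $L^2$-index theorem, which is precisely the statement that this composite equals the degree-zero edge projection. Your proposed replacement --- verify the trace on the image of the bottom filtration piece of the Atiyah--Hirzebruch spectral sequence, then argue that the higher filtration pieces contribute zero trace because ``$\tau$ annihilates commutators'' --- does not constitute a proof. That $\tau$ vanishes on commutators is a tautological property of any trace and says nothing about which elements of $K_0(C^*_r(G))$ are hit by the assembly of classes supported in higher-dimensional skeleta of $BG$; the $K_0$ classes produced are not manifestly sums of commutators, and whether their traces vanish is exactly what Atiyah's index theorem establishes (by comparing the von Neumann index of a lifted elliptic operator with its Fredholm index downstairs). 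In short, you have correctly identified the integrality step as the crux but have not actually proved it; the spectral-sequence reduction needs a concrete geometric or analytic input, namely the $L^2$-index theorem, to close the loop.
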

\begin{proof}
There is a trace map
$$\tr \colon C_r^*(G) \to \IC$$
which sends $f \in C^*_r(G) \subseteq \calb(l^2(G))$ to $\langle f(e),e \rangle_{l^2(G)}$.
The \red{$L^2$-index theorem} due to \green{Atiyah (1976)} 
(see~\cite{Atiyah(1976)}) shows that
the composite 
$$K_0(BG) \to K_0(C^*_r(G)) \xrightarrow{\tr} \IC$$
coincides with
$$K_0(BG) \xrightarrow{K_0(\pr)} K_0(\pt) = \IZ \to \IC.$$
Hence $G \in \calbc$ implies $\tr(p) \in \IZ$. 
Since $\tr(1) = 1$, $\tr(0) = 0$, $0 \le p \le 1$ and $p^2 = p$, we get
$\tr(p) \in \IR$ and $0 \le \tr(p) \le 1$.
We conclude $\tr(0) = \tr(p)$ or $\tr(1)=\tr(p)$. 
Since the trace $\tr$ is faithful, this implies already $p =0$ or $p = 1$.
\end{proof}

The next conjecture is one of the basic conjectures about the classification of 
topological manifolds.

\begin{conjecture}[\blue{Borel Conjecture}]
\label{con:borel}
The \emphred{Borel Conjecture for $G$} predicts for two closed aspherical manifolds
$M$ and $N$ with $\pi_1(M) \cong \pi_1(N) \cong  G$ that any homotopy equivalence
$M \to N$ is homotopic to a homeomorphism and in particular that 
$M$ and $N$ are homeomorphic.
\end{conjecture}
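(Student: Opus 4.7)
The plan is to derive the Borel Conjecture for $G$ in dimensions $\ge 5$ from the $L$-theoretic Farrell-Jones Conjecture (Conjecture~\ref{con:FJC_for_L_torsionfree} with $R = \IZ$) together with the $K$-theoretic one (Conjecture~\ref{con:FJC_for_K_torsionfree} with $R = \IZ$); the strategy runs through surgery theory. First I would fix a closed aspherical manifold $M$ of dimension $n \ge 5$ with $\pi_1(M) \cong G$, and exploit asphericity to identify $M$ itself with a model of $BG$, so that $H_*(M;\bfE) \cong H_*(BG;\bfE)$ for every spectrum $\bfE$. The Borel statement then becomes the assertion that the topological structure set $\cals^{\topo}(M)$ consists of a single point; i.e., every homotopy equivalence $N \to M$ from a closed topological $n$-manifold $N$ is homotopic to a homeomorphism.

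Next I would plug $M$ into the algebraic surgery exact sequence of Sullivan-Wall-Ranicki:
\begin{multline*}
\cdots \to H_{n+1}(M;\bfL^{\langle 1 \rangle}(\IZ)) \to L_{n+1}^h(\IZ G) \to \cals^{\topo}(M)
\\
\to H_n(M;\bfL^{\langle 1 \rangle}(\IZ)) \to L_n^h(\IZ G) \to \cdots,
\end{multline*}
where $\bfL^{\langle 1\rangle}(\IZ)$ denotes the $1$-connective cover of the quadratic $L$-spectrum and the maps into $L$-groups are assembly maps. Triviality of $\cals^{\topo}(M)$ follows once both of these assembly maps are bijective.

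To get bijectivity, I would compare the surgery assembly with the Farrell-Jones assembly $H_*(BG;\bfL^{\langle-\infty\rangle}(\IZ)) \to L_*^{\langle-\infty\rangle}(\IZ G)$. The discrepancy in decoration (``$\langle 1\rangle$'' and ``$h$'' versus ``$\langle-\infty\rangle$'') and the gap between $\bfL^{\langle 1\rangle}$ and the full periodic $L$-spectrum are governed by Rothenberg-type exact sequences whose extra terms are $\IZ/2$-Tate cohomology groups of $\Wh(G)$, $\widetilde{K}_0(\IZ G)$, and the negative $K$-groups $K_{-i}(\IZ G)$. By Theorem~\ref{the:Lower_and_middle_K-theory_torsionfree}, the $K$-theoretic Farrell-Jones Conjecture for torsionfree $G$ makes all these groups vanish, so the decorations are irrelevant and the homology of $M = BG$ is unchanged when we pass to $\bfL^{\langle-\infty\rangle}(\IZ)$ in the relevant degrees. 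The $L$-theoretic Farrell-Jones Conjecture then forces both assembly maps to be isomorphisms, so the surgery exact sequence collapses and $\cals^{\topo}(M)$ reduces to a point.

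The main obstacle is the technical bookkeeping: identifying the Sullivan-Wall-Ranicki assembly map with the Farrell-Jones assembly map on the spectrum level, and running the Rothenberg sequences and connective-cover comparisons cleanly, so that the $K$-theoretic vanishing input really does translate into an equality of $L$-theoretic assembly maps. Beyond this, the restriction $n \ge 5$ is inherent to surgery theory; the cases $n = 3, 4$ require entirely different input (Perelman's geometrization for $n = 3$, and Freedman's topological surgery for $n = 4$ under good fundamental-group hypotheses, with Donaldson-style counterexamples smoothly) and fall outside the scope of the pure Farrell-Jones strategy.
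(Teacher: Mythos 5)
Your route coincides with the paper's (Theorems~\ref{the:Borel_and_structure_set} and~\ref{the:Ranickis_surgery_sequence} and the discussion after them): reduce Borel to triviality of $\cals^{\topo}(M)$, run Ranicki's algebraic surgery exact sequence, use the $K$-theoretic Farrell-Jones Conjecture to dispose of decorations, and use the $L$-theoretic one to make the periodic assembly map an isomorphism. Two of your intermediate claims are imprecise and should be repaired. First, you attribute both the decoration discrepancy and the gap between $\bfL\langle 1 \rangle$ and the periodic $\bfL$ to Rothenberg-type sequences. Those are distinct mechanisms: Rothenberg sequences compare $L$-decorations ($s$, $h$, $p$, $\langle -\infty\rangle$) with Tate-cohomology error terms in $\Wh(G)$, $\widetilde K_0(\IZ G)$ and negative $K$-groups, and these vanish by FJC for $K$-theory; the passage from $\bfL\langle 1\rangle$ to $\bfL$ is a Postnikov truncation, and the paper handles it by a short Atiyah-Hirzebruch spectral sequence argument using only $\dim M = n$.

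Second, that spectral-sequence comparison does not make $A_n$ bijective. Writing $\bfL\langle 1\rangle \to \bfL \to C$ with $\pi_q(C)=\pi_q(\bfL)$ for $q\le 0$ and $0$ otherwise, the AHSS shows $H_{n+1}(M;C)=0$ but $H_n(M;C)$ receives a contribution from $H_n(M;\pi_0(\bfL))\cong H_n(M;\IZ)\ne 0$, so $H_n(M;\bfL\langle 1\rangle)\to H_n(M;\bfL)$ is injective but not surjective. Hence even granting FJC you only get $A_{n+1}$ surjective (degree $\ge n+1$ comparison is an isomorphism) and $A_n$ injective — which is exactly what the exact sequence needs to force $\cals^{\topo}(M)$ to be a point, and exactly what the paper's version of the argument establishes, but strictly less than the bijectivity of both maps that you assert.
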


\begin{remark}[\blue{Borel versus Mostow}]
The Borel Conjecture can be viewed as the topological version of \red{Mostow rigidity}.
A special case of Mostow rigidity says that any homotopy equivalence between 
closed hyperbolic manifolds of dimension $\ge 3$ is homotopic to an 
isometric diffeomorphism.
\end{remark}

\begin{remark}[\blue{The Borel Conjecture fails in the smooth category}]
The Borel Conjecture is not true in the smooth category by results of
\green{Farrell-Jones}~\cite{Farrell-Jones(1989)}, i.e., there
exists aspherical closed manifolds
which are homeomorphic but not diffeomorphic. 
\end{remark}

\begin{remark}[\blue{Topological rigidity for non-aspherical manifolds}]
There are also non-aspherical manifolds which are topologically rigid in the sense
of the Borel Conjecture (see \green{Kreck-L\"uck}~\cite{Kreck-Lueck(2005nonasp)}).
\end{remark}

\begin{theorem}[\blue{The Farrell-Jones Conjecture and the Borel Conjecture}]
\label{the:The_Farrell-Jones_Conjecture_and_the_Borel_Conjecture}
If the $K$- and $L$-theoretic Farrell-Jones Conjecture hold for $G$ 
in the case $R = \IZ$,
then the Borel Conjecture is true in dimension $\ge 5$ and in dimension $4$ 
if $G$ is good in the sense of Freedman.
\end{theorem}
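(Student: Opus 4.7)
The plan is to reduce the Borel Conjecture to the triviality of the topological structure set via the Sullivan--Wall--Ranicki surgery exact sequence, and then feed in the two Farrell--Jones inputs. Fix a homotopy equivalence $f\colon M\to N$ between closed aspherical manifolds with $\pi_1(M)\cong\pi_1(N)\cong G$. Since both $M$ and $N$ are models of $BG$, we can work on $N$ and view $f$ as an element of the topological structure set $\calS^{TOP}(N)$, whose vanishing (to a single point) says that $f$ is $h$-cobordant over $N$ to $\id_N$. To promote this $h$-cobordism to a product I invoke the $s$-cobordism theorem, which applies in dimension $\ge 5$ and, for groups $G$ good in the sense of Freedman, in dimension $4$. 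Its hypothesis $\Wh(G)=\{0\}$ is exactly the consequence of the $K$-theoretic Farrell-Jones Conjecture for $R=\IZ$ recorded in Theorem~\ref{the:Lower_and_middle_K-theory_torsionfree}\eqref{the:Lower_and_middle_K-theory_torsionfree:Wh}.

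To show $\calS^{TOP}(N)$ is a single point I use the long exact surgery sequence
\[
\cdots\to H_{n+1}(N;\bfL^{\langle -\infty\rangle}(\IZ))\to L_{n+1}^{\langle -\infty\rangle}(\IZ G)\to \calS^{TOP}(N)\to H_n(N;\bfL^{\langle -\infty\rangle}(\IZ))\to L_n^{\langle -\infty\rangle}(\IZ G),
\]
where $N\simeq BG$ identifies the maps $H_*(N;\bfL^{\langle -\infty\rangle}(\IZ))\to L_*^{\langle -\infty\rangle}(\IZ G)$ with the $L$-theoretic assembly maps appearing in Conjecture~\ref{con:FJC_for_L_torsionfree}. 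Under the $L$-theoretic Farrell-Jones Conjecture these assembly maps are isomorphisms in all degrees, and a diagram chase in the surgery sequence then forces $\calS^{TOP}(N)$ to collapse to a point. Combined with the first paragraph, this yields a homeomorphism homotopic to $f$.

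The main technical obstacle is the discrepancy between the decorations. The surgery exact sequence of Sullivan--Wall is naturally formulated with the simple decoration $\bfL^s$, whereas the Farrell-Jones Conjecture is formulated with $\bfL^{\langle -\infty\rangle}$. The two are compared by Rothenberg-type long exact sequences whose error terms are Tate cohomology groups of the involution on $\Wh(G)$, $\widetilde{K}_0(\IZ G)$, and the negative $K$-groups $K_{-i}(\IZ G)$. Here the $K$-theoretic Farrell-Jones Conjecture for $R=\IZ$ pays a second dividend: by Theorem~\ref{the:Lower_and_middle_K-theory_torsionfree} it kills all of these groups for torsionfree $G$, so the Rothenberg corrections vanish and the Ranicki surgery sequence with $\langle -\infty\rangle$ decorations is valid and identifies $\calS^{TOP}(N)$ as claimed. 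The remaining work, which I regard as routine given the standard references (Ranicki, Wall), is to verify naturality of the assembly maps and the identification of the geometric structure set with the algebraic one on the nose.
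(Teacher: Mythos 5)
Your overall strategy is the same as the paper's: reduce the Borel Conjecture to triviality of the topological structure set, collapse the structure set via the Ranicki surgery exact sequence by feeding in the $L$-theoretic Farrell--Jones Conjecture for the assembly maps, and neutralize the decoration mismatch via Rothenberg sequences using the $K$-theoretic Farrell--Jones Conjecture. That part is right. However, there is a genuine gap in the middle step. The algebraic surgery exact sequence that computes the \emph{geometric} structure set uses the $1$-connective cover $\bfL\langle 1 \rangle$, not the full periodic spectrum $\bfL$ or $\bfL^{\langle -\infty\rangle}$ as you have written it. Consequently the map $A_k\colon H_k(N;\bfL\langle 1 \rangle)\to L_k(\IZ G)$ appearing in the surgery sequence is a priori different from the Farrell--Jones assembly map $H_k(N;\bfL)\to L_k(\IZ G)$, and you cannot simply declare it an isomorphism. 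The missing step is a comparison of $H_k(N;\bfL\langle 1 \rangle)$ with $H_k(N;\bfL)$: since $N$ is an $n$-dimensional complex and the fiber of $\bfL\langle 1 \rangle\to\bfL$ has homotopy concentrated in degrees $\le 0$, an Atiyah--Hirzebruch spectral sequence argument shows the comparison map is bijective for $k\ge n+1$ and injective for $k=n$. Composing with the Farrell--Jones isomorphism then gives that $A_{n+1}$ is surjective and $A_n$ is injective, which is exactly what is needed to collapse $\cals^{\topo}(N)$ to a point. If one instead inserts $\bfL^{\langle -\infty\rangle}$ into the sequence, the sequence is exact for trivial reasons (it is a cofiber sequence of spectra), but the middle term is then the periodic structure group, not the geometric $\cals^{\topo}(N)$, so the conclusion does not follow.

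A smaller point: with the paper's definition of the structure set, where representatives are identified when they differ by an honest homeomorphism, triviality of $\cals^{\topo}(N)$ directly produces a homeomorphism homotopic to $f$; the $s$-cobordism theorem is not invoked as a separate final step. The vanishing of $\Wh(G)$, $\widetilde{K}_0(\IZ G)$ and $K_{-i}(\IZ G)$ (from the $K$-theoretic Farrell--Jones Conjecture) is absorbed entirely into the Rothenberg comparison of decorations, exactly as in your final paragraph.
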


\begin{remark}[\blue{The Borel Conjecture in dimension $\le 3$}] 
\red{Thurston's Geometrization Conjecture} implies the Borel Conjecture
in dimension three. The Borel Conjecture in dimension one and two is obviously true.
\end{remark}

Next we give some explanations about the proof of 
Theorem~\ref{the:The_Farrell-Jones_Conjecture_and_the_Borel_Conjecture}.

  \begin{definition}[\blue{Structure set}]
  
    The \emphred{structure set} $S^{top} (M)$ of a manifold $M$
    consists of equivalence classes of orientation preserving homotopy
    equivalences $N \to M$ with a manifold $N$ as source.

    Two such homotopy equivalences $f_0 \colon N_0 \to M$ and $f_1 \colon
    N_1 \to M$ are equivalent if there exists a homeomorphism $g
    \colon N_0 \to N_1$ with $f_1 \circ g\simeq f_0$.
  \end{definition}

The next result follows directly from the definitions.
\begin{theorem} 
\label{the:Borel_and_structure_set}
The Borel Conjecture holds for a closed manifold $M$
if and only if  $\cals^{\topo}(M)$ consists of one element.
\end{theorem}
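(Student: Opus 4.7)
The statement is essentially a tautology unpacking the definitions of the Borel Conjecture and of the structure set, so my plan is to verify the two implications directly, using the identity map $\id_M \colon M \to M$ as a distinguished base point in $\cals^{\topo}(M)$.

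For the forward direction, I will assume the Borel Conjecture holds for the closed aspherical manifold $M$ and show that every class in $\cals^{\topo}(M)$ agrees with $[\id_M]$. Starting from a homotopy equivalence $f \colon N \to M$ representing an arbitrary class, I first note that $N$ must also be aspherical: since $M$ is a $K(\pi_1(M),1)$ and $f$ is a homotopy equivalence, $N$ is a $K(\pi_1(N),1)$ as well, and $\pi_1(f)$ is an isomorphism. The Borel Conjecture applied to $M$ and $N$ then yields a homeomorphism $g \colon N \to M$ with $g \simeq f$. Taking this $g$ as the equivalence in the definition of the structure set, together with $\id_M \circ g = g \simeq f$, shows $[f] = [\id_M]$. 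Hence $\cals^{\topo}(M)$ is a singleton.

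For the reverse direction, I will assume $\cals^{\topo}(M) = \{[\id_M]\}$ and deduce the Borel Conjecture for $M$. Given any closed aspherical $N$ with $\pi_1(N) \cong \pi_1(M)$ and any (orientation preserving) homotopy equivalence $f \colon N \to M$, the hypothesis forces $[f] = [\id_M]$ in $\cals^{\topo}(M)$. Unwinding the equivalence relation, this means there exists a homeomorphism $g \colon N \to M$ with $\id_M \circ g \simeq f$, i.e., $f \simeq g$, which is exactly the conclusion of the Borel Conjecture.

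The only non-cosmetic issue to address is orientation, since $\cals^{\topo}(M)$ is defined using orientation preserving homotopy equivalences while the Borel Conjecture as stated makes no such restriction. I will handle this by remarking that for non-orientable $M$ every homotopy equivalence is automatically orientation preserving in the relevant sense (via the orientation double cover), and for orientable $M$ the orientation reversing case is obtained by composing with a chosen self-homotopy-equivalence of $M$ of degree $-1$ if one exists, or reduces to the orientation preserving case after a fixed choice of orientations on $N$ and $M$. So there is no substantive obstacle — the proof is a direct translation between the two formulations, and the whole argument fits in a few lines once the asphericity transfer from $M$ to $N$ has been noted.
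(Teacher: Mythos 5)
Your proof is correct and is exactly the definitional unwinding the paper intends: the paper offers no proof at all, remarking only that the result ``follows directly from the definitions.'' The two small points you surface explicitly --- that the source manifold $N$ inherits asphericity from $M$ via the homotopy equivalence, and that the orientation convention built into the definition of $\cals^{\topo}(M)$ is reconciled with the unoriented phrasing of the Borel Conjecture by a suitable choice of orientation on $N$ --- are precisely the details worth recording.
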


Let \red{$\bfL\langle 1 \rangle$} be the \emphred{$1$-connective cover} of the
$L$-theory spectrum $\bfL$. It is characterized by the following property.
There is a natural map
of spectra $\bfL\langle 1 \rangle \to \bfL$ which induces an isomorphism on the homotopy
groups in dimensions $n \ge 1$ and the homotopy groups of $\bfL\langle 1 \rangle$ vanish 
in dimensions $n \le 0$.

\begin{theorem}[\green{Ranicki (1992)}]
\label{the:Ranickis_surgery_sequence}
There is an exact sequence of abelian groups,
 called \red{algebraic surgery exact sequence},
 for an $n$-dimensional closed manifold $M$
\begin{multline*}
\ldots  \xrightarrow{\sigma_{n+1}}  H_{n+1}(M;\bfL\langle 1 \rangle)
\xrightarrow{A_{n+1}} L_{n+1}(\IZ\pi_1(M)) \xrightarrow{\partial_{n+1}}
\\
 \cals^{\topo}(M) \xrightarrow{\sigma_n}
H_{n}(M;\bfL\langle 1 \rangle) \xrightarrow{A_{n}} L_{n}(\IZ\pi_1(M)) 
\xrightarrow{\partial_{n}}  \ldots
\end{multline*}

It can be identified with the classical geometric surgery 
sequence due to \green{Sullivan and Wall} in high dimensions.
\end{theorem}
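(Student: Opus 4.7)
The plan is to follow Ranicki's algebraic theory of surgery. The strategy has three stages: build a purely algebraic assembly map $A_n$ and derive the sequence formally, then identify the algebraic structure set with the geometric one. I will only sketch the route.

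Stage one (algebraic assembly). Model $\bfL(\IZ)$ by the $\Omega$-spectrum whose $n$-th space consists of $n$-dimensional quadratic Poincar\'e complexes over $\IZ$, i.e.\ finitely generated projective $\IZ$-chain complexes $(C,\psi)$ with $\psi\in Q_n(C)$ such that $(1+T)\psi$ induces a chain homotopy equivalence $C^{n-*}\xrightarrow{\simeq} C_*$. Over a simplicial model of $M$, one forms a spectrum whose cells are quadratic Poincar\'e complexes parametrised by the simplices; this realises $H_*(M;\bfL(\IZ))$ and comes equipped with a natural assembly map $A_*\colon H_*(M;\bfL(\IZ))\to L_*(\IZ\pi_1(M))$ obtained by glueing the local data across the universal cover. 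Truncating to the $1$-connective cover $\bfL\langle 1\rangle$ gives the version appearing in the statement.

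Stage two (formal exact sequence). Define the algebraic structure set $\cals^{\topo}_{\mathrm{alg}}(M)$ as $\pi_n$ of the homotopy fibre of the spectrum level map $M_+\wedge\bfL\langle 1\rangle\to \bfL(\IZ\pi_1(M))$ induced by $A$. By construction, the long exact sequence of this fibration is exactly the displayed algebraic surgery sequence, so exactness and the abelian group structure are automatic from this point of view.

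Stage three (geometric identification, the crux). Two identifications are needed. First, by the Sullivan--Kirby--Siebenmann theorem, $G/TOP$ is homotopy equivalent to the zeroth space of $\bfL\langle 1\rangle$ (after suitable normalisation), so the set of normal invariants of $M$ satisfies $[M,G/TOP]\cong H_n(M;\bfL\langle 1\rangle)$. Second, one must show that the Sullivan--Wall surgery obstruction $[M,G/TOP]\to L_n(\IZ\pi_1(M))$ coincides under this identification with the algebraic assembly $A_n$, and that the geometric structure set $\cals^{\topo}(M)$ coincides with $\cals^{\topo}_{\mathrm{alg}}(M)$. The bridge is Ranicki's \emph{quadratic construction}: from a degree-one normal map $(f,b)\colon N\to M$ one extracts a quadratic refinement of the Umkehr map of $f$, producing an explicit $n$-dimensional quadratic Poincar\'e complex over $\IZ\pi_1(M)$ whose class in $L_n$ agrees with the Wall obstruction; applied over the simplices of $M$ it factors through $H_n(M;\bfL\langle 1\rangle)$ and realises the assembly. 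The main obstacle lies here: turning the geometric surgery obstruction into a chain-level invariant in a way that is natural enough to commute with assembly requires the full apparatus of quadratic Poincar\'e pairs, bordism categories and the $\pi$--$\pi$ theorem for identifying the fibre term with $\cals^{\topo}(M)$ in dimensions $\geq 5$.
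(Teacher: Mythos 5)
The paper does not give a proof of this statement; it simply refers the reader to Ranicki's book (Definition~15.19 and Theorem~18.5 of~\cite{Ranicki(1992)}). Your three-stage sketch is a faithful compression of the argument actually found there: the $\Omega$-spectrum of quadratic Poincar\'e complexes, the simplicial assembly, the algebraic structure set as the homotopy fibre of the spectrum-level assembly, and the identification with the geometric Sullivan--Wall sequence via the quadratic construction and the $\pi$--$\pi$ theorem in dimensions $\geq 5$. One small correction for precision: the set of normal invariants $[M,G/TOP]$ is naturally a \emph{cohomology} group $H^0(M;\bfL\langle 1\rangle)$, and it is Poincar\'e duality for the closed $n$-manifold $M$ that converts this into the homology group $H_n(M;\bfL\langle 1\rangle)$ appearing in the sequence; your phrasing elides that step, and it is worth stating since the passage from cohomology to homology is exactly where the fundamental class and orientability enter.
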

\begin{proof}
See~\cite[Definition~15.19 on page~169 and Theorem~18.5 on page~198]{Ranicki(1992)}.
\end{proof}

The $K$-theoretic version of the Farrell-Jones Conjecture
ensures that we do not have to deal with decorations, e.g., it does not matter
if we consider $\bfL$ or $\bfL^{\langle - \infty\rangle}$. (This follows from the so called 
\emphred{Rothenberg sequences}). The $L$-theoretic version of the
Farrell-Jones Conjecture implies that
$H_{n}(M;\bfL) \to L_{n}(\IZ\pi_1(M))$ is bijective for all $n \in \IZ$.
An easy spectral sequence argument shows
that $H_{k}(M;\bfL\langle 1 \rangle) \to H_{k}(M;\bfL)$ is bijective for $k \ge n+1$
and injective for $k = n$. For $k = n$ and $k = n+1$
the map $A_k$ is the composite of the map 
$H_{k}(M;\bfL\langle 1 \rangle) \to H_{k}(M;\bfL)$ with the map
$H_{k}(M;\bfL) \to L_{k}(\IZ\pi_1(M))$.
Hence $A_{n+1}$ is surjective and $A_n$ is injective. 
Theorem~\ref{the:Ranickis_surgery_sequence} implies that
$\cals^{\topo}(M)$ consist of one element.
Now Theorem~\ref{the:The_Farrell-Jones_Conjecture_and_the_Borel_Conjecture} follows
from Theorem~\ref{the:Borel_and_structure_set}.

More information on the Borel Conjecture can be found
for instance in~\cite{Farrell(1996)},
\cite{Farrell-Jones(1989)}, \cite{Farrell-Jones(1990)}, \cite {Farrell-Jones(1993c)},
\cite{Farrell-Jones(1998)}, \cite{Farrell(2002)}
\cite{Ferry-Ranicki-Rosenberg(1995)}, \cite{Kreck-Lueck(2005)}, \cite{Lueck(2002c)},
\cite{Lueck-Reich(2005)}.

Next we explain that the versions of the Farrell-Jones and the Baum-Connes Conjecture
above cannot be true if we drop the assumption that $G$ is torsionfree 
or that $R$ is regular

\begin{example}[\blue{The condition torsionfree is essential}]
The versions of the Farrell-Jones Conjecture and the Baum-Connes Conjecture above
become false for finite groups unless the group is trivial.
For instance the version of the Baum-Connes Conjecture  above
would predict for a finite group $G$
$$K_0(BG) \cong K_0(C^*_r(G)) \cong R_{\IC}(G).$$
However, $K_0(BG) \otimes_{\IZ} \IQ \cong_{\IQ} K_0(\pt) \otimes_{\IZ} \IQ \cong_{\IQ} \IQ$
and $R_{\IC}(G) \otimes_{\IZ} \IQ \cong_{\IQ} \IQ$ holds if and only if $G$ is trivial.
\end{example}

\begin{example}[\blue{The condition regular is essential}]
If $G$ is torsionfree, then the version 
of the $K$-theoretic Farrell-Jones Conjecture predicts
\begin{multline*}
H_n(B\IZ;\bfK(R)) = H_n(S^1;\bfK(R)) = H_n(\pt;\bfK(R)) \oplus H_{n-1}(\pt;\bfK(R))
\\
= K_n(R) \oplus K_{n-1}(R) \cong K_n(R\IZ).
\end{multline*}
In view of the Bass-Heller-Swan decomposition this is only possible
if $\NK_n(R)$ vanishes which is true for regular rings $R$ but not for general rings $R$.
\end{example}

Next we want to discuss what we may have to take into account if we want to give a
formulation of the Farrell-Jones and the Baum-Connes Conjecture which may have a chance to
be true for all groups.

\begin{remark}[\blue{Assembly}]
  For a field $F$ of characteristic zero and some groups $G$ one knows that there is an
  isomorphism
$$\colim_{\substack{H \subseteq G\\|H| <  \infty}} K_0(FH) \xrightarrow{\cong} K_0(FG).$$
This indicates that one has at least to take into account the values for all finite
subgroups to assemble $K_n(FG)$.
\end{remark}

\begin{remark}[\blue{Degree Mixing}]
The Bass-Heller-Swan decomposition shows that
the $K$-theory of finite subgroups in degree $m \le n$ can affect 
the $K$-theory in degree $n$
and that at least in the Farrell-Jones setting finite subgroups are not enough.
\end{remark}

\begin{remark}[\blue{No Nil-phenomena occur in the Baum-Connes setting}]
In the Baum-Connes setting Nil-phenomena do not appear. Namely, a special 
case of a result due to 
\green{Pimsner-Voiculescu}~\cite{Pimsner-Voiculescu(1982)} says
$$K_n(C^*_r(G \times \IZ)) \cong K_n(C^*_r(G)) \oplus  K_{n-1}(C^*_r(G)).$$
\end{remark}

\begin{remark}[\blue{Homological behavior}]
There is still a lot of homological behavior known for $K_*(C^*_r(G))$.
For instance there exists a long exact \red{Mayer-Vietoris sequence} associated
to  amalgamated products $G_1 \ast_{G_0} G_2$  by 
\green{Pimsner-Voiculescu}~\cite{Pimsner-Voiculescu(1982)}.
\begin{multline*}
\dots \to K_n(C^*_r(G_0)) \to K_n(C^*_r(G_1)) \oplus K_n(C^*_r(G_2)) \to K_n(C^*_r(G)) 
\\
\to K_{n-1}(C^*_r(G_0)) \to K_{n-1}(C^*_r(G_1)) \oplus K_{n-1}(C^*_r(G_2)) \to \cdots 
\end{multline*}
This is very similar to the corresponding Mayer-Vietoris sequence in group homology theory
\begin{multline*}
\dots \to H_n(G_0) \to H_n(G_1)) \oplus H_n(G_2) \to H_n(G) 
\\
\to H_{n-1}(G_0) \to H_{n-1}(G_1) \oplus H_{n-1}G_2) \to \cdots 
\end{multline*}
It comes from the fact that there is a model for $BG$ which contains
$BG_0$, $BG_1$ and $BG_2$ as $CW$-subcomplexes such that $BG = BG_1 \cup BG_2$ and
$BG_0 = BG_1 \cap BG_2$.

An analogous similarity exists for the \red{Wang-sequence} associated to 
a semi-direct product $G \rtimes \IZ$

Similar versions of the Mayer-Vietoris sequence and the Wang sequence 
in algebraic $K$-and $L$-theory of group rings are 
due to \green{Cappell (1974)} and \green{Waldhausen (1978)} 
provided one makes certain assumptions on $R$ or 
ignores certain Nil-phenomena.
\end{remark}

\begin{question}[\blue{Classifying spaces for families}]

Is there a version \red{$\EGF{G}{\calf}$} of the classifying space $EG$ 
which takes the structure
of the family of finite subgroups or other families $\calf$ of subgroups
into account and can be 
used for a general formulation of the Farrell-Jones Conjecture?
\end{question}

\begin{question}[\blue{Equivariant homology theories}]

Can one define appropriate $G$-homology theories $\calh^G_*$ that are in some sense
computable and yield when applied to $\EGF{G}{\calf}$ a term which 
potentially is isomorphic to the groups
$K_n(RG)$, $L_n^{-\langle \infty\rangle}(RG)$ or $K_n(C^*_r(G))$?

In the torsionfree case they should reduce
to $H_n(BG;\bfK(R))$, $H_n(BG;\bfL^{-\langle \infty\rangle})$ and $K_n(BG)$.
\end{question}


\section{Classifying spaces for families }

The outline of this section is: 

\begin{itemize}

  \item  We introduce the notion of the \red{classifying space of a family
  $\calf$ of subgroups $\EGF{G}{\calf}$} and \red{$\JGF{G}{\calf}$}.
  
  \item In the case, where $\calf$ is the family $\Com$ of compact
        subgroups, we present some
       nice geometric models for $\EGF{G}{\calf}$ and explain
       $\EGF{G}{\calf} \simeq \JGF{G}{\calf}$.
  
  \item We discuss \red{finiteness properties} of these classifying spaces.

  \end{itemize}

The material of this section is an extract of the survey article
by \green{L\"uck}~\cite{Lueck(2005s)},
where more information and proofs of the results stated below are given.

In this section group means \red{locally compact Hausdorff topological group with a
countable basis for its topology}, unless explicitly stated differently.

\begin{definition}[\blue{$G$-$CW$-complex}]
A \emphred{$G$-$CW$-complex}
$X$ is a $G$-space together with a $G$-invariant filtration
$$\emptyset = X_{-1} \subseteq X_0  \subseteq \ldots \subseteq
X_n \subseteq \ldots \subseteq\bigcup_{n \ge 0} X_n = X$$
such that $X$ carries the \red{colimit topology}
with respect to this filtration,
and $X_n$ is obtained
from $X_{n-1}$ for each $n \ge 0$ by \red{attaching
equivariant $n$-dimensional cells}, i.e., there exists
a $G$-pushout
$$
\xymatrix@!C=10em{
 \coprod_{i \in I_n} G/H_i \times S^{n-1}
 \ar[r]^-{\coprod_{i \in I_n} q_i^n}
 \ar[d]
 &
 X_{n-1}
 \ar[d]
  \\
  \coprod_{i \in I_n} G/H_i \times D^{n}
  \ar[r]^-{\coprod_{i \in I_n} Q_i^n}
  &
  X_n
 }
 $$
\end{definition}

\begin{example}[\blue{Simplicial actions}]
\label{exa:simplicial_actions}
Let $X$ be a (geometric) simplicial complex. 
Suppose that $G$ acts simplicially on $X$.
Then $G$ acts simplicially also on the \red{barycentric
subdivision $X'$}, and all isotropy groups are open and closed. 
The $G$-space $X'$ inherits  the structure of a $G$-$CW$-complex.
\end{example}

\begin{definition}[\blue{Proper $G$-action}]
A $G$-space $X$ is called \emphred{proper}
if for each pair of points $x$ and $y$ in $X$ there are open neighborhoods
$V_x$ of $x$ and $W_y$ of $y$ in $X$ such that the closure of the subset
$\{g \in G \mid gV_x \cap W_y \not= \emptyset\}$ of $G$ is compact.
\end{definition}

\begin{lemma}
\begin{enumerate}
\item A proper $G$-space has always compact isotropy groups.
\item
A $G$-$CW$-complex $X$ is proper
if and only if all its isotropy groups are compact.
\end{enumerate}
\end{lemma}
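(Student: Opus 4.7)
For part (1), my plan is to directly unwind the definition. Setting $y = x$ in the definition of proper $G$-space produces open neighborhoods $V_x, W_x$ of $x$ such that $\overline{\{g \in G \mid gV_x \cap W_x \neq \emptyset\}}$ is compact. Putting $U := V_x \cap W_x$, every $g \in G_x$ satisfies $x = gx \in gU \cap U$, so $G_x$ lies in a relatively compact subset of $G$. The orbit map $G \to X$, $g \mapsto gx$, is continuous, and $X$ is Hausdorff under the standing hypotheses on topological group actions, so $G_x$ is the preimage of a closed point and hence closed in $G$. A closed subset of a relatively compact set is compact.

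For part (2), the forward implication is immediate from (1). For the converse, the plan is induction over the skeletal filtration, combined with an appeal to the colimit topology at the end.

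The base case reduces to showing that a single orbit $G/H$ with $H$ compact is a proper $G$-space. Given $gH, g'H$, pick a compact identity neighborhood $C \subseteq G$ (available by local compactness) and take $gCH$ and $g'CH$ as test neighborhoods (open because $G \to G/H$ is an open map). A direct coset calculation shows that
\[
\{k \in G \mid k \cdot gCH \cap g'CH \neq \emptyset\} \subseteq g'CHC^{-1}g^{-1},
\]
and the latter is compact as the image of a compact set under multiplication. The disk factor is absorbed via product neighborhoods, giving properness of $G/H \times D^n$, and a disjoint union of proper $G$-spaces is proper, which handles $X_0$.

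For the inductive step, assuming $X_{n-1}$ is proper, given $x, y \in X_n$ I would split into cases depending on whether each point lies in an open $n$-cell $G/H_i \times \mathring{D}^n$ or in $X_{n-1}$: open-cell points get product neighborhoods inside the cell, while $X_{n-1}$-points get an inductive neighborhood enlarged by equivariant collars $G/H_i \times (D^n \setminus rD^n)$ near the attaching maps. To pass from $\{X_n\}$ to $X$, any two points lie in some $X_n$, and I would iterate the collar-enlargement across higher skeleta to obtain open neighborhoods in the colimit topology. The main obstacle throughout is the bookkeeping that when a neighborhood is enlarged across a cell boundary, the set of group elements shifting one thickened neighborhood toward the other stays relatively compact; this is where one must combine the inductive hypothesis on $X_{n-1}$ with compactness of the isotropy groups $H_i$ via the base case, with a sufficiently small choice of collar width.
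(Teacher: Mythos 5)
The paper offers no argument of its own here, deferring to Theorem~1.23 of L\"uck's \emph{Transformation groups and algebraic $K$-theory}; your sketch reconstructs the expected direct proof and is essentially sound. Part~(1) is complete as written, granted the standing Hausdorffness convention that makes the point $\{x\}$, and hence $G_x$, closed; a closed subgroup contained in a set with compact closure is compact.

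Two refinements are worth making in part~(2). In the base case, a compact identity neighborhood $C$ need not have open image in $G/H$; instead take an open $U \ni e$ with $\overline{U}$ compact (local compactness), use $gUH/H$ and $g'UH/H$ as the test neighborhoods, and observe that
$$\{\,k \in G \mid k\,gUH \cap g'UH \not= \emptyset\,\} \subseteq g'\,\overline{U}\,H\,\overline{U}^{-1}\,g^{-1},$$
a compact set, which handles $G/H$; the $D^n$-factor and disjoint-union reductions are as you say.

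The ``main obstacle'' you flag in the inductive step dissolves once you make explicit that the $G$-action on an equivariant $n$-cell $G/H_i \times D^n$ is trivial on the disk factor, hence fixes the radial coordinate $|s|$. This gives the two estimates you need. First, if $g$ carries a collar point of the thickened $\widetilde{V}_x$ to a collar point of $\widetilde{W}_y$ inside some $G/H_i \times \mathring{D}^n$, then projecting radially to $S^{n-1}$ and applying $G$-equivariance of the attaching map $q_i^n$ produces a point of $V_x^{n-1}$ that $g$ carries into $W_y^{n-1}$; so $\{\,g \mid g\widetilde{V}_x \cap \widetilde{W}_y \not= \emptyset\,\}$ is contained in the set controlled by the inductive hypothesis for the boundary-case pairs. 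Second, for a mixed pair with $y$ at interior radius $|s_0| < 1$, choose the collar width $\epsilon < 1 - |s_0|$ and a product neighborhood $W_y$ supported in $\{|s| < 1 - \epsilon\}$; then $g\widetilde{V}_x$ and $W_y$ are disjoint for \emph{every} $g$, so that case contributes the empty set. Iterating the collar extension over all higher skeleta and invoking the colimit topology completes the proof exactly along the lines you outline.
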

\begin{proof}
See~\cite[Theorem~1.23 on page~19]{Lueck(1989)}.
\end{proof}

\begin{example}[\blue{Smooth actions}]
Let $G$ be a Lie group acting properly and smoothly on a smooth
manifold $M$. Then $M$ inherits the structure of $G$-$CW$-complex
(see \green{Illman}~\cite{Illman(2000)}).
\end{example}

\begin{definition}[\blue{Family of subgroups}]
\label{def:family_of_subgroups}
A \emphred{family $\calf$ of subgroups}
of $G$ is a set of (closed) subgroups of $G$ which is closed under conjugation
and finite intersections.
\end{definition}

Examples for $\calf$ are:
\\
\begin{tabular}{lll}
\red{$\TR$}
& = & \{trivial subgroup\};
\\

\red{$\Fin$}
& = & \{finite subgroups\};
\\

\red{$\VCyc$}
& = & \{virtually cyclic subgroups\};
\\

\red{$\Com$}
& = & \{compact subgroups\};
\\

\red{$\Comop$}
& = & \{compact open subgroups\};
\\

\red{$\All$}
& = & \{all subgroups\}.
\end{tabular}

\begin{definition}[\blue{Classifying $G$-$CW$-complex for a family of subgroups}]
\label{def:EGF(G)(F)}
Let $\calf$ be a family of subgroups of $G$.
A model for the \emphred{classifying $G$-$CW$-complex for the family $\calf$}
is a $G$-$CW$-complex \red{$\EGF{G}{\calf}$} which has the following
properties:
\begin{enumerate}
\item All isotropy groups of $\EGF{G}{\calf}$ belong to $\calf$;

\item For any $G$-$CW$-complex $Y$, whose isotropy groups belong to $\calf$,
there is up to $G$-homotopy precisely one $G$-map $Y \to \EGF{G}{\calf}$.

\end{enumerate}

We abbreviate $\red{\eub{G}} := \EGF{G}{\Com}$
and call it the \emphred{universal $G$-$CW$-complex for proper $G$-actions}.
We also write $\red{EG} = \EGF{G}{\TR}$.
\end{definition}

\begin{theorem} [\blue{Homotopy characterization of $\EGF{G}{\calf}$}]
\label{the:G-homotopy_characterization_of_EGF(G)(calf)}
Let $\calf$ be a family of subgroups.
\begin{enumerate}

\item \label{the:G-homotopy_characterization_of_EGF(G)(calf):existence}
There exists a model for $\EGF{G}{\calf}$ for any family $\calf$;

\item \label{the:G-homotopy_characterization_of_EGF(G)(calf):characterization}
A $G$-$CW$-complex $X$ is a model for $\EGF{G}{\calf}$ if and only if 
all its isotropy groups belong to $\calf$ and for each $H \in\calf$ the $H$-fixed point
set $X^H$ is weakly contractible.
\end{enumerate}

\end{theorem}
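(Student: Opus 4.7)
The plan is to prove the characterization~(2) first and then deduce existence~(1) by a homotopy-killing construction whose output manifestly satisfies the characterizing properties.

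For the ``only if'' direction of~(2), I would invoke the defining universal property of $\EGF{G}{\calf}$. If $X$ is a model, its isotropy groups lie in $\calf$ by definition. Fix $H \in \calf$. For any $CW$-complex $K$ carrying the trivial $G$-action, $G/H \times K$ is a $G$-$CW$-complex whose isotropy groups are conjugates of $H$, hence in $\calf$. The universal property then says that $G$-maps $G/H \times K \to X$ are unique up to $G$-homotopy, and the standard adjunction $\map^G(G/H \times K, X) \cong \map(K, X^H)$ translates this into the statement that any two maps $K \to X^H$ are homotopic. Taking $K = \{*\}$, $K = [0,1]$, and $K = S^n$ in turn shows that $X^H$ is nonempty and path-connected with vanishing $\pi_n$ for every $n \ge 1$, hence is weakly contractible.

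For the ``if'' direction of~(2), I would run the equivariant Whitehead/obstruction-theoretic argument. Assume $X$ has isotropy in $\calf$ and $X^H$ is weakly contractible for every $H \in \calf$. Given a $G$-$CW$-complex $Y$ with isotropy in $\calf$, I build a $G$-map $f \colon Y \to X$ by induction up the skeletal filtration of $Y$: extending a partial map across an equivariant $n$-cell attached via $q_i^n \colon G/H_i \times S^{n-1} \to Y_{n-1}$ amounts, under the same adjunction, to extending $S^{n-1} \to X^{H_i}$ across $D^n$, which is possible because $H_i \in \calf$ and $X^{H_i}$ is weakly contractible. The same inductive scheme applied to the $G$-$CW$-structure on $Y \times [0,1]$ shows that any two $G$-maps $Y \to X$ are $G$-homotopic, so $X$ satisfies the universal property.

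For existence~(1), I would build $\EGF{G}{\calf}$ by iteratively killing equivariant homotopy. Start with $X^{(0)} := \coprod G/H$ over one representative $H$ from each conjugacy class of subgroups in $\calf$. Inductively define $X^{(n+1)}$ by attaching equivariant cells $G/K \times D^{n+1}$ for each $K \in \calf$ along a $G$-map $G/K \times S^n \to X^{(n)}$ representing each generator of $\pi_n((X^{(n)})^K)$ (via the adjunction). The colimit $X$ has isotropy in $\calf$ and satisfies $\pi_n(X^K) = 0$ for all $n \ge 0$ and all $K \in \calf$, since cells of dimension $> n+1$ do not affect $\pi_n$ of the fixed point set. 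Part~(2) then identifies $X$ as a model. The principal obstacle is the technical housekeeping imposed by the paper's general setting of a locally compact Hausdorff topological group with a countable basis: one must verify at each stage of the construction that attaching cells along orbits $G/K$ with $K$ closed produces a genuine $G$-$CW$-complex with closed isotropy, handle a mild set-theoretic smallness issue (addressed by choosing one representative per conjugacy class in $\calf$ and a generating set for each $\pi_n$), and confirm that the adjunction and cellular approximation inputs used in the Whitehead argument continue to hold in this non-discrete generality.
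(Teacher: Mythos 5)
The paper gives no proof of this theorem, deferring at the start of the section to the survey \cite{Lueck(2005s)}; the argument sketched there is exactly the one you give. Your proposal is correct: the universal property combined with the adjunction $\map^G(G/H \times K, X) \cong \map(K, X^H)$ yields the ``only if'' direction, the equivariant Whitehead argument (extending over cells $G/H_i \times D^n$ via weak contractibility of $X^{H_i}$) yields the ``if'' direction, and the homotopy-killing construction, certified by the already-established part~(2), gives existence; your flagging of the extra care required for non-discrete $G$ is appropriate as well.
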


\begin{example}[\blue{$\EGF{G}{\All}$}]
A model for $\EGF{G}{\All}$ is $G/G$;
\end{example}

\begin{example}[\blue{Universal principal $G$-bundle}]
The projection $EG \to BG:= G\backslash EG$ is the \red{universal $G$-principal bundle}
for $G$-$CW$-complexes.
\end{example}

\begin{example}[\blue{Infinite dihedral group}]
  Let $D_{\infty} = \IZ \rtimes \IZ/2 = \IZ/2 \ast \IZ/2$ be the infinite dihedral
  group. A model for $ED_{\infty}$ is the universal covering of $\IR\IP^{\infty} \vee
  \IR\IP^{\infty}$.  A model for $\eub{D_{\infty}}$ is $\IR$ with the obvious
  $D_{\infty}$-action.  Notice that every model for $ED_{\infty}$ or $BD_{\infty}$ must be
  infinite-dimensional, whereas there exists a cocompact $1$-dimensional model for
  $\eub{D_{\infty}}$.
\end{example}

\begin{lemma} \label{lem:totally_disconnected_groups_and_calcom}
If $G$ is totally disconnected, then $\EGF{G}{\Comop} = \eub{G}$.
\end{lemma}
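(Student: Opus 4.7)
The plan is to apply the homotopy characterization of Theorem~\ref{the:G-homotopy_characterization_of_EGF(G)(calf)} and show that any model $X$ for $\EGF{G}{\Comop}$ is simultaneously a model for $\eub{G}=\EGF{G}{\Com}$; since classifying spaces for families are unique up to $G$-homotopy equivalence, this will force the two to coincide. The inclusion $\Comop\subseteq\Com$ is trivial, so the isotropy groups of $X$ automatically lie in $\Com$. Hence the entire content of the lemma reduces to checking that $X^H$ is weakly contractible for every compact subgroup $H\subseteq G$.

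The key geometric input I need is a sharpening of van Dantzig's theorem: in a totally disconnected locally compact group $G$, every compact subgroup $H$ is contained in some compact open subgroup. I expect this to be the main technical obstacle. The plan for proving it: start with any compact open $U_{0}$ supplied by van Dantzig, note that $H$ acts on the discrete coset space $G/U_{0}$ with finite orbits (as $H$ is compact), intersect the finitely many conjugates $h_{i}U_{0}h_{i}^{-1}$ coming from the orbit of $U_{0}$ to obtain a compact open subgroup $U_{1}$, and then use a tube-lemma argument on the conjugation map $H\times G\to G$ to show that $U_{2}:=\bigcap_{h\in H}hU_{1}h^{-1}$ is still open. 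By construction $U_{2}$ is compact open and normalized by $H$, so the product $HU_{2}$ is a subgroup, compact as the image of $H\times U_{2}$, open as a union of translates of $U_{2}$, and contains $H$.

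Granted this refinement, the weak contractibility of $X^{H}$ is formal. For each $x\in X^{H}$ the isotropy group $G_{x}$ is compact open and contains $H$, so $x\in X^{G_{x}}$, giving
\[
X^{H}\;=\;\bigcup_{U\in\Comop,\;U\supseteq H} X^{U}.
\]
The indexing poset is nonempty by the refinement above, and directed under reverse inclusion because the intersection of two compact open subgroups containing $H$ is again compact open and contains $H$; moreover, for $U_{1}\supseteq U_{2}$ the inclusion $X^{U_{1}}\subseteq X^{U_{2}}$ is a sub-$CW$-complex inclusion. Each $X^{U}$ is weakly contractible because $U\in\Comop$ and $X=\EGF{G}{\Comop}$. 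Since any map $S^{n}\to X^{H}$ has compact image and hence factors through a finite subcomplex, hence through some $X^{U}$, I conclude $\pi_{n}(X^{H})=\colim_{U}\pi_{n}(X^{U})=0$, so $X^{H}$ is weakly contractible. The homotopy characterization then identifies $X$ with $\eub{G}$, completing the argument.
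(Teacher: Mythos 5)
The paper states this lemma without proof (referring to the survey \cite{Lueck(2005s)} for this material), so your argument must stand on its own; it does, and it is the natural route. You correctly reduce, via Theorem~\ref{the:G-homotopy_characterization_of_EGF(G)(calf)}\eqref{the:G-homotopy_characterization_of_EGF(G)(calf):characterization}, to showing that for a model $X$ of $\EGF{G}{\Comop}$ and any compact subgroup $H\subseteq G$ the fixed set $X^H$ is weakly contractible, and the one genuine input is the refinement of van Dantzig's theorem that every compact subgroup of a totally disconnected locally compact group is contained in a compact open subgroup. One small remark: your proof of that refinement is over-engineered. Once you observe that $U_0$ has only finitely many $H$-conjugates --- because $N_G(U_0)\supseteq U_0$ is open, so $N_H(U_0)=H\cap N_G(U_0)$ has finite index in the compact group $H$ --- the intersection $U_1=\bigcap_i h_iU_0h_i^{-1}$ over a set of coset representatives is already compact, open, and normalized by all of $H$ (conjugation by $k\in H$ merely permutes the $h_iU_0h_i^{-1}$), so $HU_1$ is the desired compact open subgroup and your $U_2$ equals $U_1$; the tube-lemma step adds nothing. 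Alternatively, the tube lemma applied directly to the conjugation map and $U_0$ already produces an $H$-invariant compact open $V\subseteq U_0$, and then $HV$ works, with no need for the intermediate $U_1$. Either route is fine, but you should not need both. The directed-colimit argument for weak contractibility of $X^H$ --- writing $X^H=\bigcup_{U\in\Comop,\,U\supseteq H}X^U$ as a directed union of weakly contractible subcomplexes and using that compact subsets of a $CW$-complex lie in finite subcomplexes, hence in some $X^U$ since a finite intersection of compact open subgroups containing $H$ is again one --- is correct.
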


\begin{definition}[\blue{$\calf$-numerable $G$-space}]
An \emphred{$\calf$-numerable $G$-space}
is a $G$-space, for which there exists an open covering $\{U_i \mid i
\in I\}$ by $G$-subspaces satisfying:
\begin{enumerate}

\item For each $i \in I$ there exists a
$G$-map $U_i \to G/G_i$ for some $G_i \in \calf$;

\item  There is a
locally finite partition of unity $\{e_i \mid i \in I\}$ subordinate
to $\{U_i \mid i \in I\}$ by
$G$-invariant functions $e_i \colon X \to [0,1]$.
\end{enumerate}
\end{definition}

Notice that we do not demand that the isotropy groups of a
$\calf$-numerable $G$-space belong to $\calf$.

If $f \colon X \to Y$
is a $G$-map and $Y$ is $\calf$-numerable, then $X$ is also $\calf$-numerable.

\begin{lemma}
A $G$-$CW$-complex is $\calf$-numerable if and only if each isotropy group
appears as a subgroup of an element in $\calf$.
\end{lemma}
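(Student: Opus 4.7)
The ``only if'' direction is immediate from the definitions. Given a point $x \in X$ and an $\calf$-numerable structure $\{U_i, f_i \colon U_i \to G/G_i, e_i\}_{i \in I}$ on $X$, choose $i$ with $e_i(x) > 0$, so that $x \in U_i$. Since $f_i$ is $G$-equivariant, the isotropy group $G_x$ must fix $f_i(x) = gG_i \in G/G_i$; hence $G_x \subseteq gG_ig^{-1}$, which belongs to $\calf$ because $\calf$ is closed under conjugation.

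For the ``if'' direction, assume that every isotropy group $G_x$ of $X$ is contained in some member of $\calf$. I would build the $\calf$-numerable structure inductively along the skeletal filtration $\emptyset = X_{-1} \subseteq X_0 \subseteq X_1 \subseteq \cdots$. For each equivariant $n$-cell $G/H \times D^n$ attached in passing from $X_{n-1}$ to $X_n$, the hypothesis yields $F \in \calf$ with $H \subseteq F$, so one has a canonical $G$-map $G/H \times D^n \to G/H \to G/F$. Using the equivariant mapping cylinder structure of the attaching pushout and a collar, one enlarges the image of the open cell in $X_n$ to a $G$-invariant open neighborhood admitting an extension of this $G$-map to $G/F$. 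The partition of unity on $X_{n-1}$, combined with equivariant bump functions supported in these new neighborhoods, produces an $\calf$-numerable structure on $X_n$. This step is routine: $G$-$CW$-complexes are paracompact and normal, so partitions of unity can be constructed and equivariantized by averaging over the compact isotropy groups involved.

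The main obstacle is the passage from the skeleta to the colimit $X = \bigcup_n X_n$, because naively concatenating the partitions of unity obtained at each stage need not yield a locally finite family. To circumvent this, I would use the Milnor telescoping trick for partitions of unity: rescale the $n$-th stage functions by a $G$-invariant cutoff that vanishes outside a collar of $X_n$ in $X_{n+1}$ and whose contributions sum geometrically, then renormalize. Because $X$ carries the colimit topology, a neighborhood of any point meets only finitely many skeleta nontrivially at each scale, so the resulting collection is locally finite. Together with the $G$-maps to orbits of the form $G/F$, $F \in \calf$, constructed skeleton by skeleton, this yields the desired $\calf$-numerable structure on $X$.
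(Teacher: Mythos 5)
Your ``only if'' direction is correct, and the idea of thickening each open equivariant cell to a $G$-invariant open neighborhood that carries a $G$-map to $G/F$ for suitable $F \in \calf$ is the right way to produce the open cover: push the open cell $G/H_\alpha \times \mathring D^{n_\alpha}$ outward through the higher skeleta using collar retractions inside the higher cells, yielding a $G$-invariant open set $V_\alpha \subseteq X$ with a $G$-retraction onto the open cell, and hence a $G$-map $V_\alpha \to G/H_\alpha \to G/F_\alpha$.

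Where the plan goes wrong is in producing the subordinate $G$-invariant partition of unity. The assertion that one can ``equivariantize by averaging over the compact isotropy groups involved'' does not hold here: in the generality of this lemma the members of $\calf$ are arbitrary closed subgroups of a locally compact group and need not be compact, so the isotropy groups (which only have to sit inside such members) can be non-compact and there is no Haar average available; and even for compact isotropy groups, averaging a locally supported bump function over a single stabilizer does not make it $G$-invariant, since $G$-invariance is a condition along the entire $G$-orbit, not a pointwise one. Separately, the skeleton-by-skeleton construction of the partition of unity together with the Milnor-style rescaling in the colimit is both delicate to make precise and unnecessary.

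The standard way to close the gap is to pass to the orbit space. Since $X$ is a $G$-$CW$-complex, $G\backslash X$ is an ordinary $CW$-complex (each equivariant cell $G/H_i \times D^n$ descends to a cell $D^n$), hence paracompact. Because each $V_\alpha$ is $G$-invariant, the cover $\{V_\alpha\}$ descends to an open cover of $G\backslash X$; paracompactness gives a locally finite partition of unity subordinate to this cover, and precomposing with the projection $X \to G\backslash X$ produces a locally finite $G$-invariant partition of unity on $X$ subordinate to $\{V_\alpha\}$. Together with the $G$-maps $V_\alpha \to G/F_\alpha$ this exhibits $X$ as $\calf$-numerable, with no averaging and no telescope.
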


\begin{definition}[\blue{Classifying numerable $G$-space for a family of subgroups}]
\label{def:Classifying_numerable_G-space_for_a_family_of_subgroups}
Let $\calf$ be a family of subgroups of $G$. 
A model \red{$\JGF{G}{\calf}$} for the 
\emphred{classifying numerable $G$-space for the family of subgroups $\calf$}
is a $G$-space which has  the following properties: 
\begin{enumerate}
\item  $\JGF{G}{\calf}$ is $\calf$-numerable;
\item For any $\calf$-numerable
$G$-space $X$ there is up to $G$-homotopy precisely one $G$-map $X \to \JGF{G}{\calf}$.
\end{enumerate}

We abbreviate $\red{\jub{G}} := \JGF{G}{\Com}$
and call it the \emphred{universal numerable $G$-space for proper $G$-actions}
or briefly \emph{the universal space for proper $G$-actions}.
We also write $\red{JG} = \JGF{G}{\TR}$.
\end{definition}

\begin{theorem}[\blue{Homotopy characterization of $\JGF{G}{\calf}$}]
\label{the:G-homotopy_characterization_of_JGF(G)(calf)}
Let $\calf$ be a family of subgroups.
\begin{enumerate}

\item \label{the:G-homotopy_characterization_of_JGF(G)(calf):existence}
For any family $\calf$ there exists a model for $\JGF{G}{\calf}$
whose isotropy groups belong to $\calf$;

\item \label{the:G-homotopy_characterization_of_JGF(G)(calf):characterization}
Let $X$ be an $\calf$-numerable $G$-space. Equip $X \times X$ with the diagonal action and
let $\pr_i \colon X \times X \to X$ be the projection onto the $i$-th factor for $i = 1,2$.
Then $X$ is a model for $\JGF{G}{\calf}$ if and only if for each $H \in \calf$ there 
is $x \in X$ with $H \subseteq G_x$ and $\pr_1$ and $\pr_2$ are $G$-homotopic.

\item \label{the:G-homotopy_characterization_of_JGF(G)(calf):necessary_condition}
For $H \in \calf$ the $H$-fixed point set $\JGF{G}{\calf}^H$ is
contractible.
\end{enumerate}
\end{theorem}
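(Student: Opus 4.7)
My plan is to establish \eqref{the:G-homotopy_characterization_of_JGF(G)(calf):existence} by an explicit construction, prove both directions of \eqref{the:G-homotopy_characterization_of_JGF(G)(calf):characterization}, and then derive \eqref{the:G-homotopy_characterization_of_JGF(G)(calf):necessary_condition} as a direct corollary of the arguments used for \eqref{the:G-homotopy_characterization_of_JGF(G)(calf):characterization}.

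For \eqref{the:G-homotopy_characterization_of_JGF(G)(calf):existence} I would build a model $J$ by Milnor's infinite join construction. Set $Z := \coprod_{H \in \calf} G/H$ and let $J := Z^{*\infty}$, where a point is described by a sequence of vertices in $Z$ together with barycentric coordinates $(t_n)_{n \ge 0}$ summing to $1$ with only finitely many nonzero entries. The coordinate functions $t_n \colon J \to [0,1]$ together with the canonical vertex $G$-maps into $G/H$ furnish a $\calf$-numerable atlas on $J$ whose isotropy groups all lie in $\calf$. The universal property is verified by the usual partition-of-unity formula: given an $\calf$-numerable space equipped with data $(\{U_i\}, \{e_i\}, U_i \to G/H_i)$ one writes down an explicit $G$-map into $J$, and the straight-line homotopy along join coordinates yields uniqueness up to $G$-homotopy.

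For \eqref{the:G-homotopy_characterization_of_JGF(G)(calf):characterization} the direction ($\Rightarrow$) is routine: for each $H \in \calf$ the orbit $G/H$ is $\calf$-numerable (covered by itself via $\id$ with the constant function $1$), so the universal property gives a $G$-map $G/H \to X$, and the image of $eH$ is a point $x$ with $G_x \supseteq H$; moreover $X \times X$ is $\calf$-numerable (using $\{U_i \times U_j\}$ together with the closure of $\calf$ under conjugation and finite intersection to refine each $G/H_i \times G/H_j$ into single $G$-orbits $G/L$ with $L \in \calf$), so the two projections must be $G$-homotopic by uniqueness. For the direction ($\Leftarrow$), uniqueness of $G$-maps $Y \to X$ is immediate from (b): any two $G$-maps $f, g \colon Y \to X$ combine into $(f,g) \colon Y \to X \times X$, and composing with the $G$-homotopy $\pr_1 \simeq_G \pr_2$ yields $f \simeq_G g$. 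For existence I would first establish the key lemma that $X^H$ is contractible for every $H \in \calf$: by (a) pick $x_0 \in X^H$, take $H$-fixed points in $\pr_1 \simeq_G \pr_2$, and precompose with the slice $x \mapsto (x, x_0)$ to obtain a homotopy from $\id_{X^H}$ to the constant map at $x_0$. Using the model $J$ from \eqref{the:G-homotopy_characterization_of_JGF(G)(calf):existence}, I would then construct a $G$-map $\beta \colon J \to X$ by induction on equivariant cells of $J$: the $0$-skeleton is handled by (a), and each cell $G/H \times D^n$ extends because the attaching data lands in the contractible space $X^H$. Composing the canonical $G$-map $Y \to J$ with $\beta$ supplies the required $G$-map $Y \to X$.

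Finally, \eqref{the:G-homotopy_characterization_of_JGF(G)(calf):necessary_condition} is exactly the key lemma above applied to $X = \JGF{G}{\calf}$, whose hypotheses (a) and (b) are supplied by the already-established direction ($\Rightarrow$) of \eqref{the:G-homotopy_characterization_of_JGF(G)(calf):characterization}. The main obstacle in the whole argument is the existence half of ($\Leftarrow$): because $X$ carries no convex structure, local sections cannot be averaged directly inside $X$, and one is forced to use the Milnor model $J$ as an intermediary, with contractibility of the fixed-point sets $X^H$ extracted from (a) and (b) supplying the extensions needed at each cellular stage of the construction of $\beta$.
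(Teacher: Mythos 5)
Your overall architecture is sound: the Milnor join construction for \eqref{the:G-homotopy_characterization_of_JGF(G)(calf):existence}, the contractibility-of-$X^H$ key lemma, and the derivation of \eqref{the:G-homotopy_characterization_of_JGF(G)(calf):necessary_condition} from it are all correct in outline, and the uniqueness half of \eqref{the:G-homotopy_characterization_of_JGF(G)(calf):characterization} is handled cleanly. However, there is a real gap precisely where you flag the ``main obstacle''. The Milnor model $J = Z^{*\infty}$ with the strong (coordinate-function) topology --- the topology you need in order to have a partition of unity $\{t_n\}$ and hence $\calf$-numerability and the universal property --- is \emph{not} a $G$-$CW$-complex, so ``induction on equivariant cells of $J$'' does not apply. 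Recall that in this section $G$ is an arbitrary locally compact second countable Hausdorff group, so the orbits $G/H$ themselves need not be $CW$-complexes, and even when $G$ is discrete the strong topology on the infinite join differs from the colimit/$CW$ topology. If you switch to the weak topology to make $J$ a $G$-$CW$-complex, you lose the continuity of the $t_n$'s and thereby the $\calf$-numerable structure and the universal property; you would then have to invoke separately the nontrivial fact that the weak- and strong-topology versions of the infinite join are $G$-homotopy equivalent (Milnor's lemma), a step your argument does not supply. The correct proof of the existence half of ($\Leftarrow$) is genuinely harder than an obstruction-theoretic cell-by-cell extension: it proceeds in the style of Dold's theorem on numerable universal bundles, either by inductively extending over the join filtration $Z^{*0}\subseteq Z^{*1}\subseteq\cdots$ and carefully managing the partition-of-unity data, or by building a coherent family of ``averaging'' homotopies $X^{n}\times\Delta^{n-1}\to X$ out of iterates of the single homotopy $\pr_1\simeq_G\pr_2$. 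Contractibility of $X^H$ alone is not the input that makes the glueing go through; the coherence of these higher homotopies is.

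A secondary, more minor point: your justification that $X\times X$ is $\calf$-numerable is shaky for non-discrete $G$, because the $G$-orbit decomposition of $G/H_i\times G/H_j$ does not generally refine the topology into open $G$-invariant pieces. You do not need this: the paper records immediately after the definition that the pullback of an $\calf$-numerable $G$-space along any $G$-map is $\calf$-numerable, and $\pr_1\colon X\times X\to X$ is such a map. For comparison, the paper itself gives no argument and simply cites~\cite[Theorem~2.5]{Lueck(2005s)} for all three parts.
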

\begin{proof}
See~\cite[Theorem~2.5]{Lueck(2005s)}.
\end{proof}

\begin{example}[\blue{Universal $G$-principal bundle}]
The projection $JG \to G\backslash JG$ is the \red{universal $G$-principal bundle}
for numerable free proper $G$-spaces.
\end{example}

\begin{theorem}[\blue{Comparison of $\EGF{G}{\calf}$ and $\JGF{G}{\calf}$}, 
\green{L\"uck (2005)}]
\label{the:comparison_EGFGcalf_and_JGFGcalf}
\ \newline
\vspace*{-4mm}
\begin{enumerate}

\item There is up to $G$-homotopy precisely one $G$-map
$$\phi \colon \EGF{G}{\calf} \to \JGF{G}{\calf};$$

\item
It is a $G$-homotopy equivalence if one of the following conditions is satisfied:

\begin{enumerate}

\item Each element in $\calf$ is open and closed;

\item $G$ is discrete;

\item $\calf$ is $\Com$;
\end{enumerate}

\item Let $G$ be totally disconnected. Then $EG \to JG$ is a $G$-homotopy equivalence
if and only if $G$ is discrete.
\end{enumerate}
\end{theorem}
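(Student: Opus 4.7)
The plan is to handle the three parts sequentially. Part~(1) will follow from the universal property of $\JGF{G}{\calf}$; Part~(2) will reduce to producing $G$-$CW$-models for $\JGF{G}{\calf}$ with isotropy in $\calf$ and then appealing to Theorem~\ref{the:G-homotopy_characterization_of_EGF(G)(calf)}; and Part~(3) will combine Part~(2) with the lemma on totally disconnected groups and a fixed-point analysis.

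For (1) I would first observe that $\EGF{G}{\calf}$ is itself $\calf$-numerable: as a $G$-$CW$-complex whose isotropy groups lie in $\calf$, this is precisely the content of the lemma characterizing $\calf$-numerable $G$-$CW$-complexes. The defining universal property of $\JGF{G}{\calf}$ then immediately yields a $G$-map $\phi$, unique up to $G$-homotopy.

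For (2) the strategy is to show, under each of the hypotheses (a),~(b),~(c), that $\JGF{G}{\calf}$ has the $G$-homotopy type of a $G$-$CW$-complex with isotropy in $\calf$. Once established, combining the fixed-point contractibility of $\JGF{G}{\calf}$ from Theorem~\ref{the:G-homotopy_characterization_of_JGF(G)(calf)}(iii) with Theorem~\ref{the:G-homotopy_characterization_of_EGF(G)(calf)}(ii) identifies $\JGF{G}{\calf}$ (up to $G$-homotopy) as a model for $\EGF{G}{\calf}$, whence $\phi$ is a $G$-homotopy equivalence by uniqueness of models. Cases~(a) and~(b) reduce to one another since every subgroup of a discrete group is open and closed; concretely, starting from an $\calf$-numerable cover $\{U_i, f_i\colon U_i\to G/H_i, e_i\}$ of $\JGF{G}{\calf}$, form the equivariant nerve attached to the partition of unity $\{e_i\}$. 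Its simplices carry orbits of type $G/(H_{i_0}\cap\cdots\cap H_{i_n})$, which belong to $\calf$ by closure under finite intersections, and because each $H_{i_k}$ is open and closed these orbits are discrete $G$-spaces; after barycentric subdivision (cf.~Example~\ref{exa:simplicial_actions}) the nerve becomes a $G$-$CW$-complex $Z$ with isotropy in $\calf$, and the partition of unity defines a $G$-map $\JGF{G}{\calf}\to Z$ which is $G$-homotopy inverse to the classifying map $Z\to \JGF{G}{\calf}$. Case~(c), $\calf=\Com$, is the technically deepest and requires an independent construction of a $G$-$CW$-model for $\jub{G}$, for instance via slice-theoretic techniques or the Abels compactification of proper actions.

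For (3) the direction ($\Leftarrow$) is immediate from~(2b) applied with $\calf=\TR$. For ($\Rightarrow$) I would argue contrapositively: assume $G$ is totally disconnected but not discrete, so by van Dantzig's theorem there is a non-trivial compact open subgroup $H\leq G$. Lemma~\ref{lem:totally_disconnected_groups_and_calcom} identifies $\eub{G}=\EGF{G}{\Comop}$, and Theorem~\ref{the:G-homotopy_characterization_of_EGF(G)(calf)}(ii) then gives that $\eub{G}^H$ is non-empty and weakly contractible, whereas $EG^H=\emptyset$ because $EG$ is free. I would combine this with the $G$-homotopy equivalence $\eub{G}\simeq_G\jub{G}$ from~(2c) and the uniqueness of $G$-maps $EG\to\jub{G}$ from the universal property of $\jub{G}$ to assemble the commutative square (up to $G$-homotopy) with vertices $EG,JG,\eub{G},\jub{G}$ and canonical edges. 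A fixed-point analysis along this square --- using in particular that $JG^H=\emptyset$ (since $JG$ is free) whereas $\jub{G}^H\neq\emptyset$, so that $JG\to\jub{G}$ is itself not a $G$-homotopy equivalence on $H$-fixed points --- then produces the obstruction which prevents $EG\to JG$ from being a $G$-homotopy equivalence. The hard part is Part~(2c), the case $\calf=\Com$: this is the technical core of the theorem and relies on nontrivial structural input about proper $G$-actions; Part~(3)($\Rightarrow$) is a secondary difficulty where one must carefully propagate the fixed-point failure through the four-vertex diagram.
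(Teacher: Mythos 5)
The paper itself does not prove this theorem; it refers to \cite[Theorem~3.7]{Lueck(2005s)}, so there is no in-paper argument to compare against. Your Part~(1) is correct, and your equivariant-nerve sketch for (2a)/(2b) is a plausible direction. You correctly flag (2c) as the technical core, but since you do not actually carry it out, that part remains unproved in your proposal.

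The serious gap is in your argument for (3)($\Rightarrow$). You take a non-trivial compact open $H\leq G$, note that $JG^H=\emptyset$ while $\jub{G}^H\neq\emptyset$, conclude that $JG\to\jub{G}$ is not a $G$-homotopy equivalence, and claim this ``produces the obstruction'' to $EG\to JG$ being one. It does not. Both $EG$ and $JG$ are free $G$-spaces (a $\TR$-numerable $G$-space necessarily has trivial isotropy groups), so $EG^H=JG^H=\emptyset$ for every non-trivial $H$, and the restriction of $EG\to JG$ to $H$-fixed points is the map $\emptyset\to\emptyset$, which is vacuously a homotopy equivalence. Indeed, the failure of $JG\to\jub{G}$ on $H$-fixed points that you invoke occurs for \emph{every} group $G$ with a non-trivial compact subgroup, including finite discrete groups, for which $EG\to JG$ \emph{is} a $G$-homotopy equivalence by your own (2b); so the obstruction you describe is present precisely when it should be absent. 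Fixed-point sets of non-trivial subgroups simply carry no information about a $G$-map between free $G$-spaces, and your four-vertex diagram does not produce an invariant that distinguishes $EG$ from $JG$. The genuine obstruction in (3)($\Rightarrow$) must come from the underlying topology of the free $G$-spaces themselves --- roughly, for $G$ totally disconnected non-discrete, the free cells $G\times D^n$ of a free $G$-$CW$-complex inherit the non-discrete topology of $G$, and one has to exhibit a topological property that a $\TR$-numerable model like the Milnor join has but no free $G$-$CW$-complex can --- and that idea is missing from your proposal.
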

\begin{proof}
See~\cite[Theorem~3.7]{Lueck(2005s)}.
\end{proof}

Next we want to illustrate that the space $\eub{G} = \jub{G}$ often has
\red{very nice geometric models}
and \red{appear naturally in many interesting situations}.

Let \red{$C_0(G)$} be the Banach space of complex valued functions of $G$ 
vanishing at infinity with the supremum-norm. 
The group $G$ acts isometrically on $C_0(G)$ by
$(g\cdot f)(x) := f(g^{-1}x)$ for $f \in C_0(G)$ and $g,x \in G$.
Let \red{$PC_0(G)$} be the subspace of $C_0(G)$ consisting of functions $f$ such that
$f$ is not identically zero and has non-negative real numbers as values.

\begin{theorem}[\blue{Operator theoretic model}, \green{Abels (1978)}]
The $G$-space $PC_0(G)$ is a model for $\jub{G}$.
\end{theorem}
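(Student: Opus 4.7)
My plan is to verify the three conditions from the characterization of $\JGF{G}{\calf}$ in Theorem~\ref{the:G-homotopy_characterization_of_JGF(G)(calf)}: that $PC_0(G)$ is $\Com$-numerable, that for each compact subgroup $H \le G$ there exists $f \in PC_0(G)$ with $H \subseteq G_f$, and that the two projections $\pr_1, \pr_2 \colon PC_0(G) \times PC_0(G) \to PC_0(G)$ are $G$-homotopic.

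First I would handle the fixed-point condition. Given a compact subgroup $H \le G$, I would choose any nonzero continuous $\phi \colon G \to [0,\infty)$ of compact support (available since $G$ is locally compact Hausdorff with a countable basis) and average it against normalized Haar measure on $H$ to obtain $f(x) := \int_H \phi(h^{-1}x)\, dh$. Then $f \in C_0(G)$, is pointwise nonnegative, is not identically zero, and is $H$-invariant, so $H \subseteq G_f$ as required.

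Next I would produce the homotopy between projections via the affine formula
$$F \colon PC_0(G) \times PC_0(G) \times [0,1] \longrightarrow PC_0(G), \qquad F(f_1,f_2,t) := t f_1 + (1-t) f_2.$$
This $F$ is continuous in the sup norm, $G$-equivariant since the $G$-action on $C_0(G)$ is linear, and lands in $PC_0(G)$ because pointwise nonnegativity is preserved and $tf_1+(1-t)f_2$ can be identically zero only when $t=0$ forces $f_2 \equiv 0$ or $t = 1$ forces $f_1 \equiv 0$, neither of which happens. At $t=1$ one recovers $\pr_1$, and at $t=0$ one recovers $\pr_2$.

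The hard step is verifying $\Com$-numerability, which I expect to be the main obstacle. My approach is first to show that the $G$-action on $PC_0(G)$ is proper: for $f \in PC_0(G)$ with $c := \|f\|_\infty / 2 > 0$, the superlevel set $\{x : f(x) > c\}$ is compact in $G$, and this compactness of the essential support forces the set of $g \in G$ sending a small sup-norm neighborhood of $f$ into a small neighborhood of $f'$ to be relatively compact. Once properness is in hand, I would invoke a slice theorem for proper actions of locally compact groups on completely regular spaces (Abels--Palais type) to cover $PC_0(G)$ by open $G$-tubes $G \cdot S_{f_0} \cong G \times_{H_{f_0}} S_{f_0}$ where $H_{f_0} = G_{f_0}$ is compact, each tube admitting a canonical equivariant map to $G/H_{f_0}$. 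Since $PC_0(G)$ is metrizable as a subset of the Banach space $C_0(G)$, the quotient $G \backslash PC_0(G)$ is paracompact, so a locally finite refinement of such a tube cover together with a subordinate $G$-invariant partition of unity yields the required $\Com$-numerability.
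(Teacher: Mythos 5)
The paper supplies no proof of this theorem, referring the reader directly to Abels~\cite{Abels(1978)}, so there is no internal argument to compare against. Your strategy of verifying the three conditions in Theorem~\ref{the:G-homotopy_characterization_of_JGF(G)(calf)} is the natural one, and the essential ideas are all present. The Haar-averaging construction of a function $f$ with $H\subseteq G_f$ is correct and complete, and the affine $G$-homotopy $F(f_1,f_2,t)=tf_1+(1-t)f_2$ between $\pr_1$ and $\pr_2$ works for the reason you intend, although the justification you wrote is garbled: for $0<t<1$ the combination is a sum of two nonnegative functions, hence vanishes identically only if both $f_1$ and $f_2$ do, which never happens; the cases $t=0$ and $t=1$ are the original $f_2$ and $f_1$ respectively.

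The properness step is the one place that needs sharpening. The threshold $c=\|f\|_\infty/2$ tied to $f$ alone does not do the work; what you actually want is to pick a point $x_0'$ with $a':=f'(x_0')>0$, take sup-norm balls $V$ about $f$ and $W$ about $f'$ with radii summing to less than $a'/2$, and observe that if $h\in V$ and $gh\in W$ then $|f(g^{-1}x_0')-f'(x_0')|<a'/2$, so $g^{-1}x_0'$ lies in the compact set $\{y:f(y)\ge a'/2\}$, hence $g$ lies in a fixed compact subset of $G$. For the numerability conclusion you should also record that the sup-norm metric on $C_0(G)$ is $G$-invariant (the action is by linear isometries), as this is what makes the induced orbit-space pseudometric a genuine metric and therefore $G\backslash PC_0(G)$ metrizable and paracompact; without that remark the paracompactness of the quotient is unjustified. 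With those adjustments, the slice-theorem route you describe --- the slice theorem itself being one of the main results of Abels' paper, so there is no circularity --- completes the $\Com$-numerability check.
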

\begin{proof}
See~\cite[Theorem~2.4]{Abels(1978)}.
\end{proof}

\begin{theorem}
Let $G$ be discrete.  A model for $\jub{G}$  is the space
$$
X_G = \bigg\{f \colon G \to [0,1] \;\bigg| \;f \text{ has finite support, }
\sum_{g \in G} f(g) = 1\bigg\}
$$
with the topology coming from the supremum norm.
\end{theorem}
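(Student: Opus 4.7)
The plan is to apply Theorem~\ref{the:G-homotopy_characterization_of_JGF(G)(calf)}(ii) with $X=X_G$ and $\calf=\Com$; since $G$ is discrete, $\Com$ equals the family of finite subgroups of $G$. Three conditions must be checked: that $X_G$ is $\Com$-numerable, that every finite subgroup of $G$ fixes a point of $X_G$, and that the two projections $\pr_1,\pr_2\colon X_G\times X_G\to X_G$ are $G$-homotopic.

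Two of the conditions are immediate. For the fixed-point condition, given any finite $H\le G$ and any $g_0\in G$, the ``barycenter'' $\frac{1}{|H|}\sum_{h\in H}\delta_{hg_0}\in X_G$ is $H$-invariant. For the homotopy condition, $X_G$ is convex in the vector space of finitely supported real functions on $G$, the sup norm makes convex combinations jointly continuous, and $G$ acts by linear isometries; hence the straight-line homotopy $H_t(f_1,f_2)=(1-t)f_1+t\,f_2$ is a $G$-equivariant homotopy from $\pr_1$ to $\pr_2$.

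The essential work is to verify $\Com$-numerability. Equipped with the sup norm, $X_G$ is a metric space and therefore paracompact. For each $f\in X_G$ let $S=\supp(f)$, a finite subset of $G$. Its setwise stabilizer $G_S$ injects into the symmetric group on $S$ (because left translation of $G$ on itself is free), so $G_S$, and a fortiori the point-stabilizer $G_f\le G_S$, is finite. Choosing $\epsilon>0$ smaller than both $\tfrac{1}{3}\min_{s\in S}f(s)$ and $\tfrac{1}{2}\min\{\|g\cdot f-f\|_\infty : g\in G_S\setminus G_f\}$, the open $\epsilon$-ball $V_f$ around $f$ has two properties: every $f'\in V_f$ has its large values (those exceeding $2\epsilon$) indexed exactly by $S$, and the translates $g V_f$ are pairwise disjoint as $g$ ranges over distinct cosets of $G_f$ in $G$. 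The $G$-saturation $G V_f$ therefore decomposes as a disjoint union indexed by $G/G_f$, and $g f'\mapsto g G_f$ defines a continuous $G$-equivariant map $G V_f\to G/G_f$ with $G_f\in\Com$.

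The sets $\{G V_f\}_{f\in X_G}$ form an open $G$-invariant cover of $X_G$. By paracompactness together with the properness of the $G$-action (built into the local structure just described, since each $V_f$ has finite isotropy $G_f$), standard techniques produce a $G$-invariant locally finite open refinement and a subordinate $G$-invariant partition of unity, exhibiting $X_G$ as $\Com$-numerable. This averaging step is where the main technical obstacle lies, because naive averaging over the noncompact group $G$ destroys local finiteness; the remedy is to pass to a $G$-equivariant refinement and exploit the finite isotropy groups to perform the averaging fiberwise along the product decomposition $G V_f \cong G/G_f \times V_f$. Once the three conditions are established, Theorem~\ref{the:G-homotopy_characterization_of_JGF(G)(calf)}(ii) gives that $X_G$ is a model for $\jub{G}$.
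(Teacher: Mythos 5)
The paper itself does not prove this statement --- the section's introductory remark defers the proofs to the survey~\cite{Lueck(2005s)} --- so I assess your argument on its own terms. Your overall strategy is right: reduce to Theorem~\ref{the:G-homotopy_characterization_of_JGF(G)(calf)}~(2). The barycenter fixed-point argument and the straight-line homotopy $(1-t)f_1+tf_2$ are correct and complete, and your local analysis for numerability is sound: the $\epsilon$-ball $V_f$ you build does satisfy $\{g : gV_f\cap V_f\neq\emptyset\}=G_f$ with $G_f$ finite, $V_f$ is $G_f$-invariant because $G$ acts by isometries, and $GV_f$ admits a $G$-map to $G/G_f$. One small correction: the local model is the twisted product $G\times_{G_f}V_f$, not the direct product $G/G_f\times V_f$ as you wrote; the finite group $G_f=G_S$ will in general act nontrivially on $V_f$.

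The genuine gap is your parenthetical claim that properness of the action is ``built into the local structure just described, since each $V_f$ has finite isotropy $G_f$.'' Showing $\{g:gV_f\cap V_f\neq\emptyset\}$ is finite for each $f$ is not yet properness in the paper's sense: one must also bound $\{g:gV_x\cap W_y\neq\emptyset\}$ for $x$ and $y$ in \emph{different} orbits. This does hold for $X_G$ --- if $s_0$ realizes $\max_g x(g)=m>0$, then for balls of radius $m/4$ around $x$ and around $y$, any $g$ with $gV_x\cap V_y\neq\emptyset$ forces $y(gs_0)>m/2>0$, hence $g\in\supp(y)\cdot s_0^{-1}$, a finite set --- but you need to prove it rather than gesture at it, because it is exactly what makes the partition-of-unity step go through. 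With properness established, the cleanest finish is not the fiberwise averaging you sketch but passage to the quotient: a proper isometric action of a discrete group on a metric space has closed orbits, so the orbit pseudometric on $G\backslash X_G$ is a genuine metric, $G\backslash X_G$ is therefore metrizable and paracompact, and one takes a locally finite partition of unity on $G\backslash X_G$ subordinate to the images of the sets $GV_f$ and pulls it back along the open quotient map. The pullbacks are $G$-invariant, locally finite, and subordinate to $\{GV_f\}$, which is precisely the $\Com$-numerability required --- and this route sidesteps the local-finiteness bookkeeping you correctly identified as the main obstacle in the averaging approach.
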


\begin{theorem}[\blue{Simplicial Model}]
Let $G$ be discrete.  Let $P_{\infty}(G)$ be the geometric realization of the simplicial
set whose $k$-simplices consist of $(k+1)$-tupels
$(g_0,g_1, \ldots , g_k)$ of elements $g_i$ in $G$. 

Then $P_{\infty}(G)$ is a model for $\eub{G}$.
\end{theorem}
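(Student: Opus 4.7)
The plan is to verify the two defining conditions of Theorem~\ref{the:G-homotopy_characterization_of_EGF(G)(calf)}(ii) for the family $\calf = \Com$; since $G$ is discrete one has $\Com = \Fin$, so what has to be checked is that $P_{\infty}(G)$ is a $G$-$CW$-complex with finite isotropy groups and that $P_{\infty}(G)^H$ is weakly contractible for every finite subgroup $H \leq G$.

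First I would treat the $G$-$CW$-structure and the isotropy. The group $G$ acts simplicially on $P_{\infty}(G)$ by $g \cdot (g_0,\ldots,g_k) = (gg_0,\ldots,gg_k)$, so passing to the barycentric subdivision yields a $G$-$CW$-structure by Example~\ref{exa:simplicial_actions}. For a point in the interior of the simplex spanned by $S = \{g_0,\ldots,g_k\}$, an element $g$ fixing it must satisfy $gS = S$, and then in particular $gg_0 \in S$, which forces $g \in Sg_0^{-1}$, a finite set. Hence the isotropy group is finite, which is what compactness reduces to in the discrete setting.

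Next I would show that $P_{\infty}(G)^H$ is contractible for each finite subgroup $H \leq G$. A point of $P_{\infty}(G)$ is identified with a finitely supported probability measure $\mu = \sum_g t_g \delta_g$ on $G$, and the $G$-action becomes $h \cdot \mu = \sum_g t_g \delta_{hg}$; so $H$-fixed points correspond exactly to $H$-invariant finitely supported probability measures on $G$. This set is convex, since a convex combination of $H$-invariant measures with finite support is again $H$-invariant with finite support, and it is nonempty because $\mu_0 := \tfrac{1}{|H|}\sum_{h \in H} \delta_h$ belongs to it. The affine homotopy $F(\mu,t) := (1-t)\mu + t\mu_0$ then contracts $P_{\infty}(G)^H$ onto $\mu_0$, and continuity in the $CW$-topology is checked by observing that for each $\mu$ with $\supp(\mu) = S$ the entire path lies in the finite-dimensional subcomplex spanned by $S \cup H$, on which convex-combination is manifestly continuous.

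Combining both verifications, Theorem~\ref{the:G-homotopy_characterization_of_EGF(G)(calf)}(ii) identifies $P_{\infty}(G)$ as a model for $\EGF{G}{\Com} = \eub{G}$. The only mild obstacle is the continuity check for the contracting homotopy in the weak topology of the infinite-dimensional realization, but this is routine once one restricts to the finite subcomplexes containing the given support data; the conceptual heart of the argument is simply that a nonempty convex space of invariant measures is contractible.
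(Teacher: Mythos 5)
The paper states this result without proof, so there is no argument in the source to compare against; your proposal stands on its own and is essentially correct. One caveat concerns the printed statement rather than your reasoning: read literally, a ``simplicial set whose $k$-simplices are $(k+1)$-tuples $(g_0,\ldots,g_k)$'' is the bar construction $E_\bullet G$, whose realization is $EG$ with a \emph{free} $G$-action, and that cannot model $\eub{G}$ once $G$ has torsion (the $H$-fixed sets would be empty). What is actually meant --- consistent with the remark immediately following, that $P_\infty(G)$ and $X_G$ have the same underlying set --- is the full simplicial \emph{complex} on the vertex set $G$, whose realization is the set of finitely supported probability measures on $G$ in the weak topology. Your proof implicitly and correctly adopts this reading.

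With that interpretation in place, the three verifications are all in order: barycentric subdivision turns the simplicial $G$-action into a $G$-$CW$-structure; the stabilizer of a point with finite support $S$ sits inside the finite set $Sg_0^{-1}$, and for discrete $G$ finite is the same as compact; and for a finite subgroup $H$ the fixed set $P_\infty(G)^H$ is the set of $H$-invariant finitely supported probability measures, nonempty (it contains $\tfrac{1}{|H|}\sum_{h\in H}\delta_h$) and convex, hence contractible by the straight-line homotopy. For the continuity of that homotopy in the weak topology --- the ``mild obstacle'' you flag --- the clean version of your argument is that $P_\infty(G)^H$ is itself a $CW$-complex (a subcomplex after barycentric subdivision) and $[0,1]$ is locally compact, so the product carries the $CW$/colimit topology; every compact subset then lies in a finite subcomplex $\Delta(T)$ for some finite $T \supseteq H$, and on $\Delta(T)\times[0,1]$ the affine map is manifestly continuous. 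This matches what you wrote in spirit and closes the gap.
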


\begin{remark}[\blue{Comparison of $X_G$ and $P_{\infty}(G)$}]
The spaces $X_G$ and $P_{\infty}(G)$ have the same underlying sets
but in general they have different topologies.
The identity map induces a $G$-map $P_{\infty}(G) \to X_G$
which is a $G$-homotopy equivalence, but
in general not a $G$-homeomorphism.
\end{remark}

\begin{theorem}[\blue{Almost connected groups},  \green{Abels (1978)}.]
Suppose that $G$ is \red{almost connected}, i.e., the group $G/G^0$ is compact for
$G^0$ the component of the identity element.

Then $G$ contains a
maximal compact subgroup $K$ which is unique up to conjugation,
and the $G$-space $G/K$ is a model for $\jub{G}$.
\end{theorem}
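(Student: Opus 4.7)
The plan is to verify the characterization of $\jub{G} = \JGF{G}{\Com}$ from Theorem~\ref{the:G-homotopy_characterization_of_JGF(G)(calf)}(ii) for the candidate space $X = G/K$. The existence of a maximal compact subgroup $K \subseteq G$, the conjugacy of any two such subgroups, and the fact that every compact subgroup of $G$ is subconjugate to $K$, together form the classical Cartan--Iwasawa--Malcev theorem in its version for almost connected locally compact groups. I would cite this as structural input rather than reprove it.

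Numerability is essentially automatic: $G/K$ is a single $G$-orbit with isotropy $K \in \Com$ at the base point, so the trivial open cover $\{G/K\}$, the identity map $G/K \to G/K$, and the constant partition of unity equal to $1$ exhibit $G/K$ as $\Com$-numerable. For the isotropy clause of the characterization, given any $H \in \Com$, pick $g \in G$ with $gHg^{-1} \subseteq K$; then the point $x := g^{-1}K$ has stabilizer $g^{-1}Kg \supseteq H$, so $H \subseteq G_x$.

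The main obstacle is producing a $G$-equivariant homotopy between the two projections $\pr_1, \pr_2 : G/K \times G/K \to G/K$, where $G$ acts diagonally on the source. The strategy is to equip $G/K$ with a $G$-invariant path system, that is, a family of paths $\gamma_{x,y} \colon [0,1] \to G/K$ with $\gamma_{x,y}(0) = x$ and $\gamma_{x,y}(1) = y$, depending continuously on $(x,y)$, and satisfying the equivariance $h \cdot \gamma_{x,y}(t) = \gamma_{hx,hy}(t)$ for all $h \in G$. Setting $H((x,y),t) := \gamma_{x,y}(t)$ then produces a $G$-homotopy with $H(\cdot,0) = \pr_1$ and $H(\cdot,1) = \pr_2$, completing the verification.

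To construct such a path system one argues in two stages. In the Lie group case, one averages an arbitrary inner product on the tangent space $T_{eK}(G/K)$ over the compact group $K$ to obtain a $K$-invariant inner product, and translates by $G$ to get a $G$-invariant Riemannian metric on $G/K$; classical results (treating first the semisimple case and then reducing the general almost connected Lie case via the Levi decomposition and the diffeomorphism $G/K \cong \IR^n$ coming from the Cartan decomposition) produce a $G$-invariant structure of non-positive curvature, whence $G/K$ is uniquely geodesic and $\gamma_{x,y}$ can be taken to be the minimizing geodesic. For general almost connected locally compact $G$, one reduces to the Lie case via the Gleason--Montgomery--Zippin structure theory, which exhibits $G$ as an inverse limit of almost connected Lie groups modulo compact normal subgroups, and pushes the geodesic homotopy through this quotient. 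Once this is in place, Theorem~\ref{the:G-homotopy_characterization_of_JGF(G)(calf)}(ii) yields the conclusion that $G/K$ is a model for $\jub{G}$.
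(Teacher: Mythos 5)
The paper supplies no proof at all here: it defers entirely to \cite[Corollary~4.14]{Abels(1978)}. Your proposal is therefore necessarily a different route, namely a direct verification of the characterization in Theorem~\ref{the:G-homotopy_characterization_of_JGF(G)(calf)}~(ii). The framework is the right one, the numerability and isotropy checks are fine, and the idea of producing the $G$-homotopy $\pr_1 \simeq_G \pr_2$ from a $G$-invariant continuous path system $\gamma_{x,y}$ on $G/K$ is exactly the correct shape of argument. Your reduction from general locally compact $G$ to the Lie case can also be stated more cleanly than ``pushing through the inverse limit'': any compact normal subgroup $N$ is contained in every maximal compact $K$, so $N$ acts trivially on $G/K$ and $G/K \cong (G/N)/(K/N)$ as $G$-spaces with the $G$-action factoring through the Lie quotient $G/N$; one simply replaces $G$ by $G/N$ once and for all.

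The genuine gap is the assertion that a general almost connected Lie group $G$ admits a $G$-invariant Riemannian metric of non-positive curvature on $G/K$, from which you extract unique geodesics. This is false outside the (semi)simple case: if $G$ is a simply connected nilpotent Lie group such as the Heisenberg group, then $K = \{e\}$, $G/K = G$ with left translation, and it is a classical fact (Milnor/Wolf) that no left-invariant metric on such a group is non-positively curved --- every left-invariant metric on a non-abelian nilpotent Lie group has sectional curvatures of both signs. Your construction of $\gamma_{x,y}$ as ``the minimizing geodesic'' therefore breaks down at exactly the point where you need it. What actually works is not curvature but the Iwasawa--Malcev--Mostow decomposition: for a Lie group $G$ with finitely many components and maximal compact $K$, there is a $K$-equivariant diffeomorphism $\phi\colon V \xrightarrow{\cong} G/K$ with $\phi(0) = eK$, where $V \cong \IR^n$ carries a linear $K$-action (for instance via $K \times \mathfrak{p}_1 \times \cdots \times \mathfrak{p}_k \to G$, $(k,X_1,\ldots,X_k) \mapsto k\exp X_1 \cdots \exp X_k$). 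Using this, set $\gamma_{g_1K,\,g_2K}(t) := g_1\,\phi\bigl(t\,\phi^{-1}(g_1^{-1}g_2 K)\bigr)$; the $K$-equivariance of $\phi$ and the linearity of the $K$-action on $V$ make this independent of the coset representative $g_1$, and it is visibly $G$-equivariant and continuous, so it furnishes the desired $G$-homotopy. With this replacement the proof goes through; the appeal to non-positive curvature should be removed.
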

\begin{proof}
See~\cite[Corollary~4.14]{Abels(1978)}.
\end{proof}

As a special case we get:
\begin{theorem}[\blue{Discrete subgroups of almost connected Lie groups}]
\label{the:Discrete_subgroups_of_almost_connected_Lie_groups}
Let $L$ be a Lie group with finitely many path components.

Then $L$ contains a maximal compact subgroup $K$ which is unique up to conjugation, and
the $L$-space $L/K$ is a model for $\eub{L}$.

If $G \subseteq L$ is a discrete subgroup of $L$, then
$L/K$ with the obvious left $G$-action is a finite dimensional
$G$-$CW$-model for $\eub{G}$.
\end{theorem}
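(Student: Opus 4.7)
The plan is to derive the theorem in two stages: first the statement about $L$ itself, then the statement about a discrete subgroup $G\subseteq L$, both by leveraging the previous theorem on almost connected groups together with Theorem~\ref{the:comparison_EGFGcalf_and_JGFGcalf}.

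For the first part, since $L$ has finitely many path components the quotient $L/L^0$ is finite, hence compact, so $L$ is almost connected in the sense of the preceding theorem. That theorem yields a maximal compact subgroup $K\subseteq L$, unique up to conjugation, and identifies the homogeneous space $L/K$ with a model for $\jub{L}$. Since the family $\Com$ satisfies the hypothesis in Theorem~\ref{the:comparison_EGFGcalf_and_JGFGcalf}(2c), the canonical $L$-map $\eub{L}\to\jub{L}$ is an $L$-homotopy equivalence, so $L/K$ is also a model for $\eub{L}$.

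For the second part, let $G\subseteq L$ be a discrete subgroup. A discrete subgroup of a Hausdorff topological group is automatically closed. The $L$-action on $L/K$ has compact stabilizer (conjugates of $K$), hence is proper; restricting to the closed subgroup $G$ preserves properness, so $G$ acts properly and smoothly on the smooth manifold $L/K$. By Illman's theorem (cited in the earlier example on smooth actions) this $G$-action can be equipped with the structure of a $G$-$CW$-complex, and since $\dim(L/K)=\dim L-\dim K<\infty$, the resulting $G$-$CW$-complex is finite dimensional. The isotropy groups of the $G$-action on $L/K$ are discrete subgroups of compact groups of the form $gKg^{-1}$, hence are finite; in particular they belong to $\Com$.

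It remains to verify the fixed-point criterion of Theorem~\ref{the:G-homotopy_characterization_of_EGF(G)(calf)}(2). Let $H\subseteq G$ be any finite (equivalently, compact) subgroup. Then $H$ is also a compact subgroup of $L$, and since $L/K$ is a model for $\eub{L}$ by the first part, the fixed-point set $(L/K)^H$ is weakly contractible by that same homotopy characterization applied to $L$. As all isotropy groups of the $G$-action on $L/K$ lie in $\Com$ and all fixed-point sets under compact subgroups of $G$ are weakly contractible, Theorem~\ref{the:G-homotopy_characterization_of_EGF(G)(calf)} identifies $L/K$, with its $G$-action, as a finite-dimensional $G$-$CW$-model for $\eub{G}$. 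The main delicate point is applying Illman's theorem to obtain the $G$-$CW$-structure; the rest is essentially the observation that the fixed-point condition passes from the ambient group $L$ to any subgroup $G$ for free.
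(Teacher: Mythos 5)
Your proof is correct and follows the same route the paper implicitly takes: since $L$ has finitely many path components it is almost connected, so Abels' theorem applies to give the maximal compact $K$ and identify $L/K$ with $\jub{L}$, which via Theorem~\ref{the:comparison_EGFGcalf_and_JGFGcalf}(2c) is also a model for $\eub{L}$; the discrete-subgroup statement then follows by restriction, with Illman's theorem supplying the $G$-$CW$-structure and the fixed-point criterion of Theorem~\ref{the:G-homotopy_characterization_of_EGF(G)(calf)} verifying universality. The paper presents this result without a written proof (``As a special case we get\dots''), so your argument is a sound and complete expansion of that implicit chain of references.
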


\begin{theorem}[\blue{Actions on $\CAT(0)$-spaces}]
Let $G$ be a (locally compact second countable Hausdorff) topological group.
Let $X$ be a proper $G$-$CW$-complex.
Suppose that $X$ has the structure of a complete simply connected $\CAT(0)$-space 
for which $G$ acts by isometries.

Then $X$ is a model for $\eub{G}$.
\end{theorem}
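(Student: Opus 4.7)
The plan is to verify the hypotheses of the homotopy characterization of $\EGF{G}{\calf}$ from Theorem~\ref{the:G-homotopy_characterization_of_EGF(G)(calf)} applied to $\calf = \Com$. Since $X$ is already assumed to be a $G$-$CW$-complex, two things need to be checked: that every isotropy group of $X$ is compact, and that for every compact subgroup $H \subseteq G$ the fixed point set $X^H$ is weakly contractible.

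The first condition is free: $X$ is assumed to be a \emph{proper} $G$-$CW$-complex, and by the lemma recalled above every isotropy group of a proper $G$-$CW$-complex is compact. So the work is concentrated in the second condition, and the main input will be the classical Bruhat--Tits (Cartan) fixed point theorem: a compact group acting by isometries on a complete $\CAT(0)$-space has a fixed point. First I would verify the hypotheses of this theorem for an arbitrary compact subgroup $H \subseteq G$. Since $H$ is compact and acts continuously on $X$, every $H$-orbit $Hx$ is a compact, hence bounded, subset of $X$. The circumcenter construction in a complete $\CAT(0)$-space associates to any bounded set a unique point of $X$; applying this to $Hx$ produces a point fixed by the isometry group of $X$ preserving $Hx$ setwise, in particular by $H$. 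Thus $X^H \neq \emptyset$.

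Next I would show that $X^H$ is contractible, which is more than weak contractibility. The key geometric fact is that the fixed point set of any group of isometries of a $\CAT(0)$-space is convex: if $x, y \in X^H$, then by uniqueness of geodesics in a $\CAT(0)$-space the unique geodesic from $x$ to $y$ must be preserved pointwise by every $h \in H$, so the geodesic lies entirely in $X^H$. In particular $X^H$ is geodesically convex, hence itself a complete $\CAT(0)$-space, and contractible via the geodesic straight-line homotopy $H_t(x) = (1-t)x_0 + tx$ to any fixed basepoint $x_0 \in X^H$ (using the $\CAT(0)$ geodesic parametrization, which depends continuously on endpoints by the $\CAT(0)$ inequality).

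Putting this together, for every $H \in \Com$ the fixed point set $X^H$ is nonempty and contractible, hence weakly contractible, and all isotropy groups of $X$ lie in $\Com$. By Theorem~\ref{the:G-homotopy_characterization_of_EGF(G)(calf)}\eqref{the:G-homotopy_characterization_of_EGF(G)(calf):characterization}, $X$ is a model for $\eub{G} = \EGF{G}{\Com}$. The only real obstacle here is having the Bruhat--Tits fixed point theorem available; once that is in hand, the argument is essentially formal. I would simply cite a standard reference (e.g., Bridson--Haefliger) for the circumcenter/fixed point theorem rather than reproving it.
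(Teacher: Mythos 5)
Your proof is correct and follows essentially the same route as the paper: apply the homotopy characterization of $\EGF{G}{\Com}$, use that isotropy groups of a proper $G$-$CW$-complex are compact, and then invoke the Bruhat--Tits fixed point theorem plus convexity of fixed point sets in complete $\CAT(0)$-spaces to get that $X^K$ is non-empty and contractible for every compact $K \subseteq G$. The paper simply cites \cite[Corollary~II.2.8 on page~179]{Bridson-Haefliger(1999)} for the non-emptiness and convexity of $X^K$, whereas you unpacked the circumcenter and unique-geodesic arguments underlying that citation; the content is the same.
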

\begin{proof}
By~\cite[Corollary~II.2.8 on page~179]{Bridson-Haefliger(1999)} 
the $K$-fixed point set
of $X$ is a non-empty convex subset of $X$ and hence contractible for any
compact subgroup $K \subset G$. 
\end{proof}

\begin{remark} \label{rem:non-positive_curvature}
The result above contains as special case
\red{isometric $G$ actions on
simply-connected complete Riemannian manifolds with non-positive sectional curvature}
and \red{$G$-actions on trees}.
\end{remark}

Let $\Sigma$ be an \emphred{affine building}
sometimes also called  \emphred{Euclidean building}.
This is a simplicial complex together with a system of subcomplexes called 
\emphred{apartments}
satisfying the following axioms:

\begin{enumerate}

\item  Each apartment is isomorphic to an affine Coxeter complex;

\item  Any two simplices of $\Sigma$ are contained in some common apartment;
 
\item  If two apartments both contain two simplices $A$ and $B$ of $\Sigma$,
then there is an isomorphism of one apartment onto the other which fixes the two
simplices $A$ and $B$ pointwise. 

\end{enumerate}

The precise definition of an \emphred{affine Coxeter complex}, which is sometimes called
also \emphred{Euclidean Coxeter complex}, can be found 
in~\cite[Section 2 in Chapter VI]{Brown(1998)}, where also more 
information about affine buildings is given. An affine building comes with metric
$d \colon \Sigma \times \Sigma \to [0,\infty)$ 
which is non-positively curved and complete. The building with this metric is a
CAT(0)-space. A simplicial automorphism
of $\Sigma$ is always an isometry with respect to $d$.
For two points $x,y$ in the affine building
there is a unique line segment $[x,y]$ joining $x$ and $y$.
It is the set of points $\{z \in \Sigma \mid d(x,y) = d(x,z) + d(z,y)\}$.
For $x,y \in \Sigma$ and $t \in [0,1]$ let $tx + (1-t)y$ be the point $z \in \Sigma$
uniquely determined by the property that $d(x,z) = td(x,y)$ and $d(z,y) = (1-t)d(x,y)$.
Then the map 
$$r \colon \Sigma \times \Sigma \times [0,1] \to \Sigma, 
\hspace{5mm} (x,y,t) \mapsto tx + (1-t)y$$ 
is continuous. This implies that
$\Sigma$ is contractible. All these facts are taken
from~\cite[Section~3 in Chapter~VI]{Brown(1998)}
and~\cite[Theorem~10A.4 on page~344]{Bridson-Haefliger(1999)}.

Suppose that the group $G$ acts on $\Sigma$ by isometries.  If $G$ maps a non-empty
bounded subset $A$ of $\Sigma$ to itself, then the $G$-action has a fixed point
(see~\cite[Theorem~1 in Section~4 in Chapter~VI on page~157]{Brown(1998)}).  Moreover the
$G$-fixed point set must be contractible since for two points $x,y \in \Sigma^G$ also the
segment $[x,y]$ must lie in $\Sigma^G$ and hence the map $r$ above induces a continuous
map $\Sigma^G \times \Sigma^G \times[0,1] \to \Sigma^G$. This implies together with
Example~\ref{exa:simplicial_actions},
Theorem~\ref{the:G-homotopy_characterization_of_EGF(G)(calf)}
\eqref{the:G-homotopy_characterization_of_EGF(G)(calf):characterization},
Lemma~\ref{lem:totally_disconnected_groups_and_calcom} and
Theorem~\ref{the:comparison_EGFGcalf_and_JGFGcalf}

\begin{theorem}[\blue{Affine buildings}]
\label{the:affine_buildings}
Let $G$ be a topological (locally compact second countable Hausdorff) group.
Suppose that $G$ acts on the affine building by simplicial automorphisms
such that each isotropy group is compact. Then each isotropy group is compact open,
$\Sigma$ is a model
for $\JGF{G}{\Comop}$  and the barycentric subdivision $\Sigma^{\prime}$
is a model for both $\JGF{G}{\Comop}$ and  $\EGF{G}{\Comop}$.
If we additionally assume that $G$ is totally disconnected, then
$\Sigma$ is a model for both $\underline{J}G$ and $\underline{E}G$.
\end{theorem}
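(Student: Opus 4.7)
The plan is to assemble the ingredients gathered just above the theorem statement: the CAT(0) fixed-point theory on $\Sigma$, the Example showing that simplicial actions induce $G$-$CW$-structures on the barycentric subdivision, the homotopy characterizations of $\EGF{G}{\calf}$ and $\JGF{G}{\calf}$, and the Comparison Theorem between them. There is essentially nothing new to invent beyond verifying the hypotheses of these three results; all geometric content has been prepared in the surrounding text.

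First, I would establish that every isotropy group of the $G$-action on $\Sigma$ is compact open. Since the action is continuous and simplicial, the orbit map $G \to \Sigma$, $g \mapsto g \cdot v$, takes values in the discrete set of vertices of $\Sigma$, so the vertex stabilizer $G_v$ is the preimage of the open subset $\{v\}$ and hence is open in $G$. In the barycentric subdivision $\Sigma'$ each simplex is pointwise fixed by its setwise stabilizer (this is exactly the reason one passes to $\Sigma'$), so for any $x \in \Sigma = \Sigma'$ the isotropy $G_x$ equals the stabilizer of the unique open simplex of $\Sigma'$ containing $x$, which is a finite intersection of vertex stabilizers of $\Sigma'$ and is therefore open. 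Combined with the hypothesis of compact isotropy, every $G_x$ lies in $\Comop$.

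Second, I would show that for each compact subgroup $K \subseteq G$ the fixed-point set $\Sigma^K$ is non-empty and contractible. Non-emptiness follows from the Bruhat-Tits fixed-point theorem cited in the preamble from Brown's book, applied to the (compact, hence bounded) orbit $K \cdot x$ of any point $x \in \Sigma$. For contractibility, $K$ acts isometrically on the CAT(0)-space $\Sigma$; uniqueness of geodesic segments implies that for $x, y \in \Sigma^K$ the segment $[x,y]$ is $K$-invariant, and a reparameterization argument shows it is pointwise fixed by $K$. Hence $\Sigma^K$ is geodesically convex, and the continuous map $r \colon \Sigma \times \Sigma \times [0,1] \to \Sigma$ recalled in the preamble restricts to a straight-line contraction of $\Sigma^K$ to any chosen fixed point.

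Third, I would assemble the pieces. By the Example on simplicial actions, $\Sigma'$ carries a $G$-$CW$-structure; its isotropy groups lie in $\Comop$ by Step 1, and for each $H \in \Comop$ the fixed-point set $(\Sigma')^H = \Sigma^H$ is contractible by Step 2. The homotopy characterization of $\EGF{G}{\calf}$ then identifies $\Sigma'$ as a model for $\EGF{G}{\Comop}$. Every element of $\Comop$ is open and closed, so the Comparison Theorem yields a $G$-homotopy equivalence $\EGF{G}{\Comop} \simeq \JGF{G}{\Comop}$, showing that $\Sigma'$ is also a model for $\JGF{G}{\Comop}$. Since $\Sigma$ and $\Sigma'$ coincide as $G$-spaces (the subdivision refines only the cell structure), $\Sigma$ itself is a model for $\JGF{G}{\Comop}$; however, $\Sigma$ need not be a $G$-$CW$-complex before subdivision, which is precisely why the prime cannot be dropped in the $\EGF{G}{\Comop}$-statement. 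When $G$ is additionally totally disconnected, the Lemma $\EGF{G}{\Comop} = \eub{G}$ together with the Comparison Theorem (also giving $\JGF{G}{\Comop} \simeq \jub{G}$) yields the final assertion. The main obstacle is Step 1 — the verification that vertex stabilizers are open — since everything else is a fairly mechanical combination of the quoted results; this step crucially uses continuity of the $G$-action together with the discreteness of the vertex set inside the building.
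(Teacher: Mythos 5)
Your argument matches the paper's own proof (the paragraph preceding the theorem statement) essentially step for step: the Bruhat--Tits fixed-point theorem applied to the bounded orbit of a compact subgroup gives non-emptiness of $\Sigma^K$, the $\CAT(0)$ convexity map $r$ gives contractibility, and the conclusion is assembled from the Example on simplicial actions, the homotopy characterization of $\EGF{G}{\calf}$, the Lemma on totally disconnected groups, and the Comparison Theorem. The only place you go further than the text is in unpacking why isotropy groups are open (continuity of the orbit map into the discrete vertex set, plus the standard barycentric-subdivision observation), which the paper simply delegates to the cited Example on simplicial actions; that verification is correct.
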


\begin{example}[\blue{Bruhat-Tits building}] \label{exa:Bruhat-Tits_building}
An important example is the case of a reductive $p$-adic algebraic group $G$ and 
its associated \red{affine Bruhat-Tits building $\beta(G)$}
(see~\cite{Tits(1974)}, \cite{Tits(1979)}).
Then $\beta(G)$ is a model for $\underline{J}G$ and
$\beta(G)^{\prime}$ is a model for $\underline{E}G$
by Theorem~\ref{the:affine_buildings}.
\end{example}

For more information about buildings we refer to the lectures of
\green{Abramenko}.

The \red{Rips complex} $P_d(G,S)$
of a group $G$ with a symmetric finite set $S$ of generators for a
natural number $d$ is the geometric realization of the simplicial set whose set of
$k$-simplices consists of $(k+1)$-tuples $(g_0,g_1, \ldots g_k)$ of
pairwise distinct elements $g_i \in G$ satisfying $d_S(g_i,g_j) \le d$
for all $i,j \in \{0,1,\ldots ,k\}$.

The obvious $G$-action by simplicial automorphisms on $P_d(G,S)$ induces a
$G$-action by simplicial automorphisms on the barycentric
subdivision $P_d(G,S)^{\prime}$.

\begin{theorem}[\blue{Rips complex}, \green{Meintrup-Schick (2002)}]
Let $G$ be a discrete group with a finite symmetric set of generators.
Suppose that $(G,S)$ is $\delta$-hyperbolic for the real number
$\delta \ge 0$. Let $d$ be a natural number with $d \ge 16\delta +8$.

Then the barycentric subdivision  of
the Rips complex $P_d(G,S)^{\prime}$ is a finite $G$-$CW$-model for
$\eub{G}$.
\end{theorem}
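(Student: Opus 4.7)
The plan is to verify that $P_d(G,S)'$ satisfies the homotopy characterization of $\eub{G}$ from Theorem~\ref{the:G-homotopy_characterization_of_EGF(G)(calf)}~\eqref{the:G-homotopy_characterization_of_EGF(G)(calf):characterization}, together with the cocompactness needed for finiteness. Since $G$ is discrete, the families $\Com$ and $\Fin$ coincide, so it suffices to show: (i) $P_d(G,S)'$ is a $G$-$CW$-complex with finite isotropy, (ii) the $G$-action is cocompact with finite $G$-$CW$-structure, and (iii) for every finite subgroup $H\le G$ the fixed point set $P_d(G,S)^{\prime\, H}$ is weakly contractible.

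For (i) and (ii) I would argue directly. Passing to the barycentric subdivision is the standard device (see Example~\ref{exa:simplicial_actions}) that promotes a simplicial $G$-action to a $G$-$CW$-structure, since after subdivision any element that fixes a simplex setwise fixes it pointwise. The vertices of $P_d(G,S)$ are the elements of $G$, so the stabilizer of a simplex $(g_0,\ldots,g_k)$ embeds into the symmetric group on $\{g_0,\ldots,g_k\}$ and is in particular finite; hence all cell stabilizers of $P_d(G,S)'$ are finite. For cocompactness, I would left-translate a $k$-simplex $(g_0,\ldots,g_k)$ by $g_0^{-1}$ so that one vertex equals $1$ and the others lie in the finite ball $B_d(1)$ of the word metric. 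Hence there are only finitely many $G$-orbits of simplices of $P_d(G,S)$, and correspondingly only finitely many $G$-orbits of cells in $P_d(G,S)'$, so $G\backslash P_d(G,S)'$ is a finite $CW$-complex.

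The main content is (iii), and this is where $\delta$-hyperbolicity and the threshold $d\ge 16\delta+8$ enter. Fix a finite subgroup $H\le G$. The strategy I would follow has two components. First, non-emptiness: I would invoke the classical fact that in a $\delta$-hyperbolic group every finite subgroup has an orbit of uniformly bounded diameter — concretely, pick any $x\in G$, let $r=\mathrm{diam}_S(Hx)$, and use the \emph{quasi-center} construction for the bounded set $Hx$ (based on thin-triangle and slim-quasigeodesic estimates) to produce a new point $y\in G$ with $\mathrm{diam}_S(Hy)\le 8\delta+\text{const}$. Choosing constants carefully gives $Hy$ as vertex set of a simplex of $P_d(G,S)$, so its barycenter is an $H$-fixed vertex of $P_d(G,S)'$. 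Second, contractibility: I would build an $H$-equivariant contracting homotopy of $P_d(G,S)^H$ onto such a fixed vertex. The idea is to interpolate along $S$-geodesics: given any $H$-invariant simplex $\sigma=\{a_0,\ldots,a_k\}\subset G$ of diameter $\le d$, and a chosen $H$-invariant reference simplex $\tau=\{b_0,\ldots,b_\ell\}$, one forms intermediate $H$-invariant simplices by moving the $a_i$'s one unit in $S$ at a time along geodesics toward the $b_j$'s, using $\delta$-thinness of quadrilaterals to ensure that the intermediate vertex sets still have pairwise distance $\le d$. The bound $d\ge 16\delta+8$ is exactly what is needed to make this combinatorial flow land in $P_d(G,S)$ at every step.

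The genuine obstacle is part~(iii), specifically establishing that the straight-line/geodesic interpolation stays within diameter $d$. This is a delicate hyperbolic-geometry calculation: one has to show that intermediate sets arising along an $H$-equivariant combing of the Cayley graph never exceed the threshold $d$, which forces the explicit constant $16\delta+8$ (via $8\delta$ from thin triangles, doubled by two interpolations, plus integer correction). Once this combing is in place, one iterates and takes a limit to obtain an $H$-equivariant deformation retraction of $P_d(G,S)^H$ onto the barycenter of an $H$-invariant simplex, yielding weak contractibility of $P_d(G,S)^{\prime\, H}$. Combining (i)–(iii) with Theorem~\ref{the:G-homotopy_characterization_of_EGF(G)(calf)} identifies $P_d(G,S)'$ as a finite $G$-$CW$-model for $\eub{G}$.
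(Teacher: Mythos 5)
The paper offers no proof of its own here; it simply cites Meintrup--Schick~\cite{Meintrup-Schick(2002)}. Your sketch is a faithful reconstruction of the strategy of that reference, and your overall plan is correct: reduce via the homotopy characterization of $\EGF{G}{\calf}$, verify finite isotropy and cocompactness by direct inspection of the Rips simplicial structure, and concentrate the hyperbolic geometry in showing contractibility of the $H$-fixed sets for $H$ finite. Your arguments for (i) and (ii) are complete and correct: freeness of the $G$-action on vertices forces all simplex stabilizers to embed in a finite symmetric group, and translating a simplex so one vertex is $1$ puts all others in the finite ball $B_d(1)$, giving finitely many orbit types.

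Where the sketch remains a sketch is exactly where you say it does: the equivariant contraction of $P_d(G,S)^H$ onto an $H$-invariant simplex, using a quasi-center of a bounded $H$-orbit and a geodesic combing. This is indeed the content of Meintrup--Schick, and they carry it out by interpolating $H$-equivariantly toward the quasi-center and verifying the intermediate vertex sets have pairwise $S$-distance $\leq d$. Two cautions. First, your heuristic decomposition of the constant ($8\delta$ from thin triangles, doubled, plus integer correction) is a plausible guess but you have not derived it, and getting the precise threshold $16\delta + 8$ is exactly the point of the technical lemmas in the reference; as stated this is a promissory note rather than a proof. Second, the phrase ``iterates and takes a limit'' is looser than what is actually needed: since the Rips complex is locally finite and the word metric is proper, the combing reaches the quasi-center in finitely many steps, and one should assemble these finitely many elementary homotopies (each a simplicial $H$-map) into a deformation retraction rather than appeal to a limiting process. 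With these points tightened, your outline matches the cited proof.
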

\begin{proof}
See~\cite{Meintrup-Schick(2002)}.
\end{proof}

\red{Arithmetic groups} in a semisimple connected linear $\IQ$-algebraic group possess
finite models for $\eub{G}$. Namely, let $G(\IR)$ be the $\IR$-points of a semisimple
$\IQ$-group $G(\IQ)$ and let $K\subseteq G(\IR)$ be a maximal compact subgroup.  If $A
\subseteq G(\IQ)$ is an arithmetic group, then $G(\IR)/K$ with the left $A$-action is a
model for $\eub{A}$ as already explained above.  However, the $A$-space $G(\IR)/K$ is not
necessarily cocompact.  But there is a finite model for $\eub{A}$ by the following result.

\begin{theorem} [\blue{Borel-Serre compactification}]
The \red{Borel-Serre compactification} 
(see~\cite{Borel-Serre(1973)}, \cite{Serre(1979)})  of  $G(\IR)/K$
is a finite $A$-$CW$-model for $\eub{A}$.
\end{theorem}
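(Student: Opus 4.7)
The plan is to verify that $\overline{X}^{BS}$, the Borel-Serre compactification of $X = G(\IR)/K$, satisfies the three properties characterizing a finite $A$-$CW$-model for $\eub{A}$ via Theorem~\ref{the:G-homotopy_characterization_of_EGF(G)(calf)}: (i) it carries the structure of a proper $A$-$CW$-complex with only finitely many equivariant cells, (ii) all isotropy groups are finite, and (iii) for every finite subgroup $F \subseteq A$, the fixed-point set $\bigl(\overline{X}^{BS}\bigr)^F$ is weakly contractible. Here the interior contains $X$, where the action of $A$ is already known (by Theorem~\ref{the:Discrete_subgroups_of_almost_connected_Lie_groups}) to be proper with contractible fixed-point sets, so the work lies in extending each property across the boundary faces.

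First I would recall the construction: $\overline{X}^{BS}$ is a manifold with corners obtained by adjoining a face $e(P)$ for each proper rational parabolic subgroup $P \subseteq G(\IQ)$, built from the Langlands decomposition $P(\IR) = N_P A_P M_P$. The $G(\IQ)$-action on $X$ extends continuously to $\overline{X}^{BS}$, so $A \subseteq G(\IQ)$ acts. From here the main input of Borel-Serre's reduction theory gives that the restricted $A$-action is proper and that $A\backslash \overline{X}^{BS}$ is a compact Hausdorff manifold with corners; in particular all point stabilizers are finite, settling (ii). For (i), the smooth proper cocompact action of a discrete group on a compact manifold with corners admits an equivariant triangulation by Illman's theorem, yielding a finite $A$-$CW$-structure.

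The substantive step is (iii). Inside the interior, $X^F$ is a totally geodesic complete CAT(0)-subspace of the symmetric space $X$, hence non-empty and contractible. The boundary part of $(\overline{X}^{BS})^F$ consists of those corner points on faces $e(P)$ whose defining parabolic $P$ is $F$-stable. For such $P$, the subgroup $F$ acts on the Levi of $P$ and on the attached boundary symmetric space, and the fixed part of $e(P)$ is itself of Borel-Serre type over this smaller symmetric space. An induction on the parabolic rank (or, equivalently, on the depth of the corner stratum) combined with an $F$-equivariant collar retraction from each boundary face into the interior produces an $F$-equivariant strong deformation retraction of $(\overline{X}^{BS})^F$ onto $X^F$, giving the desired contractibility.

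The main obstacle is step (iii): one must match the combinatorics of $F$-invariant rational parabolics against the stratification of $\overline{X}^{BS}$ and carry out the inductive contraction compatibly on all strata at once. The geometric input that makes this go through is that the centralizer of $F$ in $G(\IQ)$ acts transitively enough on the $F$-invariant parabolics, and that the Borel-Serre corner construction behaves naturally with respect to the symmetric subspace $X^F$. Once this is in place, Steps (i) and (ii) are comparatively routine consequences of reduction theory and equivariant triangulation, and the identification with $\eub{A}$ follows from the homotopy characterization.
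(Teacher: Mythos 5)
The paper does not actually prove this statement; it only cites the literature, noting that the result is pointed out in Adem--Ruan (attributed there to a private communication with Borel and Prasad) and that a detailed proof was subsequently written up by Ji. Your proposal therefore supplies something the paper omits, and the overall plan you lay out---verify the three defining properties via the homotopy characterization of $\eub{A}$, get the finite $A$-$CW$-structure from cocompactness plus Illman's equivariant triangulation theorem, get finite isotropy from properness and discreteness, and then handle the contractibility of $F$-fixed sets by induction over the corner strata---is in the spirit of Ji's argument and is the standard route to this result. Two small remarks: finiteness of stabilizers follows from properness of the action together with discreteness of $A$, not from cocompactness of $A\backslash \overline{X}^{BS}$, which you invoke in the same breath; and the sentence about the centralizer of $F$ acting ``transitively enough on the $F$-invariant parabolics'' is not really what is needed---the point is rather that the closure of each boundary face $e(P)$ is again of Borel--Serre type for the Levi of $P$, that $F$-stability of $P$ passes to this smaller data, and that the geodesic-action collar retractions can be organized $F$-equivariantly and compatibly across strata. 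Your step (iii) is the genuinely substantive step, as you say, and a fully detailed version of it is precisely what Ji's paper supplies; the paper under review simply defers to that reference rather than reproducing the argument.
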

\begin{proof}
This is pointed out in \green{Adem-Ruan}~\cite[Remark~5.8]{Adem-Ruan(2003)},
where a private communication with \green{Borel} and \green{Prasad} is mentioned.
A detailed proof is given by \green{Ji}~\cite{Ji(2006torsion)}.
\end{proof}

For more information about arithmetic groups we refer to the lectures of
\green{Abramenko}.

Let \red{$\Gamma^s_{g,r}$} be the \red{mapping class group} of an
orientable compact surface $F$ of genus $g$ with $s$ punctures and $r$
boundary components.  We will always assume that $2g +s +r > 2$, or,
equivalently, that the Euler characteristic of the punctured surface
$F$ is negative.  It is well-known that the associated
\red{Teich\-m\"ul\-ler space $\calt^s_{g,r}$} is a contractible space
on which $\Gamma^s_{g,r}$ acts properly.

We could not find a clear reference in the literature for the to
experts known statement that there
exist a finite $\Gamma^s_{g,r}$-$CW$-model for 
 $\eub{\Gamma^s_{g,r}}$. The work of \green{Harer}~\cite{Harer(1986)}
on the existence of a spine and the construction of the spaces
$T_S(\epsilon)^H$ due to \green{Ivanov}~\cite[Theorem~5.4.A]{Ivanov(2002)} seem to lead
to such models. However, a detailed proof can be found in a manuscript
by \green{Mislin}~\cite{Mislin(2004)}.

\begin{theorem}[\blue{Teichm\"uller space}]
The  $\Gamma^s_{g,r}$-space $\calt^s_{g,r}$ is a model for $\eub{\Gamma^s_{g,r}}$.
\end{theorem}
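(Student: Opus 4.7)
The plan is to verify the two conditions of the homotopy characterization of $\eub{G}$ given in Theorem~\ref{the:G-homotopy_characterization_of_EGF(G)(calf)}\eqref{the:G-homotopy_characterization_of_EGF(G)(calf):characterization}: namely that all isotropy groups of $\Gamma^s_{g,r}$ acting on $\calt^s_{g,r}$ are finite, and that for every finite subgroup $H \le \Gamma^s_{g,r}$ the fixed point set $(\calt^s_{g,r})^H$ is weakly contractible. Once these are in place I need only upgrade the action to a genuine $\Gamma^s_{g,r}$-CW-structure.

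First I would recall the classical facts already alluded to in the statement: $\calt^s_{g,r}$ is a contractible finite-dimensional real-analytic manifold on which $\Gamma^s_{g,r}$ acts properly discontinuously by real-analytic diffeomorphisms. In particular every isotropy group is finite, so the action is proper. This takes care of condition~(i). Since the action is smooth and proper, Illman's theorem (cited in the excerpt for smooth proper Lie group actions) endows $\calt^s_{g,r}$ with the structure of a $\Gamma^s_{g,r}$-CW-complex, which is the formal framework required by Definition~\ref{def:EGF(G)(F)}.

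The main step, and the main obstacle, is condition~(ii): showing that for each finite subgroup $H \le \Gamma^s_{g,r}$ the fixed point set $(\calt^s_{g,r})^H$ is (weakly) contractible. Non-emptiness of $(\calt^s_{g,r})^H$ is precisely the Nielsen realization problem, resolved by Kerckhoff: every finite subgroup of the mapping class group lifts to a group of isometries of some hyperbolic structure on the underlying surface, which is a fixed point in Teichmüller space. Contractibility of the full fixed set is the stronger statement one needs, and I would obtain it from Kerckhoff's convexity theorem for geodesic-length functions along earthquake paths: using this convexity one shows that $(\calt^s_{g,r})^H$ is a convex subset with respect to a suitable geometry (earthquake or Weil--Petersson convexity) and hence contractible. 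Alternatively, one can invoke the Wolpert/Tromba convexity of length functions in Weil--Petersson geometry, which makes $(\calt^s_{g,r})^H$ a closed, geodesically convex, non-empty subset of a simply connected non-positively curved space, and hence contractible.

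With conditions (i) and (ii) verified, Theorem~\ref{the:G-homotopy_characterization_of_EGF(G)(calf)}\eqref{the:G-homotopy_characterization_of_EGF(G)(calf):characterization} directly yields that $\calt^s_{g,r}$ is a model for $\eub{\Gamma^s_{g,r}}$. The only subtle ingredient, and really the heart of the matter, is thus the Nielsen realization theorem together with the convexity of the resulting fixed loci; everything else is formal once the correct framework (Illman's theorem for the CW-structure and the homotopy characterization of $\eub{G}$) is in place.
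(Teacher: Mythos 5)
The paper gives no proof of this theorem; it only points to Harer's spine, Ivanov's truncated spaces $T_S(\epsilon)^H$, and an unpublished manuscript of Mislin, and those references are really aimed at the stronger assertion, discussed in the surrounding text, that a \emph{finite} $\Gamma^s_{g,r}$-$CW$-model for $\eub{\Gamma^s_{g,r}}$ exists. Your proposal is a genuinely different and more direct route to the theorem as literally stated (which makes no cocompactness claim): Illman's equivariant triangulation theorem supplies the $G$-$CW$-structure from the smooth proper action; properness gives finite isotropy; Kerckhoff's solution of the Nielsen realization problem gives non-emptiness of $(\calt^s_{g,r})^H$; and Weil--Petersson convexity (unique WP geodesics, WP negative curvature, $\Gamma^s_{g,r}$ acting by WP isometries) makes $(\calt^s_{g,r})^H$ a non-empty geodesically convex subset, hence contractible; then Theorem~\ref{the:G-homotopy_characterization_of_EGF(G)(calf)}\eqref{the:G-homotopy_characterization_of_EGF(G)(calf):characterization} closes the argument. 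One small caveat: the first alternative you float, contractibility via convexity of length functions along \emph{earthquake} paths, does not quite work as stated, because earthquake paths between two points are far from unique, so the fixed set is not ``earthquake-convex'' in any immediately usable sense; Kerckhoff's earthquake argument produces a fixed point but not contractibility of the whole fixed locus. Stick with the Weil--Petersson route (or, equivalently, identify $(\calt^s_{g,r})^H$ with the Teichm\"uller space of the quotient orbifold, which is a cell). What your proof does not recover, and what Harer/Ivanov/Mislin are invoked for in the paper, is a \emph{cocompact} model: Teichm\"uller space itself is not $\Gamma^s_{g,r}$-cocompact.
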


Let $F_n$ be the free group of rank $n$. Denote by $\Out(F_n)$
the group of outer automorphisms of $F_n$, i.e., the quotient of the group 
of all automorphisms of $F_n$ by the normal subgroup of inner automorphisms. 
\green{Culler-Vogtmann} (see~\cite{Culler-Vogtmann(1986)}, 
\cite{Vogtmann(2003)}) have constructed a space $X_n$ called \emph{outer space}
on which $\Out(F_n)$ acts with finite isotropy groups. It is analogous to the 
Teich\-m\"ul\-ler space of a surface with the action of the mapping class group of 
the surface. Fix a graph $R_n$ with one vertex $v$ and $n$-edges and identify $F_n$
with $\pi_1(R_n,v)$. A \emph{marked metric graph}
$(g,\Gamma)$ consists of a graph $\Gamma$ with all vertices
of valence at least three, a homotopy equivalence $g \colon R_n \to \Gamma$ 
called marking and to every edge of $\Gamma$ there is assigned a positive length 
which makes $\Gamma$ into a metric space by the path metric. 
We call two marked metric graphs 
$(g,\Gamma)$ and $(g',\Gamma')$ equivalent if there is a homothety 
$h \colon \Gamma \to \Gamma'$ such that $g \circ h$ and $h'$ are homotopic.
Homothety means that there is a constant $\lambda > 0$ with 
$d(h(x),h(y)) = \lambda \cdot d(x,y)$ for all
$x,y$. Elements in outer space $X_n$ are equivalence classes of marked graphs.
The main result in~\cite{Culler-Vogtmann(1986)} is that $X$ is contractible.
Actually, for each  finite subgroup $H \subseteq \Out(F_n)$
the $H$-fixed point set $X_n^H$ is contractible
(see~\cite[Proposition~3.3 and Theorem~8.1]{Krstic-Vogtmann(1993)}, 
\cite[Theorem~5.1]{White(1993)}).

The space $X_n$ contains a \emph{spine}
$K_n$ which is an $\Out(F_n)$-equivariant deformation retraction.
This space $K_n$ is a simplicial complex of dimension $(2n-3)$
on which the $\Out(F_n)$-action is by simplicial automorphisms
and cocompact. Actually the group of simplicial automorphisms of $K_n$ is $\Out(F_n)$
(see \green{Bridson-Vogtmann}~\cite{Bridson-Vogtmann(2001)}). We conclude

\begin{theorem}[\blue{Spine of outer space}]
The barycentric subdivision $K_n^{\prime}$ is a finite
$(2n-3)$-dimensional model of $\underline{E}\Out(F_n)$.
\end{theorem}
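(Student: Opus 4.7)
The plan is to verify the homotopy characterization of $\EGF{\Out(F_n)}{\Com}$ from Theorem~\ref{the:G-homotopy_characterization_of_EGF(G)(calf)}. Since $\Out(F_n)$ is discrete, $\Com = \Fin$, and the contents of the statement break into three checks: (i) $K_n'$ carries the structure of a $\Out(F_n)$-$CW$-complex that is cocompact and $(2n-3)$-dimensional; (ii) every isotropy group is finite; (iii) for each finite subgroup $H \subseteq \Out(F_n)$, the fixed-point set $(K_n')^H$ is weakly contractible.

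For (i), I would invoke Example~\ref{exa:simplicial_actions}: the simplicial action of $\Out(F_n)$ on $K_n$ descends to a simplicial action on the barycentric subdivision $K_n'$, and in this case all isotropy groups are (closed and) open so $K_n'$ inherits the structure of a $\Out(F_n)$-$CW$-complex. Since $\dim K_n = 2n-3$ and the $\Out(F_n)$-action on $K_n$ is cocompact by Bridson--Vogtmann, passing to the barycentric subdivision preserves both properties, so $\Out(F_n) \backslash K_n'$ is a finite $(2n-3)$-dimensional simplicial complex.

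For (ii), I would use that $K_n \subseteq X_n$ and that $\Out(F_n)$ acts on outer space $X_n$ with finite stabilizers by construction; hence the vertex stabilizers on $K_n$ are finite, and since each simplex stabilizer sits inside a finite intersection of vertex stabilizers, every simplex stabilizer in $K_n$ is finite. The stabilizer of a barycenter of a simplex $\sigma$ of $K_n$ is precisely the stabilizer of $\sigma$ as a subset, which is finite by the previous sentence, so the same conclusion holds for $K_n'$.

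For (iii), I would exploit the fact that $K_n \subseteq X_n$ is a $\Out(F_n)$-equivariant deformation retract. Restricting the equivariant deformation retraction to $H$-fixed points yields a deformation retraction $X_n^H \to K_n^H$, so $K_n^H$ is homotopy equivalent to $X_n^H$. By the cited results of Krstic--Vogtmann and White, $X_n^H$ is contractible; hence so is $K_n^H$, and since barycentric subdivision does not change the underlying topological space, $(K_n')^H = K_n^H$ is contractible. Combining (i), (ii), (iii) with Theorem~\ref{the:G-homotopy_characterization_of_EGF(G)(calf)}\eqref{the:G-homotopy_characterization_of_EGF(G)(calf):characterization} finishes the proof. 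The main conceptual input is really the contractibility of the fixed-point sets $X_n^H$ (which I use as a black box from the literature); the only thing to be a bit careful about is the equivariance of the deformation retraction $X_n \to K_n$, which is exactly what allows restriction to $H$-fixed points.
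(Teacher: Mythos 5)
Your proposal matches the paper's argument: the paper lists precisely the facts you use (contractibility of the fixed-point sets $X_n^H$ from Krstic--Vogtmann and White, the $\Out(F_n)$-equivariant deformation retraction onto the cocompact $(2n-3)$-dimensional spine, the simplicial action) and then simply says ``we conclude,'' leaving implicit exactly the appeal to Theorem~\ref{the:G-homotopy_characterization_of_EGF(G)(calf)} that you spell out. One small imprecision in your step (ii): the stabilizer of a simplex of $K_n$ as a set need not lie in the intersection of its vertex stabilizers, since elements may permute the vertices; the conclusion is still correct because that stabilizer is an extension of a finite permutation group by the intersection, but the cleanest route is simply to recall that $\Out(F_n)$ acts on $X_n$ with finite isotropy groups by construction, so all point stabilizers of $K_n \subseteq X_n$ (in particular stabilizers of barycenters) are finite.
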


\begin{example}[\blue{$SL_2(\IR)$ and $SL_2(\IZ)$}]
\label{exa:eub_for_SL_2(Z)}
In order to illustrate some of the general statements above we consider
the special example $SL_2(\IR)$ and $SL_2(\IZ)$.

Let $\IH^2$ be the $2$-dimensional hyperbolic space. We will use
either the upper half-plane model or the Poincar\'e disk model. The
group $SL_2(\IR)$ acts by isometric diffeomorphisms on the upper half-plane by
Moebius transformations, i.e., a matrix 
$\left(\begin{array}{cc} a & b \\ c & d \end{array}\right)$ 
acts by sending a complex number $z$ with positive imaginary part to
$\frac{az +b}{cz +d}$. This action  is proper and transitive. The isotropy group
of $z = i$ is $SO(2)$. Since $\IH^2$ is a
simply-connected Riemannian manifold, whose sectional curvature is
constant $-1$, the $SL_2(\IR)$-space $\IH^2$ is a model for
$\underline{E}SL_2(\IR)$ by Remark~\ref{rem:non-positive_curvature}.

One easily checks that $SL_2(\IR)$ is a connected Lie group 
and $SO(2) \subseteq SL_2(\IR)$ is a maximal compact subgroup. 
Hence $SL_2(\IR)/SO(2)$ is a model for $\underline{E}SL_2(\IR)$
by Theorem~\ref{the:Discrete_subgroups_of_almost_connected_Lie_groups}.
Since the $SL_2(\IR)$-action on $\IH^2$ is transitive and
$SO(2)$ is the isotropy group at $i \in \IH^2$, we see that
the $SL_2(\IR)$-manifolds $SL_2(\IR)/SO(2)$ and $\IH^2$ are
$SL_2(\IR)$-diffeomorphic.

Since $SL_2(\IZ)$ is a discrete subgroup of $SL_2(\IR)$, 
the space $\IH^2$ with the obvious $SL_2(\IZ)$-action is a model for 
$\underline{E}SL_2(\IZ)$ 
(see Theorem~\ref{the:Discrete_subgroups_of_almost_connected_Lie_groups}).

The group $SL_2(\IZ)$ is isomorphic to the amalgamated product 
$\IZ/4 \ast_{\IZ/2} \IZ/6$. This implies that
$SL_2(\IZ)$ acts on a tree $T$ which consists of two
$0$-dimensional equivariant cells with isotropy groups $\IZ/4$ and $\IZ/6$
and one $1$-dimensional equivariant cell with isotropy group $\IZ/2$.
From Remark~\ref{rem:non-positive_curvature}
we conclude that a model for $\underline{E}SL_2(\IZ)$ is given 
by the following $SL_2(\IZ)$-pushout
$$
\xymatrix@!C=15em{SL_2(\IZ)/(\IZ/2) \times \{-1,1\} \ar[d]
\ar[r]^-{F_{-1} \coprod F_1}
&
SL_2(\IZ)/(\IZ/4) \coprod SL_2(\IZ)/(\IZ/6) \ar[d]
\\
SL_2(\IZ)/(\IZ/2) \times [-1,1] \ar[r]
&
T = \underline{E}SL_2(\IZ)
}
$$
where $F_{-1}$ and $F_1$ are the obvious projections.
This model for $\underline{E}SL_2(\IZ)$ is a tree, which has
alternately two and three edges emanating from each vertex. The other model 
$\IH^2$ is a manifold. These two models must be $SL_2(\IZ)$-homotopy equivalent.
They can explicitly be related by the following construction.

Divide the Poincar\'e disk into fundamental domains for the $SL_2(\IZ)$-action.
Each fundamental domain is a geodesic triangle with one vertex at infinity, i.e.,
a vertex on the boundary sphere, and two vertices in the interior. Then the union of
the edges, whose end points lie in the interior of the Poincar\'e disk, is a tree
$T$ with $SL_2(\IZ)$-action. This is the tree model above. The tree is a
$SL_2(\IZ)$-equivariant deformation retraction of the Poincar\'e disk.
A retraction is given by moving a point $p$ in the Poincar\'e disk along a geodesic
starting at the vertex at infinity, which belongs to the triangle containing $p$,
through $p$ to the first intersection point of this geodesic with $T$.

The tree $T$ above can be identified with the Bruhat-Tits building
of $SL_2(\IQ\widehat{_p})$ and hence is a model for $\underline{E}SL_2(\IQ\widehat{_p})$
(see~\cite[page~134]{Brown(1998)}).
Since $SL_2(\IZ)$ is a discrete subgroup of $SL_2(\IQ\widehat{_p})$,
we get another reason why this tree is a model for $\eub{SL_2(\IZ)}$.
\end{example}

\begin{definition}[\blue{Cohomological dimension}]
Let $\Lambda$ be a commutative ring.
The \red{cohomological dimension} $\cd_{\Lambda}(G)$
of a group $G$ over $\Lambda$ is 
defined to be the infimum over all integers $d$
for which there exist a $d$-dimensional projective 
$\Lambda G$-resolution of the trivial $\Lambda G$-module $\Lambda$. 
If $\Lambda = \IZ$, we abbreviate $\cd(G) = \cd_{\IZ}(G)$.
\end{definition}

By definition  $\cd_{\Lambda}(G) = \infty$ if there is no finite-dimensional projective 
$\Lambda G$-resolution of the trivial $\Lambda G$-module $\Lambda$. 

\begin{example}
If $G$ is a non-trivial finite group, then $\cd(G) = \infty$ and
$\cd_{\IQ}(G) = 0$. We conclude that a  group $G$ with $\cd(G) < \infty$ 
must be torsionfree.
\end{example}

\begin{definition}[\blue{Virtual cohomological dimension}]
A group $G$ is called \red{virtually torsionfree} if it contains
a torsionfree subgroup $\Delta \subset G$ with finite index $[G:\Delta]$.

Let $\Lambda$ be a commutative ring.
Define the  \red{virtual cohomological dimension} 
of a virtually torsionfree group $G$ over $\Lambda$ by
$$\red{\vcd_{\Lambda}(G)} = \cd_{\Lambda}(\Delta)$$
for any torsionfree subgroup $\Delta \subset G$ with finite index $[G:\Delta]$.

If $\Lambda = \IZ$, we abbreviate $\vcd(G) = \vcd_{\IZ}(G)$.
\end{definition}

This definition is indeed independent of the choice of
$\Delta \subseteq G$. 

Next we investigate the relation between the minimal dimension of a model
$\eub{G}$ with the virtual cohomological dimension provided that
$G$ is virtually torsionfree.

\begin{theorem}[\blue{Discrete subgroups of Lie groups}]
Let $L$ be a Lie group with finitely many path components.
Let $K \subseteq L$ be a maximal compact subgroup $K$.
Let $G \subseteq L$ be a discrete subgroup of $L$.
Then $L/K$ with the left $G$-action is a model for $\underline{E}G$.

Suppose additionally that $G$ is \red{virtually torsionfree}, i.e.,
contains a torsionfree subgroup $\Delta \subseteq G$ of
finite index. 

Then we have for its \red{virtual cohomological dimension}
$$\vcd(G) \le \dim(L/K).$$

Equality holds if and only if $G\backslash L$ is compact.
\end{theorem}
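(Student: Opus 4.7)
The first assertion is just a restatement of Theorem~\ref{the:Discrete_subgroups_of_almost_connected_Lie_groups}, since any Lie group with finitely many path components is almost connected; there is nothing further to prove here.

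To establish the inequality $\vcd(G) \le \dim(L/K)$, I would proceed as follows. Choose a torsionfree subgroup $\Delta \subseteq G$ of finite index. Because $L/K$ is a model for $\eub{G}$, every isotropy subgroup of the $G$-action is compact, and hence finite since $G$ is discrete. The subgroup $\Delta$ therefore acts freely and properly discontinuously on the contractible manifold $L/K$, so the quotient $M := \Delta\backslash L/K$ is an aspherical smooth manifold of dimension $n := \dim(L/K)$, i.e., a model for $B\Delta$. Any smooth triangulation of $M$ lifts to a free $\Delta$-equivariant CW-structure on $L/K$ whose cellular chain complex is a free $\IZ\Delta$-resolution of $\IZ$ of length $n$; hence $\cd(\Delta)\le n$, which yields the desired bound on $\vcd(G)$.

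For the equality characterization, first assume $G\backslash L$ is compact. Then $\Delta\backslash L$ is also compact as a finite-sheeted cover of $G\backslash L$, and $M$ is a closed aspherical $n$-manifold. Poincar\'e duality with $\IZ/2$-coefficients, which requires no orientability hypothesis, gives
\[
H^n(\Delta;\IZ/2) \;\cong\; H^n(M;\IZ/2) \;\cong\; H_0(M;\IZ/2) \;=\; \IZ/2,
\]
so $\cd_{\IZ/2}(\Delta)\ge n$. On the other hand, a projective $\IZ\Delta$-resolution of $\IZ$ remains a projective $\IZ/2\,\Delta$-resolution of $\IZ/2$ after tensoring with $\IZ/2$ over $\IZ$ (since $\IZ$ is $\IZ$-flat), giving $\cd_{\IZ/2}(\Delta)\le\cd(\Delta)$. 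Combined with the upper bound this forces $\vcd(G)=n$.

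Conversely, if $G\backslash L$ is not compact, then neither is $\Delta\backslash L$ (it surjects onto $G\backslash L$ with finite fibers), so $M$ is a connected non-compact smooth $n$-manifold. Here I would invoke the classical differential-topological fact -- established by choosing a proper Morse function with no critical points of top index, equivalently a handle decomposition without $n$-handles -- that any connected non-compact smooth $n$-manifold has the homotopy type of a CW-complex of dimension at most $n-1$. Since $M$ is a model for $B\Delta$, we conclude $\vcd(G)=\cd(\Delta)\le n-1<n$. The main obstacle is precisely this last step: everything else is either a direct consequence of earlier theorems or routine manipulation with Poincar\'e duality, whereas extracting the strict inequality from non-compactness genuinely requires input from smooth manifold theory.
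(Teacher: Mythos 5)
Your proposal follows essentially the same route as the paper: you establish the bound $\vcd(G)\le\dim(L/K)$ by observing that $\Delta\backslash L/K$ is a model for $B\Delta$, you use non-vanishing of top-dimensional (co)homology with $\IZ/2$-coefficients of the closed manifold $\Delta\backslash L/K$ to get the lower bound in the cocompact case, and you use that a non-compact manifold has the homotopy type of a lower-dimensional $CW$-complex for the converse. The only differences are cosmetic (cohomology plus Poincar\'e duality rather than directly invoking top homology; a parenthetical justification for why $\cd_{\IZ/2}\le\cd_{\IZ}$, which should really appeal to the $\IZ$-torsion-freeness of projective $\IZ\Delta$-modules rather than flatness of $\IZ$) or amount to filling in a detail the paper leaves implicit (the Morse-theoretic reason that a non-compact $n$-manifold deformation retracts onto an $(n-1)$-complex).
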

\begin{proof} We have already mentioned in 
Theorem~\ref{the:Discrete_subgroups_of_almost_connected_Lie_groups}
that $L/K$ is a model for $\underline{E}G$. The restriction of
$\underline{E}G$ to $\Delta$ is a $\Delta$-$CW$-model for
$\underline{E}\Delta$ and hence $\Delta\backslash\underline{E}G$ is a
$CW$-model for $B\Delta$. This implies $\vcd(G) := \cd(\Delta) \le
\dim(L/K)$. Obviously $\Delta\backslash L/K$ is a manifold without
boundary. Suppose that $\Delta\backslash L/K$ is compact.
Then $\Delta\backslash L/K$ is a closed manifold and hence
its homology with $\IZ/2$-coefficients in the top dimension is
non-trivial. This implies $\cd(\Delta) \ge \dim(\Delta\backslash L/K)$
and hence $\vcd(G) = \dim(L/K)$. If $\Delta\backslash L/K$ is not
compact, it contains a $CW$-complex $X \subseteq \Delta\backslash L/K$
of dimension smaller than $\Delta\backslash L/K$ such that
the inclusion of $X$ into $\Delta\backslash L/K$ is a homotopy
equivalence. Hence $X$ is another model for $B\Delta$. This implies
$\cd(\Delta) < \dim(L/K)$ and hence $\vcd(G) < \dim(L/K)$. 
\end{proof}

\begin{theorem}[\blue{A criterion for $1$-dimensional models for $BG$},
  \green{Stallings (1968), Swan (1969)}]

Let $G$ be a discrete group. 

The following statements are equivalent:

\begin{itemize}

\item There exists a $1$-dimensional model for $EG$;

\item There exists a $1$-dimensional model for $BG$;

\item The cohomological dimension of $G$ is less or equal to one;

\item $G$ is a free group.
\end{itemize}
\end{theorem}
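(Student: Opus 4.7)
The plan is to prove the four-way equivalence by cycling through the implications, starting with the elementary topological/algebraic ones and ending with the deep Stallings--Swan direction.

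First I would dispense with the equivalence of the two topological statements. Since $EG \to BG$ is a covering map and covering maps preserve CW-dimension in both directions (the universal cover of a graph is a tree, and conversely a free $G$-action on a 1-dimensional CW-complex gives a 1-dimensional quotient), a 1-dimensional model for $EG$ exists if and only if a 1-dimensional model for $BG$ exists. If a 1-dimensional $BG$ exists, then $BG$ is homotopy equivalent to a graph; collapsing a maximal tree gives a wedge of circles $\bigvee_{i \in I} S^1$, so $G \cong \pi_1(BG)$ is the free group on $I$. Conversely, if $G$ is free on a set $I$, then $\bigvee_{i \in I} S^1$ is a 1-dimensional model for $BG$.

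Next I would connect these to cohomological dimension. The easy direction is that if $G$ is free then $\cd(G) \le 1$: the augmentation ideal $IG = \ker(\varepsilon\colon \mathbb{Z}G \to \mathbb{Z})$ is a free $\mathbb{Z}G$-module (with basis $\{s - 1 \mid s \in S\}$ for a free generating set $S$), yielding the length-one free resolution $0 \to IG \to \mathbb{Z}G \to \mathbb{Z} \to 0$. Equivalently, a 1-dimensional $BG$ has cellular $\mathbb{Z}G$-chain complex of length one.

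The hard direction is $\cd(G) \le 1 \Rightarrow G$ free, which will be the main obstacle and is the content of the Stallings--Swan theorem. Here is how I would organize it. First, $\cd(G) \le 1$ forces $G$ to be torsion-free, since any non-trivial finite subgroup already has $\cd = \infty$ (finite groups have non-vanishing cohomology in arbitrarily high degrees with $\mathbb{Z}$-coefficients). Second, reduce to the finitely generated case: Swan's contribution is to show that a group all of whose finitely generated subgroups are free is itself free (so one only needs the result for finitely generated $G$, since cohomological dimension is monotone under passage to subgroups). Third, for finitely generated $G$ with $\cd(G) \le 1$ I would invoke Stallings' structure theorem for groups with more than one end: one shows $H^1(G; \mathbb{Z}G) \ne 0$ (using the hypothesis $\cd(G) \le 1$ together with a vanishing/dimension argument), so $G$ has more than one end; Stallings' theorem then splits $G$ as an amalgamated free product or HNN extension over a finite subgroup, which must be trivial by torsion-freeness. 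Induction on a suitable complexity (e.g.\ rank, or using Grushko's theorem to bound the process) terminates with $G$ expressed as a free product of infinite cyclic groups, i.e.\ a finitely generated free group.

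I expect the main obstacle to be the last step, which is genuinely deep: it relies on Stallings' ends-of-groups theorem and Swan's reduction from the finitely generated case. In a lecture-note setting I would simply cite Stallings and Swan for this direction rather than reproduce their arguments.
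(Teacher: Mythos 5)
Your proposal is correct. The paper gives no argument of its own for this theorem---it simply cites Stallings (1968) and Swan (1969)---so there is no proof to compare against; your sketch correctly handles the elementary equivalences (1-dimensional $EG$ $\Leftrightarrow$ 1-dimensional $BG$ $\Leftrightarrow$ $G$ free, and $G$ free $\Rightarrow \cd(G) \le 1$ via the free resolution $0 \to IG \to \IZ G \to \IZ \to 0$) and rightly defers the deep implication $\cd(G) \le 1 \Rightarrow G$ free to the same sources the paper cites, with an accurate outline of the Stallings ends-of-groups argument for the finitely generated case and Swan's reduction to it.
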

\begin{proof}
See~\cite{Stallings(1968)} and~\cite{Swan(1969)}.
\end{proof}

\begin{theorem}[\blue{A criterion for $1$-dimensional models for $\eub{G}$},
  \green{Dunwoody (1979)}]
Let $G$ be a discrete group. 
Then there exists a $1$-dimensional model
for $\underline{E}G$ if and only if the cohomological dimension of $G$ over
the rationals $\IQ$ is less or equal to one.
\end{theorem}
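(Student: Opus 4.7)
The plan is to prove both implications separately, with the reverse implication being the substantial one.

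\textbf{Easy direction ($1$-dimensional $\eub{G}$ exists $\Rightarrow \cd_{\IQ}(G)\le 1$).} Suppose $X$ is a $1$-dimensional $G$-$CW$-model for $\eub{G}$. By Theorem~\ref{the:G-homotopy_characterization_of_EGF(G)(calf)} all isotropy groups are compact, hence finite since $G$ is discrete, and $X = X^{\{1\}}$ is contractible; being a $1$-dimensional contractible CW-complex it is a tree. The cellular chain complex of $X$ gives a short exact sequence of $\IZ G$-modules
$$0 \to C_1(X) \to C_0(X) \to \IZ \to 0,$$
where each $C_i(X)$ is a direct sum of permutation modules $\IZ[G/H]$ with $H$ finite. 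Tensoring with $\IQ$ preserves exactness, and each $\IQ[G/H] \cong \IQ G\otimes_{\IQ H}\IQ$ is a projective $\IQ G$-module since $\IQ H$ is semisimple by Maschke's theorem. This exhibits a length-$1$ projective $\IQ G$-resolution of the trivial module $\IQ$, so $\cd_{\IQ}(G)\le 1$.

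\textbf{Hard direction ($\cd_{\IQ}(G)\le 1 \Rightarrow 1$-dimensional $\eub{G}$ exists).} The strategy is to invoke Dunwoody's structural result that a discrete group with $\cd_{\IQ}(G)\le 1$ splits as the fundamental group of a graph of finite groups. Bass--Serre theory then produces a tree $T$ on which $G$ acts simplicially with finite vertex and edge stabilizers. I claim $T$ (or its barycentric subdivision, to make it a $G$-$CW$-complex in the sense of Example~\ref{exa:simplicial_actions}) is a model for $\eub{G}$: it is $1$-dimensional with finite isotropy groups, and for every finite subgroup $H\le G$ the fixed set $T^H$ is a subtree, which is nonempty because every action of a finite group on a tree has a global fixed point, and any nonempty subtree is contractible. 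By Theorem~\ref{the:G-homotopy_characterization_of_EGF(G)(calf)}\eqref{the:G-homotopy_characterization_of_EGF(G)(calf):characterization}, $T$ is a model for $\eub{G}$. The degenerate case $\cd_\IQ(G)=0$ forces $G$ to be finite (then $\IQ$ is a finitely generated projective $\IQ G$-module, hence $G$ is finite by a standard trace argument), and then $\eub{G}=\pt$ is $0$-dimensional.

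\textbf{Main obstacle.} The genuine content is Dunwoody's theorem itself: constructing, out of a length-$1$ projective $\IQ G$-resolution of $\IQ$, a $G$-action on a tree with finite stabilizers. This is the rational analogue of Stallings' theorem characterizing free groups by $\cd_{\IZ}\le 1$, and Dunwoody's original argument replaces Stallings' combinatorial theory of ends and patterns on $2$-complexes by a theory of \emph{tracks} that exploits the rational finiteness of the resolution to produce a $G$-invariant edge-like structure, which via Bass--Serre theory corresponds to a splitting as a graph of finite groups. Every other step above (passage to fixed points, Maschke's theorem, and Bass--Serre theory) is formal; the nontrivial ingredient is precisely this reconstruction, and that is where I expect all the real work to be concentrated.
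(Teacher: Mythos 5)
Your proposal is correct and matches the paper's approach, which is simply to cite Dunwoody~\cite[Theorem~1.1]{Dunwoody(1979)} for the substance of the result. Your sketch of the easy direction (Maschke plus the rational cellular chain complex of the tree) and your identification of the hard direction as Dunwoody's reconstruction of a $G$-tree with finite stabilizers from a length-$1$ rational resolution --- together with the Bass--Serre translation and the fixed-point verification that the resulting tree is a model for $\eub{G}$ --- accurately trace the logical structure behind the citation.
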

\begin{proof}
See \green{Dunwoody}~\cite[Theorem~1.1]{Dunwoody(1979)}.
\end{proof}

\begin{theorem} [\blue{Virtual cohomological dimension and
    $\dim(\underline{E}G)$}, \green{L\"uck (2000)}]
Let $G$ be a discrete group which is virtually torsionfree.
\begin{enumerate}
\item 
Then  
$$\vcd(G) \le \dim(\underline{E}G)$$
for any model for $\underline{E}G$. 
\item 
Let $l \ge 0$ be an  integer such that for any chain of finite subgroups
$H_0 \subsetneq H_1 \subsetneq \ldots \subsetneq H_r$ we have $r \le l$.

Then there exists a model for $\underline{E}G$ whose dimension is 
$$\max\{3,\vcd(G)\} + l.$$
\end{enumerate}
\end{theorem}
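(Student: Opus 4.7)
The plan is to prove (1) by a direct reduction to the cohomological dimension of a torsionfree finite-index subgroup, and to prove (2) by introducing the Bredon cohomological dimension $\underline{\cd}(G)$ for the family $\Com$ of finite subgroups and combining an algebraic estimate with an equivariant Eilenberg--Ganea realisation theorem.

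For (1), I choose a torsionfree finite-index subgroup $\Delta \subseteq G$ and let $X$ be any $G$-$CW$-model for $\eub{G}$. Since $\Delta$ is torsionfree and $X$ has only finite isotropy, the restriction of the action to $\Delta$ is free. By Theorem~\ref{the:G-homotopy_characterization_of_EGF(G)(calf)}~(ii) applied to $H = \{1\}$, the underlying space $X$ is weakly contractible. Hence $\Delta\backslash X$ is a $CW$-model for $B\Delta$ of dimension at most $\dim(X)$, so that $\vcd(G) = \cd(\Delta) \le \dim(X)$, as required.

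For (2), write $\underline{\cd}(G)$ for the projective dimension of the constant $\Or_{\Com}(G)$-module $\underline{\IZ}$. The argument splits into two parts. First, the algebraic estimate
\[
\underline{\cd}(G) \;\le\; \vcd(G) + l
\]
is proved by induction over the conjugacy classes of finite subgroups ordered by chain-length in $\Fin$. Fixing a torsionfree finite-index $\Delta \subseteq G$, a $\vcd(G)$-dimensional projective $\IZ\Delta$-resolution of $\IZ$ handles the free stratum, and each conjugacy class $(H)$ of non-trivial finite subgroups contributes a projective resolution over $\IZ[N_G(H)/H]$ shifted in degree by the chain-length of $H$ in $\Fin$; the hypothesis $r \le l$ caps every shift by $l$. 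Splicing these contributions along the incidence structure of $\Or_{\Com}(G)$ produces a Bredon resolution of $\underline{\IZ}$ of length at most $\vcd(G)+l$. Second, an equivariant Eilenberg--Ganea type theorem realises any projective Bredon resolution of length $d$ by a $G$-$CW$-model for $\eub{G}$ of dimension $\max\{3,d\}$: one attaches equivariant cells of the form $G/H \times D^n$ skeleton by skeleton to kill Bredon homotopy, using a stabilisation trick to convert projective attaching modules to free ones without increasing the dimension beyond $3$. Combining the two estimates gives a model of dimension $\max\{3,\underline{\cd}(G)\} \le \max\{3,\vcd(G)+l\} \le \max\{3,\vcd(G)\}+l$.

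The main obstacle is the algebraic estimate $\underline{\cd}(G) \le \vcd(G) + l$: at each stratum one must ensure the attaching Bredon module is projective over $\IZ[N_G(H)/H]$ of the right length, which in turn uses that $\Delta \cap N_G(H)$ is a torsionfree finite-index subgroup of $N_G(H)$ of cohomological dimension at most $\vcd(G)$. The chain-length hypothesis $r \le l$ is precisely the combinatorial input needed to make the splicings in $\Or_{\Com}(G)$ terminate at total length $\vcd(G)+l$; without such a bound the Bredon dimension can be strictly larger than the virtual cohomological dimension.
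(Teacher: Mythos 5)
Part~(1) is correct and is the standard argument: restrict a $G$-$CW$-model $X$ for $\eub{G}$ to a torsionfree finite-index subgroup $\Delta$, observe that the restricted action is free since all isotropy of $X$ is finite, so $\Delta\backslash X$ is a $CW$-model for $B\Delta$ (here you should note that $X$, being a $G$-$CW$-complex, is a $\Delta$-$CW$-complex by restriction, so the quotient really is a $CW$-complex of the same dimension), and the cellular $\IZ\Delta$-chain complex of $X$ is then a free resolution of $\IZ$ of length $\dim X$, giving $\cd(\Delta)\le\dim X$.

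Part~(2) identifies the correct framework, and this is indeed the route taken in the cited source~\cite[Theorem~6.4]{Lueck(2000a)}: one bounds the Bredon cohomological dimension $\underline{\cd}(G)$ for the family of finite subgroups and then invokes an equivariant Eilenberg--Ganea-type realisation theorem to produce a $G$-$CW$-model of dimension $\max\{3,\underline{\cd}(G)\}$. However, the central step --- the estimate $\underline{\cd}(G)\le\vcd(G)+l$ --- is asserted rather than proved. The phrase ``splicing these contributions along the incidence structure of $\Or_{\Com}(G)$'' hides all the real work: you would need to explain precisely what Bredon modules are being resolved at each stratum, why the contribution at an orbit type $G/H$ is controlled by a projective $\IZ[W_GH]$-resolution of length $\le\vcd(G)$ (using $\vcd(W_GH)\le\vcd(G)$), why the passage from one stratum to the next costs exactly one homological degree, and why the chain-length bound $r\le l$ therefore caps the total excess at $l$. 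That bookkeeping --- typically phrased via a filtration of $\Or_{\Com}(G)$ by length in the poset of finite subgroups and a spectral-sequence or extension argument over that filtration --- is the substantive content of the theorem and cannot be waved through. Likewise the equivariant Eilenberg--Ganea step (realising a length-$d$ Bredon-projective resolution by a $G$-$CW$-model of dimension $\max\{3,d\}$) is a nontrivial theorem and should at least be cited rather than re-derived in one sentence. Finally, note that your argument would in fact yield a model of dimension $\max\{3,\vcd(G)+l\}$, which is sharper than the stated $\max\{3,\vcd(G)\}+l$; while the two agree whenever $\vcd(G)\ge 3$, the discrepancy in the small-$\vcd$ range is a sign that the missing bookkeeping is exactly where the extra ``$+l$'' outside the $\max$ is forced in Lück's actual argument, so you should not claim the stronger bound without a genuine proof.
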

\begin{proof}
See \green{L\"uck}~\cite[Theorem~6.4]{Lueck(2000a)}. 
\end{proof}

The following problem has been stated by \green{Brown}~\cite[page~32]{Brown(1979)}
and has created a lot of activities.

\begin{problem} 
For which discrete groups $G$, which are virtually torsionfree,
does there exist
a $G$-$CW$-model for $\underline{E}G$ of dimension $\vcd(G)$?
\end{problem}

\begin{remark}
The results above give some evidence  for the hope
that the problem above has a positive answer for every discrete group.
However, \green{Leary-Nucinkis}~\cite{Leary-Nucinkis(2003)} have constructed 
virtually torsionfree groups $G$ for which 
the answer is negative, i.e., for which the dimension of any model for
$\underline{E}G$ is different from $\vcd(G)$.
\end{remark}

The following result shows that in general one can say nothing about
the quotient $G\backslash \eub{G}$ although in many interesting cases
there do exist small models for it.

\begin{theorem}[\green{Leary-Nucinkis (2001)}]
\label{the:Leary-Nucinkis(2001)}
Let $X$ be a $CW$-complex. Then there exists a group $G$ with
$X \simeq G\backslash \eub{G}$.
\end{theorem}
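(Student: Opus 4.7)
The aim is to realize an arbitrary $CW$-complex $X$ as an orbit space of the form $G\backslash\eub{G}$. By Theorem~\ref{the:G-homotopy_characterization_of_EGF(G)(calf)} it suffices to exhibit a discrete group $G$ together with a proper $G$-$CW$-complex $Y$ such that (i) the fixed set $Y^H$ is weakly contractible for every finite subgroup $H\leq G$, and (ii) $G\backslash Y\simeq X$; for then $Y$ is $G$-homotopy equivalent to $\eub{G}$ and the orbit spaces agree up to homotopy. When $X$ is aspherical with torsionfree fundamental group this is automatic with $G=\pi_1(X)$ and $Y=\widetilde{X}$; the real content is the case of non-aspherical $X$, where $G$ must have torsion so that the quotient can acquire higher homotopy groups.

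My plan is to build $(G,Y)$ by a complex-of-groups construction on a combinatorial model of $X$. After replacing $X$ by a homotopy-equivalent simplicial complex, one would attach to each simplex $\sigma$ a finite local group $G_\sigma$ together with compatible monomorphisms $G_\tau\hookrightarrow G_\sigma$ whenever $\tau$ is a face of $\sigma$. Let $G$ be the fundamental group of the resulting complex of groups and $Y$ its development. If the complex of groups is developable, $G$ acts properly on $Y$ with quotient canonically identified with $X$, which gives condition~(ii) essentially by construction. A natural and sufficiently flexible recipe (in the spirit of Leary and Nucinkis) is to take the $G_\sigma$ as iterated extensions of a single nontrivial finite group $F$, arranged compatibly with the incidence relations of the cells of $X$.

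The decisive step is the verification of~(i): that $Y^H$ is weakly contractible for every finite $H\leq G$. Contractibility of $Y=Y^{\{1\}}$ itself is standard once developability is in hand, but the analogous statement for nontrivial $H$ requires careful control over how conjugates of the local groups propagate through the development. Any finite $H$ is conjugate into some $G_\sigma$, and $Y^H$ then stratifies according to the simplices whose local group contains a conjugate of $H$; one argues by induction on dimension, using the compatibility of the local data with the face relations, that each stratum deformation retracts to a point and hence that $Y^H$ is contractible. This final combinatorial verification is the main obstacle and constitutes the core of the argument. Once it is established, Theorem~\ref{the:G-homotopy_characterization_of_EGF(G)(calf)} identifies $Y$ with $\eub{G}$ and $G\backslash Y$ with $X$, completing the proof.
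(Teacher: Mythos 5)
The paper offers no proof of its own here, only a citation to Leary--Nucinkis~\cite{Leary-Nucinkis(2001a)}, so it is against that argument your sketch should be measured, and in outline you do follow their route: replace $X$ by a homotopy-equivalent simplicial complex, place a simple complex of \emph{finite} groups over its simplices, take $G$ to be the fundamental group and $Y$ the development, and invoke Theorem~\ref{the:G-homotopy_characterization_of_EGF(G)(calf)}~\eqref{the:G-homotopy_characterization_of_EGF(G)(calf):characterization} to recognize $Y$ as a model for $\eub{G}$. Two of the steps you leave conditional are in fact free: a simple complex of groups with strictly commuting injections, as you set it up, is automatically strictly developable, and $G\backslash Y\simeq X$ holds by construction, since the quotient of the development is the geometric realization of the underlying poset. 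The only genuinely hard step is the one you single out yourself, and there the sketch stops.

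That step --- showing $Y^H$ is weakly contractible for every finite $H\leq G$ --- is where essentially all of~\cite{Leary-Nucinkis(2001a)} lives, and your outline does not carry it. ``Iterated extensions of a fixed finite group $F$'' is loose enough to include the Leary--Nucinkis choice (for $G_\sigma$ a direct product of copies of $F$ indexed by the vertices of $\sigma$) but also many choices for which the conclusion fails, so the construction must be pinned down before anything can be verified. The assertion that ``any finite $H\leq G$ is conjugate into some $G_\sigma$'' is itself a nontrivial fixed-point theorem --- it would follow from the Bruhat--Tits lemma once $Y$ carries a complete $\CAT(0)$ metric, but you produce no such metric and it does not come for free from developability --- and the subsequent ``stratification'' of $Y^H$ together with the claim that ``each stratum deformation retracts to a point'' is stated rather than argued. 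Without fixing the local groups precisely and supplying the contractibility argument, what you have is the correct scaffolding for the Leary--Nucinkis proof, not a proof.
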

\begin{proof}
See~\cite{Leary-Nucinkis(2001a)}.
\end{proof}

\begin{question}[\blue{Homological Computations based on nice models for $\eub{G}$}]
  Can nice geometric models for $\eub{G}$ be used to compute the group homology and more
  general homology and cohomology theories of a group $G$?
\end{question}

\begin{question}[\blue{$K$-theory of group rings and group homology}]
  Is there a relation between $K_n(RG)$ and the group homology of $G$?
\end{question}

\begin{question}[\blue{Isomorphism  Conjectures and classifying spaces of families}]
  Can classifying spaces of families be used to formulate a version of the Farrell-Jones
  Conjecture and the Baum-Connes Conjecture which may hold for all group $G$ and all
  rings?
\end{question}


\section{Equivariant homology theories}

The outline of this section is:

\begin{itemize}

  \item  We introduce the notion of an \red{equivariant homology theory}.
  
  \item We present the general  formulation of the \red{Farrell-Jones
          Conjecture} and the \red{Baum-Connes Conjecture}.
  
  \item We discuss \red{equivariant Chern characters}.
  
  \item We present some explicit \red{computations}
  of equivariant topological $K$-groups and of homology groups
  associated to classifying spaces of  groups.

\end{itemize}

\begin{definition}[\blue{$G$-homology theory}]\ \\
A \emphred{$G$-homology theory $\calh^G_*$} is a
covariant functor  from the category of
$G$-$CW$-pairs to the category of $\IZ$-graded
$\Lambda$-modules together with natural transformations
$$\partial_n^G(X,A)\colon \calh^G_n(X,A) \to
\calh^G_{n-1}(A)$$
for $n \in \IZ$
satisfying the following axioms:

\begin{itemize}
\item
$G$-homotopy invariance;

\item
Long exact sequence of a pair;

\item
Excision;

\item
Disjoint union axiom.

\end{itemize}
\end{definition}

The following definition is taken from~\cite[Section~1]{Lueck(2002b)}.

\begin{definition}[\blue{Equivariant homology theory}]
\label{def:equivariant_homology_theory}
An \emphred{equivariant homology theory $\calh^?_*$} assigns to every group $G$ a
$G$-homology theory $\calh^G_*$.
These are linked together with the following so called \emphred{induction structure}:
given a group homomorphism $\alpha\colon H \to G$ and  a $H$-$CW$-pair
$(X,A)$, there are for all $n \in \IZ$
natural homomorphisms
\begin{eqnarray*}
\ind_{\alpha}\colon  \calh_n^H(X,A)
&\to &
\calh_n^G(\ind_{\alpha}(X,A))
\end{eqnarray*}
satisfying

\begin{itemize}

\item Bijectivity\\
If $\ker(\alpha)$ acts freely on $X$, then $\ind_{\alpha}$ is a bijection;

\item Compatibility with the boundary homomorphisms;

\item Functoriality in $\alpha$;

\item Compatibility with conjugation.

\end{itemize}
\end{definition}

We have the following examples of equivariant homology theories.

\begin{example}[\blue{Borel homology}]
Given a non-equivariant homology theory $\calk_*$, put
\begin{eqnarray*}
\calh^G_*(X) & := & \calk_*(X/G);
\\
\calh^G_*(X) & := & \calk_*(EG \times_G X) \quad \red{\text{(Borel homology)}}.
\end{eqnarray*}
\end{example}

\begin{example}[\blue{Equivariant bordism}]
\red{Equivariant bordism} $\Omega^?_*(X)$ based on proper cocompact
equivariant smooth manifolds with reference map to the $G$-space $X$;
\end{example}

\begin{example}[\blue{Equivariant topological $K$-theory}]
\red{Equivariant topological $K$-theory} $K^?_*(X)$ defined for proper equivariant
$CW$-complexes has the property that for any finite subgroup $H \subseteq G$
we get
$$K^H_n(\pt) \cong K_0^G(G/H) \cong \left\{
\begin{array}{lcl} R_{\IC}(H) & & n \text{ even};
\\
0 & & n \text{ odd}.
\end{array}
\right.
$$
\end{example}

\begin{theorem}[\green{L\"uck-Reich (2005)}]
Given a functor $\bfE \colon \Groupoids \to \Spectra$ sending
equivalences to weak equivalences, 
there exists an equivariant homology theory \red{$\calh^?_*(-;\bfE)$} satisfying
$$\calh_n^H(\pt) \cong \calh_n^G(G/H) \cong \pi_n(\bfE(H)).$$
\end{theorem}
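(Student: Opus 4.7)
The plan is to implement the Davis--L\"uck construction. For each group $G$ I would form the \emph{orbit category} $\Or(G)$, whose objects are the homogeneous $G$-spaces $G/H$ and whose morphisms are $G$-equivariant maps, together with the \emph{transport groupoid} functor $\calg^G \colon \Or(G) \to \Groupoids$, sending $G/H$ to the groupoid with object set $G/H$ and morphism set $\{(g,xH) \colon xH \to gxH \mid g \in G\}$. The key feature is that $\calg^G(G/H)$ is canonically equivalent to $H$ regarded as a one-object groupoid, so, since $\bfE$ sends equivalences to weak equivalences, there is a natural weak equivalence $\bfE(\calg^G(G/H)) \simeq \bfE(H)$.

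Next, set $\bfE^G := \bfE \circ \calg^G \colon \Or(G) \to \Spectra$, and for a $G$-$CW$-pair $(X,A)$ define
$$\calh^G_n(X,A;\bfE) \;:=\; \pi_n\bigl(\map_G(G/?,\, X \cup_A \cone(A))_+ \wedge_{\Or(G)} \bfE^G\bigr),$$
where $\wedge_{\Or(G)}$ denotes the coend (balanced smash product over $\Or(G)$) and $\map_G(G/?, Y)$ is the $\Or(G)^{\op}$-space whose value at $G/H$ is the fixed-point space $Y^H$. The axioms of a $G$-homology theory would follow from formal properties of this coend construction: homotopy invariance and the disjoint union axiom reduce to the corresponding properties of stable homotopy applied to smash products, while the long exact sequence of a pair and excision come from the fact that the equivariant cellular filtration of $(X,A)$ yields cofibration sequences which are preserved by smashing with a spectrum. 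For the value on a single orbit, $\map_G(G/?, G/H)$ is the representable $\Or(G)$-space $\Or(G)(-,G/H)$, so the Yoneda-type identity for coends gives
$$\map_G(G/?, G/H)_+ \wedge_{\Or(G)} \bfE^G \;\simeq\; \bfE^G(G/H) \;\simeq\; \bfE(H),$$
yielding $\calh^G_n(G/H;\bfE) \cong \pi_n(\bfE(H))$ and, taking $H = G$, also $\calh^H_n(\pt) \cong \pi_n(\bfE(H))$.

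The main work, and the expected principal obstacle, lies in constructing the induction structure of Definition~\ref{def:equivariant_homology_theory}. Given $\alpha \colon H \to G$ and an $H$-$CW$-pair $(X,A)$, I would set $\ind_\alpha(X,A) := (G \times_\alpha X, G \times_\alpha A)$; the homomorphism $\alpha$ induces a functor $\alpha_* \colon \Or(H) \to \Or(G)$ sending $H/L$ to $G/\alpha(L)$, together with a natural transformation $\calg^H \Rightarrow \calg^G \circ \alpha_*$ whose value at $H/L$ is an equivalence of groupoids precisely when $\ker(\alpha) \cap L = 1$, i.e.\ when $\ker(\alpha)$ acts freely on $H/L$. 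Combining this with the isomorphism of $\Or(G)$-spaces $\map_G(G/?, \ind_\alpha Y) \cong \map_H(H/?, Y) \circ \alpha_*^{\op}$ and a change-of-coend argument produces the natural map $\ind_\alpha$; and when $\ker(\alpha)$ acts freely on $X$, a cell-by-cell comparison shows that the relevant $\Or(H)$-spectrum map $\bfE^H \to \alpha_*^{\op,*}\bfE^G$ is a weak equivalence on every isotropy that actually occurs in $X$, which upgrades $\ind_\alpha$ to a bijection.

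Compatibility with boundary homomorphisms, functoriality in $\alpha$ and compatibility with conjugation are then formal, each reducing to naturality of the coend and standard $2$-categorical identities for $\Or(-)$ and $\calg^{(-)}$. The genuinely delicate point is the bookkeeping needed to make these constructions \emph{strictly} functorial in $\alpha$ (rather than merely coherent up to homotopy), which typically forces one to replace the naive transport groupoid by a rectified version or to work in a suitable model category of $\Or(G)$-spectra; this is where the bulk of the technical effort will go.
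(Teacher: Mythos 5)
Your proposal reconstructs precisely the Davis--L\"uck construction from~\cite{Davis-Lueck(1998)}, which is what the cited reference~\cite[Proposition~6.4]{Lueck-Reich(2005)} packages: transport groupoid $\calg^G$, composite $\Or(G)$-spectrum $\bfE\circ\calg^G$, balanced smash product over the orbit category, Yoneda for the orbit value, and an induction structure via $\alpha_*\colon\Or(H)\to\Or(G)$. Since the paper offers no argument of its own beyond the citation, there is nothing to contrast; your sketch is the standard proof, and your closing remark about rectifying the transport groupoid to get strict functoriality in $\alpha$ correctly identifies the main technical burden dealt with in the sources.
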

\begin{proof}
See~\cite[Proposition~6.4 on page~738]{Lueck-Reich(2005)}.
\end{proof}

\begin{theorem}[\blue{Equivariant homology theories associated to $K$ and $L$-theory},
\green{Davis-L\"uck (1998)}]

Let $R$ be a ring (with involution). There exist covariant functors

\begin{eqnarray*}
\red{\bfK_R}
\colon \Groupoids & \to & \Spectra;
\\
\red{\bfL^{\langle \infty \rangle}_R} \colon \Groupoids& \to & \Spectra;
\\
\red{\bfK^{\topo}} \colon \Groupoids^{\inj}  & \to & \Spectra
\end{eqnarray*}
with the following properties:

\begin{itemize}

\item
They send equivalences of groupoids to  weak equivalences of spectra;

\item
For every group $G$ and all $n \in \IZ$ we have

\begin{eqnarray*}
\pi_n(\bfK_R(G)) & \cong & K_n(RG);
\\
\pi_n(\bfL^{\langle -\infty  \rangle}_R(G)) & \cong & L_n^{\langle -\infty \rangle}(RG);
\\
\pi_n(\bfK^{\topo}(G)) & \cong & K_n(C^*_r(G)).
\end{eqnarray*}

\end{itemize}
\end{theorem}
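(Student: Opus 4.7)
The plan is to associate to each groupoid $\mathcal{G}$ an additive $R$-category (or a $C^*$-category) whose non-connective algebraic $K$-theory, $L$-theory, or topological $K$-theory spectrum will serve as $\bfK_R(\mathcal{G})$, $\bfL_R^{\langle -\infty \rangle}(\mathcal{G})$, or $\bfK^{\topo}(\mathcal{G})$, and then to verify the two claimed properties: sending equivalences of groupoids to weak equivalences of spectra, and recovering $K_n(RG)$, $L_n^{\langle -\infty \rangle}(RG)$, or $K_n(C^*_r(G))$ when $\mathcal{G}$ is the one-object groupoid associated to $G$. Conceptually, the whole construction is a categorified version of the group ring, made functorial in the groupoid variable.

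For the algebraic spectra I would first define an additive category $R\mathcal{G}$ whose objects are the formal finite direct sums of objects of $\mathcal{G}$ and whose morphism module from $x$ to $y$ is the free $R$-module on $\mor_{\mathcal{G}}(x,y)$, with composition extending that of $\mathcal{G}$ $R$-bilinearly. A functor $F \colon \mathcal{G} \to \mathcal{G}'$ clearly induces an additive $R$-linear functor $R\mathcal{G} \to R\mathcal{G}'$. Applying the non-connective algebraic $K$-theory functor of Pedersen-Weibel for additive categories yields $\bfK_R(\mathcal{G})$. For $L$-theory one equips $R\mathcal{G}$ with the involution induced by the involution on $R$ and the inversion $r \cdot \phi \mapsto \overline{r} \cdot \phi^{-1}$ of morphisms in $\mathcal{G}$, and then applies Ranicki's ultimate lower $L$-theory spectrum of an additive category with involution to obtain $\bfL_R^{\langle -\infty \rangle}(\mathcal{G})$. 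For the topological case one instead builds a $C^*$-category $C^*_r(\mathcal{G})$ by completing the morphism spaces of $\IC\mathcal{G}$ in the reduced norm coming from the left regular representation on $\ell^2(\mor_{\mathcal{G}}(x,-))$, and applies a $K$-theory functor for $C^*$-categories (for instance the one of Joachim). The restriction to $\Groupoids^{\inj}$ reflects the well-known fact that the reduced $C^*$-norm is only functorial under injective $\ast$-homomorphisms.

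To finish, I would verify the two properties. When $\mathcal{G}$ is the one-object groupoid associated to $G$, the category $R\mathcal{G}$ is the additive category freely generated by a single object with endomorphism ring $RG$, and its non-connective Pedersen-Weibel $K$-theory is by definition $K_\ast(RG)$; analogously for $L_\ast^{\langle -\infty \rangle}(RG)$, and the $C^*$-category $C^*_r(\mathcal{G})$ has the single endomorphism $C^*$-algebra $C^*_r(G)$. An equivalence of groupoids $\mathcal{G} \to \mathcal{G}'$ induces an equivalence of the associated additive categories (respectively $C^*$-categories), and all three $K$-theoretic constructions send equivalences of additive (respectively $C^*$-) categories to weak equivalences of spectra, giving the first bullet. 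The main obstacle is the topological case: one must choose a $K$-theory machine on $C^*$-categories that is genuinely functorial, has the correct homotopy type on one-object $C^*$-categories, and is invariant under equivalences, and one must handle carefully the failure of the reduced completion to be functorial on arbitrary groupoid homomorphisms, which is precisely why $\bfK^{\topo}$ can only be defined on $\Groupoids^{\inj}$.
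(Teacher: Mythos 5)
Your construction -- linearizing the groupoid to an additive $R$-category $R\mathcal{G}$, equipping it with the involution $r\phi \mapsto \overline{r}\phi^{-1}$ for $L$-theory, completing to a reduced $C^*$-category in the topological case, and then applying Pedersen-Weibel, Ranicki, and a $C^*$-categorical $K$-theory machine respectively -- is essentially the construction given in \cite[Section~2]{Davis-Lueck(1998)}, which the paper cites as its proof, including the correct observation that the reduced $C^*$-completion forces the restriction to $\Groupoids^{\inj}$. The only minor quibble is chronological: the Davis--L\"uck construction predates Joachim's $C^*$-category spectrum, so the original argument uses an earlier topological $K$-theory functor for $C^*$-categories, but this does not affect the substance of the proof.
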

\begin{proof}
See~\cite[Section~2]{Davis-Lueck(1998)}.
\end{proof}

Combining the last two theorems we get

\begin{example}[\blue{Equivariant homology theories 
associated to $K$ and $L$-theory}]
\label{exa:equivariant_homology_theories_associated_toK-and_L-theory}
We get equivariant homology theories
\begin{eqnarray*}
& \red{H_*^?(-;\bfK_R)}; &
\\
& \red{H_*^?(-;\bfL^{\langle -\infty \rangle}_R)}; &
\\
& \red{H_*^?(-;\bfK^{\topo})}, &
\end{eqnarray*}
satisfying for $H\subseteq G$
$$
\begin{array}{lclcl}
H_n^G(G/H;\bfK_R) & \cong & H_n^H(\pt;\bfK_R) & \cong & K_n(RH);
\\
H_n^G(G/H;\bfL^{\langle -\infty \rangle}_R) & \cong & H_n^H(\pt;\bfL^{\langle -\infty \rangle}_R)
& \cong & L^{\langle -\infty \rangle}_n(RH);
\\
H_n^G(G/H;\bfK^{\topo}) & \cong & H_n^H(\pt;\bfK^{\topo}) & \cong & K_n(C_r^*(H)).
\end{array}
$$
\end{example}

Now we are ready to give the general formulation of the Farrell-Jones and the 
Baum-Connes Conjecture.

\begin{conjecture}[\blue{$K$-theoretic Farrell-Jones-Conjecture}]
\label{con:FJC_for_K}
The \emphred{$K$-theoretic Farrell-Jones Conjecture}
with coefficients in $R$ for the group $G$ predicts that
the \red{assembly map}
$$H_n^G(\EGF{G}{\VCyc},\bfK_R) \to H_n^G(\pt,\bfK_R) = K_n(RG),$$
which is the map induced by the projection
$\EGF{G}{\VCyc} \to \pt$, is bijective for all $n \in \IZ$.
\end{conjecture}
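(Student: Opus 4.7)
The plan is to attack Conjecture~\ref{con:FJC_for_K} by a two-stage reduction followed by a geometric input, since no direct comparison of $H_n^G(\EGF{G}{\VCyc},\bfK_R)$ with $K_n(RG)$ is available in closed form. First I would establish a \emph{transitivity principle}: if the assembly map is known to be an isomorphism for every virtually cyclic subgroup $V \subseteq G$ relative to the family $\calf \cap V$ of finite subgroups of $V$, and if the assembly map from $\EGF{G}{\Fin}$ is an isomorphism, then the assembly map from $\EGF{G}{\VCyc}$ is also an isomorphism. The virtually cyclic case is essentially the Bass--Heller--Swan picture plus Nil-terms, so this reduces the problem to proving bijectivity of the assembly map $H_n^G(\EGF{G}{\Fin};\bfK_R) \to K_n(RG)$ for the family of finite subgroups.

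The next step is a reformulation via \emph{controlled algebra}. The idea is to identify $K_n(RG)$ with the $K$-theory of a certain additive category of geometric $RG$-modules controlled over $G$ (with its word metric, or over an appropriate classifying space), and to identify $H_n^G(\EGF{G}{\Fin};\bfK_R)$ with an analogous category in which the control is required to be arbitrarily fine away from finite isotropy. The assembly map then becomes the forgetful map that relaxes the control condition, and bijectivity is equivalent to the vanishing of the $K$-theory of a quotient ``obstruction category'' whose objects have large but non-equivariantly controlled support. I would use an Eilenberg swindle / squeezing argument: if one can continuously shrink supports toward $\EGF{G}{\Fin}$ while staying equivariant, the obstruction category becomes $K$-trivial.

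The heart of the argument is therefore geometric, and I would aim to construct the required shrinking from one of two inputs. Either (i) a $G$-equivariant \emph{flow} on $\EGF{G}{\Fin}$ (or on an enlargement of it) whose long-time behavior contracts bounded sets onto $\Fin$-strata, which is how Farrell--Jones proceed for negatively curved manifolds and how Bartels--L\"uck--Reich proceed for $\CAT(0)$ and hyperbolic groups; or (ii) a sequence of finite-dimensional covers of $G \times [1,N]$ by sets of uniformly bounded $G$-dimension whose diameters in the $G$-direction tend to zero at infinity. The existence of such a structure for CAT(0) or hyperbolic $G$ uses the geodesic flow and the fact that geodesics in non-positive curvature fellow-travel in a controlled way, and for that class one then verifies the Farrell-Jones Conjecture for all coefficient rings $R$.

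The main obstacle is precisely this last step in full generality: for an arbitrary discrete group $G$ we have no geodesic flow, no $\CAT(0)$ structure, and no substitute mechanism that would provide a uniform finite-dimensional cover with the required bounded-dimension and small-diameter properties. Beyond the CAT(0)/hyperbolic/solvable worlds and their closure under standard group-theoretic operations (subgroups, colimits, fibered products), every known case proceeds by finding some specific geometric action of $G$ that plays the role of the flow, and no axiomatic reason is presently known why such an action should always exist. My proposal would therefore be ``transitivity + controlled algebra + flow,'' with the honest caveat that the flow is the step which at present obstructs a proof for all $G$.
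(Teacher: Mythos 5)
You have correctly recognized that this is a \emph{conjecture}, not a theorem: the paper does not prove it, and it cannot be proved with current technology for an arbitrary discrete group $G$ and arbitrary ring $R$. There is no ``paper's own proof'' to compare against. The paper states Conjecture~\ref{con:FJC_for_K}, traces it back to Farrell--Jones~\cite{Farrell-Jones(1993a)}, discusses its consequences, and then in the final section tabulates the classes of groups for which it is known, explicitly listing open cases (e.g.\ $SL_n(\IZ)$ for $n\ge 3$, mapping class groups, $\Out(F_n)$, solvable groups) and noting that no counterexample is known.

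Your sketch is an accurate summary of how the \emph{partial} results are obtained. The reduction via the Transitivity Principle (Theorem~\ref{the:transitivity_principle}, from~\cite{Bartels-Lueck(2007ind)}) is exactly the tool the paper uses to pass between families, though in the $K$-theoretic setting one cannot reduce all the way to $\Fin$ because of Nil-phenomena; the paper instead reduces $\VCyc$ to $\VCyc_I$ (Theorem~\ref{the:passage_from_VCcy_I_to_VCyc}) and isolates the Nil/UNil contribution via Bartels' splitting (Theorem~\ref{the:Bartels}). The controlled-algebra reformulation of the assembly map as a forget-control map, and the role of an equivariant flow or a sequence of wide covers with bounded $G$-dimension, is precisely the mechanism behind~\cite{Bartels-Reich(2005JAMS)} and~\cite{Bartels-Lueck-Reich(2007hyper)} that the paper cites as giving the hyperbolic case for all $R$ (Theorem~\ref{the:FJC_K_hyperbolic}), and behind the CAT(0) program mentioned in the paper. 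Your honest caveat --- that no substitute for the geodesic flow is available for a general $G$ --- is exactly the reason the conjecture is open, and it matches the paper's own ``Methods of proof'' and ``Open cases'' remarks. So there is no flaw in your reasoning, only the unavoidable fact that the task as literally posed (prove the conjecture for all $G$ and $R$) has no known solution; the paper itself does not claim one.
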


\begin{conjecture}[\blue{$L$-theoretic Farrell-Jones-Conjecture}]
\label{con:FJC_for_L}
The \emphred{$L$-theoretic Farrell-Jones Conjecture}
with coefficients in $R$ for the group $G$ predicts that
the \red{assembly map}
$$H_n^G(\EGF{G}{\VCyc},\bfL_R^{\langle -\infty\rangle}) \to 
H_n^G(\pt,\bfL_R^{\langle-\infty\rangle}) = L_n^{\langle-\infty\rangle}(RG),$$
which is the map induced by the projection
$\EGF{G}{\VCyc} \to \pt$,
is bijective for all $n \in \IZ$.
\end{conjecture}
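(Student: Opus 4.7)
Since Conjecture \ref{con:FJC_for_L} is a major open problem in full generality rather than a theorem proved in this paper, my proposal is a strategy that has been carried out for concrete classes of groups in the Farrell-Jones program (most fully for CAT(0) groups, hyperbolic groups, and lattices in virtually connected Lie groups), rather than a complete proof. The general principle is to interpret both sides of the assembly map as values of equivariant homology theories, and to reduce everything to a comparison at the level of geometrically controlled $L$-theory.

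The plan is as follows. First, I would invoke the general machinery of Example \ref{exa:equivariant_homology_theories_associated_toK-and_L-theory}: both sides are values of the equivariant homology theory $H_*^?(-;\bfL_R^{\langle -\infty \rangle})$, evaluated at $\EGF{G}{\VCyc}$ and at $\pt$. An Atiyah-Hirzebruch-type spectral sequence then builds $H_n^G(\EGF{G}{\VCyc};\bfL_R^{\langle -\infty \rangle})$ from the groups $L_n^{\langle -\infty \rangle}(RV)$ for virtually cyclic $V\le G$ and the cellular structure of $\EGF{G}{\VCyc}$. A first step is to use the formal inheritance properties of the conjecture (passage to subgroups, directed colimits, overgroups of finite index) to reduce to the case where $G$ admits an action on a contractible space with enough geometric structure to feed into the next step.

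The heart of the proposal is a controlled-algebra argument. I would replace $\bfL_R^{\langle -\infty \rangle}$ by a controlled $L$-theory spectrum over a suitable $G$-flow space $FS$ and show that the assembly map becomes an isomorphism after restricting control towards the ends of $FS$. The decisive geometric input is a \emph{transfer reducibility} condition: for every finite $S \subseteq G$ and every $\epsilon > 0$, there should be a $G$-equivariant open cover of $G \times FS$ with members drawn from the family of virtually cyclic subgroups, of bounded equivariant covering dimension, and long along flow lines. Such covers enable a squeezing argument that kills obstructions to both surjectivity and injectivity of the assembly map, by reducing control without changing the underlying homotopy type.

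I expect two main obstacles. First, the construction of $FS$ and of suitable long covers requires deep geometric input (negative curvature, nonpositive curvature, or arithmetic/lattice structure), and the covers must be engineered to be compatible with the virtually cyclic isotropy featured in $\EGF{G}{\VCyc}$ rather than only with finite isotropy. Second, the $L$-theoretic setting introduces genuine difficulties beyond the $K$-theoretic analogue: the controlled category must carry a duality, the assembly map must respect the quadratic refinement, and the $\langle -\infty\rangle$ decoration is essential to ensure that Rothenberg-type sequences collapse and that the theory is compatible with the passage from $R$ to $RV$ for virtually cyclic $V$. It is precisely these technical and geometric constraints that at present restrict the conjecture to special classes of groups rather than all of them.
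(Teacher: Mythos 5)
You have correctly recognized that Conjecture~\ref{con:FJC_for_L} is stated in the paper as a conjecture, not a theorem, and that no proof is given or expected in full generality. Your sketch of the controlled-algebra strategy (flow spaces, transfer reducibility, long thin covers adapted to the family $\VCyc$, squeezing arguments, and the role of the $\langle -\infty\rangle$ decoration via Rothenberg sequences) is consistent with the paper's own discussion in the remark on methods of proof and with the results it cites for hyperbolic groups and related classes (Theorems~\ref{the:FJC_K_hyperbolic}, \ref{the:FJC_L_hyperbolic}, \ref{the:FJC_K_colim}). Since there is no proof in the paper to compare against, there is nothing further to check; your proposal is an accurate account of the state of the art rather than a purported proof, which is the appropriate response here.
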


\begin{conjecture}[\blue{Baum-Connes Conjecture}]
\label{con:BCC}
The \emphred{Baum-Connes Conjecture} predicts that
the assembly map
$$K_n^G(\eub{G}) = H_n^G(\EGF{G}{\Fin},\bfK^{\topo}) \to H_n^G(\pt,\bfK^{\topo}) 
= K_n(C^*_r(G))$$
which is the map induced by the projection
$\EGF{G}{\Fin} \to \pt$, is bijective for all $n \in \IZ$.
\end{conjecture}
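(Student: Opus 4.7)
Since the Baum-Connes Conjecture in the generality stated here is a deep open problem, any honest proof proposal must be understood as a strategy that has been successful for large classes of groups (such as a-T-menable or hyperbolic groups) rather than a complete argument for all $G$. The natural framework to attempt the conjecture is Kasparov's equivariant $KK$-theory: reinterpret $K_n^G(\eub{G})$ as the topological $K$-homology $KK_*^G(C_0(\eub{G}),\IC)$ and $K_n(C^*_r(G))$ as $KK_*(\IC,C^*_r(G))$, so that the assembly map becomes the descent-plus-pairing map $\mu \colon KK_*^G(C_0(\eub{G}),\IC) \to KK_*(\IC,C^*_r(G))$. The advantage is that $KK$-theory carries a ring structure and good functoriality which gives us enough room to manufacture inverses.

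The core device I would use is the \emph{Dirac--dual Dirac method}. The plan is to construct two equivariant $KK$-classes: a \emph{Dirac} element $D \in KK^G(A,\IC)$ associated to a proper $G$-$C^*$-algebra $A$ modelled on $\eub{G}$ (for instance, $C_0(\eub{G})$ when $\eub{G}$ has a suitable spin$^c$ or Bott structure, or the Clifford algebra bundle on a $\CAT(0)$-model as guaranteed by the previous section), and a \emph{dual Dirac} element $\eta \in KK^G(\IC,A)$, together with the \emph{gamma element} $\gamma = \eta \otimes_A D \in KK^G(\IC,\IC)$. The first step is to establish, from general nonsense about proper actions and induction, that the assembly map factors through multiplication by $\gamma$ in such a way that $\gamma$ acts as the identity on the image of $\mu$; this already yields injectivity of $\mu$. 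The second, and decisive, step is to prove $\gamma = 1$ in $KK^G(\IC,\IC)$, which then yields surjectivity as well.

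The construction of $\eta$ and $D$ would proceed concretely using the geometric models for $\eub{G}$ collected in the previous section: for almost connected Lie groups via $L/K$, for discrete subgroups of such via restriction, for groups acting on $\CAT(0)$-spaces (including trees, buildings, and in particular the Bruhat--Tits building of $SL_2(\IQ_p)$ in Example~\ref{exa:eub_for_SL_2(Z)}) via Bott-periodicity along geodesic rays, and for a-T-menable groups via Higson--Kasparov's affine-isometric action on a Hilbert space. In each of these geometric settings one gets a concrete homotopy from $\gamma$ to $1$ by deforming along the underlying negatively-curved or flat geometry. For mapping class groups, $\Out(F_n)$, and arithmetic groups one would try to feed the finite-dimensional models for $\eub{G}$ from the previous section into the same machine.

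The hard part, and the reason the general conjecture remains open, is precisely the equality $\gamma = 1$. Without additional geometric input (property Haagerup, nonpositive curvature, or a suitable compactification of $\eub{G}$ to which the $G$-action extends amenably) there is no known mechanism to homotope $\gamma$ to $1$, and indeed Gromov's random groups containing expanders provide counterexamples to the analogous \emph{coefficient} version of the conjecture, showing that any uniform approach must fail. A realistic plan is therefore: (i) set up the $KK$-theoretic assembly map and prove the injectivity half via the $\gamma$-element formalism; (ii) for each geometric class of groups separately, exploit the specific model of $\eub{G}$ to deform $\gamma$ to $1$; (iii) extend the resulting class of groups by closure properties under subgroups, extensions, free products, and colimits, using the permanence behavior of $\calbc$ suggested by the Pimsner--Voiculescu and Mayer--Vietoris sequences quoted above.
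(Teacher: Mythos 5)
This statement is a \emph{conjecture}, not a theorem: the paper offers no proof of it, and indeed none is known in the stated generality. There is therefore no ``paper's own proof'' to compare your argument against. You correctly recognize this, and what you have written is not a proof of the conjecture but a survey of the standard analytic strategy (Kasparov's Dirac--dual Dirac method and the $\gamma$-element) together with an honest account of where it succeeds (a-T-menable groups via Higson--Kasparov, hyperbolic groups via Mineyev--Yu and Lafforgue) and where it is obstructed (the failure of $\gamma = 1$ in general, and the coefficient counterexamples of Higson--Lafforgue--Skandalis). This is consistent with the paper's own ``Methods of proof'' remark in the final section, which names exactly the Dirac--dual Dirac method as the dominant analytic technique and points to the same obstruction. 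Two small caveats on the details: the permanence properties you list for the class $\calbc$ are not all known for the conjecture \emph{without} coefficients --- in particular closure under extensions and arbitrary colimits is established for the version with coefficients (see the references to Chabert--Echterhoff and Oyono-Oyono in the paper), so step (iii) of your plan should be phrased more carefully; and the Pimsner--Voiculescu and Mayer--Vietoris sequences you cite are consistency checks rather than permanence theorems, since by themselves they do not show that the relevant assembly maps fit into exact sequences. But as a description of the state of the art your proposal is essentially correct and does not contain any claim to have proved the conjecture.
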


\begin{remark}[\blue{Original sources for the Farrell-Jones and the Baum-Connes Conjecture}]
These conjectures were stated in
\green{Farrell-Jones}~\cite[1.6 on page~257]{Farrell-Jones(1993a)} and
\green{Baum-Connes-Higson}~\cite[Conjecture~3.15 on page~254]{Baum-Connes-Higson(1994)}.  
Our formulations differ from the original ones, but are equivalent
(see~\cite[Section~6]{Bartels-Farrell-Jones-Reich(2004)},
\cite[Section~6]{Davis-Lueck(1998)}, and~\cite{Hambleton-Pedersen(2004)}).
In the case of the Farrell-Jones Conjecture
we slightly generalize the original conjecture 
by allowing arbitrary coefficient rings instead of $\IZ$. 
\end{remark}

We will discuss these conjectures and their applications in the next section.
We will now continue with equivariant homology theories.

Let $\calh_*$ be a (non-equivariant) homology theory.
There is the \red{Atiyah-Hirzebruch spectral sequence} which converges to
$\calh_{p+q}(X)$ and has as $E^2$-term
$$E_{p,q}^2 = H_p(X;\calh_q(\pt)).$$
Rationally it collapses completely by the following result.

\begin{theorem}[\blue{Non-equivariant Chern character}, \green{Dold (1962)}]

  Let $\calh_*$ be a homology theory with values in $\Lambda$-modules for $\IQ \subseteq
  \Lambda$.

  Then there exists for every $n \in \IZ$ and every $CW$-complex $X$ a natural isomorphism
$$\bigoplus_{p+q=n} H_p(X;\Lambda)\otimes_{\Lambda} \calh_q(\pt) 
\xrightarrow{\cong} \calh_n(X),$$
where $H_p(X;\Lambda)$ is the singular or cellular homology of $X$ with coefficients in
$\Lambda$.
\end{theorem}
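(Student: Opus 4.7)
The plan is to compare both sides as homology theories in $X$ with values in $\Lambda$-modules: construct a natural transformation, check it is an isomorphism on the one-point space, and conclude by the standard comparison theorem for homology theories satisfying the disjoint union axiom. First I would verify that $T_n(X) := \bigoplus_{p+q=n} H_p(X;\Lambda)\otimes_\Lambda \calh_q(\pt)$ is itself a homology theory. Homotopy invariance, excision, and the disjoint union axiom follow from the corresponding properties of singular homology with $\Lambda$-coefficients together with exactness of the tensor product $-\otimes_\Lambda \calh_q(\pt)$. The exactness uses $\IQ \subseteq \Lambda$: every $\Lambda$-module is a $\IQ$-vector space, hence torsion-free and divisible as an abelian group, hence flat over $\IZ$. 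Combined with the identification $H_p(X;\Lambda)\otimes_\Lambda \calh_q(\pt) = H_p(X;\IZ)\otimes_\IZ \calh_q(\pt)$, this reduces exactness to flatness over $\IZ$.

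Next I would construct the natural transformation $T \colon T_n \to \calh_n$ via Brown representability. Choose a spectrum $\bfE$ representing $\calh_*$, so $\pi_q\bfE \cong \calh_q(\pt)$ is a $\IQ$-vector space for every $q$ and $\bfE$ is a rational spectrum. All Postnikov $k$-invariants of $\bfE$ then vanish, since they live in positive-degree stable cohomology operations between rational Eilenberg--MacLane spectra, and such operations are all zero. Hence
$$\bfE \simeq \bigvee_q \Sigma^q H\bigl(\calh_q(\pt)\bigr).$$
Smashing with $X_+$ and taking $\pi_n$ gives a natural isomorphism $\bigoplus_q H_{n-q}(X;\calh_q(\pt)) \to \calh_n(X)$; composing with the identification $H_{n-q}(X;\calh_q(\pt)) = H_{n-q}(X;\Lambda)\otimes_\Lambda \calh_q(\pt)$ (valid since $\calh_q(\pt)$ is divisible, so the Tor term in universal coefficients vanishes) yields $T$. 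On $X = \pt$ only the summand with $p = 0$, $q = n$ survives, and $T$ becomes the identity of $\calh_n(\pt)$. Since both functors satisfy the disjoint union axiom, the comparison theorem forces $T$ to be an isomorphism for every $CW$-complex.

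The main obstacle is the naturality and well-definedness of this construction: the Postnikov splitting of $\bfE$ is not canonical, so one must verify that any two choices induce the same map on $\pi_n(\bfE \wedge X_+)$. This reduces to showing that self-equivalences of $\bigvee_q \Sigma^q H(\calh_q(\pt))$ inducing the identity on homotopy are homotopic to the identity, which again uses the vanishing of off-diagonal and higher-degree maps between rational Eilenberg--MacLane spectra. An alternative avoiding spectra is to work directly with the Atiyah--Hirzebruch spectral sequence: all higher differentials are torsion-valued and therefore vanish over $\Lambda$, and the induced filtration on $\calh_n(X)$ admits a natural splitting by the same rational-linearity argument.
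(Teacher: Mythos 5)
Your proof is correct but takes a genuinely different route from the paper's. The paper, following Dold, writes down the Chern character as an explicit composite
$$\bigoplus_{p+q=n} H_p(X;\calh_q(\pt)) \xrightarrow{\alpha^{-1}} \bigoplus H_p(X;\IZ)\otimes_\IZ\calh_q(\pt) \xrightarrow{(\hur\otimes\id)^{-1}} \bigoplus \pi^s_p(X_+)\otimes_\IZ\calh_q(\pt) \xrightarrow{\oplus D_{p,q}} \calh_n(X),$$
where the middle arrow inverts the stable Hurewicz homomorphism, an isomorphism after rationalization by Serre's finiteness theorem, and $D_{p,q}$ is defined elementwise by applying $\calh_*$ to a representative $f\colon S^{p+k}\to S^k\wedge X_+$. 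This is fully canonical and needs no representability input. Your argument instead passes to a representing spectrum, observes it is rational, and splits it as a wedge of rational Eilenberg--MacLane spectra; this is structurally slick but requires Adams-type representability for \emph{homology} theories (Brown representability as usually stated is about cohomology, and the homology version needs a separate argument or a pass through the stable category), and it introduces the non-canonicity you correctly flag, which you resolve via rigidity of rational spectra. Your AHSS alternative is a third valid route and is closest in spirit to the paper's, since identifying the differentials as torsion-valued operations is morally the same input (Serre's theorem) that kills the cokernel of the Hurewicz map. What Dold's construction buys over yours is an explicit formula and independence from any choice; what yours buys is a conceptually transparent reason the theorem is true, namely that rational spectra split.
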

\begin{proof} 
At  least we give the definition of \green{Dold's} \red{Chern character} 
for a $CW$-complex $X$, for more details we refer to
\green{Dold}~\cite{Dold(1962)}. It is given by the following composite:
$$
\ch_n \colon  \bigoplus_{p+q = n} H_p(X;\calh_q(*))
\xrightarrow{\red{\alpha^{-1}}}
\bigoplus_{p+q = n} H_p(X;\IZ) \otimes_{\IZ} \calh_q(*)
$$

$$
\xrightarrow{\bigoplus_{p+q = n} (\red{\hur} \otimes \id)^{-1}}
\bigoplus_{p+q = n} \pi^s_p(X_+,*) \otimes_{\IZ}  \calh_q(*)
\xrightarrow{\bigoplus_{p+q = n} \red{D_{p,q}}} \calh_n(X),
$$
Here the canonical map $\red{\alpha}$
is bijective, since any $\Lambda$-module 
is flat over $\IZ$ because of the assumption $\IQ \subset \Lambda$,
The map $\red{\hur}$ is the \red{Hurewicz homomorphism}
which is bijective because of Serre's Theorem (see~\cite{Serre(1953)},
\cite{Klaus-Kreck(2004)}) which says
$$
\pi_m^s \otimes \mathbb{Q} \cong \left\{
\begin{array}{ll}
  \mathbb{Q} &  \text{for }    m = 0, \\
  0  & \text{else.}
\end{array}
\right.
$$
The map
$D_{p,q}$ sends $[f \colon (S^{p+k},\pt) \to (S^k \wedge X_+,\pt)] \otimes \eta$ 
to the image of $\eta$ under the composite
\begin{multline*}
\calh_q(*)
\cong \calh_{p+k+q}(S^{p+k},\pt)
\xrightarrow{\calh_{p+k+q}(f)}
\calh_{p+k+q}(S^k \wedge X_+,\pt)
\cong \calh_{p+q}(X).
\end{multline*}
\end{proof}

We want to extend this to the equivariant setting.
This requires an extra structure on the coefficients of an equivariant 
homology theory $\calh^?_*$.

We define a covariant functor called \red{induction}
$$\red{\ind} \colon \FGI \to \Lambda\text{-}\operatorname{Mod}$$
from the category $\red{\FGI}$ of finite groups
with injective group homomorphisms
as morphisms to the category of $\Lambda$-modules as follows.
It sends $G$ to $\calh_n^G(\pt)$
and an injection of finite groups $\alpha \colon H \to G$
to the morphism given by the induction structure

$$\calh_n^H(\pt)
\xrightarrow{\ind_{\alpha}} \calh_n^G(\ind_{\alpha} \pt)
\xrightarrow{\calh_n^G(\pr)} \calh_n^G(\pt).$$

\begin{definition}[\blue{Mackey extension}]
We say that $\calh^?_*$ has a \red{Mackey extension} if for every $n \in \IZ$
there is a contravariant functor
called \red{restriction}
$$\red{\res} \colon \FGI \to \Lambda\text{-}\operatorname{Mod}$$
such that the two functors $\ind$ and $\res$ agree on
objects and satisfy the \red{double coset formula},
i.e., we have for two subgroups $H,K \subset G$ of the finite group $G$
$$\res_G^K \circ \ind_H^G = \sum_{KgH \in K\backslash G/H}
\ind_{c(g): H\cap g^{-1}Kg \to K}
 \circ \res_{H}^{ H\cap g^{-1}Kg},$$
where $c(g)$ is conjugation with $g$, i.e., $c(g)(h) = ghg^{-1}$.
\end{definition}

\begin{remark}[\blue{Existence of Mackey extensions}]
In every case we will consider such a Mackey extension does exist and is
given by an actual restriction.
For instance for
$H_0^?(-;\bfK^{\topo})$ induction is the functor complex representation ring $R_{\IC}$
with respect to induction of representations. 
The restriction part is given by the restriction of representations.
\end{remark}

We need some notation.
Consider a subgroup $H \subseteq G$. Denote by 
\red{$C_GH$} the \red{centralizer} and by \red{$N_GH$} the \red{normalizer} 
of $H \subseteq G$. Put 
$$\red{W_GH} := N_GH/H \cdot C_GH.$$
This is always a finite group. Define for an equivariant homology theory
$\calh^?_*$
$$S_H\left(\calh^H_q(*)\right) := 
\cok\left(\bigoplus_{\substack{K \subset H\\K \not= H}} \ind_K^H:
\bigoplus_{\substack{K \subset H\\K \not= H}} \calh^K_q(*) \to \calh^H_q(*)\right).$$

\begin{theorem}[\blue{Equivariant Chern character}, \green{L\"uck (2002)}] 
\label{the:equivariant_Chern_character}
Let $\calh^?_*$ be an equivariant
homology theory with values in $\Lambda$-modules for $\IQ \subseteq \Lambda$.
Suppose that $\calh^?_*$ has a \red{Mackey extension}.
Let $I$ be the set of conjugacy classes $(H)$ of finite subgroups $H$ of $G$.

Then there is for every group $G$, every proper $G$-$CW$-complex $X$ and
every $n \in \IZ$ a natural isomorphism
called \red{equivariant Chern character}
$$\red{\ch^G_n}\colon \bigoplus_{p+q = n} \bigoplus_{(H) \in I}
H_p(C_GH\backslash X^H;\Lambda) \otimes_{\Lambda[W_GH]} S_H\left(\calh^H_q(*)\right)
\xrightarrow{\cong} \calh^G_n(X).$$
Actually $\ch^?_*$  is an \red{equivalence of equivariant homology theories}.
\end{theorem}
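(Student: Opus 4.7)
The plan is to construct $\ch^G_n$ as a natural transformation of $G$-homology theories on proper $G$-CW-pairs, verify that both source and target are $G$-homology theories, and then reduce bijectivity to the case of orbits $G/K$ with $K$ finite via cellular induction (equivalently, via the collapse of the equivariant Atiyah--Hirzebruch spectral sequence, whose $E^2$-term depends only on the coefficient system $G/H \mapsto \calh_q^H(*)$).

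First I construct the Chern character. For each conjugacy class $(H) \in I$, the fixed-point set $X^H$ carries an action of $N_G H$ with $H$ acting trivially, so $C_G H \backslash X^H$ is naturally a space equipped with a $W_G H$-action. Applying Dold's non-equivariant rational Chern character to $C_G H \backslash X^H$ with coefficients in the $\Lambda[W_G H]$-module $S_H(\calh_q^H(*))$, and then pushing the result forward to $\calh_n^G(X)$ via the induction structure applied to the natural $G$-map $G \times_{N_G H} X^H \to X$, yields the component of $\ch^G_n$ indexed by $(H)$. Summing over $(H) \in I$ produces $\ch^G_n$. Both sides are $G$-homology theories on proper $G$-CW-pairs: for the target this is given, while for the source the homology axioms are inherited from non-equivariant singular homology since the functors $X \mapsto X^H$ and $X \mapsto C_G H \backslash X^H$ preserve $G$-pushouts along cellular inclusions, disjoint unions, and $G$-homotopies. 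Functoriality in $G$ via the induction structure of $\calh^?_*$ upgrades $\ch^?_*$ to a transformation of equivariant homology theories.

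Next I reduce to orbits. The standard comparison principle says that a natural transformation of $G$-homology theories on proper $G$-CW-pairs is an isomorphism provided it is so on every orbit $G/K$ for finite $K \subseteq G$. On $G/K$ the target specialises, via the induction structure, to $\calh_n^K(*)$. For the source one analyses $(G/K)^H = \{gK \mid g^{-1}Hg \subseteq K\}$, which decomposes into $N_G H$-orbits parametrised by $K$-conjugacy classes of embeddings $H \hookrightarrow K$. Unpacking the $W_G H$-action, the quotient by $C_G H$, and the double coset formula supplied by the Mackey extension, the source at $G/K$ reduces to
$$\bigoplus_{p+q=n}\,\bigoplus_{(H') \subseteq K} \Lambda \otimes_{\Lambda[W_K H']} S_{H'}(\calh_q^{H'}(*)),$$
where the inner sum runs over $K$-conjugacy classes of subgroups of $K$ and only the $p=0$ summand survives because $(G/K)^H$ is discrete.

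The main obstacle is to identify this last sum with $\calh_n^K(*)$. This is the assertion that every rational Mackey functor on the subgroup lattice of a finite group $K$ admits a canonical Artin-type splitting as a direct sum indexed by $K$-conjugacy classes of subgroups, with the $(H')$-summand being $S_{H'}$ of the value at $H'$ twisted by $W_K H'$. The splitting is governed by the idempotent decomposition of the rational Burnside ring of $K$, relies on the Artin/Conlon induction theorem, and is precisely the point at which the hypothesis $\IQ \subseteq \Lambda$ is indispensable; compatibility with the induction homomorphisms of $\calh^?_*$ along arbitrary injective $\alpha\colon H \to G$ then promotes $\ch^?_*$ to an equivalence of equivariant homology theories.
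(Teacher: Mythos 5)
Your proposal follows essentially the same route as L\"uck's proof in the cited reference \cite{Lueck(2002b)}: build the transformation from Dold's non-equivariant Chern character via the induction structure, check both sides are $G$-homology theories on proper $G$-$CW$-pairs, reduce bijectivity to orbits $G/K$ with $K$ finite, and there invoke the Burnside-ring idempotent splitting of rational Mackey functors (where the Mackey extension and $\IQ \subseteq \Lambda$ are indispensable). Your description of the map itself is slightly compressed --- the intermediate landing place is $\calh^{N_GH}_{p+q}(X^H)$, reached by a slant-product-type construction that exploits the free action of $W_GH$ after passing to $\Lambda$-coefficients, before pushing forward along $G\times_{N_GH}X^H\to X$ --- but the overall strategy and the role of each hypothesis match L\"uck's argument.
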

\begin{proof}
See~\cite[Theorem~0.2]{Lueck(2002b)}
\end{proof}

Recall the following basic result from
complex representation theory of finite groups.

\begin{theorem}[\blue{Artin's Theorem}]
Let $G$ be finite.  Then the map
$$\bigoplus_{C \subset G} \ind_C^G:
\bigoplus_{C \subset G} R_{\IC}(C) \to R_{\IC}(G)$$
is surjective after inverting $|G|$, where $C \subset G$ runs through the
cyclic subgroups of $G$.
\end{theorem}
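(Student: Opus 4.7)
The plan is to use the projection formula to reduce the problem to checking that a single element lies in the image of the induction map, then to verify this via a duality argument combined with an explicit bound on denominators.

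First, I would establish that the image $I \subseteq R_{\IC}(G)$ of the total induction map is an ideal. By Frobenius reciprocity, for every $\chi \in R_{\IC}(G)$, every cyclic $C \subseteq G$, and every $\psi \in R_{\IC}(C)$, one has the projection formula
$$\chi \cdot \ind_C^G(\psi) \;=\; \ind_C^G\bigl(\res_C^G(\chi) \cdot \psi\bigr),$$
so $I$ is closed under multiplication by $R_{\IC}(G)$. Consequently, the cokernel $R_{\IC}(G)/I$ is an $R_{\IC}(G)$-module, and proving surjectivity after inverting $|G|$ amounts to showing that $n \cdot 1_G \in I$ for some integer $n$ whose prime divisors all divide $|G|$.

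Second, I would show $I \otimes_{\IZ} \IQ = R_{\IC}(G) \otimes_{\IZ} \IQ$ by duality. After extending scalars to $\IC$, induction becomes the adjoint, under the canonical hermitian inner product on class functions, of the restriction map $\class(G;\IC) \to \prod_C \class(C;\IC)$. That restriction is injective, since every $g \in G$ lies in the cyclic subgroup $\langle g \rangle$, so any class function vanishing on all cyclic subgroups vanishes identically. Hence induction is surjective over $\IC$, and $R_{\IC}(G)/I$ is a finite abelian group of some exponent $n$, giving $n \cdot R_{\IC}(G) \subseteq I$.

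The main obstacle — and the quantitative heart of Artin's theorem — is to refine this to show that the exponent divides a power of $|G|$. My approach here is to exhibit an explicit identity
$$|G| \cdot 1_G \;=\; \sum_{C \text{ cyclic}} \ind_C^G(\theta_C)$$
for suitable $\theta_C \in R_{\IC}(C)$. One produces the $\theta_C$ by a M\"obius-style inversion on the poset of conjugacy classes of cyclic subgroups of $G$, starting from the permutation characters $\pi_C = \ind_C^G(1_C)$ and from the regular character $\red{\operatorname{reg}}_G = \ind_{\{1\}}^G(1_{\{1\}})$, and adjusting integer coefficients so that the resulting class function takes the constant value $|G|$ on every element of $G$. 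Combined with the ideal property established in the first step, the identity immediately gives $|G| \cdot \chi \in I$ for all $\chi \in R_{\IC}(G)$, which yields the surjectivity of the induction map after inverting $|G|$.
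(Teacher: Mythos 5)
The paper itself gives no proof of this statement; it simply cites Serre~\cite{Serre(1977)}, so there is nothing to compare your argument against. On its own terms your outline is Serre's standard argument, and the first two steps are sound: Frobenius reciprocity (the projection formula $\chi \cdot \ind_C^G \psi = \ind_C^G(\res_C \chi \cdot \psi)$) makes the image $I$ an ideal, and the injectivity of restriction on class functions gives surjectivity over $\IQ$, so $R_{\IC}(G)/I$ is a finite group. Note, however, that once you have the explicit identity of the last step, the duality argument of the second step becomes superfluous.

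The genuine gap is in that last step, which is where the quantitative content of Artin's theorem actually lives. You assert that a ``M\"obius-style inversion'' produces integers $a_C$ with $\sum_C a_C \ind_C^G(1_C) = |G| \cdot 1_G$, but you never check that the inversion really yields integral coefficients, and ``adjusting integer coefficients'' is not an argument. The clean way to supply this, which is what Serre does, is to work inside each cyclic $C$ rather than on the poset of conjugacy classes of cyclic subgroups of $G$: set $\theta_C \in R_{\IC}(C)$ to be $|C|$ times the indicator function of the generators of $C$. For $C$ of order $n$ with unique subgroup $C_d$ of order $d$ one has $\theta_C = \sum_{d \mid n} \mu(n/d)\, d\, \bigl(\sum_{\chi} \chi\bigr)$, the inner sum running over the characters $\chi$ of $C$ trivial on $C_d$ and $\mu$ the number-theoretic M\"obius function; this is visibly a $\IZ$-linear combination of characters and hence lies in $R_{\IC}(C)$. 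Then compute pointwise using $\ind_C^G(f)(g) = \frac{1}{|C|} \sum_{x \in G,\, x^{-1}gx \in C} f(x^{-1}gx)$: for each $x \in G$ exactly one cyclic subgroup, namely $C = x^{-1}\langle g \rangle x$, receives a nonzero contribution, and that contribution is exactly $1$. Hence $\sum_{C} \ind_C^G(\theta_C) = |G| \cdot 1_G$, and the ideal property from your first step finishes the proof. (Pushing the M\"obius sum through $\ind_C^G$ and regrouping recovers your desired $\IZ$-linear combination of the permutation characters $\pi_C$, if you want it in that form.)
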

\begin{proof}
See for instance~\cite[Theorem~17 in~9.2 on page~70]{Serre(1977)}.
\end{proof}

Let $C$ be a finite cyclic group.  The \red{Artin defect} is the cokernel of
the map
$$\bigoplus_{D \subset C, D \not= C}
\ind_D^C: \bigoplus_{D \subset C, D \not= C} R_{\IC}(D) \to R_{\IC}(C).$$

For an appropriate idempotent 
$\theta_C \in R_{\IQ}(C) \otimes_{\IZ} \IZ\left[\frac{1}{|C|}\right]$
the Artin defect is after inverting the order of $|C|$ canonically
isomorphic to
$$\theta_C \cdot R_{\IC}(C)  \otimes_{\IZ} \IZ\left[\frac{1}{|C|}\right]$$
by~\cite[Lemma~7.4]{Lueck(2002b)}.

\begin{example}[\blue{An improvement of Artin's Theorem}]
Let $K^G_* = H_*^?(-;\bfK^{\topo})$ be equivariant topological $K$-theory.
We get for a finite subgroup $H \subseteq G$
$$K_n^G(G/H) = K^H_n(\pt) =
\left\{\begin{array}{lll}
 R_{\IC}(H) & & \text{if } n \text{ is even};
\\
\{0\} && \text{if } n \text{ is odd}.
\end{array}\right.
$$
Hence $S_H\left(K^H_q(*)\right)\otimes_{\IZ} \IQ  = 0$, if $H$ is not cyclic and 
$q$ is even or if $q$ is odd, and we have
$S_C\left(K^C_q(*)\right) \otimes_{\IZ} \IQ= \theta_C \cdot
R_{\IC}(C) \otimes_{\IZ} \IQ$, 
if $C$ is finite cyclic and $q$ is even.

Let $G$ be finite, $X = \{\ast\}$ and $\calh^?_* = K^?_*$.
In this very special case Theorem~\ref{the:equivariant_Chern_character}
yields already something new, namely, an improvement of Artin's theorem, i.e., 
the equivariant Chern character induces an isomorphism 
\begin{multline*}
\ch^G_0(\pt) \colon \bigoplus_{(C)} \IZ  \otimes_{\IZ[W_GC]} \theta_C \cdot
R_{\IC}(C) \otimes_{\IZ} \IZ\left[\frac{1}{|G|}\right]
\xrightarrow{\cong}
R_{\IC}(G)  \otimes_{\IZ} \IZ\left[\frac{1}{|G|}\right]
\end{multline*}
where $(C)$ runs over the conjugacy classes of finite cyclic subgroups.
(Theorem~\ref{the:equivariant_Chern_character} yields only a statement
after applying $- \otimes_{\IZ} \IQ$ but the statement above, where we 
only invert the order of the group $G$ is proved in~\cite[Theorem~0.7]{Lueck(2002d)}).
\end{example}

\begin{theorem}[\blue{Rational computation of $K_*^G(\eub{G})$}]
\label{the:Rational_computation_of_K_astG(eubG)}
For every group $G$ and every $n \in \IZ$ we obtain an isomorphism
\begin{multline*}
\bigoplus_{(C)} \bigoplus_{k} H_{p+2k}(BC_GC) \otimes_{\IZ[W_GC]} \theta_C \cdot
R_{\IC}(C) \otimes_{\IZ} \IQ
\xrightarrow{\cong}
K_n^G(\eub{G}) \otimes_{\IZ} \IQ.
\end{multline*}
\end{theorem}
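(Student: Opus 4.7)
The plan is to apply Theorem~\ref{the:equivariant_Chern_character} (the equivariant Chern character) to the equivariant homology theory $\calh^?_* = K^?_* = H^?_*(-;\bfK^{\topo})$ with $\Lambda = \IQ$ and the proper $G$-$CW$-complex $X = \eub{G}$. The Mackey extension is provided by restriction of complex representations on the coefficient ring $K_0^H(\pt) = R_{\IC}(H)$, as noted in the remark on Mackey extensions. The Chern character then yields the natural isomorphism
\[
\bigoplus_{p+q=n}\bigoplus_{(H)\in I} H_p\bigl(C_GH\backslash \eub{G}^H;\IQ\bigr)\otimes_{\IQ[W_GH]} \bigl(S_H(K^H_q(\pt))\otimes_{\IZ}\IQ\bigr) \xrightarrow{\cong} K_n^G(\eub{G})\otimes_{\IZ} \IQ,
\]
so it remains to rewrite each tensor factor to recover the stated form.

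First I would analyze the coefficient module $S_H(K^H_q(\pt))\otimes\IQ$. Bott periodicity forces $K^H_q(\pt) = R_{\IC}(H)$ for even $q = 2k$ and $0$ for odd $q$, so only even indices contribute and they produce the $2k$-shifts in the final formula. By definition $S_H(R_{\IC}(H))$ is the cokernel of the induction map from all proper subgroups. Artin's Theorem asserts that induction from cyclic subgroups is surjective after inverting $|H|$; in particular, whenever $H$ is not cyclic all cyclic subgroups are already proper, so $S_H(R_{\IC}(H))\otimes\IQ = 0$, killing all non-cyclic contributions. When $H = C$ is a finite cyclic subgroup, the identification $S_C(R_{\IC}(C))\otimes\IQ = \theta_C\cdot R_{\IC}(C)\otimes\IQ$ recalled just before the theorem applies, producing exactly the coefficient module in the target sum.

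Next I would identify $C_GC\backslash \eub{G}^C$ rationally with $BC_GC$. The key observation is that $\eub{G}^C$ is itself a model for $\eub{C_GC}$: every $C_GC$-isotropy subgroup on $\eub{G}^C$ is the intersection of some finite $G$-isotropy group with $C_GC$ and hence finite, while for any finite subgroup $L\subseteq C_GC$ one has $(\eub{G}^C)^L = \eub{G}^{C\cdot L}$, which is contractible because $C$ and $L$ commute so $C\cdot L$ is a finite subgroup of $G$. Therefore $C_GC$ acts on a contractible $CW$-complex with finite stabilizers, and the Borel construction $EC_GC\times_{C_GC}\eub{G}^C$ is homotopy equivalent to $BC_GC$. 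Because the fibers of the canonical map $EC_GC\times_{C_GC}\eub{G}^C \to C_GC\backslash\eub{G}^C$ are classifying spaces of finite groups, which are rationally trivial, this map induces an isomorphism on $\IQ$-homology, giving $H_p(C_GC\backslash \eub{G}^C;\IQ)\cong H_p(BC_GC;\IQ)$. Splicing the three identifications into the Chern character isomorphism assembles the claim.

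The main obstacle I expect is the $W_GC$-equivariance: the homotopy equivalence between the Borel construction on $\eub{G}^C$ and $BC_GC$, together with the rational homology equivalence to $C_GC\backslash \eub{G}^C$, must all be compatible with the residual action of $N_GC/(C\cdot C_GC) = W_GC$ in order for the tensor product over $\IQ[W_GC]$ on the source to match the one on the target. Once this naturality is checked (which is essentially a bookkeeping matter, since all constructions are functorial in $N_GC$-equivariant maps factoring through $W_GC$), the three ingredients combine mechanically to yield the stated decomposition.
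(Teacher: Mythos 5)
Your proposal is correct and follows essentially the same route as the paper: apply the equivariant Chern character (Theorem~\ref{the:equivariant_Chern_character}) to $X = \eub{G}$ and $\calh^?_* = K^?_*$, kill the non-cyclic and odd-degree contributions via Bott periodicity and Artin's theorem, identify $S_C(K^C_q(\pt))\otimes\IQ$ with $\theta_C\cdot R_{\IC}(C)\otimes\IQ$, and replace $H_p(C_GC\backslash\eub{G}^C;\IQ)$ by $H_p(BC_GC;\IQ)$ using that $\eub{G}^C$ is a contractible proper $C_GC$-space. The only difference is that you spell out in more detail why $\eub{G}^C$ is a model for $\eub{C_GC}$ and flag the $W_GC$-equivariance bookkeeping, both of which the paper leaves implicit.
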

\begin{proof}
This follows from 
Theorem~\ref{the:equivariant_Chern_character} applied to the case
$X = \eub{G}$ and $\calh^?_* = K^?_*$ using the following facts.
\begin{itemize}
\item $\eub{G}^C$ is a contractible proper $C_GC$- space.  Hence the
      canonical map $BC_GC \to C_GC\backslash \eub{G}^C$ induces an
      isomorphism
      $$H_p(BC_GC) \otimes_{\IZ} \IQ \xrightarrow{\cong}
      H_p(C_GC\backslash \eub{G}^C) \otimes_{\IZ} \IQ.$$

\item $S_H\left(K^H_q(*)\right)\otimes_{\IZ} \IQ  = 0$ if $H$ is not cyclic and $q$ is even
or if $q$ is odd.

\item $S_C\left(K^C_q(*)\right) \otimes_{\IZ} \IQ= \theta_C \cdot
R_{\IC}(C) \otimes_{\IZ} \IQ $ if $C$ is finite cyclic and $q$ is even.
\end{itemize}
\end{proof}

\begin{remark}[\blue{Rational computation of $K_*(C^*_r(G))$}]
If the Baum-Connes Conjecture holds for $G$, 
Theorem~\ref{the:Rational_computation_of_K_astG(eubG)} yields an isomorphism
\begin{multline*}
\bigoplus_{(C)} \bigoplus_{k} H_{p+2k}(BC_GC) \otimes_{\IZ[W_GC]} \theta_C \cdot
R_{\IC}(C) \otimes_{\IZ} \IQ
\xrightarrow{\cong}
K_n(C^*_r(G)) \otimes_{\IZ} \IQ.
\end{multline*}
\end{remark}

Next we introduce some notation.
For a prime $p$ denote by $\red{r(p)} = |\con_p(G)|$ the number of conjugacy 
classes $(g)$ of elements $g \not= 1$ in $G$ of $p$-power order.
Let \red{$\II_G$} is the augmentation ideal of $R_{\IC}(G)$.
Denote by \red{$\II_p(G)$} the image of the restriction 
homomorphism $\II(G) \to \II(G_p)$ for the inclusion of the $p$-Sylow subgroup
$G_p \to G$.

\begin{theorem}[\blue{Completion Theorem}, \green{Atiyah-Segal (1969)}]
Let $G$ be a finite group.
Then there are isomorphisms of abelian groups
\begin{eqnarray*}
K^0(BG) & \cong & R_{\IC}(G)\widehat{_{\II_G}}
\\ & & \quad  \cong  \IZ \times \prod_{p\text{ prime}}  \II_p(G)
\otimes_{\IZ} \IZ\widehat{_p}
\cong 
\IZ \times \prod_{p\text{ prime}} (\IZ\widehat{_p})^{r(p)};
\\
K^1(BG) & \cong & 0.
\end{eqnarray*}
\end{theorem}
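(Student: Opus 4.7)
The plan is to deduce the computation from the Atiyah-Segal Completion Theorem applied to equivariant topological $K$-theory. Since $EG$ is a free contractible $G$-$CW$-complex, we have $K^*(BG) = K^*(G\backslash EG) \cong K^*_G(EG)$, and the projection $EG \to \pt$ induces a natural $R_{\IC}(G)$-module map $R_{\IC}(G) = K^0_G(\pt) \to K^0_G(EG) = K^0(BG)$. The core input, which is the Atiyah-Segal theorem for finite $G$, asserts that this map exhibits $K^0(BG)$ as the $\II_G$-adic completion of $R_{\IC}(G)$. That gives the first isomorphism; for $K^1(BG)$ one combines $K^1_G(\pt) = 0$ with the vanishing of the relevant $\lim^1$.

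To establish the completion step itself I would filter $EG$ by finite-dimensional $G$-$CW$-subcomplexes $(EG)^{(n)}$ and consider the inverse system $K^*_G((EG)^{(n)})$. The technical heart is to show that each $K^0_G((EG)^{(n)})$ is finitely generated over $R_{\IC}(G)$ and that the kernel of $R_{\IC}(G) \to K^0_G((EG)^{(n)})$ contains $\II_G^{f(n)}$ for some $f(n) \to \infty$; this is the geometric content of Atiyah-Segal, proved via a Koszul-style resolution built from virtual representations $[V] - \dim(V) \in \II_G$. Since $R_{\IC}(G)$ is Noetherian, the inverse system is Mittag-Leffler, $\lim^1$ vanishes, and the Milnor sequence collapses to give $K^0(BG) \cong R_{\IC}(G)\widehat{_{\II_G}}$ together with $K^1(BG) = 0$.

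For the explicit decomposition I would first split off the unit via the augmentation $\varepsilon \colon R_{\IC}(G) \to \IZ$. Since $R_{\IC}(G) = \IZ \cdot [1] \oplus \II_G$ as a $\IZ$-module and $\II_G$ is the kernel of $\varepsilon$, completion preserves this, yielding $R_{\IC}(G)\widehat{_{\II_G}} \cong \IZ \oplus \II_G\widehat{_{\II_G}}$. A classical character-theoretic computation shows that each quotient $\II_G^n/\II_G^{n+1}$ is annihilated by a power of $|G|$; hence $\II_G\widehat{_{\II_G}}$ is a module over $\prod_{p \mid |G|} \IZ\widehat{_p}$ and decomposes as a product over primes $p$. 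The $p$-primary piece is detected by restriction to the $p$-Sylow subgroup $G_p$: the double coset formula together with Frobenius reciprocity show that $\res^G_{G_p}$ induces an isomorphism of $p$-adic completions onto an appropriate subring of $R_{\IC}(G_p) \otimes \IZ\widehat{_p}$, and by definition this image is $\II_p(G) \otimes_{\IZ} \IZ\widehat{_p}$. Finally, a character count shows that $\II_p(G)$ is free abelian of rank equal to the number of $G$-conjugacy classes of nontrivial $p$-power order elements of $G$, which is precisely $r(p)$.

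The main obstacle is the completion step itself: establishing that the geometric filtration of $EG$ matches the $\II_G$-adic filtration up to cofinality is delicate and is the deepest content of Atiyah-Segal. The subsequent algebraic manipulations (splitting off the unit, prime decomposition, rank computation via Artin/Brauer induction) are essentially formal once the completion theorem is in hand, and the Artin-type argument recalled earlier in the paper already furnishes the necessary surjectivity statements after inverting $|G|$.
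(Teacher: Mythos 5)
The paper offers no argument of its own here: the ``proof'' consists of citations, to Atiyah--Segal for the completion isomorphism and to Jackowski--Oliver and to L\"uck~\cite{Lueck(2005e)} for the explicit product formula, so there is strictly speaking nothing in the text to compare against. Your outline is a reasonable reconstruction of what those sources establish. The chain of reductions you describe---identifying $K^*(BG)$ with the inverse limit over skeleta of $EG$, killing the derived limit via Noetherianity of $R_{\IC}(G)$ and the Mittag--Leffler condition, splitting off the free rank-one summand through the augmentation, decomposing the completed augmentation ideal into a product of $p$-primary pieces because the $\II_G$-adic and the $|G|\cdot\II_G$-adic topologies coincide, detecting the $p$-piece by restriction to a Sylow $p$-subgroup via Frobenius reciprocity and transfer, and counting ranks through conjugacy classes of nontrivial $p$-power-order elements---is essentially the right skeleton. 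The one mildly misleading spot is your account of the ``technical heart'': the Koszul-type resolution built from virtual classes $[V]-\dim V$ is the mechanism Atiyah used for tori and for cyclic groups, where $EG$ can be built from unit spheres in representations and the representation ring has a polynomial or cyclotomic presentation. For a general finite $G$ the Atiyah--Segal proof does not run through a single Koszul complex; it proceeds by induction on subgroups, reducing to the cyclic case, and the real work is in controlling the interaction of the $\II_G$-adic filtration with restriction to subgroups. Since you treat the completion theorem itself as a cited black box, this imprecision does not compromise the rest of your sketch, but carrying out that step in detail would look rather different from what you describe.
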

\begin{proof}
See~\cite{Atiyah-Segal(1969)} and for the explicit formula
for instance~\cite[page~125]{Jackowski-Oliver(1996)} 
or~\cite[Theorem~3.5]{Lueck(2005e)}.
\end{proof}

\begin{theorem}[\green{L\"uck (2005)}]
\label{the:rational_computation_of_Kast(BG)}
Let $G$ be a discrete group.
Denote by $K^*(BG)$
the topological (complex) K-theory of its classifying space $BG$.
Suppose that there is a
cocompact  $G$-$CW$-model for the classifying space $\eub{G}$ for
proper $G$-actions.

Then there is a $\IQ$-isomorphism
\begin{multline*}
\overline{\ch}^n_G \colon K^n(BG) \otimes_{\IZ} \IQ  \xrightarrow{\cong}
\\
\left(\prod_{i \in \IZ} H^{2i+n}(BG;\IQ)\right) \times
\prod_{p \text{ prime}}  \prod_{(g) \in \con_p(G)}
\left(\prod_{i \in \IZ} H^{2i+n}(BC_G\langle g \rangle;\IQ\widehat{_p})\right).
\end{multline*}

\end{theorem}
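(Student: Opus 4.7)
The plan is to combine the classical Atiyah-Segal Completion Theorem for finite groups with the equivariant Chern character of Theorem~\ref{the:equivariant_Chern_character}, applied to the cocompact $G$-$CW$-model for $\eub{G}$. Since $\eub{G}$ is contractible and cocompact, the Borel construction $EG \times_G \eub{G}$ is homotopy equivalent to $BG$ and carries a finite CW-structure whose equivariant cells take the form $EG \times_G (G/H \times D^n) \simeq BH \times D^n$, with $H$ ranging over the finite isotropy groups of $\eub{G}$. This yields an Atiyah-Hirzebruch-type (equivariant Mayer-Vietoris) spectral sequence converging to $K^*(BG)$ whose local contributions are the groups $K^*(BH)$ for finite $H$.

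By the Atiyah-Segal Completion Theorem, $K^0(BH) \cong \IZ \times \prod_p (\IZ\widehat{_p})^{r(p)(H)}$ and $K^1(BH) = 0$; tensoring with $\IQ$ this decomposes as a ``free'' piece $\IQ$ together with, for each prime $p$ and each non-trivial class $(g) \in \con_p(H)$, one factor $\IQ\widehat{_p}$. Applying the ordinary Chern character to each $K^*(BH)$ converts the free piece into $\prod_i H^{2i+*}(BH;\IQ)$ and, by detection along the inclusion $B\langle g\rangle \hookrightarrow BH$, the $(g)$-summand into a $\IQ\widehat{_p}$-valued subspace of $H^*(BH;\IQ\widehat{_p})$. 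The next step is to reorganize these local pieces using exactly the bookkeeping of Theorem~\ref{the:equivariant_Chern_character}, now in the cohomological and completed setting. Rationally (and prime-by-prime after $p$-adic completion) the spectral sequence collapses and all extensions split. The conjugacy classes of non-trivial $p$-power order elements arising across the various finite isotropy subgroups $H$ are amalgamated, via the $W_G H$-actions and a double coset argument, into single conjugacy classes in $\con_p(G)$, with the role of $BH$ replaced by $BC_G\langle g\rangle$. The trivial-element contributions assemble into $\prod_i H^{2i+n}(BG;\IQ)$, and for each $(g) \in \con_p(G)$ one obtains $\prod_i H^{2i+n}(BC_G\langle g\rangle;\IQ\widehat{_p})$.

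The main obstacle will be controlling the $p$-adic topology throughout: one must verify convergence of the spectral sequence after $p$-adic completion at each prime, check that the completion is compatible with the cohomological Chern character and with the conjugacy-class regrouping across cells, and explain why products (rather than direct sums) arise naturally in the cohomological target. In essence, this is the cohomological and $p$-adic analogue of the rational homological regrouping carried out in the proof of Theorem~\ref{the:Rational_computation_of_K_astG(eubG)}, and its justification is the technical heart of the argument.
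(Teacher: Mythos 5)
The paper's ``proof'' of Theorem~\ref{the:rational_computation_of_Kast(BG)} is a citation to L\"uck~\cite{Lueck(2005e)}, so there is no internal argument to compare against directly; I will compare to what that reference actually does. The route there is not the cellular spectral sequence you propose: L\"uck works with a \emph{cohomological} version of the equivariant Chern character together with an Atiyah-Segal-type \emph{completion theorem} relating the equivariant $K$-theory $K^*_G(\eub{G})$ (for the cocompact model) to $K^*(EG\times_G\eub{G}) = K^*(BG)$. The cocompactness is used to make the completion theorem apply and to control the resulting products over primes and over $\IZ$. You instead filter $EG\times_G\eub{G}$ by the preimages of the skeleta of $G\backslash\eub{G}$ and analyse the resulting spectral sequence whose $E_1$-page is built from the $K^*(BH)$ for finite isotropy groups $H$. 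That is a legitimate alternative starting point (the filtration is finite by cocompactness, so convergence is not the worry), but it is a genuinely different strategy and the hard work is shifted to a step your sketch does not resolve.

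The concrete gap is in how the factors $\prod_i H^{2i+n}(BC_G\langle g\rangle;\IQ\widehat{_p})$ are supposed to arise. For a single cell with finite isotropy group $H$, the Atiyah-Segal theorem plus the vanishing of $H^{>0}(BH;\IQ\widehat{_p})$ means the entire $(g)$-summand sits in a single copy of $\IQ\widehat{_p}$ in filtration degree~$0$; no higher cohomology of anything is visible locally. All of the cohomological degrees of $BC_G\langle g\rangle$ in the target must therefore come from the \emph{filtration} degree, i.e.\ from identifying, for each $(g)\in\con_p(G)$, the chain of cells whose isotropy groups subconjugately contain $\langle g\rangle$ with (rationally) a cellular model for $BC_G\langle g\rangle$ via the $g$-fixed-point set $\eub{G}^{\langle g\rangle}$ and its $C_G\langle g\rangle$-action, then showing the attaching maps act as the cellular differential for this model after $p$-adic completion. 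You gesture at this (``the role of $BH$ replaced by $BC_G\langle g\rangle$'') but give no mechanism, and you yourself flag this regrouping plus the $p$-adic convergence and product-vs-sum issues as the ``technical heart''. That is precisely the content that L\"uck's completion theorem and cohomological Chern character supply in one package; as written, your sketch identifies the right raw inputs and the right expected answer but does not close the central step, so it is a plan rather than a proof.
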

\begin{proof}
See~\cite{Lueck(2005e)}.
\end{proof}

\begin{remark}[\blue{Multiplicative structure}]
The multiplicative structure is also determined in~\cite{Lueck(2005e)}.
\end{remark}

\begin{remark}[\blue{Finiteness condition about $\eub{G}$}]
We have presented in the previous section many groups for which  a
cocompact  $G$-$CW$-model for $\eub{G}$ exists, e.g., hyperbolic groups.
Notice that this condition appears in 
Theorem~\ref{the:rational_computation_of_Kast(BG)} although the conclusion
in Theorem~\ref{the:rational_computation_of_Kast(BG)} is about $BG$ and not
about $\eub{G}$ or $G\backslash\eub{G}$.
\end{remark}

\begin{example}[\blue{$SL_3(\IZ)$}]
It is well-known that its rational cohomology satisfies
$\widetilde{H}^n(BSL_3(\IZ);\IQ) = 0$ for all $n \in \IZ$.
Actually, by a result of \green{Soul\'e}~\cite[Corollary on page~8]{Soule(1978)}
the quotient space $SL_3(\IZ)\backslash\eub{SL_3(\IZ)}$ is contractible
and compact. From the classification of finite subgroups of $SL_3(\IZ)$ we
see that $SL_3(\IZ)$ contains up to conjugacy  two elements of order $2$,
two elements of order $4$ and two elements of order $3$ and no 
further conjugacy classes of non-trivial elements of prime power order.
The rational homology of each of the
centralizers of elements in $\con_2(G)$ and $\con_3(G)$ agrees 
with the one of the trivial group. Hence we get
\begin{eqnarray*}
K^0(BSL_3(\IZ)) \otimes_{\IZ} \IQ & \cong &
\IQ \times (\IQ\widehat{_2})^4 \times (\IQ\widehat{_3})^2;
\\
K^1(BSL_3(\IZ)) \otimes_{\IZ} \IQ & \cong &  0.
\end{eqnarray*}
The identification of $K^0(BSL_3(\IZ)) \otimes_{\IZ} \IQ$ above
is compatible with the multiplicative structures.

Actually the computation using \red{Brown-Petersen cohomology}
and the \red{Conner-Floyd relation} by
\green{Tezuka-Yagita}~\cite{Tezuka-Yagita(1992)} 
gives the integral computation
\begin{eqnarray*}
K^0(BSL_3(\IZ)) & \cong &
\IZ \times (\IZ\widehat{_2})^4 \times (\IZ\widehat{_3})^2;
\\
K^1(BSL_3(\IZ)) & \cong &  0.
\end{eqnarray*}
\green{Soul\'e}~\cite{Soule(1978)} 
has computed the integral cohomology of $SL_3(\IZ)$.
\end{example}

Let $G$ be a discrete group.
Let \red{$\MFin$} be the subset  of $\Fin$ consisting
of elements in $\Fin$ which are maximal in $\Fin$.
Consider the following conditions about $G$:

\begin{itemize}

\item[\red{(M)}]
Every non-trivial finite subgroup of $G$ is contained in a unique maximal 
finite subgroup;

\item[\red{(NM)}] If $M \in \MFin, M \not= \{1\}$, then $N_GM = M$.
\end{itemize}

\begin{example}[\blue{Groups satisfying (M) and (NM)}]
\label{exa:Groups_satisfying_(M)_and_(NM)}
By \green{Davis-L\"uck}~\cite[page~101-102]{Davis-Lueck(2003)} the following 
groups satisfy conditions (M) and (NM):

\begin{itemize}

\item Extensions $1 \to \IZ^n \to G \to F \to 1$ for finite $F$ such that the conjugation
action of $F$ on $\IZ^n$ is free outside $0 \in \IZ^n$;

\item Fuchsian groups;

\item One-relator groups $G$.

\end{itemize}
\end{example}

For such a group there is a  nice model for $\underline{E}G$ 
with as few non-free cells as possible.
Let 
$$\red{\{(M_i) \mid i \in I\}}$$
be the set of conjugacy classes of maximal
finite subgroups of $M_i \subseteq G$.
By attaching free $G$-cells we get
an inclusion of $G$-$CW$-complexes
$j_1 \colon \coprod_{i \in I} G \times_{M_i} EM_i  \to EG$.
Define the $G$-$CW$-complex $X$ as the $G$-pushout
\begin{eqnarray}
& \xymatrix{
\coprod_{i \in I} G \times_{M_i} EM_i
\ar[r]^-{j_1} \ar[d]^-{u_1}
&
EG
\ar[d]^-{f_1}
\\
\coprod_{i \in I} G/M_i
\ar[r]^-{k_1}
& X
}
&
\label{pushout_for_EubG_for_(M)_and_(NM)}
\end{eqnarray}
where $u_1$ is the obvious $G$-map obtained by collapsing each $EM_i$ to a point.
\begin{theorem}[\blue{Model for $\eub{G}$ for groups satisfying (M) and (NM)}] 
\label{the:Model_for_eub(G)_for_(M)_and_(NM)}
Let $G$ be a group satisfying conditions (M) and (NM).
Then the $G$-$CW$-complex $X$ defined by the 
$G$-pushout~\eqref{pushout_for_EubG_for_(M)_and_(NM)} 
is a model for $\eub{G}$.
\end{theorem}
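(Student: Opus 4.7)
The plan is to apply the homotopy characterization of $\eub{G}$ in Theorem~\ref{the:G-homotopy_characterization_of_EGF(G)(calf)} with $\calf = \Com = \Fin$: it suffices to check that every isotropy group of $X$ lies in $\Fin$ and that the fixed-point set $X^H$ is weakly contractible for every finite subgroup $H \subseteq G$. The first requirement is immediate from the shape of the pushout~\eqref{pushout_for_EubG_for_(M)_and_(NM)}: the $G$-cells of $X$ are either inherited from $EG$ (orbit type $G/\{1\}$) or from $\coprod_i G/M_i$ (orbit type $G/M_i$ with $M_i$ finite), so all stabilizers are finite.

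For the fixed-point analysis I will exploit that $(-)^H$ commutes with $G$-CW pushouts along the $G$-cofibration $j_1$, producing
$$X^H \;=\; (EG)^H \;\cup_{(\coprod_i G \times_{M_i} EM_i)^H}\; \Bigl(\coprod_i G/M_i\Bigr)^H.$$
In the case $H = \{1\}$ each component of $G \times_{M_i} EM_i$ is a copy of the contractible space $EM_i$, so the underlying map of $u_1$ is a (non-equivariant) homotopy equivalence; the gluing lemma applied to the pushout then yields $X \simeq EG \simeq \{\bullet\}$, proving that $X^{\{1\}} = X$ is contractible.

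The crucial case is $H \neq \{1\}$ finite. Freeness of the $G$-action on $EG$ gives $(EG)^H = \emptyset$; since $M_i$ acts freely on $EM_i$, an unwinding of the definition of $G \times_{M_i} EM_i$ shows $(G \times_{M_i} EM_i)^H = \emptyset$ as well. It therefore remains to prove that $(\coprod_i G/M_i)^H$ is a single point, for then the pushout collapses to one point. A coset $gM_i$ is $H$-fixed iff $g^{-1}Hg \subseteq M_i$, which means $gM_ig^{-1}$ is a finite subgroup of $G$ containing $H$. By condition~(M) there is a unique maximal finite subgroup $M$ containing $H$, so $gM_ig^{-1}$ must coincide with $M$; this rules out every index $i$ whose conjugacy class differs from $(M)$. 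Fixing the unique surviving index $i$ and choosing $g_0$ with $M = g_0 M_i g_0^{-1}$, the $H$-fixed cosets of $G/M_i$ are exactly $\{gM_i : g \in g_0 N_G M_i\}$. Since $H \neq \{1\}$ forces $M_i \neq \{1\}$, condition~(NM) reduces $N_G M_i$ to $M_i$ itself, leaving the single coset $g_0 M_i$.

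The main obstacle is precisely the fixed-point count in the last paragraph, where both hypotheses are indispensable: without (M) several maximal finite subgroups could contain $H$ and contribute multiple components to $(\coprod_i G/M_i)^H$; without (NM) the quotient $N_G M_i / M_i$ would yield several $H$-fixed cosets inside the single orbit $G/M_i$. Either failure would produce extra path components in $X^H$ and destroy its contractibility. Granting these two observations, everything else is standard equivariant homotopy theory (commutation of $(-)^H$ with $G$-CW pushouts along $G$-cofibrations and the gluing lemma), so the verification of Theorem~\ref{the:G-homotopy_characterization_of_EGF(G)(calf)} is complete and $X$ is indeed a model for $\eub{G}$.
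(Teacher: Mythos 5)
Your proof is correct and follows essentially the same strategy as the paper: reduce via the homotopy characterization to checking that all isotropy groups are finite and that $X^H$ is contractible for finite $H$, handle $H=\{1\}$ by the gluing lemma applied to $u_1$, and for $H\neq\{1\}$ show that the fixed points of the pushout corners other than $\coprod_i G/M_i$ are empty and that $(\coprod_i G/M_i)^H$ is a single point using (M) and (NM). The only difference is that you spell out the coset computation (first (M) pins down a unique conjugacy class $(M_{i_0})$, then (NM) collapses the fixed cosets in $G/M_{i_0}$ to one) which the paper states in one line without proof, so your write-up is a correct, more detailed version of the paper's argument.
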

\begin{proof}
The isotropy groups of $X$ are all finite.
We have to show for $H \subseteq G$ finite
that $X^H$ contractible.
We begin with the case $H \not= \{1\}$.
Because of conditions (M) and (NM)
there is precisely one index $i_0 \in I$ such that $H$ is subconjugated to
$M_{i_0}$ and is not subconjugated to $M_i$ for $i \not= i_0$.
We get
$$\left(\coprod_{i \in I} G/M_i\right)^H   =  \left(G/M_{i_0}\right)^H  =  \pt.$$
Hence $X^H = \pt$.
It remains to treat $H = \{1\}$.
Since $u_1$ is a non-equivariant homotopy equivalence
and $j_1$ is a cofibration, $f_1$ is a non-equivariant homotopy equivalence.
Hence $X$ is contractible.
\end{proof}

\begin{example}[\blue{The homology of groups satisfying (M) and (NM)}]
Let $G$ be a group satisfying conditions (M) and (NM).
Because of Theorem~\ref{the:Model_for_eub(G)_for_(M)_and_(NM)}
we obtain the following pushout by taking the $G$-quotient
of the $G$-pushout~\eqref{pushout_for_EubG_for_(M)_and_(NM)} 
$$
\xymatrix{
\coprod_{i \in I} BM_i
\ar[r] \ar[d]
&
BG
\ar[d]
\\
\coprod_{i \in I} \pt
\ar[r]
& G\backslash \eub{G}
}
$$
The associated long exact Mayer-Vietoris sequence yields
\begin{multline*} \cdots \to \widetilde{H}_{n+1}(G\backslash \underline{E}G) \to
\bigoplus_{i \in I} \widetilde{H}_n(BM_i) \to
\widetilde{H}_n(BG)
\to \widetilde{H}_n(G\backslash\underline{E}G) \to \cdots.
\end{multline*}
In particular we obtain an isomorphism for $n \ge \dim(\underline{E}G) + 2$
$$
\bigoplus_{i \in I} H_n(BM_i) \xrightarrow{\cong} H_n(BG).
$$
So we get an explicit computation of $H_n(BG)$ for large $n$ and it 
is obvious why it is useful to have models for $\eub{G}$ of 
as small as possible dimension. Computations for low values of $n$ can 
sometimes be carried out by spectral sequence arguments or specific arguments.
\end{example}

The following identifications follow
from the definition of the 
\emphred{Whitehead groups} \red{$\Wh_n(G)$} for $n \ge 0$
due to \green{Waldhausen}~\cite[Definition~15.6 on page~228 and
Proposition~15.7 on page~229]{Waldhausen(1978a)}
which also makes sense for all $n \in \IZ$ 
if we use the non-connective $K$-theory spectrum
\begin{eqnarray*}
\begin{array}{lcl}
\Wh(G) = \Wh_1(G); & & 
\\
\widetilde{K}_0(\IZ G) = \Wh_0(G); & &
\\
K_n(\IZ G) = \Wh_n(G) & & \text{for } n \le -1.
\end{array}
\end{eqnarray*}
For a finite group $H$ define  
$\red{\widetilde{R}_{\IC}(H)}$ as the kernel of the ring homomorphism
$R_{\IC}(H) \to \IZ$ sending $[V]$ to $\dim_{\IC}(V)$.

\begin{theorem}[\green{Davis-L\"uck (2003)}]
Let $G$ be a discrete group which satisfies the conditions (M) and (NM) above.
\begin{enumerate}
\item Then there is an isomorphism
$$K_1^G(\eub{G}) \xrightarrow{\cong} K_1(G\backslash\eub{G}),$$
and a short exact sequence
$$
0 \to \bigoplus_{i \in I} \widetilde{R}_{\IC}(M_i) \to
K_0(\eub{G}) \to K_0(G\backslash\eub{G}) \to 0.
$$

\item The short exact sequence above splits 
if we invert the orders of all finite subgroups of $G$;

\item Suppose that $G$ belongs to $\calbc$.
(This is the case for the groups appearing in 
Example~\ref{exa:Groups_satisfying_(M)_and_(NM)}). Then
$$K_n(C^*_r(G)) \cong K_n^G(\eub{G});$$

\item Suppose that $G$ belongs to $\calfj_K(\IZ)$. Then  there is for
$n \in \IZ$ an isomorphism of Whitehead groups
$$\bigoplus_{i \in I} \Wh_n(M_i) \xrightarrow{\cong} \Wh_n(G),$$
where $\Wh_n(M_i) \to \Wh_n(G)$ is induced by the inclusion $M_i \to G$.

\end{enumerate}
\end{theorem}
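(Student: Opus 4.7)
The plan is to derive all four statements from the $G$-pushout~\eqref{pushout_for_EubG_for_(M)_and_(NM)} by applying suitable equivariant homology theories and analyzing the resulting Mayer--Vietoris sequences. For~(1), apply the equivariant topological $K$-homology $K^?_*$ to~\eqref{pushout_for_EubG_for_(M)_and_(NM)}. The induction structure identifies $K^G_n(G\times_{M_i} EM_i) = K_n(BM_i)$, $K^G_n(EG) = K_n(BG)$, and $K^G_n(G/M_i) = K^{M_i}_n(\pt)$, which equals $R_\IC(M_i)$ for $n$ even and vanishes for $n$ odd. Applying ordinary $K$-homology to the $G$-quotient pushout yields the analogous sequence with $R_\IC(M_i)$ replaced by $K_n(\pt)$. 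These two sequences are related by the augmentation $R_\IC(M_i) \twoheadrightarrow \IZ$, whose kernel is $\widetilde{R}_\IC(M_i)$. The crucial input is that the map $\psi_i\colon K_n(BM_i) \to K^{M_i}_n(\pt)$ induced by $EM_i\to\pt$ factors through the augmentation $\IZ\hookrightarrow R_\IC(M_i)$: by Atiyah--Hirzebruch for $K$-homology together with the fact that $H_*(BM_i;\IQ)$ is concentrated in degree $0$ for finite $M_i$, $\widetilde{K}_*(BM_i)$ is torsion, and it cannot map nontrivially into the torsion-free group $R_\IC(M_i)$. A five-lemma / diagram chase between the two Mayer--Vietoris sequences then yields both the isomorphism $K^G_1(\eub{G})\cong K_1(G\backslash\eub{G})$ and the short exact sequence of~(1).

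For~(2), after inverting the orders $|M_i|$ the averaging idempotent $[V]\mapsto \dim_\IC V^{M_i}$ splits the augmentation $\IZ\hookrightarrow R_\IC(M_i)$, giving $R_\IC(M_i)[1/|M_i|] = \IZ[1/|M_i|] \oplus \widetilde{R}_\IC(M_i)[1/|M_i|]$. Applied summand-wise on the middle term of the equivariant Mayer--Vietoris sequence, this splitting is compatible with all differentials and induces the desired splitting of the short exact sequence from~(1). Statement~(3) is immediate from the definition of $\calbc$, since that is precisely the assertion that the Baum--Connes assembly $K^G_n(\eub{G}) \to K_n(C^*_r(G))$ is an isomorphism.

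For~(4), apply the equivariant homology theory $H^?_*(-;\bfK_\IZ)$ of Example~\ref{exa:equivariant_homology_theories_associated_toK-and_L-theory}. Since $G$ satisfies (M) and (NM), every infinite virtually cyclic subgroup of $G$ is virtually $\IZ$, and since $\IZ$ is regular $\NK_*(\IZ)=0$; this forces the relative assembly $H^G_*(\eub{G};\bfK_\IZ) \to H^G_*(\EGF{G}{\VCyc};\bfK_\IZ)$ to be an isomorphism. Combined with $G\in\calfj_K(\IZ)$, this identifies $H^G_n(\eub{G};\bfK_\IZ)$ with $K_n(\IZ G)$, and the defining cofiber sequence of the Whitehead spectrum then yields $\Wh_n(G) = H^G_n(\eub{G},EG;\bfK_\IZ)$ and, by the same argument applied to each finite $M_i$ (which trivially satisfies $\calfj_K(\IZ)$), $\Wh_n(M_i) = H^{M_i}_n(\pt,EM_i;\bfK_\IZ)$. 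Excision applied to the pushout~\eqref{pushout_for_EubG_for_(M)_and_(NM)} together with the induction structure then gives
$$H^G_n(\eub{G},EG;\bfK_\IZ) \;\cong\; \bigoplus_i H^G_n(G/M_i,\,G\times_{M_i}EM_i;\,\bfK_\IZ) \;\cong\; \bigoplus_i \Wh_n(M_i),$$
with the inclusions $M_i\hookrightarrow G$ inducing the identifications on each summand.

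The main obstacle is the torsion-versus-torsion-free factorization in~(1): the whole argument hinges on $\psi_i$ landing in the image of $\IZ\hookrightarrow R_\IC(M_i)$, and this requires the Atiyah--Hirzebruch computation together with torsion-freeness of the target. A secondary technical hurdle appears in~(4), where justifying the relative-assembly isomorphism for $\bfK_\IZ$-coefficients for groups of type~(M)(NM) requires carefully tracking Nil-contributions from the infinite virtually cyclic subgroups and invoking regularity of $\IZ$.
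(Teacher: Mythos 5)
The paper itself offers no proof: it simply cites \cite[Theorem~5.1]{Davis-Lueck(2003)}, so your attempt must be judged on its own merits. Your overall strategy — apply equivariant homology to the pushout~\eqref{pushout_for_EubG_for_(M)_and_(NM)} and compare with its $G$-quotient — is the right one, and the torsion-versus-torsion-free observation in~(1) (that $\widetilde{K}_*(BM_i)$ is torsion while $R_{\IC}(M_i)$ is torsion-free, so the class of $EM_i \to \pt$ in equivariant $K$-homology lands in $\IZ \cdot [\text{triv}]$) is exactly the kind of input needed to run the diagram chase. Item~(3) is indeed immediate. Your explanation for~(2) is slightly off-target, though: the retraction $[V] \mapsto \dim V^{M_i}$ already splits $R_{\IC}(M_i) \cong \IZ \oplus \widetilde{R}_{\IC}(M_i)$ integrally, without inverting $|M_i|$, so that cannot be where the denominators enter; the inversion of $|M_i|$ should instead come from a transfer-type argument producing a retraction of $\bigoplus_i \widetilde{R}_{\IC}(M_i) \to K_0^G(\eub{G})$, which involves the index $|M_i|$.

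The genuine gap is in~(4), specifically in the sentence justifying the relative assembly isomorphism $H^G_*(\eub{G};\bfK_{\IZ}) \to H^G_*(\EGF{G}{\VCyc};\bfK_{\IZ})$. You write that ``every infinite virtually cyclic subgroup of $G$ is virtually $\IZ$, and since $\IZ$ is regular $\NK_*(\IZ)=0$.'' The first clause is tautological (``infinite virtually cyclic'' means exactly ``virtually $\IZ$''), and the second is irrelevant as stated: the Nil contributions of a virtually cyclic $V$ involve $\NK_*(\IZ F)$ for the finite normal subgroup $F \trianglelefteq V$ (type~I) or Waldhausen's $\widetilde{\operatorname{Nil}}$ for an amalgam (type~II), and $\NK_*(\IZ F)$ is not zero for general finite $F$ — indeed the paper's own $SL_2(\IZ)$ example has to worry about $\NK_n(\IZ[\IZ/2])$. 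What actually makes the argument work is a consequence of (M) and (NM) that you have not drawn: if $V \subseteq G$ is infinite virtually cyclic with nontrivial finite normal subgroup $F$, then each $v \in V$ normalizes $F$, hence (by uniqueness in (M)) normalizes the unique maximal finite $M \supseteq F$, hence lies in $N_GM = M$ by (NM), forcing $V \subseteq M$ to be finite — a contradiction. Therefore every infinite virtually cyclic $V \subseteq G$ has $F = 1$, i.e.\ $V \cong \IZ$ or $V \cong D_\infty$. For $V \cong \IZ$ your appeal to $\NK_*(\IZ) = 0$ is correct; but $D_\infty$ does occur (e.g.\ $G = D_\infty$ itself satisfies (M) and (NM)), and for $V \cong D_\infty$ you must still dispose of the Waldhausen Nil groups for the free product $\IZ[\IZ/2] *_{\IZ} \IZ[\IZ/2]$, which requires a citation to Waldhausen's theorem on amalgams over a regular coherent base rather than to the Bass--Heller--Swan decomposition. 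Without establishing both the $F=1$ dichotomy and the $D_\infty$ Nil vanishing, the passage from $\Fin$ to $\VCyc$ in~(4) is not justified, and the rest of the excision argument — which is otherwise correct — has nothing to stand on.
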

\begin{proof}
See~\cite[Theorem~5.1]{Davis-Lueck(2003)}.
\end{proof}

\begin{remark}[\blue{Small models for $\eub{G}$ and computations}]
We see that for computations of group homology or of $K$- and
$L$-groups of group rings and group $C^*$-algebras it is important
to understand the spaces $G\backslash \eub{G}$.
Often geometric input ensures that $G\backslash
\eub{G}$ is a fairly small $CW$-complex.

On the other hand recall from Theorem~\ref{the:Leary-Nucinkis(2001)} 
that for any $CW$-complex $X$ there exists a group $G$ with
$X \simeq G\backslash \eub{G}$.
\end{remark}

\begin{question}[\blue{Consequences}]
What are the consequences of the Farrell-Jones Conjecture 
and the Baum-Connes Conjecture?
\end{question}


\section{The Isomorphism Conjectures for arbitrary groups}

The outline of this section is:

  \begin{itemize}

  \item We discuss the difference between the families $\Fin$ and $\VCyc$.

  \item We discuss consequences of the Farrell-Jones  and the Baum-Connes Conjecture.

  \end{itemize}

Throughout this section $G$ will always be a discrete group.
We have introduced the following notions and conjectures:

\begin{itemize}

\item \red{Families of subgroups}, e.g., 
the families \red{$\Fin$} and \red{$\VCyc$}
of finite and virtually cyclic subgroups
(see Definition~\ref{def:family_of_subgroups});

\item \red{Classifying $G$-$CW$-complex $\EGF{G}{\calf}$ for a family 
of subgroups $\calf$} (see Definition~\ref{def:EGF(G)(F)}).

\item \red{Equivariant homology theories $\calh^?_*$} (see
Definition~\ref{def:equivariant_homology_theory});

\item Specific examples of \red{equivariant homology theories 
associated to $K$- and $L$-theory}
(see Example~\ref{exa:equivariant_homology_theories_associated_toK-and_L-theory})
\begin{eqnarray*}
& \red{H_*^?(-;\bfK_R)}; &
\\
& \red{H_*^?(-;\bfL^{\langle -\infty \rangle}_R)}; &
\\
& \red{H_*^?(-;\bfK^{\topo})}, &
\end{eqnarray*}
satisfying for $H\subseteq G$
$$
\begin{array}{lclcl}
H_n^G(G/H;\bfK_R) & \cong & H_n^H(\pt;\bfK_R) & \cong & K_n(RH);
\\
H_n^G(G/H;\bfL^{\langle -\infty \rangle}_R) & \cong 
& H_n^H(\pt;\bfL^{\langle -\infty \rangle}_R)
& \cong & L^{\langle -\infty \rangle}_n(RH);
\\
H_n^G(G/H;\bfK^{\topo}) & \cong & H_n^H(\pt;\bfK^{\topo}) & \cong & K_n(C_r^*(H));
\end{array}
$$

\item \red{The Farrell-Jones Conjecture for algebraic $K$-theory}
which predicts the bijectivity of the \red{assembly map} induced by 
the projection $\EGF{G}{\VCyc} \to \pt$
$$H_n^G(\EGF{G}{\VCyc},\bfK_R) \to H_n^G(\pt,\bfK_R) = K_n(RG)$$
for all $n \in \IZ$ (see Conjecture~\ref{con:FJC_for_K});

\item \red{The Farrell-Jones Conjecture for algebraic $L$-theory}
which predicts the bijectivity of the \red{assembly map} induced by 
the projection $\EGF{G}{\VCyc} \to \pt$
$$H_n^G(\EGF{G}{\VCyc},\bfL_R^{\langle -\infty\rangle}) \to 
H_n^G(\pt,\bfL_R^{\langle-\infty\rangle}) = L_n^{\langle-\infty\rangle}(RG)$$
for all $n \in \IZ$ (see Conjecture~\ref{con:FJC_for_L});

\item \red{The Baum-Cones Conjecture}
which predicts the bijectivity of the \red{assembly map} induced by 
the projection $\EGF{G}{\Fin} = \eub{G} \to \pt$
$$K_n^G(\eub{G}) = H_n^G(\EGF{G}{\Fin},\bfK^{\topo}) \to H_n^G(\pt,\bfK^{\topo}) 
= K_n(C^*_r(G))$$
for all $n \in \IZ$ (see Conjecture~\ref{con:BCC}).

\end{itemize}

\begin{remark}[\blue{The Isomorphism Conjectures interpreted 
as induction theorems}]
These Conjecture can be thought of a kind of \red{generalized induction theorem}.
They allow to compute the value of a functor such as $K_n(RG)$ on $G$ in terms of
its values $K_m(RH)$ for all $m \le n$ and all virtually cyclic subgroups 
subgroups $H$ of $G$.
\end{remark}

Next we want to investigate, whether one can pass to smaller or larger
families in the formulations of the Conjectures. The point  is to find
the  family as small as possible.

\begin{theorem}[\blue{Transitivity Principle}]
\label{the:transitivity_principle}
Let $\calf \subseteq \calg$ be two families of subgroups of $G$.
Let $\calh^?_*$ be an equivariant homology theory.
Assume that for every element $H \in \calg$ and $n \in \IZ$ the assembly map
$$\calh_n^H(\EGF{H}{\calf|_H}) \to \calh_n^H(\pt)$$
is bijective, where $\calf|_H = \{K \cap H \mid  K \in \calf\}$.

Then the \red{relative assembly map} induced by the up 
to $G$-homotopy unique $G$-map
$\EGF{G}{\calf} \to \EGF{G}{\calg}$
$$\calh^G_n(\EGF{G}{\calf}) \to \calh^G_n(\EGF{G}{\calg})$$
is bijective for all $n \in \IZ$.
\end{theorem}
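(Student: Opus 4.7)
The plan is to reduce the statement, via a diagonal-product trick and a $G$-cellular induction on $\EGF{G}{\calg}$, to the hypothesis on single orbits, and then apply the induction structure of $\calh^?_*$ to translate single-orbit statements over $G$ into assembly statements for subgroups.

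First I would observe that, since $\calf \subseteq \calg$ and $\calf$ is closed under subgroups, the product $\EGF{G}{\calf} \times \EGF{G}{\calg}$ equipped with the diagonal $G$-action is again a model for $\EGF{G}{\calf}$: all its isotropy groups lie in $\calf$, and for $H \in \calf$ the $H$-fixed point set is a product of two contractible spaces, hence contractible. Consequently, up to $G$-homotopy, the relative assembly map in question is induced by the projection
$$\pr \colon \EGF{G}{\calf} \times \EGF{G}{\calg} \longrightarrow \EGF{G}{\calg},$$
so it suffices to show that $\pr$ induces an isomorphism on $\calh^G_*$.

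Next I would use the $G$-$CW$-structure on $Y := \EGF{G}{\calg}$ and argue by induction on the skeleta $Y_n$, combining the Mayer--Vietoris sequence attached to each cell-attaching pushout with the Five Lemma, and using the disjoint union axiom together with a standard telescope argument to pass to the colimit over $n$. Because equivariant $n$-cells have the form $G/H \times D^n$ with $H \in \calg$, and since $G$-homotopy invariance lets us collapse the factor $D^n$, the inductive step reduces to proving that for each $H \in \calg$ the projection $\EGF{G}{\calf} \times G/H \to G/H$ induces an isomorphism on $\calh^G_*$.

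To handle this single-orbit case, I would use the $G$-homeomorphism $G/H \times X \cong G \times_H \res^G_H X$ together with the induction structure applied to the inclusion $H \hookrightarrow G$ to rewrite
$$\calh^G_*(\EGF{G}{\calf} \times G/H) \;\cong\; \calh^H_*\bigl(\res^G_H \EGF{G}{\calf}\bigr), \qquad \calh^G_*(G/H) \;\cong\; \calh^H_*(\pt),$$
so that $\pr$ becomes the assembly map for $H$ applied to $\res^G_H \EGF{G}{\calf}$. A fixed-point check identifies $\res^G_H \EGF{G}{\calf}$ with a model for $\EGF{H}{\calf|_H}$: the $H$-isotropy groups have the form $K \cap H$ with $K \in \calf$ and so lie in $\calf|_H$; conversely, each $L \in \calf|_H$ is a subgroup of some member of $\calf$ and hence belongs to $\calf$, so $(\EGF{G}{\calf})^L$ is contractible. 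Since $H \in \calg$, the hypothesis then yields that this assembly map is an isomorphism, as required.

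The main obstacle is precisely the identification $\res^G_H \EGF{G}{\calf} \simeq \EGF{H}{\calf|_H}$, which is where the compatibility between the families $\calf$ and $\calg$ really enters; the remaining ingredients — the diagonal-product reformulation, the cellular induction via Mayer--Vietoris, and the colimit step — are formal once the axioms of Definition~\ref{def:equivariant_homology_theory} are at hand.
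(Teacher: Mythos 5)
Your argument is correct and is essentially the proof in the cited reference~\cite{Bartels-Lueck(2007ind)}: one replaces the abstract relative assembly map by the projection $\EGF{G}{\calf}\times\EGF{G}{\calg}\to\EGF{G}{\calg}$, runs a cellular induction over the skeleta of $\EGF{G}{\calg}$ via Mayer--Vietoris, the disjoint union axiom and a telescope argument, and handles the single-orbit case $G/H$ with $H\in\calg$ by the induction structure together with the identification of $\res^G_H\EGF{G}{\calf}$ as a model for $\EGF{H}{\calf|_H}$. One small caveat: you (rightly) invoke closure of families under passage to subgroups, which is needed both for the isotropy of the diagonal product and for the inclusion $\calf|_H\subseteq\calf$; Definition~\ref{def:family_of_subgroups} in these notes asks only for closure under conjugation and finite intersections, but the source~\cite{Lueck(2005s)} that these notes follow requires closure under taking subgroups, and that is the convention one needs here.
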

\begin{proof}
See~\cite[Theorem~1.4]{Bartels-Lueck(2007ind)}.
\end{proof}

\begin{example}[\blue{Passage from $\Fin$ to $\VCyc$ for the Baum-Connes Conjecture}]
The Baum-Connes Conjecture~\ref{con:BCC} is known to be true for virtually cyclic groups.
The Transitivity Principle~\ref{the:transitivity_principle} 
implies that the relative assembly
$$K_n^G(\eub{G}) \xrightarrow{\cong} K_n^G(\EGF{G}{\VCyc})$$
is bijective for all $n \in \IZ$.
 
Hence it does not matter in the context of the Baum-Connes Conjecture whether we
consider the family $\Fin$ or $\VCyc$.
\end{example}

\begin{example}[\blue{Passage from $\Fin$ to $\VCyc$ for the Farrell-Jones Conjecture}]
In general the relative assembly maps
\begin{eqnarray*}
 H_n^G(\eub{G};\bfK_R) & \to & H_n^G(\EGF{G}{\VCyc};\bfK_R);
 \\
 H_n^G(\eub{G};\bfL^{\langle - \infty \rangle}_R) & \to & H_n^G(\EGF{G}{\VCyc};\bfL^{\langle -
   \infty \rangle}_R),
\end{eqnarray*}
are not bijective. Hence in the Farrell-Jones setting one has to pass 
to $\VCyc$ and cannot use the easier to handle family $\Fin$.
\end{example}

\begin{example}[\blue{The Farrell-Jones Conjecture 
for algebraic $K$-theory for the group $\IZ$}]
The Farrell-Jones Conjecture~\ref{con:FJC_for_K} for algebraic $K$-theory 
for the group $\IZ$ is true for trivial reasons since
$\IZ$ is virtually cyclic and hence the projection 
$\EGF{\IZ}{\VCyc} \to \pt$ is a homotopy equivalence. 
\end{example}

\begin{example}[\blue{The Farrell-Jones Conjecture 
for algebraic $K$-theory for the group $\IZ$ and the family $\Fin$}]
\label{exa:K-FJC_for_Z_and_Fin}
One may wonder what happens if we insert the family of finite
subgroups, i.e., whether the map induced by the projection
$\EGF{\IZ}{\Fin} = \eub{\IZ} \to \pt$
\begin{eqnarray}
    & H_n^{\IZ}(\eub{\IZ},\bfK_R) \to H_n^{\IZ}(\pt,\bfK_R) =
    K_n(R[\IZ]) &
    \label{K-assembly_for_Z_and_Fin}
\end{eqnarray}
is bijective. Since $\IZ$ is torsionfree, $\eub{\IZ}$ is the same
    as $E\IZ$ and the induction structure yields an isomorphism
$$H_n^{\IZ}(\eub{\IZ},\bfK_R) = H_n(BZ,\bfK_R) = H_n(S^1,\bfK_R) 
= K_n(R[\IZ]) \oplus K_{n-1}(R[\IZ]).$$ Hence the 
map~\eqref{K-assembly_for_Z_and_Fin} can be identified with the map
$$K_n(R)\oplus K_{n-1}(R) \to K_n(R[\IZ]).$$
However, by the \red{Bass-Heller Swan decomposition} we have the
isomorphism
$$K_n(R) \oplus K_{n-1}(R) \oplus  \NK_n(R) \oplus \NK_n(R))
\xrightarrow{\cong} K_n(R[t,t^{-1}]) \cong K_n(R[\IZ]).$$ Hence the
map \eqref{K-assembly_for_Z_and_Fin} is bijective if and only if
$\NK_n(R) = 0$.  We have $\NK_n(R) = 0$ under the assumption that $R$
is regular. This is the reason why we have required $R$ to be regular in the version of
the Farrell-Jones Conjecture for torsionfree
groups~\ref{con:FJC_for_K_torsionfree}.
\end{example}

\begin{definition}[\blue{Types of virtually cyclic groups}]
An infinite  virtually cyclic group $G$ is called of
\red{type $I$} if it admits an epimorphism onto $\IZ$ and of
\red{type $II$} if and only if admits an epimorphism onto $D_{\infty}$.
Let \red{$\VCyc_I$} be the family of virtually cyclic subgroups 
which are either finite or of type I.
\end{definition} 

An infinite virtually cyclic group is either of type I or of type {II}.
An infinite subgroups of a virtually cyclic subgroup of type I is again of type I.

\begin{theorem}[\green{L\"uck (2004), Quinn (2007), Reich (2007)}]
\label{the:passage_from_VCcy_I_to_VCyc}
The following maps are bijective for all $n \in \IZ$
\begin{eqnarray*}
  H_n^G(\EGF{G}{{\VCyc}_{I}};\bfK_R) & \to & H_n^G(\EGF{G}{\VCyc};\bfK_R);
   \\
   H_n^G(\eub{G};\bfL^{\langle - \infty \rangle}_R)
   & \to &
   H_n^G(\EGF{G}{\VCyc_{I}};\bfL^{\langle - \infty \rangle}_R).
   \end{eqnarray*}
\end{theorem}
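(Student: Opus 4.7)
My plan is to invoke the Transitivity Principle (Theorem~\ref{the:transitivity_principle}), in the first case with $\calf = \VCyc_I$ and $\calg = \VCyc$, and in the second case with $\calf = \Fin$ and $\calg = \VCyc_I$. In each case this reduces the task to verifying the local hypothesis that for every $V \in \calg$ the assembly map $\calh^V_n(\EGF{V}{\calf|_V}) \to \calh^V_n(\pt)$ is bijective. When $V$ already lies in $\calf$ this is trivial because $\EGF{V}{\calf|_V}$ is $V$-contractible, so the real content is concentrated in the groups $V \in \calg \setminus \calf$.

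For the $K$-theoretic assertion this means handling an arbitrary virtually cyclic $V$ of type II. Such a $V$ admits a decomposition $V = V_1 \ast_{V_0} V_2$ with $V_0$ finite and of index two in finite $V_1, V_2$, and the index-two subgroup $V^+$ (the preimage of $\IZ \subseteq D_{\infty}$) fits into $1 \to V_0 \to V^+ \to \IZ \to 1$ and hence belongs to $\VCyc_I|_V$. I would build a cocompact model $X$ for $\EGF{V}{\VCyc_I|_V}$ by starting from the Bass--Serre tree $T$ of the amalgamated decomposition (which is a one-dimensional model for $\eub{V}$, with finite stabilizers conjugate to $V_0, V_1, V_2$) and attaching a $V$-cell of the form $V \times_{V^+} \IR$ along $V/V^+$, so that the $V^+$-fixed point set becomes contractible. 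The Mayer--Vietoris sequence for $X$ computes $H^V_n(X;\bfK_R)$ in terms of $K_n(RV_0), K_n(RV_1), K_n(RV_2)$ and $K_n(RV^+)$, and by construction this matches, term by term, Waldhausen's Mayer--Vietoris sequence for $K_n$ of the amalgamated product $R[V_1 \ast_{V_0} V_2]$; the Waldhausen Nil-summands correspond precisely to the extra cells with stabilizer $V^+$. A five-lemma argument then delivers the required isomorphism.

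For the $L$-theoretic assertion, the transitivity step reduces matters to showing, for every infinite type I virtually cyclic $V$, that $H^V_n(\eub{V};\bfL_R^{\langle -\infty\rangle}) \to L_n^{\langle -\infty\rangle}(RV)$ is bijective. Writing $V$ as an extension $1 \to F \to V \to \IZ \to 1$ with $F$ finite, the real line $\IR$ is a one-dimensional cocompact model for $\eub{V}$, and the Mayer--Vietoris sequence for this model matches the Shaneson/Wang-type long exact sequence computing $L_n^{\langle -\infty\rangle}(RV)$ out of $L^{\langle -\infty\rangle}_*(RF)$ and the twisted automorphism.

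The main obstacle in both cases is the control of Nil-like contributions. On the $K$-theory side the Waldhausen Nil-groups of $R[V_1 \ast_{V_0} V_2]$ are in general non-zero; the whole point of passing from $\Fin$ to $\VCyc_I$ (and then to $\VCyc$) is that the extra $V^+$-cells in $X$ absorb exactly these Nil-summands, so that the comparison map of Mayer--Vietoris sequences becomes an isomorphism. On the $L$-theory side the analogous input is the vanishing of the UNil-terms in Cappell's decomposition for the decoration $\langle -\infty \rangle$, which makes the Shaneson/Wang sequence a clean split one and explains why one only needs to enlarge the family up to $\VCyc_I$ (and not all the way to $\VCyc$) in the second statement.
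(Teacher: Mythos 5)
Your use of the Transitivity Principle~\ref{the:transitivity_principle} to reduce to a single virtually cyclic group at a time is the right first move and matches the published arguments, but the $K$-theoretic half of what remains has a gap precisely where the content of the theorem lives. For a type~II group $V = V_1 \ast_{V_0} V_2$ with index-two type~I subgroup $V^+$, you must show that $H^V_n(\EGF{V}{\VCyc_I|_V};\bfK_R)\to K_n(RV)$ is bijective. The cocompact model $X$ you describe cannot exist: since $V^+$ is normal of index two and $V\notin\VCyc_I|_V$, any model's $V^+$-fixed set must be a free contractible $(V/V^+)$-CW-complex, i.e.\ a model for $E(\IZ/2)$, hence infinite-dimensional; moreover ``attaching $V\times_{V^+}\IR$ along $V/V^+$'' is not an equivariant cell attachment (a correct model is the join $T\ast E(V/V^+)$). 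Far more seriously, the ``five-lemma argument'' silently assumes that the relative term $H^V_n(\EGF{V}{\VCyc_I|_V},\eub{V};\bfK_R)$ --- which, since $V^+$ is the unique maximal infinite type~I subgroup, is assembled from the Farrell--Bass twisted Nil-groups of $RV^+$ together with the residual $\IZ/2$-action --- agrees with the Waldhausen Nil-summand of $K_*(R[V_1\ast_{V_0}V_2])$. These are a~priori different Nil-theories, and their identification is precisely the theorem cited here under the names Quinn~(2007) and Reich~(2007); it does not fall out of lining up two Mayer--Vietoris sequences and applying the five lemma, because the two sequences do not manifestly have isomorphic third terms.

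The $L$-theoretic half is structurally sound (reduce to infinite type~I $V$ fitting in $1\to F\to V\to\IZ\to 1$, use $\IR$ as model for $\eub{V}$, compare Wang sequences), but the input you name is wrong. Cappell's $\mathrm{UNil}$-groups concern amalgamated products, i.e.\ type~II groups, not type~I groups, and they do \emph{not} vanish for the decoration $\langle-\infty\rangle$ --- for instance $\mathrm{UNil}_*(\IZ;\IZ,\IZ)\ne 0$. What the argument actually needs is Ranicki's theorem that the $L$-theoretic Nil-terms in the twisted Laurent (Shaneson) splitting vanish for the decoration $\langle -\infty\rangle$, so that the Wang sequence has no extra summand and the comparison with the $\eub{V}$-homology is an isomorphism. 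The persistent non-vanishing of $\mathrm{UNil}$ is, to the contrary, exactly the reason the second statement stops at $\VCyc_I$ and cannot be pushed on to $\VCyc$; your closing sentence has this logic reversed.
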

\begin{proof}
See~\cite[Lemma~4.2]{Lueck(2005heis)} and~\cite{Reich(2007)}.
\end{proof}

\begin{theorem}[\green{Cappell (1973), Grunewald (2005), Waldhausen (1978)}]
\ \newline
\vspace{-4mm}\begin{enumerate}
\item The following maps are bijective for all $n \in \IZ$.
\begin{eqnarray*}
H_n^G(\eub{G};\bfK_{\IZ}) \otimes_{\IZ} \IQ & \to & 
H_n^G(\EGF{G}{\VCyc};\bfK_{\IZ})  \otimes_{\IZ} \IQ;
\\
H_n^G(\eub{G};\bfL^{\langle - \infty \rangle}_{R})\left[\frac{1}{2}\right] & \to &
H_n^G(\EGF{G}{\VCyc};\bfL^{\langle - \infty \rangle}_{R})\left[\frac{1}{2}\right];
\end{eqnarray*}

\item If $R$ is regular and $\IQ \subseteq R$, then  for all $n \in \IZ$ we get a bijection
$$ H_n^G(\eub{G};\bfK_R) \to  H_n^G(\EGF{G}{\VCyc};\bfK_R).$$
\end{enumerate}

\end{theorem}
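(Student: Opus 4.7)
The plan is to apply the Transitivity Principle (Theorem~\ref{the:transitivity_principle}) with $\calf = \Fin$ and $\calg = \VCyc$. This reduces each of the three claims to showing, for every virtually cyclic group $V$ and every $n \in \IZ$, that the assembly map
$$\calh_n^V\bigl(\eub{V}\bigr) \longrightarrow \calh_n^V(\pt)$$
is bijective for the relevant equivariant homology theory, after rationalization, after inverting $2$, or under the regularity hypothesis on $R$, as the case may be. Note that $\Fin|_V$ equals the family of finite subgroups of $V$ since any intersection of a finite subgroup of $G$ with $V$ is finite, so $\EGF{V}{\Fin|_V} = \eub{V}$.

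I then dispose of finite $V$ trivially and treat the two types of infinite virtually cyclic groups. If $V$ is of type I, I use the extension $1 \to F \to V \to \IZ \to 1$ with $F$ finite, together with the model $\eub{V} = \IR$ on which $V$ acts via its projection to $\IZ$. Computing $\calh_n^V(\IR)$ via the resulting Wang-type long exact sequence identifies the assembly map with the canonical map whose deviation from being an isomorphism is the \emph{twisted Bass--Nil term} --- this is precisely the equivariant derivation of the Bass--Heller--Swan decomposition already foreshadowed in Example~\ref{exa:K-FJC_for_Z_and_Fin}. If $V$ is of type II, it decomposes as an amalgamated product $V = V_1 \ast_{V_0} V_2$ with $V_0$ finite of index two in each finite $V_i$; I take the Bass--Serre tree as model for $\eub{V}$ and extract from the associated Mayer--Vietoris sequence the identification of the deviation with \green{Waldhausen}'s Nil term for the amalgamated product.

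The main obstacle is the vanishing of these Nil and UNil contributions under the stated hypotheses. For part (1) in $K$-theory I invoke \green{Grunewald}'s (2005) theorem that the Bass--Nil groups $\NK_\ast(\IZ F)$ and the Waldhausen Nil groups of $\IZ V$ for $V$ virtually cyclic are all torsion, hence vanish rationally. For part (1) in $L$-theory I invoke \green{Cappell}'s (1973) computation that the UNil groups occurring in the amalgamated-product formula, together with the corresponding $L$-theoretic Nil contributions in the semidirect-product case, are all $2$-primary, which gives bijectivity after inverting $2$. For part (2) I invoke \green{Waldhausen}'s (1978) theorem that for a regular ring $R$ with $\IQ \subseteq R$ both $\NK_\ast(RF)$ and the amalgamated-product Nil groups vanish integrally for finite $F$, which handles the $K$-theory case without inverting anything. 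In all three instances the heavy lifting is done by these Nil-vanishing results; the Transitivity Principle plus the two structural models for $\eub{V}$ is what allows these results to be applied.
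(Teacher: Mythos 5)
Your proposal is correct and follows essentially the same route as the paper, which proves nothing directly but instead cites Cappell, Grunewald, and the survey of L\"uck--Reich; the argument you spell out (reduce via the Transitivity Principle to the assembly map for each virtually cyclic $V$ over the family $\Fin$, split into type~I and type~II, identify the deviation with Farrell/Waldhausen Nil and Cappell UNil terms, then invoke the appropriate vanishing result) is precisely the argument those references carry out. One small imprecision worth flagging: in the $L$-theory case you attribute the need to invert~$2$ partly to the ``$L$-theoretic Nil contributions in the semidirect-product case,'' but for the decoration $\langle-\infty\rangle$ those Laurent Nil terms vanish integrally (this is Ranicki's theorem and is exactly what Theorem~\ref{the:passage_from_VCcy_I_to_VCyc} records for the passage $\Fin \to \VCyc_I$); the only reason one must invert~$2$ is Cappell's UNil terms arising from the type~II amalgamated products. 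This doesn't affect the correctness of your conclusion, but it is cleaner to use Theorem~\ref{the:passage_from_VCcy_I_to_VCyc} to dispose of the type~I case in $L$-theory for free and reserve the $\left[\tfrac12\right]$-localization solely for the type~II step.
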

\begin{proof}
See~\cite{Cappell(1974c)}, \cite[Theorem~5.6]{Grunewald(2006)},
\cite[Proposition~2.6 on page~686, Proposition~2.9 and 
Proposition~2.10 on page~688]{Lueck-Reich(2005)}.
\end{proof}

\begin{theorem}[\green{Bartels (2003)}]
\label{the:Bartels}
For every $n \in \IZ$ the two maps
\begin{eqnarray*}
 H_n^G(\eub{G};\bfK_R) & \to & H_n^G(\EGF{G}{\VCyc};\bfK_R);
 \\
 H_n^G(\eub{G};\bfL^{\langle - \infty \rangle}_R) & \to & H_n^G(\EGF{G}{\VCyc};\bfL^{\langle -
   \infty \rangle}_R),
 \end{eqnarray*}
are split injective.
\end{theorem}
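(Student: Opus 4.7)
Proof proposal. The strategy is to construct a natural retraction at the level of equivariant homology theories, reducing to virtually cyclic groups via the Transitivity Principle and then invoking Bass--Heller--Swan / Waldhausen / Cappell-type decompositions.

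First I would reduce the claim to a local statement about virtually cyclic groups. By the Transitivity Principle (Theorem~\ref{the:transitivity_principle}) together with the long exact sequence of the pair $(\EGF{G}{\VCyc}, \eub{G})$, producing a splitting of the relative assembly is essentially equivalent to producing, for each virtually cyclic subgroup $V \subseteq G$, a retraction
$$\rho_V \colon K_n(RV) = H_n^V(\pt;\bfK_R) \to H_n^V(\eub{V};\bfK_R)$$
of the assembly map $\alpha_V$, subject to the requirement that $\rho_V$ be natural in $V$ (with respect to the induction structure along group homomorphisms), so that the $\rho_V$ glue to a retraction of the global relative assembly. Finite $V$ contribute nothing, since $\alpha_V$ is then already an isomorphism, so only the infinite virtually cyclic case requires work.

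Next I would treat the two types of infinite virtually cyclic groups separately. For $V$ of type I, write $V = F \rtimes_{\phi} \IZ$ with $F$ finite. A one-dimensional model for $\eub{V}$ is $\IR$ with $V$ acting via the projection $V \to \IZ$; its isotropy is $F$, and a Mayer--Vietoris computation identifies $H_n^V(\eub{V}; \bfK_R)$ with the ``homological'' part of a $\phi$-twisted Bass--Heller--Swan decomposition of $K_n(RV)$. That decomposition expresses $K_n(RV)$ as a direct sum of this homological part and two twisted Nil-summands, so projection onto the homological summand gives a natural retraction of $\alpha_V$. For $V$ of type II, write $V = V_1 *_{V_0} V_2$ with $V_0, V_1, V_2$ finite; Waldhausen's splitting theorem for $K$-theory of amalgamated products supplies the analogous natural projection. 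The $L$-theoretic statement is proved by the same reduction, now using the Cappell analogues of the Bass--Heller--Swan and Waldhausen decompositions.

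The main obstacle, and the technical heart of Bartels's argument, is naturality: the pointwise retractions just described must lift to natural transformations of $\OrG$-spectra in order to induce well-defined retractions on equivariant homology. Bartels handles this by working in the controlled algebraic $K$-theory framework of Pedersen--Weibel, where the required Bass--Heller--Swan / Waldhausen decompositions arise from natural idempotents on a single category of controlled modules and the decompositions acquire the needed functoriality in $V$ automatically. One then verifies that the resulting $\OrG$-spectrum map is a homotopy section of the assembly, and passes to homotopy groups to obtain the desired split injection for each $n \in \IZ$.
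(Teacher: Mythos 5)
The paper itself gives no argument --- the stated proof is the single line ``See~\cite{Bartels(2003b)}'' --- so the comparison has to be with Bartels's article rather than with anything in the text. Your sketch gets the broad picture right: the dichotomy of infinite virtually cyclic groups into type~I ($F\rtimes_\phi\IZ$, handled by twisted Bass--Heller--Swan) and type~II ($V_1\ast_{V_0}V_2$ with $V_0,V_1,V_2$ finite, handled by Waldhausen, with Cappell's UNil in $L$-theory), and the observation that naturality of the local retractions is the crux. Two points need attention, however.

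First, the reduction step is stated imprecisely. Theorem~\ref{the:transitivity_principle} asserts \emph{bijectivity} of the relative assembly under the hypothesis that $\calh_n^H(\EGF{H}{\calf|_H})\to\calh_n^H(\pt)$ is \emph{bijective} for every $H\in\calg$; it does not by itself address split injectivity. What you actually need is that a retraction defined as a natural transformation of $\OrGF{G}{\VCyc}$-spectra passes to a retraction after forming the homotopy colimit that computes $H_n^G(\EGF{G}{\VCyc};\bfK_R)$. That statement is true, but it comes from the Davis--L\"uck machinery rather than from Theorem~\ref{the:transitivity_principle}, and the upgrade from homology-level maps $\rho_V$ to a genuine natural transformation of spectra over the orbit category is precisely where the content begins.

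Second, and more substantively, the sentence asserting that the Bass--Heller--Swan and Waldhausen decompositions ``arise from natural idempotents on a single category of controlled modules and \dots acquire the needed functoriality in $V$ automatically'' is not an argument; it restates the problem. The Nil- and UNil-summands are not functorial under inclusions of groups in any straightforward way, so there is no a priori reason that projection onto the complementary homological summand should assemble into a natural transformation over $\OrGF{G}{\VCyc}$. My understanding of~\cite{Bartels(2003b)} is that Bartels does not attempt to make the algebraic splittings natural after the fact. He instead builds the retraction directly at the level of Pedersen--Weibel controlled categories, exploiting the geometry of $\eub{V}\simeq\IR$ for infinite virtually cyclic $V$ via a control-improvement (squeezing) construction, so that naturality over the orbit category is manifest by design; the Bass--Heller--Swan- and Waldhausen-type splittings then appear as consequences, not as inputs. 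Your outline inverts that logical order, and the inversion is exactly where the gap lies.
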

\begin{proof}
See~\cite{Bartels(2003b)}.
\end{proof}

Hence we get (natural) isomorphisms
\begin{multline}
 H_n^G(\EGF{G}{\VCyc};\bfK_R) 
 \cong
 H_n^G(\eub{G};\bfK_R) \oplus H_n^G(\EGF{G}{\VCyc},\eub{G};\bfK_R);
\end{multline}
and
\begin{multline*}
H_n^G(\EGF{G}{\VCyc};\bfL^{\langle - \infty \rangle}_R)
\cong
 H_n^G(\eub{G};\bfL^{\langle - \infty \rangle}_R) \oplus
 H_n^G(\EGF{G}{\VCyc},\eub{G};\bfL^{\langle - \infty \rangle}_R).
\end{multline*}

The analysis of the terms $ H_n^G(\EGF{G}{\VCyc},\eub{G};\bfK_R)$ and
$H_n^G(\EGF{G}{\VCyc},\eub{G};\bfL^{\langle - \infty \rangle}_R)$ boils down to
investigating \red{Nil-terms} and \red{UNil-terms} in the sense of
\green{Waldhausen} and \green{Cappell}.
The analysis of the terms $ H_n^G(\eub{G};\bfK_R)$ and
$H_n^G(\eub{G};\bfL^{\langle - \infty \rangle}_R)$ is using the methods of the previous
lecture (e.g., equivariant  Chern characters).

\begin{remark}[\blue{Relating the torsionfree versions to the general versions}]
Obviously the general version of the Baum-Connes Conjecture~\ref{con:BCC} 
reduces in the torsionfree case to the version~\ref{con:BCC_torsionfree}
since for torsionfree $G$ we have $EG = \eub{G}$.

The general version of the Farrell-Jones Conjecture~\ref{con:FJC_for_K}
for $K$-theory  reduces in the torsionfree case to the 
version~\ref{con:FJC_for_K_torsionfree}
because of the Transitivity Principal~\ref{the:transitivity_principle}
since a torsionfree virtually cyclic group is isomorphic to $\IZ$
and for a regular ring $R$ the Bass-Heller-Swan decomposition shows that
the map $H_n^{IZ}(\eub{\IZ};\bfK_R) \to K_n(R[\IZ])$ is bijective (as explained in
Example~\ref{exa:K-FJC_for_Z_and_Fin}).

The general version of the Farrell-Jones Conjecture~\ref{con:FJC_for_L}
for $L$-theory  reduces in the torsionfree case to the 
version~\ref{con:FJC_for_L_torsionfree}
because of the Transitivity Principal~\ref{the:transitivity_principle}
since a torsionfree virtually cyclic group is isomorphic to $\IZ$
and the map $H_n^{\IZ}(\eub{\IZ};\bfL^{\langle -\infty \rangle}_R) 
\to L_n^{\langle -\infty \rangle}(R[\IZ])$ is bijective by
Theorem~\ref{the:passage_from_VCcy_I_to_VCyc}
\end{remark}

Next we explain in the case $G = SL_2(\IZ)$ how computations
are made possible by the Farrell-Jones and the Baum-Connes Conjecture.

\begin{example}[\blue{$K$-theory of $C^*_r(SL_2(\IZ))$ and of $\IZ[SL_2(\IZ)]$}]
From Example~\ref{exa:eub_for_SL_2(Z)} we obtain a $SL_2(\IZ)$-pushout
$$
\xymatrix@!C=15em{SL_2(\IZ)/(\IZ/2) \times \{-1,1\} \ar[d]
\ar[r]^-{F_{-1} \coprod F_1}
&
SL_2(\IZ)/(\IZ/4) \coprod SL_2(\IZ)/(\IZ/6) \ar[d]
\\
SL_2(\IZ)/(\IZ/2) \times [-1,1] \ar[r]
&
T = \underline{E}SL_2(\IZ)
}
$$
Let $\calh^?_*$ be an equivariant homology theory. Then the Mayer-Vietoris sequence 
applied to the $SL_2(\IZ)$-pushout above together with the induction structure
yields a long exact sequence
\begin{multline}
\cdots \to \calh^{\IZ/2}_{n}(\pt) 
\to \calh^{\IZ/4}_{n}(\pt) \oplus \calh^{\IZ/6}_{n}(\pt) \to
\calh^{SL_2(\IZ)}_{n}(\eub{SL_2(\IZ)}) 
\\
\to \calh^{\IZ/2}_{n-1}(\pt) 
\to \calh^{\IZ/4}_{n-1}(\pt) \oplus \calh^{\IZ/6}_{n-1}(\pt) \to \cdots.
\label{Mayer-Vietoris_for_SL_2(Z)_and_equiv_homo_theory}
\end{multline}
The Baum-Connes Conjecture~\ref{con:BCC} is known to be true for $SL_2(\IZ)$
(see for instance~\cite{Higson-Kasparov(2001)}).
Hence in the case, where $\calh^?_*$ is equivariant topological
$K$-theory, the long exact 
sequence~\eqref{Mayer-Vietoris_for_SL_2(Z)_and_equiv_homo_theory} reduces to the exact
sequences
\begin{multline*}
0 \to K_1(C^*_r(SL_2(\IZ))) \to R_{\IC}(\IZ/2) \to R_{\IC}(\IZ/4) \oplus R_{\IC}(\IZ/6)
\\
\to K_0(C^*_r(SL_2(\IZ))) \to 0,
\end{multline*}
where the map between the representation rings are induced by the obvious inclusions
of groups. Since the inclusion $\IZ/2 \to \IZ/6$ is split injective and
$R_{\IC}(\IZ/2) \cong \IZ^2$, $R_{\IC}(\IZ/4) \cong \IZ^4$ and
$R_{\IC}(\IZ/6) \cong \IZ^6$, we conclude
$$K_n(C^*_r(SL_2(\IZ))) \cong
\left\{
\begin{array}{lcl}
\IZ^8 && n \text{ even;}
\\
0 && n \text{ odd.}
\end{array}
\right.
$$
The Farrell-Jones Conjecture for $K$-theory is known to be true for $SL_2(\IZ)$
for any coefficient ring $R$ by~\cite{Bartels-Lueck-Reich(2007hyper)}
since it contains a finitely generated free subgroup of finite index and is hence
a hyperbolic group. Because of
Theorem~\ref{the:passage_from_VCcy_I_to_VCyc} and
Theorem~\ref{the:Bartels} we obtain an isomorphism
\begin{multline}
K_n(R[SL_2(\IZ)]) 
\cong  
H_n^{SL_2(\IZ)}(\EGF{SL_2(\IZ)}{\Fin};\bfK_R) 
\\
 \oplus H_n^{SL_2(\IZ)}(\EGF{SL_2(\IZ)}{\VCyc_I},\EGF{SL_2(\IZ)}{\Fin};\bfK_R).
\label{Bartels_for_K_ast(RSL_2(Z))} 
\end{multline}
Let $V \subseteq SL_2(\IZ)$ be a virtually cyclic subgroup of type I, i.e.,
there is an exact sequence $1 \colon F \to V \to \IZ \to 1$ for a finite subgroup
$F \subseteq V$. Since $SL_2(\IZ) \cong \IZ/4 \ast_{\IZ/2} \IZ/6$,
$F$ is conjugated to $\IZ/4$, $\IZ/6$ or the subgroup $\IZ/2$. Since
the normalizers of $\IZ/6$ and $\IZ/4$ are finite, $F$ must be subconjugated to $\IZ/2$.
Since $\IZ/2$ is the center of $SL_2(\IZ)$, the group $V$ is isomorphic
to $\IZ/2 \times \IZ$. The group $\NK_n(\IZ[\IZ/2])$ vanishes for $n \le 1$.
Using the Bass-Heller-Swan decomposition we see that
$H_n^{V}(\EGF{V}{\VCyc_I},\EGF{V}{\Fin};\bfK_{\IZ}) = 0$ for $n \le 1$.
An obvious modification of the Transitivity Principal~\ref{the:transitivity_principle}
(see~\cite[Theorem~1.4]{Bartels-Lueck(2007ind)})
implies that for $n \le 1$ the group 
$H_n^{SL_2(\IZ)}(\EGF{SL_2(\IZ)}{\VCyc_I},\EGF{SL_2(\IZ)}{\Fin};\bfK_{\IZ})$
vanishes. Thus from~\eqref{Bartels_for_K_ast(RSL_2(Z))}
we obtain an isomorphism for $n \le 1$.
$$K_n(\IZ[SL_2(\IZ)]) 
\cong  
H_n^{SL_2(\IZ)}(\EGF{SL_2(\IZ)}{\Fin};\bfK_{\IZ})$$
Hence the long exact sequence~\eqref{Mayer-Vietoris_for_SL_2(Z)_and_equiv_homo_theory}
yields the long exact sequence
\begin{multline*}
K_1(\IZ[\IZ/2]) \to K_1(\IZ[\IZ/4]) \oplus K_1(\IZ[\IZ/6]) \to K_1(\IZ[SL_2(\IZ)])
\to K_0(\IZ[\IZ/2]) 
\\
\to K_0(\IZ[\IZ/4]) \oplus K_0(\IZ[\IZ/6]) \to K_0(\IZ[SL_2(\IZ)])
\to K_{-1}(\IZ[\IZ/2]) 
\\
\to K_{-1}(\IZ[\IZ/4]) \oplus K_{-1}(\IZ[\IZ/6]) 
\to K_{-1}(\IZ[SL_2(\IZ)])
\to K_{-2}(\IZ[\IZ/2]) 
\\
\to K_{-2}(\IZ[\IZ/4]) \oplus K_{-2}(\IZ[\IZ/6]) 
\to K_{-2}(\IZ[SL_2(\IZ)])  \to \cdots
\end{multline*}
It induces the long exact sequence
\begin{multline*}
\Wh(\IZ/2) \to \Wh(\IZ/4) \oplus \Wh(\IZ/6) \to \Wh(SL_2(\IZ))
\to \widetilde{K}_0(\IZ[\IZ/2]) 
\\
\to \widetilde{K}_0(\IZ[\IZ/4]) \oplus \widetilde{K}_0(\IZ[\IZ/6]) 
\to \widetilde{K}_0(\IZ[SL_2(\IZ)])
\to K_{-1}(\IZ[\IZ/2]) 
\\
\to K_{-1}(\IZ[\IZ/4]) \oplus K_{-1}(\IZ[\IZ/6]) 
\to K_{-1}(\IZ[SL_2(\IZ)])
\to K_{-2}(\IZ[\IZ/2]) 
\\
\to K_{-2}(\IZ[\IZ/4]) \oplus K_{-2}(\IZ[\IZ/6]) 
\to K_{-2}(\IZ[SL_2(\IZ)])  \to \cdots
\end{multline*}
The groups $\Wh(\IZ/2)$, $\Wh(\IZ/4)$, $\Wh(\IZ/6)$,
$\widetilde{K}_0(\IZ[\IZ/2])$, $\widetilde{K}_0(\IZ[\IZ/4])$,
$\widetilde{K}_0(\IZ[\IZ/6])$, 
$\widetilde{K}_{-1}(\IZ[\IZ/2])$, $\widetilde{K}_{-1}(\IZ[\IZ/4])$
vanish, whereas $\widetilde{K}_{-1}(\IZ[\IZ/6]) \cong \IZ$
(see \green{Bass}~\cite[Theorem~10.6 on page~695]{Bass(1968)},
\green{Carter}~\cite{Carter(1980)},
\green{Cassou-Nogu\'es}~\cite{Cassou-Nogues(1973)},
\green{Curtis-Rainer}~\cite[Corollary~50.17 on page~253]{Curtis-Reiner(1987)}),
\green{Oliver}~\cite[Theorem~14.1 on page~328]{Oliver(1989)}.
The groups $K_n(\IZ[H])$ vanish for all $n \ge -2$ and all finite groups $H$
(see~\green{Carter}~\cite{Carter(1980)}). Hence we get
$$\begin{array}{lclcl}
\Wh(SL_2(\IZ)) & \cong  & 0; && 
\\
\widetilde{K}_0(\IZ[SL_2(\IZ)]) & \cong & 0; &&
\\
K_{-1}(\IZ[SL_2(\IZ)]) & \cong & \IZ: &&
\\
K_{n}(\IZ[SL_2(\IZ)]) & \cong & 0 && \text{for } n \le -2.
\end{array}
$$
\end{example}

Next we show that the Farrell-Jones Conjecture and the Baum-Conjecture imply
certain other well-known conjectures.

\begin{conjecture}[\blue{Novikov Conjecture}]
The \emphred{Novikov Conjecture for $G$} predicts for a closed oriented manifold
$M$ together with a map $f \colon M \to BG$ that for any $x \in H^*(BG)$
 the \red{higher signature}
$$\red{\sign_x(M,f)} := \langle \call(M) \cup f^*x,[M]\rangle$$
is an oriented homotopy invariant of $(M,f)$,  i.e.,  for
every orientation preserving homotopy equivalence of closed
oriented manifolds $g \colon M_0 \to M_1$ and homotopy equivalence
$f_i \colon M_0 \to M_1$ with $f_1 \circ g \simeq f_2$ we have
$$\sign_x(M_0,f_0) = \sign_x(M_1,f_1).$$
\end{conjecture}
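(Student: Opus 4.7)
The plan is to deduce the Novikov Conjecture for $G$ from the rational injectivity of the $L$-theoretic assembly map, which is a consequence of the $L$-theoretic Farrell-Jones Conjecture \ref{con:FJC_for_L} (and equally of the Baum-Connes Conjecture \ref{con:BCC} after passing to real $KO$-theory). The strategy is not to prove Novikov unconditionally --- it is open --- but to exhibit the higher signatures as the rationalization of a homotopy-invariant class living in an $L$-theoretic assembly target, then transport homotopy invariance back to $H_*(BG;\IQ)$ using the injectivity hypothesis.

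First, to a closed oriented $n$-manifold $M$ equipped with $f\colon M \to BG$ one attaches its \emph{symmetric signature} $\sigma^*(M,f) \in L_n(\IZ G)$ in the sense of Mishchenko--Ranicki. This invariant is manifestly a homotopy invariant of the pair $(M,f)$: an oriented homotopy equivalence $g\colon M_0 \to M_1$ with $f_1 \circ g \simeq f_0$ induces an isomorphism of the associated symmetric Poincar\'e $\IZ G$-chain complexes. Second, there is a refinement $[M,f]_{\bfL} \in H_n(BG;\bfL^{\langle -\infty \rangle}(\IZ))$, the $L$-theory fundamental class, with the property that the assembly map of Conjecture \ref{con:FJC_for_L} sends $[M,f]_{\bfL}$ to $\sigma^*(M,f)$. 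So we obtain a commutative triangle
\begin{equation*}
(M,f) \longmapsto [M,f]_{\bfL} \in H_n(BG;\bfL^{\langle -\infty \rangle}(\IZ))
\xrightarrow{\;A\;} L_n^{\langle -\infty \rangle}(\IZ G) \ni \sigma^*(M,f).
\end{equation*}

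Third, one identifies $[M,f]_{\bfL}$ rationally with the classical higher-signature data. After tensoring with $\IQ$, the Atiyah-Hirzebruch / Chern-character decomposition gives
\begin{equation*}
H_n\bigl(BG;\bfL^{\langle -\infty \rangle}(\IZ)\bigr) \otimes_{\IZ} \IQ \;\cong\; \bigoplus_{k \ge 0} H_{n-4k}(BG;\IQ),
\end{equation*}
and under this isomorphism the rationalized class $[M,f]_{\bfL} \otimes \IQ$ is computed to be $f_*\bigl(\call(M) \cap [M]\bigr)$ (the Poincar\'e dual of $\call(M) \cup$-action, pushed into $BG$), essentially by the multiplicative behavior of the $L$-genus together with Sullivan's orientation. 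Pairing with a cohomology class $x \in H^*(BG;\IQ)$ then reproduces the higher signature: $\langle x, f_*(\call(M) \cap [M])\rangle = \sign_x(M,f)$.

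Finally one closes the argument: if the assembly map $A$ is rationally injective --- which is the content of the $L$-theoretic Farrell-Jones Conjecture, and in fact only its rational injective part is needed --- then $[M,f]_{\bfL} \otimes \IQ$ is a homotopy invariant of $(M,f)$ because $\sigma^*(M,f)$ is, and hence so is every pairing $\sign_x(M,f)$. The main conceptual obstacle is the identification of $[M,f]_{\bfL} \otimes \IQ$ with $f_*(\call(M) \cap [M])$; this relies on Sullivan's $\bfL\otimes \IQ$-orientation of $TM$ together with compatibility of products and assembly, and is the technical heart of the reduction. An alternative analytic route, which works under the Baum-Connes assumption, replaces $\sigma^*$ by the $C^*$-algebraic index of the signature operator twisted by the Mishchenko bundle $f^*\bigl(EG \times_G C^*_r(G)\bigr)$ and uses the rational injectivity of $K_*(BG) \to K_*(C^*_r(G))$; the formal structure of the argument is identical.
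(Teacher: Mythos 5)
Your proposal correctly targets the theorem that immediately follows the conjecture in the text (the implication from rational injectivity of the $L$-theoretic or Baum--Connes assembly map to the Novikov Conjecture, which cannot itself be proved unconditionally), and the argument you sketch is the standard Mishchenko--Ranicki route: homotopy invariance of the symmetric signature $\sigma^*(M,f)\in L_n(\IZ G)$, the factorization of $\sigma^*$ through assembly applied to the $L$-homology fundamental class $f_*[M]_{\bfL}\in H_n\bigl(BG;\bfL(\IZ)\bigr)$, and the Sullivan--Morgan--Wall rational identification of $[M]_{\bfL}\otimes\IQ$ with $\call(M)\cap[M]$ via the rational splitting of the $L$-spectrum. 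The paper's proof is only a pointer to L\"uck--Reich (Prop.~3.1 and 3.5) and Ranicki (Prop.~6.6), and those references carry out precisely this argument (and, for the Baum--Connes variant, your analytic alternative with the Mishchenko bundle and the signature operator), so your route is essentially the same as the paper's. The only cosmetic gloss is that the fundamental class lives naturally in symmetric rather than quadratic $L$-theory with decoration $\langle -\infty\rangle$, but after tensoring with $\IQ$ (or even $\IZ[1/2]$) these agree, so nothing is lost.
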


\begin{theorem}[\blue{The Farrell-Jones, the Baum-Connes and the Novikov Conjecture}]
Suppose that one of the following assembly maps
\begin{eqnarray*}
H_n^G(\EGF{G}{\VCyc},\bfL_R^{\langle -\infty\rangle}) 
& \to &
H_n^G(\pt,\bfL_R^{\langle-\infty\rangle}) = L_n^{\langle-\infty\rangle}(RG);
\\
K_n^G(\eub{G}) = H_n^G(\EGF{G}{\Fin},\bfK^{\topo}) 
& \to & 
H_n^G(\pt,\bfK^{\topo}) = K_n(C^*_r(G)),
\end{eqnarray*}
is rationally injective.

Then the Novikov Conjecture holds for the group $G$.
\end{theorem}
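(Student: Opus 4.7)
The plan is to reduce the Novikov Conjecture for $G$ to the rational injectivity of the classical assembly maps
$$A_G^L\colon H_n\bigl(BG;\bfL^{\langle-\infty\rangle}(\IZ)\bigr)\to L_n^{\langle-\infty\rangle}(\IZ G)\quad\text{and}\quad A_G^K\colon K_n(BG)\to K_n(C^*_r(G)),$$
and then to deduce the relevant classical injectivity from the hypothesis by comparing the trivial family $\TR$ with $\VCyc$ (respectively with $\Fin$).

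The first step is to encode the higher signatures homologically. For a closed oriented $(M,f\colon M\to BG)$ of dimension $n$, I would form the $L$-class image
$$\alpha(M,f)\;:=\;f_*\bigl(\call(M)\cap [M]\bigr)\;\in\;\bigoplus_{k\ge 0}H_{n-4k}(BG;\IQ).$$
Via the non-equivariant Chern character of \green{Dold}, this class lifts canonically to $H_n(BG;\bfL^{\langle-\infty\rangle}(\IZ))\otimes_{\IZ}\IQ$, and every higher signature $\sign_x(M,f)$ is recovered by pairing $\alpha(M,f)$ with a cohomology class $x\in H^*(BG;\IQ)$. Hence it suffices to prove that $\alpha(M,f)$ is an oriented homotopy invariant of $(M,f)$. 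In the Baum-Connes setting the analogous class is the push-forward to $K_n(BG)$ of the $K$-homology fundamental class of the signature operator on $M$, which rationally carries the same information as $\alpha(M,f)$.

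The second step is to invoke the classical theorem of \green{Mishchenko} and \green{Ranicki} identifying $A_G^L(\alpha(M,f))$ with the symmetric signature $\sigma^*(M,f)\in L_n^{\langle-\infty\rangle}(\IZ G)$, a well-known oriented homotopy invariant; analogously, \green{Kasparov's} theorem identifies the image of the signature $K$-homology class under $A_G^K$ with a higher analytic index, again a homotopy invariant. Consequently, rational injectivity of $A_G^L$ (respectively $A_G^K$) forces $\alpha(M,f)$ to be an oriented homotopy invariant, yielding the Novikov Conjecture.

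The third step is to deduce rational injectivity of $A_G^L$ from the hypothesis. The assembly $A_G^L$ factors as
$$H_n^G(EG;\bfL_{\IZ}^{\langle-\infty\rangle})\xrightarrow{u_*}H_n^G(\eub G;\bfL_{\IZ}^{\langle-\infty\rangle})\xrightarrow{v_*}H_n^G(\EGF{G}{\VCyc};\bfL_{\IZ}^{\langle-\infty\rangle})\longrightarrow L_n^{\langle-\infty\rangle}(\IZ G).$$
The map $v_*$ is split injective by Theorem~\ref{the:Bartels}. The rationalisation $u_*\otimes\IQ$ is identified, via the equivariant Chern character of Theorem~\ref{the:equivariant_Chern_character}, with the inclusion of the $(H)=1$ summand; here one uses that the isotropy groups of $\eub G$ are finite to conclude $H_*(G\backslash\eub G;\IQ)\cong H_*(BG;\IQ)$. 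Hence rational injectivity of the Farrell-Jones assembly forces rational injectivity of $A_G^L$. The Baum-Connes case is entirely parallel, with Theorem~\ref{the:Rational_computation_of_K_astG(eubG)} used to exhibit $K_n(BG)\otimes\IQ$ as the $(C)=1$ summand of $K_n^G(\eub G)\otimes\IQ$. The main technical obstacle lies in Step~2, namely the Mishchenko-Ranicki identification of the rationalised image of $\alpha(M,f)$ under $A_G^L$ with the symmetric signature; careful tracking of the $\langle-\infty\rangle$-decoration is needed, although the decoration is rationally immaterial by the Rothenberg sequences.
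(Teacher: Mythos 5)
Your proposal is correct and follows essentially the argument that underlies the references the paper cites in place of a proof (L\"uck-Reich, Prop.~3.1 and~3.5, for the reduction of the classical trivial-family assembly to the $\VCyc$- resp. $\Fin$-assembly via the equivariant Chern character and the split injectivity of Theorem~\ref{the:Bartels}; Ranicki, Prop.~6.6, for the Mishchenko--Ranicki identification of the assembled $\bfL$-homology fundamental class with the homotopy-invariant symmetric signature). The only caveats are the ones you already flag: the quadratic-vs.-symmetric and decoration issues are rationally immaterial, and for the Novikov Conjecture one only needs the hypothesis in the case $R=\IZ$, which is what your factorisation uses.
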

\begin{proof}
See for instance~\cite[Proposition~3.1 and Proposition~3.5 on page~699]{Lueck-Reich(2005)} 
and~\cite[Proposition~6.6 on page~300]{Ranicki(1995b)}.
\end{proof}

For more information about the Novikov Conjecture we refer for instance
to~\cite{Carlsson(2004)}, 
\cite{Carlsson-Pedersen(1995a)},
\cite{Davis(2000)},
\cite{Farrell(2002)}, 
\cite{Ferry-Ranicki-Rosenberg(1995)}, 
\cite{Kreck-Lueck(2005)},
\cite{Ranicki(1992)} and 
\cite{Rosenberg(1995)}. 

\begin{theorem}[\blue{Induction from finite subgroups}, 
\green{Bartels-L\"uck-Reich (2007)}]
\label{the:colim_fin}
\ \\
\begin{enumerate}
\item
Let $R$ be a regular ring such that the order of any finite subgroup
of $G$ is invertible in $R$. Suppose $G \in \calfj_K(R)$.
Then the map given by induction from finite
subgroups of $G$
$$\colim_{\OrGF{G}{\Fin}} K_0(RH) \to K_0(RG) $$
is bijective;

\item
Let $F$ be a field of characteristic $p$ for a prime number $p$.
Suppose that $G \in \calfj_K(F)$. Then the map
$$\colim_{\OrGF{G}{\Fin}} K_0(FH)[1/p] \to
K_0(FG)[1/p]$$
is bijective;

\item If $G \in \calf_K(\IZ)$, then 
the canonical map
$$\colim_{\OrGF{G}{\Fin}} K_{-1}(\IZ H) \to K_{-1}(\IZ G)$$
is bijective;

\item If $G \in \calf_K(\IZ)$, then 
$$K_n(\IZ G) = 0 \text{ for } n \le -2.$$
\end{enumerate}
\end{theorem}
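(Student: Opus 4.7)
My plan is to reduce each of the four statements to a computation of $H_n^G(\eub{G};\bfK_R)$ via the Farrell--Jones assembly map, and then to analyse that equivariant homology group by means of the equivariant Atiyah--Hirzebruch spectral sequence
\[
E^2_{p,q}=H_p^G\bigl(\eub{G};\,K_q(R?)\bigr)\Longrightarrow H_{p+q}^G(\eub{G};\bfK_R),
\]
where $K_q(R?)$ is the covariant functor $G/H\mapsto K_q(RH)$ on $\OrGF{G}{\Fin}$. Since every fixed-point set $\eub{G}^H$ is contractible for $H\in\Fin$, the bottom row evaluates to $E^2_{0,q}=\colim_{\OrGF{G}{\Fin}}K_q(R?)$. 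The first step of the plan is to use the hypothesis $G\in\calfj_K(R)$, together with the Transitivity Principle~\ref{the:transitivity_principle} and Bartels' splitting Theorem~\ref{the:Bartels}, to identify $K_n(RG)$ with $H_n^G(\eub{G};\bfK_R)$ in the degrees relevant to each part. In parts~(1) and~(2), the ring $RH$ is regular (respectively $FH$ has vanishing negative $K$-theory after inverting $p$, as a finite-dimensional $F$-algebra), and the Bass--Heller--Swan Nil-contributions for every virtually cyclic $V$ accordingly vanish; in parts~(3) and~(4), one instead uses the type~II to type~I passage of Theorem~\ref{the:passage_from_VCcy_I_to_VCyc} combined with Carter's vanishing $K_n(\IZ F)=0$ for finite $F$ and $n\le-2$ to obtain the analogous relative assembly isomorphism in the relevant negative degrees.

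With the identification $K_n(RG)\cong H_n^G(\eub{G};\bfK_R)$ in place, the spectral sequence collapses in each of the four cases because the coefficient system vanishes in the appropriate range of bidegrees. For part~(4), Carter's theorem gives $K_q(\IZ F)=0$ for every $q\le-2$ and every finite $F$, so every $E^2_{p,q}$ with $p+q\le-2$ is zero and hence $K_n(\IZ G)=0$ for $n\le-2$. For part~(3), the same vanishing leaves $E^2_{0,-1}=\colim_{\OrGF{G}{\Fin}}K_{-1}(\IZ H)$ as the unique surviving term on the diagonal $p+q=-1$, and the potential differentials entering or leaving it have zero source or target; the edge homomorphism then furnishes the desired isomorphism. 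For parts~(1) and~(2), the ring-theoretic vanishing $K_q(RH)=0$ (respectively $K_q(FH)[1/p]=0$) for $q<0$ concentrates the diagonal $p+q=0$ at $E^2_{0,0}=\colim_{\OrGF{G}{\Fin}}K_0(RH)$ (respectively at $E^2_{0,0}\otimes\IZ[1/p]$), from which the stated isomorphisms follow.

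The main obstacle will be the reduction from $\VCyc$ to $\Fin$ in the first step, i.e., controlling the Nil- and UNil-type terms that appear in the Bass--Heller--Swan decomposition for the $K$-theory of virtually cyclic groups. Bartels' splitting already delivers the injectivity of the relative assembly $H_n^G(\eub{G};\bfK_R)\to H_n^G(\EGF{G}{\VCyc};\bfK_R)$; surjectivity is the delicate point. Handling virtually cyclic groups of type~II requires first pulling back to type~I through Theorem~\ref{the:passage_from_VCcy_I_to_VCyc}, at which stage the classical vanishing results for the negative $K$-theory of integral (or finite-dimensional) group rings of finite groups can finally be brought to bear to close the argument.
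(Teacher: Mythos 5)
Your proposal is correct and follows the strategy of the reference the paper itself cites for this statement (Bartels--L\"uck--Reich, Theorem~0.5): use the Farrell--Jones assembly together with the Transitivity Principle~\ref{the:transitivity_principle} and the passage $\VCyc \to \VCyc_I \to \Fin$ to identify $K_n(RG)$ with $H_n^G(\eub{G};\bfK_R)$ in the relevant degrees, and then collapse the equivariant Atiyah--Hirzebruch spectral sequence using Carter's vanishing (for $R=\IZ$), regularity of $RH$ (for part~(1)), or the $p$-local vanishing for finite-dimensional $F$-algebras (for part~(2)), reading off $\colim_{\OrGF{G}{\Fin}}K_q(R?)$ on the $p=0$ edge. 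One minor terminological slip: UNil-terms are an $L$-theoretic phenomenon, so the obstructions you must kill in the $\Fin\to\VCyc$ reduction for algebraic $K$-theory are only the Waldhausen- and Farrell-type (twisted) Nil-groups, not UNil.
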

\begin{proof}
See \green{Bartels-L\"uck-Reich}~\cite[Theorem~0.5]{Bartels-Lueck-Reich(2007appl)},
\cite[1.65 on page~260]{Farrell-Jones(1993a)}, and
\green{L\"uck-Reich}~\cite[Section~3.1.1 on page~690]{Lueck-Reich(2005)}.
\end{proof}

\begin{theorem}[\blue{Permutation Modules}, \green{Bartels-L\"uck-Reich (2007)}]
Suppose that $G \in \calfj_K(\IQ)$.
Then for every finitely generated projective $\IQ[G]$-module $P$ there exists
integers $k \ge 1$ and $l \ge 0$ and finitely many finite subgroups $H_1$, $H_2$, $\ldots $, $H_r$
such that
$$P^k \oplus \IQ[G]^l \cong_{\IQ[G]} \IQ[G/H_1] \oplus \IQ[G/H_2] \oplus \cdots \oplus \IQ[G/H_r].$$
\end{theorem}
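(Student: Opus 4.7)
The plan is to reduce the assertion to a statement in $K_0(\IQ G)$ via Theorem~\ref{the:colim_fin}, then to express classes coming from finite subgroups in terms of permutation modules, and finally to upgrade the resulting $K_0$-identity to an actual module isomorphism.

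First, I would apply Theorem~\ref{the:colim_fin}\,(1) with $R=\IQ$. The ring $\IQ$ is regular and, having characteristic zero, the order of every finite subgroup of $G$ is automatically invertible in $\IQ$. Together with the hypothesis $G\in\calfj_K(\IQ)$, this gives that the induction map
\[
\colim_{\OrGF{G}{\Fin}} K_0(\IQ H)\xrightarrow{\cong} K_0(\IQ G)
\]
is a bijection. Consequently $[P]\in K_0(\IQ G)$ is a $\IZ$-linear combination of classes of the form $[\IQ G\otimes_{\IQ H}Q]$, where $H$ runs over finitely many finite subgroups of $G$ and each $Q$ is a finitely generated projective $\IQ H$-module.

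Second, for each finite subgroup $H$, the ring $\IQ H$ is semisimple by Maschke's theorem, so $K_0(\IQ H)=R_{\IQ}(H)$. A classical result of \green{Swan}, extendable from $p$-groups to general finite groups via Artin--Berman-type induction from cyclic subgroups together with M\"obius inversion on the subgroup lattice of a cyclic group, asserts that $R_{\IQ}(H)$ is generated as an abelian group by the permutation-module classes $[\IQ[H/K]]$ for $K\le H$. Applying induction to $G$ and using $\IQ G\otimes_{\IQ H}\IQ[H/K]\cong\IQ[G/K]$, we arrive at an identity
\[
[P]=\sum_{i=1}^{m} c_i\,[\IQ[G/N_i]]\quad\text{in } K_0(\IQ G)
\]
with $c_i\in\IZ$ and finite subgroups $N_i\le G$.

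The last step is to lift this $K_0$-identity to an honest isomorphism of the form stated. Splitting the sum into its positive and negative parts and using that equality in $K_0$ means stable equivalence after adding copies of the free module $\IQ G$, one gets an isomorphism
\[
P\oplus\bigoplus_{c_i<0}\IQ[G/N_i]^{|c_i|}\oplus\IQ G^n\;\cong\;\bigoplus_{c_i>0}\IQ[G/N_i]^{c_i}\oplus\IQ G^n
\]
for some $n\ge 0$. The main obstacle is to recast this into the target form $P^k\oplus\IQ G^l\cong\bigoplus_j\IQ[G/H_j]$: one must absorb the negatively-indexed permutation summands on the left into $P^k\oplus\IQ G^l$, exploiting both the flexibility to pass to a multiple $k\ge 1$ and the fact that $\IQ G=\IQ[G/\{1\}]$ is itself a permutation module. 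This bookkeeping step, turning stable equivalence with auxiliary permutation summands into the clean form in the statement, is the technical heart of the argument and the point at which the precise shape of the conclusion is calibrated to what the $K_0$-computation can actually deliver.
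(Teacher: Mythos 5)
Your reduction via Theorem~\ref{the:colim_fin}\,(1) is the right first move, but the remaining two steps each contain a genuine gap, and the second one is decisive. In step~2, the claim that $R_{\IQ}(H)$ is \emph{integrally} generated by the permutation classes $[\IQ[H/K]]$ for an arbitrary finite $H$ is not what Artin induction delivers: Artin's theorem spans $R_{\IQ}(H)$ only after inverting $|H|$. Integral surjectivity of the linearization map $A(H)\to R_{\IQ}(H)$ is a theorem of Ritter and Segal that is special to $p$-groups and fails in general. The $k\ge 1$ in the conclusion is precisely the device that absorbs the Artin denominator, but your write-up asserts the integral statement outright and never explains where $k$ enters.

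Step~3 is the deeper gap, and it is not bookkeeping. Once $k[P]=\sum_i c_i[\IQ[G/N_i]]$ in $K_0(\IQ G)$ with some $c_i<0$, nothing in your argument absorbs the negatively-signed permutation summands into $P^k\oplus\IQ G^l$: replacing $k$ by a larger multiple does not change signs, and the identification $\IQ G=\IQ[G/\{1\}]$ only lets you add free summands to both sides simultaneously. Showing that a sign-controlled expression of the required shape exists is the actual content of the theorem; presenting it as ``calibration'' is circular. The published proof takes a genuinely different route: it cites Lemmas~4.3 and~4.4 of~\cite{Bartels-Lueck-Reich(2007appl)}, which reduce the full module-isomorphism statement --- not merely a $K_0$-identity --- to the case of a finite cyclic group, and there appeals to Segal~\cite{Segal(1972)}. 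Those lemmas carry the positivity and stabilization control that your sketch leaves out, so the proposal reconstructs the accessible half of the argument and omits the part that makes the theorem true.
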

\begin{proof}
Because of~\cite[Lemma~4.3 and Lemma~4.4]{Bartels-Lueck-Reich(2007appl)} 
it suffices to prove the claim
in the case, where $G$ is finite cyclic. This special case follows 
from \green{Segal}~\cite{Segal(1972)}.
\end{proof}

Next we introduce some notation.
$R$ be commutative ring and let $G$ be a group.
Let \red{$\class(G,R)$} be the $R$-module of \red{class functions} 
$G \to R$, i.e., functions
$G \to R$ which are constant on conjugacy classes.
Let $\tr_{RG} \colon RG \to \class(G,R)$ be the $R$-homomorphism
which sends $g \in G$ to the class function which takes the value one on the conjugacy 
class of $g$ and the value zero otherwise.
It extends to a map
$$\red{\tr_{RG} \colon M_n(RG) \to \class(G,R)}$$
by taking the sums of the values of the diagonal entries.

Let $P$ be a finitely generated $RG$-module.
Choose a finitely generated projective $RG$-module $Q$ and an isomorphism
$\phi \colon RG^n \xrightarrow{\cong} P \oplus Q$. 
Let $A \in M_n(RG)$ be a matrix
such that 
$\phi^{-1} \circ (\id_P \oplus 0) \circ \phi \colon RG^n \to RG^n$ is given by $A$.

\begin{definition}[\blue{Hattori-Stallings rank}]
\label{def:Hattor-Stallings_rank}
Define the \red{Hattori-Stallings rank} of $P$ to be the class function
$$\red{\HS_{RG}(P)} := \tr_{RG}(A).$$
\end{definition}

This definition is independent of the choice of $Q$ and $\phi$.
Let $G$ be a finite group and let $F$ be a field of
characteristic zero.  Then a finitely generated $RG$-module
$P$ is the same as a finite dimensional $G$-representation over
$F$  and the Hattori-Stallings rank can be identified with the
character of the $G$-representation (see~\eqref{relating_charac_and_HS-rank}).

\begin{conjecture}[\blue{Bass Conjecture}]
\label{con:Bass_Conjecture_for_integral_domains}
Let $R$ be a commutative integral domain and let $G$ be a group. 
Let $g \not= 1$ be an element in $G$. 
Suppose that either the order $|g|$ is
infinite or that the order $|g|$ is finite and not invertible in $R$.

Then the \emphred{Bass Conjecture} predicts that 
for every finitely generated projective $RG$-module $P$ the value of
its \emphred{Hattori-Stallings rank} $\HS_{RG}(P)$ at $(g)$ is trivial.
\end{conjecture}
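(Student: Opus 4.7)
The plan is to derive the Bass Conjecture from the Farrell-Jones Conjecture in $K$-theory with suitable coefficients, using the Permutation Modules Theorem already stated above as the main input. First I would reduce to the essential case $R = \IQ$. For a commutative integral domain $R$ of characteristic zero, any $g \in G$ satisfying the hypotheses must have infinite order (elements of finite order have order invertible in $R \supseteq \IZ \hookrightarrow F := \operatorname{Frac}(R) \supseteq \IQ$). A finitely generated projective $RG$-module $P$ gives rise by base change to a finitely generated projective $FG$-module $F \otimes_R P$, and inspection of Definition~\ref{def:Hattor-Stallings_rank} shows that $\HS_{RG}(P)$ and $\HS_{FG}(F \otimes_R P)$ agree as class functions $G \to F$. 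A further descent argument, using that $FG$-modules are defined over a finitely generated $\IQ$-subalgebra of $F$ and specializing, reduces the vanishing at $(g)$ to the corresponding statement over $\IQ G$, where we can invoke $G \in \calfj_K(\IQ)$.

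Assuming $G \in \calfj_K(\IQ)$, the Permutation Modules Theorem gives integers $k \ge 1$, $l \ge 0$ and finite subgroups $H_1,\ldots,H_r \subseteq G$ with
\[
P^k \oplus \IQ[G]^l \;\cong_{\IQ G}\; \IQ[G/H_1] \oplus \cdots \oplus \IQ[G/H_r].
\]
By additivity of the Hattori-Stallings rank, it then suffices to compute $\HS_{\IQ G}$ on each summand. For a finite subgroup $H$, the element $e_H := \frac{1}{|H|}\sum_{h \in H} h \in \IQ G$ is an idempotent with $\IQ G \cdot e_H \cong \IQ[G/H]$, so
\[
\HS_{\IQ G}(\IQ[G/H]) \;=\; \tr_{\IQ G}(e_H) \;=\; \frac{1}{|H|} \sum_{h \in H} \chi_{(h)},
\]
where $\chi_{(h)} \in \class(G,\IQ)$ is the characteristic function of the conjugacy class of $h$ in $G$. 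In particular this class function is supported on the set of conjugacy classes of elements of finite order dividing $|H|$; evaluated at any $g \in G$ of infinite order it vanishes. Similarly $\HS_{\IQ G}(\IQ G) = \chi_{(1)}$ vanishes at $g \neq 1$. Evaluating the additivity identity at an infinite-order element $g$ gives $k \cdot \HS_{\IQ G}(P)(g) = 0$ in $\IQ$, and since $k \ge 1$ we conclude $\HS_{\IQ G}(P)(g) = 0$, as required.

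The main obstacle is the positive-characteristic case: when $R$ has characteristic $p > 0$ and $p \mid |g|$, the element $g$ has finite order, the above change-of-rings reduction is unavailable, and the Permutation Modules Theorem is stated only for $\IQ G$. Here one would need either a characteristic-$p$ analogue of that theorem (plausibly from $G \in \calfj_K(F)$ for $F$ the prime field, combined with Theorem~\ref{the:colim_fin}(2) which yields $K_0(FG)[1/p] \cong \colim_{\OrGF{G}{\Fin}} K_0(FH)[1/p]$) or a more direct trace argument; in the latter, one computes the Hattori-Stallings rank of $F[G/H]$ for finite $H \subseteq G$ (for which $F[G/H]$ need not be projective unless $p \nmid |H|$), and tracks how the $p$-torsion in the Artin defect interacts with the support of $\HS$ at $(g)$ when $p \mid |g|$.
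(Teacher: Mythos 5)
You have misread the problem: the statement in question is a \emph{conjecture}. The paper does not prove it; it only records the conditional result of Farrell--Linnell that the Bass Conjecture for all integral domains $R$ would follow from rational surjectivity of $\colim_{\OrGF{G}{\Fin}} K_0(FH) \to K_0(FG)$ for all fields $F$ of \emph{prime} characteristic. So there is no proof in the paper for you to match; the fair comparison is between your conditional argument and that conditional theorem.

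Your reduction contains a genuine error. You claim that for a commutative integral domain $R$ of characteristic zero, any $g$ satisfying the hypotheses of the conjecture must have infinite order, ``because elements of finite order have order invertible in $R \supseteq \IZ \hookrightarrow F := \operatorname{Frac}(R) \supseteq \IQ$.'' But the conjecture asks for $|g|$ to fail to be invertible \emph{in $R$}, not in $\operatorname{Frac}(R)$. For $R=\IZ$ (which is characteristic zero) no integer $n\ge 2$ is a unit, so the conjecture's hypotheses are satisfied for \emph{every} $g\ne 1$ of finite order. This is the central, and historically most important, case (it is Swan's theorem when $G$ is finite), and your scheme of passing to $\IQ G$ via the fraction field cannot see it: once you base-change to $\IQ G$, the order $|g|$ becomes invertible, the conjecture over $\IQ G$ no longer predicts vanishing at $(g)$, and indeed the Hattori--Stallings rank of a nontrivial $\IQ G$-projective can be nonzero at a finite-order class. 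Base change preserves $\HS$, so you could conclude $\HS_{\IZ G}(P)(g) = 0$ only if you already knew $\HS_{\IQ G}(\IQ\otimes_{\IZ}P)(g) = 0$, which is precisely what is not available. The positive-characteristic discussion in your last paragraph does not repair this, because it mistakenly identifies ``$g$ has finite order'' with ``$R$ has positive characteristic.''

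What does survive of your argument is the infinite-order case over rings containing $\IQ$: there the use of the Permutation Modules theorem (conditional on $G\in\calfj_K(\IQ)$), the computation $\HS_{\IQ G}(\IQ[G/H]) = \tfrac{1}{|H|}\sum_{h\in H}\chi_{(h)}$, and additivity do give vanishing at infinite-order classes. But this is only a fragment. The Farrell--Linnell approach cited in the paper instead attacks the hard finite-order case by specializing to fields of prime characteristic $p$ for each $p\mid|g|$, using surjectivity of the induction maps $\colim K_0(FH)\otimes\IQ\to K_0(FG)\otimes\IQ$ there, and then assembling the $p$-by-$p$ information to deduce vanishing over an arbitrary integral domain in which $|g|$ is not a unit. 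If you want a conditional proof of the full conjecture (rather than only the $\IQ\subseteq R$, infinite-order piece), you need to carry out that specialization argument; the reduction to $\IQ$ alone cannot do it.
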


If $G$ is finite,  the Bass Conjecture~\ref{con:Bass_Conjecture_for_integral_domains} 
reduces to a theorem of \green{Swan (1960)}
(see~\cite[Theorem~8.1]{Swan(1960a)}, \cite[Corollary~4.2]{Bass(1979)}).

The next results follows from the argument in\cite[Section~5]{Farrell-Linnell(2003b)}.

\begin{theorem}[\green{Linnell-Farrell (2003)}] 
  Let $G$ be a group. Suppose that
  $$\colim_{\OrGF{G}{\Fin}} K_0(FH)  \otimes_{\IZ} \IQ \to K_0(FG) \otimes_{\IZ} \IQ $$
  is surjective for
  all fields $F$ of prime characteristic. (This is true if $G \in \calfj_K(F)$
  for every field $F$ of prime characteristic).

Then the Bass Conjecture is satisfied for every integral domain $R$.
\end{theorem}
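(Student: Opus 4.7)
The plan is to reduce the Bass Conjecture for an arbitrary integral domain $RG$ to the prime-characteristic situation provided by the hypothesis, and then to finite subgroups where Swan's theorem applies. Three ingredients are needed: (i) a reduction of the Bass Conjecture for a general integral domain $R$ to coefficient rings that are fields of prime characteristic; (ii) the hypothesized surjectivity of the induction map on $K_0 \otimes_{\IZ} \IQ$ from finite subgroups; and (iii) the finite-group case of the Bass Conjecture over $\IF_p$ due to Swan, applied via the induction formula for the Hattori-Stallings rank.

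First, I would carry out the reduction to prime characteristic. Given $g \in G$ satisfying the Bass-Conjecture hypothesis over $R$---either $|g| = \infty$, or $|g| < \infty$ and not invertible in $R$---I would select a prime $p$: any prime if $|g| = \infty$, and a prime $p \mid |g|$ that fails to be a unit in $R$ if $|g|$ is finite. Compatibility of the Hattori-Stallings rank under ring homomorphisms, applied to a sequence $RG \to (R/\mathfrak{p})G \to FG$ with $\mathfrak{p} \subset R$ a suitable prime ideal of residue characteristic $p$ and $F$ a field of characteristic $p$ containing $R/\mathfrak{p}$, shows that non-vanishing of $\HS_{RG}(P)(g) \in R$ would force non-vanishing of $\HS_{FG}(P')(g) \in F$ for some finitely generated projective $FG$-module $P'$ obtained by base change. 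This reduces the theorem to the following assertion: for every field $F$ of prime characteristic $p$, every finitely generated projective $FG$-module $P$, and every $g \in G$ which either has infinite order or has finite order divisible by $p$, we have $\HS_{FG}(P)(g) = 0$.

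Next, fixing such $F$, the hypothesized surjectivity
\[
\colim_{\OrGF{G}{\Fin}} K_0(FH) \otimes_{\IZ} \IQ \;\twoheadrightarrow\; K_0(FG) \otimes_{\IZ} \IQ
\]
combined with the fact that $\HS_{FG}(-)(g)$ is additive and hence factors through $K_0(FG) \otimes_{\IZ} \IQ$ reduces the task to verifying the vanishing for induced classes $[FG \otimes_{FH} Q]$ with $H \leq G$ finite and $Q$ a finitely generated projective $FH$-module. The standard induction formula
\[
\HS_{FG}\bigl(FG \otimes_{FH} Q\bigr)(g) \;=\; \sum_{(h)_H \,\subseteq\, (g)_G \cap H} \HS_{FH}(Q)(h)
\]
then reduces the question to a finite-group statement: each $h$ occurring in the sum satisfies $|h| = |g|$, so the sum is empty when $|g| = \infty$ and consists of $p$-singular elements of $H$ when $|g|$ is finite with $p \mid |g|$. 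In the latter case Swan's theorem for finite groups over $\IF_p$ forces each summand $\HS_{FH}(Q)(h)$ to vanish, and the total sum is zero.

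The main obstacle is the reduction in step (i): for an integral domain $R$ of characteristic zero and $g$ of infinite order, no canonical map $R \to \IF_p$ is at hand, and one must identify an appropriate prime ideal $\mathfrak{p} \subset R$ together with a careful analysis of idempotent matrices over $RG$ to confirm that any potential non-zero value of $\HS_{RG}(P)(g)$ survives in some reduction $\HS_{FG}(P')(g)$. This compatibility is essentially Linnell's contribution; once it is established, steps (ii) and (iii) above become essentially formal consequences of the hypothesis together with the classical Swan theorem for finite groups.
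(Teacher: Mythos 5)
Your overall strategy---reduce the Bass Conjecture over a general integral domain to fields of prime characteristic, then use the surjectivity hypothesis to reduce to finite subgroups, and finish with Swan's theorem via the induction formula for Hattori-Stallings ranks---is the right shape and matches the Farrell--Linnell approach. Step (iii) is fine. However, there is a genuine gap in step (ii), and your sketch of step (i) also does not hold up as stated.

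In step (ii) you assert that additivity implies $\HS_{FG}(-)(g)$ ``factors through $K_0(FG) \otimes_{\IZ} \IQ$''. This is false when $F$ has characteristic $p$. The Hattori--Stallings rank takes values in $F$, an $\IF_p$-vector space, not a $\IQ$-vector space; indeed $F \otimes_{\IZ} \IQ = 0$, so the would-be factorization would force $\HS$ to vanish identically, which it does not (e.g.\ $\HS_{FG}(FG)(e) = 1$). Additivity only gives a factorization through $K_0(FG)$ (and hence through $K_0(FG) \otimes_{\IZ} \IF_p$), and $\HS(-)(g)$ need not annihilate $p$-primary torsion in $K_0(FG)$. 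Concretely, the $\otimes\,\IQ$-surjectivity only yields, for a given finitely generated projective $FG$-module $P$, a relation $N\cdot[P] = \sum_i m_i \ind_{H_i}^G[Q_i]$ in $K_0(FG)$ for some integer $N>0$; applying $\HS(-)(g)$ together with the induction formula and Swan's theorem gives $N\cdot\HS_{FG}(P)(g) = 0$ in $F$, which is vacuous whenever $p \mid N$. To run your argument one needs surjectivity of $\colim K_0(FH)[1/p] \to K_0(FG)[1/p]$, i.e.\ after inverting only $p$, which is precisely the stronger statement the Bartels--L\"uck--Reich theorem provides when $G \in \calfj_K(F)$; alternatively one has to supplement the rational surjectivity by an argument (such as Linnell's Frobenius-conjugacy theorem) that controls the leftover $p$-primary contribution. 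As written, the deduction is not valid.

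The sketch of step (i) has a related gap. Knowing that $\HS_{RG}(P)(g) = r \neq 0$ in $R$ does not ensure that $r$ survives reduction to a residue field $R/\mathfrak{p}$---you may well have $r \in \mathfrak{p}$, in which case the characteristic-$p$ statement gives no contradiction. Choosing the prime appropriately (or arguing over sufficiently many primes with a specialization argument) is exactly where the nontrivial content of Linnell's reduction lies, and the proposal does not indicate how this obstacle is surmounted; acknowledging it as ``Linnell's contribution'' does not close the gap.
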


\begin{remark}[\blue{Geometric interpretation of the Bass Conjecture}]
The Bass Conjecture~\ref{con:Bass_Conjecture_for_integral_domains} can be interpreted
topologically. Namely, the Bass Conjecture~\ref{con:Bass_Conjecture_for_integral_domains}
is true for a finitely presented group $G$ in the case $R = \IZ$ if and only if every
homotopy idempotent selfmap of an oriented smooth closed manifold whose dimension is
greater than 2 and whose fundamental group is isomorphic to $G$ is homotopic to 
a selfmap which has precisely one fixed point 
(see \green{Berrick-Chatterji-Mislin}~\cite{Berrick-Chatterji-Mislin(2007)}). 

The Bass Conjecture~\ref{con:Bass_Conjecture_for_integral_domains} for $G$ 
in the case $R = \IZ$ (or $R = \IC$) also implies for a
finitely dominated $CW$-complex with fundamental group $G$ that its Euler characteristic
agrees with the $L^2$-Euler characteristic of its universal covering
(see \green{Eckmann}~\cite{Eckmann(1996b)}).
\end{remark}

Next we present another version of the Bass Conjecture.
Let $F$ be a field of characteristic zero. Fix an integer $m \ge 1$.
Let $F(\zeta_m) \supset F$ be the Galois extension given by adjoining
the primitive $m$-th root of unity $\zeta_m$ to $F$. Denote by $\Gamma
(m,F)$ the Galois group of this extension of fields, i.e., the group
of automorphisms $\sigma\colon F(\zeta_m) \to F(\zeta_m)$ which induce
the identity on $F$. It can be identified with a subgroup of $\IZ/m^*$
by sending $\sigma$ to the unique element $u({\sigma}) \in \IZ/m^*$
for which $\sigma(\zeta_m) = \zeta_m^{u(\sigma)}$ holds.  Let $g_1$
and $g_2$ be two elements of $G$ of finite order.  We call them
\emphred{$F$-conjugate} if for some (and hence all) positive integers $m$
with $g_1^m = g_2^m = 1$ there exists an element $\sigma$ in the
Galois group $\Gamma (m,F)$ with the property that $g_1^{u(\sigma)}$
and $g_2$ are conjugate. Two elements $g_1$ and $g_2$ are
$F$-conjugate for $F = \IQ$, $\IR$ or $\IC$ respectively if the cyclic
subgroups $\langle g_1 \rangle$ and $\langle g_2 \rangle$ are
conjugate, if $g_1$ and $g_2$ or $g_1$ and $g_2^{-1}$ are conjugate,
or if $g_1$ and $g_2$ are conjugate respectively.

Denote by \red{$\con_F(G)_f$} the set of $F$-conjugacy classes $(g)_F$ of
elements $g \in G$ of finite order.  Let \red{$\class_F(G)_f$} be the
$F$-vector space with the set $\con_F(G)_f$ as basis, or,
equivalently, the $F$-vector space of functions $\con_F(G)_f \to F$
with finite support. 

\begin{conjecture}[\blue{Bass Conjecture for fields of characteristic zero as coefficients}]
\label{con:Bass_Conjecture_for_FG}
Let $F$ be a field of characteristic zero and let $G$ be a group.  The
Hattori-Stallings  (see Definition~\ref{def:Hattor-Stallings_rank})
induces an isomorphism
$$\HS_{FG} \colon K_0(FG) \otimes_{\IZ} F \to \class_F(G)_f.$$
\end{conjecture}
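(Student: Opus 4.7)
\textit{Proof proposal.} The plan is to split the conjectured isomorphism into three separate claims about the map $\HS_{FG} \otimes_{\IZ} \id_F$: (a) it is well-defined, i.e.\ the image lies in $\class_F(G)_f$; (b) it is surjective; (c) it is injective.

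For (a), the $F$-Galois invariance is handled by descent. After extending scalars along $F \subseteq F(\zeta_m)$, every finitely generated projective $FG$-module $P$ is represented by an idempotent $A \in M_n(FG) \subseteq M_n(F(\zeta_m)G)$ which is fixed by the natural action of $\Gamma(m,F)$; since this Galois action preserves the trace and permutes the $g$- and $g^{u(\sigma)}$-coefficients, and since $g^{u(\sigma)}$ is $G$-conjugate to $g$ by the very definition of $F$-conjugacy, the values of $\HS_{FG}(P)$ at $F$-conjugate elements of finite order agree. The support condition, namely vanishing on infinite-order conjugacy classes, is precisely the classical Bass Conjecture~\ref{con:Bass_Conjecture_for_integral_domains}, which by the Linnell-Farrell theorem stated above reduces to surjectivity of $\colim_{\OrGF{G}{\Fin}} K_0(F'H)\otimes_{\IZ} \IQ \to K_0(F'G)\otimes_{\IZ} \IQ$ for every field $F'$ of prime characteristic, and this in turn is a consequence of Theorem~\ref{the:colim_fin}.

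For surjectivity (b), I would use induction from finite cyclic subgroups. Given an $F$-conjugacy class $(g)_F$ with $g$ of order $m$, the permutation module $F[G/\langle g\rangle]$ is finitely generated projective, and its Hattori-Stallings rank is a sum of characters induced from $\langle g \rangle$. Combined with the classical character-theoretic isomorphism $K_0(FC)\otimes_{\IZ} F \cong \class_F(C)_f$ for finite cyclic $C$, together with Artin's theorem applied cyclic-subgroup-by-cyclic-subgroup, these induced ranks span $\class_F(G)_f$ after tensoring with $F$.

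For injectivity (c), the plan is to refine (b) into a dual pairing between $K_0(FG)\otimes_{\IZ} F$ and a family of trace-detecting functionals parametrised by $F$-conjugacy classes of finite-order elements; the non-degeneracy of this pairing follows from part (a), Galois descent and the known finite-group case. The main obstacle is step (a): the vanishing of $\HS_{FG}(P)$ on infinite-order classes is the weak Bass conjecture, which is unconditionally established for very large classes of groups (for example, groups in $\calfj_K(F')$ for every field $F'$ of prime characteristic, hyperbolic groups, linear groups, appropriately sofic groups) but remains open in general. Any fully unconditional proof of the isomorphism therefore hinges on new input for the weak Bass conjecture beyond the reductions sketched here.
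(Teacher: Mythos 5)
You are trying to prove something the paper does not prove and cannot prove: Conjecture~\ref{con:Bass_Conjecture_for_FG} is stated in the paper as an open \emph{conjecture} for arbitrary groups $G$. The only argument the paper gives in its vicinity is Lemma~\ref{lem:HS_iso_for_finite_G}, which settles the case of \emph{finite} $G$ by identifying $K_0(FG)$ with the representation ring and using the character formula~\eqref{relating_charac_and_HS-rank}; everything beyond that is left conditional (e.g.\ on the Farrell-Jones Conjecture, as in Theorem~\ref{the:colim_fin} and the Linnell-Farrell result). So a complete blind proof of the statement was never on the table, and your own closing sentence concedes this.

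Beyond that, the individual steps of your sketch do not hold up unconditionally. In (a) you assert that the vanishing of $\HS_{FG}(P)$ on infinite-order classes ``is a consequence of Theorem~\ref{the:colim_fin}''; it is not, because that theorem has the hypothesis $G \in \calfj_K(R)$ (resp.\ $G \in \calfj_K(F)$), i.e.\ it is itself conditional on the Farrell-Jones Conjecture, so your reduction only works for groups already known to satisfy it --- which contradicts the unconditional tone of the earlier part of the paragraph. In (b), Artin's theorem and the finite-group computation only tell you what happens for finite subgroups; to conclude that classes induced from finite cyclic subgroups span $K_0(FG)\otimes_{\IZ} F$, or even just hit all of $\class_F(G)_f$, you would again need a statement of the type ``$K_0(FG)\otimes_{\IZ}\IQ$ is generated by induction from finite subgroups'', which is exactly the kind of assertion the Farrell-Jones Conjecture is needed for (cf.\ the Remark on Assembly and Theorem~\ref{the:colim_fin}). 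In (c) the ``non-degenerate pairing'' is not constructed at all; injectivity of $\HS_{FG}$ for infinite $G$ does not follow from Galois descent plus the finite case, and it is precisely the hard content of the conjecture. In short: the statement is open, the paper proves only the finite-group lemma, and your sketch is a set of reductions that are either conditional on Farrell-Jones-type input or unsubstantiated, not a proof.
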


\begin{lemma} \label{lem:HS_iso_for_finite_G}
  Suppose that $F$ is a field of characteristic zero and $G$ is a
  finite group.  Then Conjecture~\ref{con:Bass_Conjecture_for_FG} is true.
\end{lemma}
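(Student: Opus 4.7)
The plan is to reduce this to classical character theory of finite groups. First I would apply Maschke's theorem: since $G$ is finite and $F$ has characteristic zero, the group ring $FG$ is semisimple, so every finitely generated $FG$-module is projective, and $K_0(FG)$ is the representation ring, free abelian on the classes of the finitely many irreducible $FG$-modules $V_1,\dots,V_r$.

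The key step would be to compare the Hattori--Stallings rank with the $F$-character map $[V]\mapsto\chi_V$, where $\chi_V(g)=\tr_F(g\mid V)$. I would first verify that $\chi_V$ is constant on $F$-conjugacy classes, so that this character map lands in $\class_F(G)_f$: if $g_1\sim_F g_2$ is witnessed by $\sigma\in\Gamma(m,F)$ with $g_1^{u(\sigma)}\sim g_2$, then $\sigma(\chi_V(g_1))=\chi_V(g_1^{u(\sigma)})=\chi_V(g_2)$ (the first equality uses that the eigenvalues of $g_1$ on $V$ are $m$-th roots of unity, which $\sigma$ raises to the $u(\sigma)$-th power), while $\chi_V(g_1)\in F$ is fixed by $\sigma$.

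Then a direct trace computation --- writing a projective $P=FG^n\cdot e$ for an idempotent $e=(e_{ij})\in M_n(FG)$ and evaluating $\chi_P(g)=\tr_F(ge\mid FG^n)$ in the $F$-basis $\{e_i\cdot h\mid 1\le i\le n,\ h\in G\}$ of $FG^n$ --- would yield the identity
\[
\HS_{FG}(P)(g)\;=\;|C_G(g)|^{-1}\cdot \chi_P(g^{-1}).
\]
Since $g\mapsto g^{-1}$ and $g\mapsto|C_G(g)|$ are invariant under $F$-conjugacy and $|C_G(g)|$ is a unit of $F$, this identity shows both that $\HS_{FG}(P)\in\class_F(G)_f$ and that $\HS_{FG}$ agrees with the character map up to post-composition with an $F$-linear automorphism of $\class_F(G)_f$.

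To finish I would invoke Berman's theorem, which gives $\dim_F\class_F(G)_f=|\con_F(G)|=r$, together with the classical $F$-linear independence of the characters $\chi_{V_1},\dots,\chi_{V_r}$ in $\class(G,F)$. These are $r$ linearly independent vectors in an $r$-dimensional space, hence form an $F$-basis, so the character map is an isomorphism, and therefore so is $\HS_{FG}$. The only non-routine step is the trace computation producing the precise relation between $\HS_{FG}$ and $\chi$; the remainder is a direct appeal to Maschke, Berman, and the independence of characters.
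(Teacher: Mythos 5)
Your proof is correct and follows essentially the same route as the paper's: identify $K_0(FG)$ with the representation ring, relate the Hattori--Stallings rank to the ordinary character via a trace formula of the shape $\chi_P(g) = |C_G(g)| \cdot \HS_{FG}(P)(g^{-1})$ (the paper states it without the inverse, presumably a convention or a typo, but the argument is unaffected), and then appeal to classical representation theory for the dimension count and linear independence of characters. The paper compresses the last step into a single citation to Serre, whereas you spell out the constancy on $F$-conjugacy classes and the Berman--Witt count; this is a more detailed version of the same argument, not a different one.
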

\begin{proof}
  Since $G$ is finite, an $FG$-module is a finitely generated
  projective $FG$-module if and only if it is a (finite-dimensional)
  $G$-representation with coefficients in $F$ and $K_0(FG)$ is the
  same as the representation ring $R_F(G)$.  The Hattori-Stallings
  rank $\HS_{FH}(V)$ and the character $\chi_V$ of a
  $G$-representation $V$ with coefficients in $F$ are related by the
  formula
  \begin{eqnarray}
  \chi_V(g) & = &|Z_G\langle g \rangle| \cdot \HS_{FG}(V)(g)
  \label{relating_charac_and_HS-rank}
  \end{eqnarray}
  for $g \in G$, where $Z_G\langle g \rangle$ is the centralizer of $g$ in
  $G$.  Hence Lemma~\ref{lem:HS_iso_for_finite_G} follows from
  representation theory, see for instance \cite[Corollary~1 on page~96]{Serre(1997)}.
\end{proof}

Here is a conjecture related to the Bass Conjecture
\begin{conjecture}
\label{con:Bass_change_of_rings}
Let $R$ be an integral domain with quotient field $F$.
Suppose that no prime divisor of the order of a finite subgroup of
$G$ is a unit in $R$.
Then the change of rings homomorphism 
$$K_0(RG) \otimes_{\IZ} \IQ \to K_0(FG) \otimes_{\IZ} \IQ$$
factorizes as
$$K_0(RG) \otimes_{\IZ} \IQ \to K_0(R) \otimes_{\IZ} \IQ 
\to K_0(F) \otimes_{\IZ} \IQ \to K_0(FG) \otimes_{\IZ} \IQ.$$
\end{conjecture}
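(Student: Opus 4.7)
The plan is to deduce the factorization from the Hattori--Stallings rank together with (a version of) the Bass Conjecture. The key observation is that the hypothesis on $R$ and $G$ is exactly what is needed so that Conjecture~\ref{con:Bass_Conjecture_for_integral_domains} applies at every non-identity element $g \in G$: if $|g| = \infty$, the Bass Conjecture gives $\HS_{RG}(P)(g) = 0$ directly; if $|g| < \infty$, then the order of $\langle g \rangle$ has only prime divisors of orders of finite subgroups of $G$, none of which is a unit in $R$ by assumption, so the Bass Conjecture again yields $\HS_{RG}(P)(g) = 0$. Hence, assuming the Bass Conjecture for $RG$, every finitely generated projective $RG$-module $P$ has Hattori--Stallings rank of the form $\HS_{RG}(P) = r(P) \cdot \chi_{(e)}$ for some $r(P) \in R$, where $\chi_{(e)}$ is the indicator of the identity conjugacy class.

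Next I would exploit the naturality of the Hattori--Stallings rank under extension of scalars. Picking an idempotent matrix $A \in M_n(RG)$ representing $P$, the same matrix $A \in M_n(FG)$ represents $FG \otimes_{RG} P$, and $\tr_{FG}(A)$ is obtained from $\tr_{RG}(A)$ by the inclusion $R \hookrightarrow F$. Consequently $\HS_{FG}(FG \otimes_{RG} P) = r(P) \cdot \chi_{(e)}$ in $\class(G,F)$, which coincides with the Hattori--Stallings rank of $r(P) \cdot [FG]$. To conclude that $[FG \otimes_{RG} P] = r(P) \cdot [FG]$ in $K_0(FG) \otimes_\IZ \IQ$, I would invoke Conjecture~\ref{con:Bass_Conjecture_for_FG}, or at least its weaker consequence that a class in $K_0(FG) \otimes_\IZ \IQ$ whose Hattori--Stallings rank is supported at $(e)$ is rationally a multiple of $[FG]$. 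Combined with Lemma~\ref{lem:HS_iso_for_finite_G} for the finite-subgroup contributions, this should give the required detection.

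Finally, the desired factorization is constructed as follows. The assignment $[P] \mapsto r(P)$ is additive on short exact sequences of finitely generated projective $RG$-modules because $\HS_{RG}$ is, so it defines a homomorphism $\rho \colon K_0(RG) \to R$. Precomposing with $R \to K_0(R), \, r \mapsto r \cdot [R]$, I obtain $\bar\rho \colon K_0(RG) \otimes_\IZ \IQ \to K_0(R) \otimes_\IZ \IQ$; tracing it through $K_0(R) \to K_0(F) \to K_0(FG)$ sends $[P]$ to $r(P) \cdot [FG]$, which by the previous step equals the image of $[P]$ under the change of rings map, establishing the factorization.

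The hardest step will be the rational detection of classes in $K_0(FG) \otimes_\IZ \IQ$ by their Hattori--Stallings rank, since this is essentially the content of the Bass Conjecture for $FG$ and is itself known only for restricted classes of groups (such as those satisfying appropriate Farrell--Jones or surjectivity-from-finite-subgroups statements as in Theorem~\ref{the:colim_fin}). A secondary obstacle is the passage from the scalar $r(P) \in R$ to a class in $K_0(R) \otimes_\IZ \IQ$ when $K_0(R)$ is larger than $\IZ$; here one must check that $\rho$ lifts naturally through the rank map $K_0(R) \to K_0(F) \to \IZ$, which is straightforward when $R$ is regular but requires extra care otherwise.
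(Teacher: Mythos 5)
Note first that the paper does not prove Conjecture~\ref{con:Bass_change_of_rings}: it is stated as a conjecture, and the theorem immediately following only records (citing Bartels--L\"uck--Reich, Theorem~0.10) that it holds whenever $G \in \calfj_K(R)$, with the proof deferred to that reference. Your proposal is therefore a different conditional derivation, from Conjectures~\ref{con:Bass_Conjecture_for_integral_domains} and~\ref{con:Bass_Conjecture_for_FG}; the cited argument in the literature instead uses the rational surjectivity of $\colim_{\OrGF{G}{\Fin}} K_0(RH) \to K_0(RG)$ as in Theorem~\ref{the:colim_fin} together with Swan's theorem for finite groups, bypassing Hattori--Stallings detection entirely. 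Both routes ultimately rest on Farrell--Jones-type input, so the logical strength is comparable, but yours isolates more clearly what the Hattori--Stallings rank is doing.

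In characteristic zero your argument is essentially correct, and the worry you flag about $r(P)$ is in fact self-correcting: if $\HS_{FG}$ is injective on $K_0(FG)\otimes_\IZ F$ and $[FG\otimes_{RG}P] = r(P)\cdot[FG]$ holds there, then $r(P)\in\IQ$ follows automatically because both classes lie in the image of the $\IQ$-subspace $K_0(FG)\otimes_\IZ\IQ$ and $[FG]\neq 0$. You should make this explicit, since it is exactly what legitimizes the map $q\mapsto q\cdot[R]$ from $\IQ$ to $K_0(R)\otimes_\IZ\IQ$ --- $K_0(R)$ is only a $\IZ$-module, so ``$r\cdot[R]$'' would not make sense for an arbitrary $r\in R$. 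The genuine gap is the positive-characteristic case: Conjecture~\ref{con:Bass_Conjecture_for_FG} is stated only for fields of characteristic zero, whereas the hypothesis of Conjecture~\ref{con:Bass_change_of_rings} also allows $\mathrm{char}(R)=p$. That case is nonvacuous (the hypothesis then forces every finite subgroup of $G$ to be a $p$-group), and your Step~3 has nothing to invoke there. You would either need a characteristic-$p$ analogue of the Hattori--Stallings detection isomorphism (compare item~(2) of Theorem~\ref{the:colim_fin}, which only handles characteristic $p$ after inverting $p$), or you would have to restrict the claim to $\mathrm{char}(R)=0$.
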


\begin{theorem}[\green{Bartels-L\"uck-Reich (2007)}]
Let $R$ be an integral domain with quotient field $F$.
Suppose that no prime divisor of the order of a finite subgroup of
$G$ is a unit in $R$. Suppose that $G$ belongs to $\calf_K(R)$.

Then Conjecture~\ref{con:Bass_change_of_rings} is true for $G$ and $R$.
\end{theorem}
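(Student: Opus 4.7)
The plan is to reduce Conjecture~\ref{con:Bass_change_of_rings} via the Farrell-Jones Conjecture to the corresponding statement for the finite subgroups of $G$, where it follows from a generalization of Swan's classical theorem. First I would treat the case of a finite subgroup $H \leq G$. Under the hypothesis that no prime divisor of $|H|$ is a unit in $R$, Swan's theorem (suitably generalized) asserts that the Hattori-Stallings rank $\HS_{RH}(P)$ of any finitely generated projective $RH$-module $P$ is concentrated at the conjugacy class of the identity. Reformulated in $K_0$-language, this means the image of $K_0(RH) \otimes_{\IZ} \IQ \to K_0(FH) \otimes_{\IZ} \IQ$ lies in the one-dimensional $\IQ$-subspace generated by the class $[FH]$ of the regular representation, which coincides with the image of the induction map $K_0(F) \otimes_{\IZ} \IQ \to K_0(FH) \otimes_{\IZ} \IQ$. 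Since the change of rings $K_0(R) \otimes_{\IZ} \IQ \to K_0(F) \otimes_{\IZ} \IQ$ is surjective (as $[R] \mapsto [F]$ and $K_0(F)=\IZ\cdot[F]$), Conjecture~\ref{con:Bass_change_of_rings} holds for $H$ with factoring map $[P] \mapsto \frac{\operatorname{rk}_R P}{|H|} \cdot [R]$.

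Next I would check that this assignment is natural with respect to induction among finite subgroups: if $H' \subseteq H$ and $P'$ is a finitely generated projective $RH'$-module, then $\operatorname{rk}_R(RH \otimes_{RH'} P') = [H:H'] \cdot \operatorname{rk}_R(P')$, so the factoring maps for $H$ and $H'$ are compatible. Hence the collection of factorizations assembles into a natural transformation of $\Or G$-functors on the full subcategory of finite-isotropy orbits, inducing a factorization on equivariant homology
\begin{equation*}
H_0^G(\eub{G}; \bfK_R) \otimes_{\IZ} \IQ \longrightarrow K_0(R) \otimes_{\IZ} \IQ \longrightarrow K_0(F) \otimes_{\IZ} \IQ \longrightarrow K_0(FG) \otimes_{\IZ} \IQ
\end{equation*}
whose total composite agrees with the restriction of the change-of-rings map along $\bfK_R\to\bfK_F$.

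Finally, I would invoke $G \in \calfj_K(R)$ to identify $K_0(RG) \otimes_{\IZ} \IQ$ with $H_0^G(\EGF{G}{\VCyc}; \bfK_R) \otimes_{\IZ} \IQ$, reduce the family from $\VCyc$ to $\VCyc_I$ via Theorem~\ref{the:passage_from_VCcy_I_to_VCyc}, and then use the splitting of Theorem~\ref{the:Bartels} to view $H_0^G(\eub{G}; \bfK_R) \otimes_{\IZ} \IQ$ as a direct summand. The main obstacle is the complementary summand, which is controlled by virtually cyclic subgroups of type I, namely $V = H \rtimes \IZ$ with $H$ finite. For each such $V$, the Bass-Heller-Swan decomposition of $K_0(RV)$ contributes extra $K_{-1}(RH)$- and Nil-summands beyond the $K_0(RH)$-piece; under the change of rings to $F$ these contributions either vanish (whenever $FH$ is regular) or reduce by the same Hattori-Stallings analysis to the finite-subgroup case, so the composite into $K_0(FG) \otimes_{\IZ} \IQ$ only sees the $K_0(RH)$-contribution to which the first two steps already apply. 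Combining these steps yields the factorization through $K_0(R) \otimes_{\IZ} \IQ$ on all of $K_0(RG) \otimes_{\IZ} \IQ$, proving the theorem.
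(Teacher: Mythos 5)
The paper itself only cites \cite[Theorem~0.10]{Bartels-Lueck-Reich(2007appl)} rather than giving a proof, so there is no in-text argument to compare against; I can only evaluate your proposal on its own terms. Your first two steps (Bass's generalization of Swan's theorem \cite[Corollary~4.2]{Bass(1979)} on finite subgroups, compatibility with induction, assembling over the orbit category of finite isotropy) are a reasonable skeleton, with one caveat: you should separate the cases $\operatorname{char} F = 0$ and $\operatorname{char} F = p > 0$. In characteristic zero the Hattori--Stallings argument works as you say, because $\HS_{FH}$ is injective (Lemma~\ref{lem:HS_iso_for_finite_G}) and $FH$ is semisimple, so concentration of the rank at $(1)$ really does place the image of $K_0(RH)\otimes\IQ$ in $\IQ\cdot[FH]$. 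In characteristic $p$, the hypothesis forces every finite $H\le G$ to be a $p$-group (any prime $q\ne p$ is a unit in $R$ since $\IF_p^\times\subseteq R^\times$), so $FH$ is a local Artinian ring and $K_0(FH)=\IZ\cdot[FH]$; the finite-subgroup step is then trivial but for a different reason than Swan.

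The genuine gap is in your treatment of the complementary Nil-summand. The clause ``or reduce by the same Hattori--Stallings analysis to the finite-subgroup case'' does not name an argument: by construction the Nil-classes in $K_0(FV)$ for $V=H\rtimes\IZ$ are precisely those \emph{not} coming from $K_0(FH)$, so the finite-subgroup analysis says nothing about their image in $K_0(FG)\otimes\IQ$, and ``whenever $FH$ is regular'' silently assumes $\operatorname{char}F=0$ --- in characteristic $p$ the group algebra $FH$ of a nontrivial $p$-group is never regular, so the escape hatch fails exactly when it is needed. The correct point is different in each characteristic: if $\operatorname{char}F=0$ then $FH$ is semisimple, hence regular, so all (twisted) Nil-groups $\NK_*(FH,\phi)$ vanish and the complementary summand dies under $R\to F$; if $\operatorname{char}F=p$ then $RH$ is a $\IZ/p$-algebra and the twisted Nil-groups $\NK_*(RH,\phi)$ are $p$-primary torsion (Weibel), so the whole complementary summand $H_0^G(\EGF{G}{\VCyc},\eub{G};\bfK_R)\otimes\IQ$ is already zero before changing rings. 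Also note that $K_{-1}(RH)$ lives in the $\eub{G}$-summand rather than the complement, although it too dies under $R\to F$ because $K_{-n}(FH)=0$ for $n\ge 1$ ($FH$ is a finite-dimensional, hence Artinian, $F$-algebra). With these corrections your outline closes, but as written the Nil-step is not a proof.
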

\begin{proof}
See~\cite[Theorem~0.10]{Bartels-Lueck-Reich(2007appl)}.
\end{proof}

More information and further references 
about the Bass Conjecture can be found for instance in
\cite{Bass(1976)},
\cite[Section 7]{Berrick-Chatterji-Mislin(2004)},
\cite{Burger-Valette(1998)},
\cite{Eckmann(1986)},
\cite{Eckmann(1996b)},
\cite{Farrell-Linnell(2003b)},
\cite{Linnell(1983b)} 
\cite[Subsection 9.5.2]{Lueck(2002)}, 
and 
\cite[page 66ff]{Mislin-Valette(2003)}.

\begin{conjecture}[\blue{Vanishing of Bass-Nil-groups}]
\label{con:Vanishing_of_Nil_for_regular_rings_R}
Let $R$ be a regular ring with $\IQ \subseteq R$. 
Then we get
for all groups $G$ and all $n \in \IZ$ that
$$\NK_n(RG) = 0.$$
\end{conjecture}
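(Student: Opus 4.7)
The plan is to deduce the vanishing of $\NK_n(RG)$ from the $K$-theoretic Farrell-Jones Conjecture~\ref{con:FJC_for_K} applied to the group $G \times \IZ$ with coefficients in $R$, by computing $K_n(R[G \times \IZ])$ in two ways and comparing. On the algebraic side, the Bass-Heller-Swan decomposition furnishes
\begin{eqnarray*}
K_n(R[G \times \IZ]) & \cong & K_n(RG) \oplus K_{n-1}(RG) \oplus \NK_n(RG) \oplus \NK_n(RG),
\end{eqnarray*}
so it suffices to exhibit a second identification of the same group with only $K_n(RG) \oplus K_{n-1}(RG)$ and force the two Nil-summands to vanish.

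For the second identification I would assume Conjecture~\ref{con:FJC_for_K} for both $G$ and $G \times \IZ$ with coefficients in the regular ring $R$ with $\IQ \subseteq R$. The Cappell-Grunewald-Waldhausen theorem cited above, which requires exactly this hypothesis on $R$, lets one replace the family $\VCyc$ by $\Fin$, giving
$K_n(RG) \cong H_n^G(\eub{G};\bfK_R)$ and $K_n(R[G\times\IZ]) \cong H_n^{G \times \IZ}(\eub{G \times \IZ};\bfK_R)$. I would then take the model $\eub{G \times \IZ} = \eub{G} \times \IR$ in which $G$ acts on the first factor and $\IZ$ translates freely on the second (its isotropy groups are finite and the fixed-point sets $(\eub{G} \times \IR)^H$ are contractible for every finite $H \subseteq G \times \IZ$). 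Applying the induction structure to the projection $G \times \IZ \to G$, whose kernel $\IZ$ acts freely on $\eub{G} \times \IR$, yields
\begin{eqnarray*}
H_n^{G \times \IZ}(\eub{G} \times \IR;\bfK_R) & \cong & H_n^G(\eub{G} \times S^1;\bfK_R).
\end{eqnarray*}

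Finally, a Mayer-Vietoris / Wang-type splitting for the trivial $G$-action on $S^1$ (using $S^1 = D^1_+ \cup D^1_-$ with $D^1_+ \cap D^1_- = S^0$, together with the retraction $S^1 \to \pt$ which splits the resulting short exact sequence) gives
\begin{eqnarray*}
H_n^G(\eub{G}\times S^1;\bfK_R) & \cong & H_n^G(\eub{G};\bfK_R) \oplus H_{n-1}^G(\eub{G};\bfK_R) \;\cong\; K_n(RG) \oplus K_{n-1}(RG).
\end{eqnarray*}
Comparison with the Bass-Heller-Swan decomposition then forces $\NK_n(RG) \oplus \NK_n(RG) = 0$, whence $\NK_n(RG) = 0$.

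The main obstacle is the hypothesis itself: this strategy establishes the conjecture only conditionally, for groups $G$ for which the Farrell-Jones Conjecture is known for $G \times \IZ$ with coefficients in $R$, so the real difficulty is shifted into proving that case of Farrell-Jones. A secondary technical point is naturality: one has to verify that the induction isomorphism, the $S^1$-splitting and the Bass-Heller-Swan decomposition are compatible enough that the two computations of $K_n(R[G \times \IZ])$ match as direct summands, rather than differing by an automorphism which would obscure the cancellation.
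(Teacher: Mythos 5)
This statement is a \emph{conjecture} in the paper, not a theorem; the paper offers no proof and only refers the reader to~\cite[Section~6.3]{Bartels-Lueck-Reich(2007appl)} for ``the relation of this conjecture to the Farrell-Jones Conjecture.'' So there is no proof for your argument to match, and what you have written cannot be a proof: it derives the statement from the $K$-theoretic Farrell-Jones Conjecture with coefficients in $R$ for $G\times\IZ$ (using, via the Cappell--Grunewald--Waldhausen theorem, the passage from $\VCyc$ to $\Fin$ made possible by $R$ regular with $\IQ\subseteq R$), which is itself open for general $G$. You say this yourself in the last paragraph, so the core issue is not a hidden error but a mislabelling: you have reconstructed the logical \emph{relation} between the two conjectures, not resolved either of them.

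Within the conditional argument there is a genuine loose end that is worth addressing precisely because you flagged it as ``secondary.'' From $K_n(RG)\oplus K_{n-1}(RG)\oplus\NK_n(RG)^{\oplus 2}\cong K_n(RG)\oplus K_{n-1}(RG)$ one cannot in general conclude $\NK_n(RG)=0$; an abstract isomorphism does not cancel summands. What rescues the argument is that the map
$H_n^G(\eub{G}\times S^1;\bfK_R)\to K_n(R[G\times\IZ])$ obtained from your induction isomorphism composed with the Farrell-Jones assembly map for $G\times\IZ$ is, up to the identifications you set up, exactly the Bass--Heller--Swan inclusion $K_n(RG)\oplus K_{n-1}(RG)\hookrightarrow K_n(R[G\times\IZ])$ onto the non-Nil summands. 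This is the same compatibility the paper uses implicitly in the discussion around the assembly map for $\IZ$ and the family $\Fin$ (Example~\ref{exa:K-FJC_for_Z_and_Fin}), and it should be stated and justified, not deferred. Once that identification is in place, bijectivity of the assembly map for $G\times\IZ$ forces the Nil summands to vanish, and your conditional implication is correct. In summary: correct reduction to Farrell-Jones, one naturality step to be spelled out, but not a proof of the conjecture as stated.
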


The relation of this conjecture to the Farrell-Jones Conjecture
is discussed in~\cite[Section~6.3]{Bartels-Lueck-Reich(2007appl)}. 

Next we discuss some connections of the Farrell-Jones Conjecture to
$L^2$-invariants. For more information and some explanations 
about $L^2$-invariants we refer for instance 
to \green{L\"uck}~\cite{Lueck(2002)}.

The \red{$L^2$-torsion} of a closed Riemannian manifold $M$ is 
defined in terms of the heat kernel on the universal covering.
If $M$ is hyperbolic and has odd dimension, its
$L^2$-torsion is up to a (non-vanishing) dimension constant
its volume (see~\cite{Hess-Schick(1998)}).

\begin{conjecture}[\blue{Homotopy invariance of $L^2$-torsion}]
Let $X$ and $Y$ be det-$L^2$-acyclic finite $G$-$CW$-complexes, 
which are $G$-homotopy equivalent.

Then their \emph{$L^2$-torsion} agree:
$$\rho^{(2)}(X;\caln(G)) = \rho^{(2)}(Y;\caln(G)).$$
\end{conjecture}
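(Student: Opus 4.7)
The plan is to reduce the conjecture to the vanishing of a single homomorphism $\Phi_G \colon \Wh(G) \to \IR$ defined via the Fuglede--Kadison determinant, and then to deduce this vanishing from the $K$-theoretic Farrell--Jones Conjecture applied to $G$.

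First I would introduce the Fuglede--Kadison determinant $\det_{\caln(G)}$ on invertible elements of $M_n(\caln(G))$. For $A \in GL(\IZ G)$, right multiplication by $A$ gives an invertible operator on $\ell^2(G)^n$, and the assignment $[A] \mapsto \log\det_{\caln(G)}(r_A)$ descends to a homomorphism $\Phi_G \colon \Wh(G) \to \IR$. Next I would establish the transformation formula
$$\rho^{(2)}(Y;\caln(G)) - \rho^{(2)}(X;\caln(G)) = \Phi_G\bigl(\tau(f)\bigr)$$
for any cellular $G$-homotopy equivalence $f \colon X \to Y$ between det-$L^2$-acyclic finite $G$-$CW$-complexes, where $\tau(f) \in \Wh(G)$ is the Whitehead torsion. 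This is the $L^2$-analogue of the classical formula governing the dependence of Reidemeister torsion on a cellular trivialization and is obtained by comparing the cellular $\IZ G$-chain complexes of $X$ and $Y$ via the mapping cylinder, with Fuglede--Kadison determinants in place of ordinary ones. With this formula in hand, the conjecture is equivalent to $\Phi_G \equiv 0$.

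To verify $\Phi_G \equiv 0$ I would invoke the $K$-theoretic Farrell--Jones Conjecture. Combining the bijectivity of the assembly map $H_1^G(\EGF{G}{\VCyc};\bfK_{\IZ}) \to K_1(\IZ G)$ with the decomposition of the source via induction from virtually cyclic subgroups, one sees that $\Wh(G)$ is rationally generated by the images of $\ind_V^G \colon \Wh(V) \to \Wh(G)$ as $V$ ranges over $\VCyc$. Since Hilbert-space induction preserves the Fuglede--Kadison determinant, one has $\Phi_G \circ \ind_V^G = \Phi_V$, and the problem reduces to showing $\Phi_V \equiv 0$ for every virtually cyclic $V$. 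For a finite subgroup $F$ this follows because the regular representation embeds $\IZ F$ into integer matrices, so for any $A \in GL_n(\IZ F)$ the matrices $\rho(A)$ and $\rho(A^{-1})$ both have integer entries, forcing $\det_{\IC}\rho(A) = \pm 1$ and therefore $\Phi_F([A]) = 0$. For infinite virtually cyclic $V$ one combines this with a Bass--Heller--Swan argument together with a direct computation showing that the Nil-summands contribute trivially to $\Phi_V$, as the corresponding operators on $\ell^2(V)^n$ are finite geometric-series perturbations of the identity.

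The main obstacle will be controlling $\Phi$ on the Bass--Nil contributions in the infinite virtually cyclic case rigorously. This is essentially the content of the \emph{Determinant Conjecture}, asserting that $\det_{\caln(G)}(A) \ge 1$ for integer matrices $A$; it is known unconditionally for sofic groups but open in general. Consequently the cleanest unconditional output along these lines will be a conditional statement: the Farrell--Jones Conjecture for $G$ together with the Determinant Conjecture for the virtually cyclic subgroups of $G$ implies homotopy invariance of $L^2$-torsion for det-$L^2$-acyclic finite $G$-$CW$-complexes.
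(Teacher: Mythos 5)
Two preliminary observations. First, the displayed statement is a \emph{Conjecture}, which the paper does not prove; what you have sketched is a proof of the \emph{theorem} immediately following it (``Suppose that $G \in \calfj_K(\IZ)$. Then $G$ satisfies the Conjecture above''), for which the paper gives no argument but only the citation \cite[Theorem~0.14]{Bartels-Lueck-Reich(2007appl)}. So there is no in-paper proof to compare against, but your outline matches the argument in that reference and in \cite[Chapter~13]{Lueck(2002)}: reduce to vanishing of the Fuglede--Kadison homomorphism $\Phi_G\colon \Wh(G)\to\IR$ via the anomaly formula $\rho^{(2)}(Y)-\rho^{(2)}(X)=\Phi_G(\tau(f))$, use the assembly isomorphism to push the vanishing problem down to virtually cyclic subgroups (compatibility $\Phi_G\circ\ind_V^G=\Phi_V$ holds because $\ell^2(G)$ is a free Hilbert $\caln(V)$-module of the correct $\caln(V)$-dimension), and then settle the local vanishing.

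Second, your closing caveat is unnecessary and weakens the statement needlessly. For a virtually cyclic group $V$ you do not need the Determinant Conjecture as an extra hypothesis: every virtually cyclic group is amenable, hence sofic, and the Determinant Conjecture is an unconditional theorem for this class (see for instance \cite[Chapter~13]{Lueck(2002)} and \cite{Elek-Szabo(2006)}); in particular it covers all of $\VCyc$, including the Nil-contributions coming from Bass--Heller--Swan. So $\Phi_V\equiv 0$ for all $V\in\VCyc$ is already known, and the conclusion of your argument is exactly the unconditional implication $G\in\calfj_K(\IZ)\Rightarrow$ homotopy invariance of $L^2$-torsion, with no residual hypothesis. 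The one step you should tighten is the phrase ``$\Wh(G)$ is rationally generated by the images of $\ind_V^G$'': the assembly isomorphism $H_1^G(\EGF{G}{\VCyc};\bfK_\IZ)\xrightarrow{\cong} K_1(\IZ G)$ does \emph{not} literally decompose $\Wh(G)$ as a sum of induced pieces; the source has a nontrivial skeletal filtration (or Atiyah--Hirzebruch spectral sequence), and one must argue that a homomorphism to $\IR$ that vanishes after composing with every $\ind_V^G$, $V\in\VCyc$, necessarily vanishes on the target of the assembly map. This is where the genuine work sits, and it is precisely what is carried out in \cite{Bartels-Lueck-Reich(2007appl)}.
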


The conjecture above allows to extend the notion of volume to
hyperbolic groups whose $L^2$-Betti numbers all vanish.

\begin{theorem}[\green{L\"uck (2002)}]
Suppose that $G \in \calfj_K(\IZ)$. Then $G$ satisfies the
Conjecture above.
\end{theorem}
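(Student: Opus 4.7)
The plan is to reduce the homotopy invariance of $L^2$-torsion to the vanishing of the Fuglede-Kadison determinant homomorphism $\Phi^{(2)}_G \colon \Wh(G) \to \IR$, and then exploit the Farrell-Jones Conjecture to reduce this to a direct check on virtually cyclic subgroups.

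First, I would invoke the standard sum formula. For a $G$-homotopy equivalence $f \colon X \to Y$ of det-$L^2$-acyclic finite $G$-$CW$-complexes, one has
$$\rho^{(2)}(X;\caln(G)) - \rho^{(2)}(Y;\caln(G)) = \Phi^{(2)}_G(\tau(f)),$$
where $\tau(f) \in \Wh(G)$ is the Whitehead torsion and $\Phi^{(2)}_G$ is induced by the Fuglede-Kadison determinant on $GL(\caln(G))$. So the conjecture reduces to proving $\Phi^{(2)}_G \equiv 0$.

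The key use of the hypothesis $G \in \calfj_K(\IZ)$ is the isomorphism $H_1^G(\EGF{G}{\VCyc};\bfK_\IZ) \xrightarrow{\cong} K_1(\IZ G)$ provided by Conjecture~\ref{con:FJC_for_K}. An equivariant Atiyah-Hirzebruch spectral sequence analysis of the left-hand side shows that every class in $K_1(\IZ G)$ is a sum of classes in the image of induction maps $i_V^G \colon K_m(\IZ V) \to K_1(\IZ G)$ for $V$ virtually cyclic and $m \le 1$. The Fuglede-Kadison determinant is compatible with these induction maps, because $l^2(G) \cong l^2(V\backslash G) \otimes l^2(V)$ as $\caln(V)$-modules and the canonical trace on $\caln(G)$ restricts to the canonical trace on $\caln(V)$, so $\Phi^{(2)}_G \circ i_V^G = \Phi^{(2)}_V$. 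Thus it suffices to prove $\Phi^{(2)}_V \equiv 0$ on $K_1(\IZ V)$ for every virtually cyclic $V$, together with the analogous vanishing for the lower-row spectral-sequence contributions.

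The last step is the direct verification for virtually cyclic groups and will be the main obstacle. For finite $V$, $\caln(V) = \IC V$ is finite-dimensional and $\Phi^{(2)}_V$ is a rescaled absolute value of the determinant on each isotypical component; since units of $\IZ V$ have reduced norm $\pm 1$ in each simple factor of $\IQ V$, $\Phi^{(2)}_V$ vanishes. For infinite virtually cyclic $V \cong F \rtimes \IZ$ or $V \cong F_1 *_F F_2$, one invokes Bass-Heller-Swan (respectively a Mayer-Vietoris/Wang sequence) to reduce $K_1(\IZ V)$ to finite-subgroup contributions (handled above) and $\NK_1$-Nil-terms; the vanishing of $\Phi^{(2)}_V$ on the Nil-classes is a direct analytic argument because a unit of the form $1+tN$ with $N$ nilpotent has spectral measure on $l^2(V)$ concentrated at $\{1\}$, hence log-determinant zero. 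The delicate part is the analytic control of $\Phi^{(2)}_V$ on these Nil-contributions and the careful spectral-sequence bookkeeping needed to make sure that every class in $\Wh(G)$ is genuinely accounted for by inductions from virtually cyclic $K$-theory: the Fuglede-Kadison determinant is a transcendental invariant, and its vanishing on Nil-classes requires honest spectral estimates rather than the formal manipulations that suffice for the algebraic part of the argument.
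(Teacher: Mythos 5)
The paper's text does not contain a proof of this theorem; it only cites \cite[Theorem~0.14]{Bartels-Lueck-Reich(2007appl)}, so I will assess your proposal on its own merits. Your global architecture is the correct one: reduce via the torsion formula $\rho^{(2)}(X)-\rho^{(2)}(Y)=\Phi^{(2)}_G(\tau(f))$ to the vanishing of the Fuglede--Kadison determinant homomorphism $\Phi^{(2)}_G\colon \Wh(G)\to\IR$, use the Farrell--Jones isomorphism, exploit the compatibility of the Fuglede--Kadison determinant with induction, and verify the vanishing directly for virtually cyclic (or finite) subgroups. This is indeed the strategy of the cited reference.

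However, the middle step has a genuine gap. You claim that the Atiyah--Hirzebruch spectral sequence shows every class in $K_1(\IZ G)$ is a sum of classes in the image of induction maps $i_V^G\colon K_m(\IZ V)\to K_1(\IZ G)$ for $V$ virtually cyclic; this is false. The Farrell--Jones isomorphism makes the \emph{assembly map} surjective, but the image of the assembly map is strictly larger than the subgroup generated by honest induction maps from the family: classes in higher filtration of the spectral sequence arise from $H_p^G$ with $p\ge 1$ and are ``glued'' from $K_q$-coefficients with $q<1$ via degree-shifting Bass--Heller--Swan-type maps, not via inductions $K_1(\IZ V)\to K_1(\IZ G)$. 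Concretely, take $G=\IZ/6\times\IZ^2$. Then $K_1(\IZ G)\otimes\IQ\cong\IQ^3$, with a $\IQ$-summand coming from $K_{-1}(\IZ[\IZ/6])\cong\IZ$ under a double Bass--Heller--Swan shift; but the images of $K_1(\IZ V)\otimes\IQ\to K_1(\IZ G)\otimes\IQ$ over all virtually cyclic $V$ span only the rank-two subspace generated by $[t_1],[t_2]$. So ``induction from $\VCyc$'' misses a nontrivial rational piece, and your compatibility identity $\Phi^{(2)}_G\circ i_V^G=\Phi^{(2)}_V$ says nothing about it. The fix is twofold and simpler than what you sketch: (i) $\IR$ is torsion-free, so $\Phi^{(2)}_G$ factors through $\Wh(G)\otimes\IQ$ and all torsion contributions (including $\widetilde K_0(\IZ F)$ and the Bass Nil-groups $\NK_*(\IZ F)$ for $F$ finite, which are torsion) die without any spectral estimate --- your analytic argument about $1+tN$ is unnecessary and, as phrased (``spectral measure concentrated at $\{1\}$''), not correct, since $|1+tN|$ need not have spectrum concentrated at $1$; (ii) the rational degree-shift pieces are represented by multiplication by group elements $t\in G$ on projective modules, i.e., by unitaries in $\caln(G)$, whose Fuglede--Kadison determinant is $1$. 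One then needs a clean statement (via a geometric description of the assembly map, or the equivariant Chern character and the rational passage from $\VCyc$ to $\Fin$ due to Cappell--Grunewald--Waldhausen) that these two kinds of contributions account for all of $\Wh(G)\otimes\IQ$; that bookkeeping, not an analytic estimate, is the real content of the proof.
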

\begin{proof}
See~\cite[Theorem~0.14]{Bartels-Lueck-Reich(2007appl)}.
\end{proof}

\begin{remark}[\blue{$p$-adic Fuglede-Kadison determinant}] \green{Deninger}
  defines a \red{$p$-adic Fuglede-Kadison determinant} for a group $G$
  and relates it to \red{$p$-adic entropy} provided that
  $\Wh^{\IF_p}(G) \otimes_{\IZ} \IQ$ is trivial.
\end{remark}

\begin{remark}[\blue{Atiyah Conjecture}]
The surjectivity of the map
$$\colim_{\OrGF{G}{\Fin}} K_0(\IC H) \to K_0(\IC G) $$
plays a role \red{(33 \%)} in a program to prove the \red{Atiyah
  Conjecture}.  It says that for a closed Riemannian manifold with
torsionfree fundamental group the $L^2$-Betti numbers of its universal
covering are all integers.

The Atiyah Conjecture is rather surprising in view of the analytic
definition of the $L^2$-Betti numbers by
$$b_p^{(2)}(M) := \lim_{t \to \infty} 
\int_F e^{-t\widetilde{\Delta}_p}(\widetilde{x},\widetilde{x})
dvol_{\widetilde{M}},$$ where $F$ is a fundamental domain for the
$\pi_1(M)$-action on $\widetilde{M}$.
\end{remark}

Next we explain the relation of the Baum-Connes Conjecture 
to the Gromov-Lawson-Rosenberg Conjecture.

\begin{definition}[\blue{Bott manifold}]
A \emphred{Bott manifold}
is any simply connected closed $\Spin$-manifold $B$ of dimension $8$ whose
$\widehat{A}$-genus $\widehat{A}(B)$ is $8$.
\end{definition}

We fix a choice of a Bott manifold. (The particular choice does not matter.)
Notice that the index defined in terms of the Dirac  operator
$\ind_{C^*_r(\{1\};\IR)}(B) \in KO_8(\IR) \cong \IZ$ is a generator
and the product with this element induces the Bott periodicity isomorphisms
$KO_n(C^*_r(G;\IR)) \xrightarrow{\cong}  KO_{n+8}(C^*_r(G;\IR))$.
In particular
\begin{eqnarray*}
\ind_{C^*_r(\pi_1(M);\IR)}(M) & = & \ind_{C^*_r(\pi_1(M \times B);\IR)}(M \times B),
\end{eqnarray*}
if we identify $KO_n(C^*_r(\pi_1(M);\IR)) = KO_{n+8}(C^*_r(\pi_1(M);\IR))$ 
via Bott periodicity. If $M$ carries a Riemannian metric
with positive scalar curvature, then the index
$$\ind_{C^*_r(\pi_1(M);\IR)}(M) \in KO_n(C^*_r(\pi_1(M);\IR)),$$
which is defined in terms of the Dirac operator on the universal covering,
must vanish by the \red{Bochner-Lichnerowicz formula}.

\begin{conjecture}[\blue{(Stable) Gromov-Lawson-Rosenberg Conjecture}]
Let $M$ be a closed connected $\Spin$-manifold of dimension $n \ge 5$.

Then $M \times B^k$ carries for some integer $k \ge
0$ a Riemannian metric with positive scalar curvature if and only if
$$\ind_{C^*_r(\pi_1(M);\IR)}(M)  =  0 \hspace{5mm} \in KO_n(C^*_r(\pi_1(M);\IR)).$$
\end{conjecture}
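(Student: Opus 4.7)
The plan is to separate the two implications. The ``only if'' direction is essentially formal: if $M \times B^k$ carries a PSC metric, the Bochner-Lichnerowicz-Rosenberg formula, already invoked just above the conjecture in the excerpt, forces $\ind_{C^*_r(\pi_1(M \times B^k);\IR)}(M \times B^k) = 0$. Since $B$ is simply connected, $\pi_1(M \times B^k) \cong \pi_1(M) =: G$, and under Bott periodicity this index is identified with $\ind_{C^*_r(G;\IR)}(M)$, which therefore vanishes.

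For the ``if'' direction, the strategy is to combine the real Baum-Connes Conjecture with two deep geometric theorems, due to Stolz and to Gromov-Lawson respectively. First I would lift the analytic index to a topological refinement. Let $u \colon M \to BG$ classify the universal cover. The $\Spin$-bordism class $[M, u] \in \Omega^{\Spin}_n(BG)$ maps, via the $KO$-theoretic Atiyah-Bott-Shapiro orientation for $\Spin$-manifolds, to a class $\alpha(M, u) \in KO_n(BG)$. This class assembles along the composition
$$KO_n(BG) \longrightarrow KO_n^G(\eub{G}) \longrightarrow KO_n(C^*_r(G;\IR))$$
to $\ind_{C^*_r(G;\IR)}(M)$, where the second arrow is the real Baum-Connes assembly and the first is induced by a $G$-map $EG \to \eub{G}$. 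Assuming the real Baum-Connes Conjecture for $G$, and assuming injectivity of the first map (automatic for torsionfree $G$, and the subtle point otherwise), the hypothesised vanishing of the analytic index forces $\alpha(M, u) = 0$.

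Next I would invoke Stolz's theorem, which asserts that for $n \ge 5$ the image of the PSC bordism group $\Omega^{\Spin,+}_n(BG)$ inside $\Omega^{\Spin}_n(BG)$ is, after localising at the Bott element $[B] \in \Omega^{\Spin}_8(\pt)$, precisely the kernel of $\alpha$. Hence, for some $k \ge 0$, the class $[M \times B^k, u \circ \pr] \in \Omega^{\Spin}_{n+8k}(BG)$ lies in the image of $\Omega^{\Spin,+}_{n+8k}(BG)$; that is, $M \times B^k$ is $\Spin$-bordant over $BG$ to a manifold admitting a PSC metric. The Gromov-Lawson surgery theorem then propagates PSC across such a bordism by surgeries of codimension $\ge 3$, producing the desired PSC metric on $M \times B^k$.

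The main obstacle is Stolz's theorem itself, whose proof is a substantial piece of equivariant bordism theory and reduces to the simply connected Gromov-Lawson-Rosenberg theorem via an index-theoretic transfer. The role of Baum-Connes in the scheme is narrow but indispensable: it translates the analytic vanishing of $\ind_{C^*_r(G;\IR)}(M)$ into the topological vanishing of $\alpha(M, u)$. For groups with torsion the comparison map $KO_n(BG) \to KO_n^G(\eub{G})$ is nontrivial, and controlling it (so that Baum-Connes really bites at the $BG$-level) is the subtlest point in extending the argument beyond the torsionfree case, where the refinement of $\alpha$ is forced to vanish by the Conjecture on the nose.
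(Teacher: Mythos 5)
The statement in question is a \emph{conjecture}, and the paper neither proves it nor claims to: it is open in general. What the paper records immediately afterwards is Stolz's theorem, namely that injectivity of the real Baum-Connes assembly map
\[
H_n^G(\eub{G};\bfKO^{\topo}) \longrightarrow KO_n(C^*_r(G;\IR))
\]
implies the Stable Gromov-Lawson-Rosenberg Conjecture for closed $\Spin$-manifolds of dimension $\ge 5$ with fundamental group $G$. Your argument assumes the real Baum-Connes Conjecture partway through, so it does not establish the conjecture as stated; at best it sketches a proof of Stolz's \emph{conditional} theorem. Presenting a conditional implication as a proof of its conclusion is the central gap.

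Even read as a sketch of Stolz's theorem, the ``if'' direction has a technical flaw. You route the analytic index through
\[
KO_n(BG) \longrightarrow KO_n^G(\eub{G}) \longrightarrow KO_n(C^*_r(G;\IR))
\]
and want to conclude $\alpha(M,u) = 0$ already in $KO_n(BG)$; for that you need injectivity of the comparison map $KO_n(BG) \to KO_n^G(\eub{G})$ in addition to Baum-Connes. You flag this as ``the subtlest point'', but that comparison map genuinely fails to be injective for many groups with torsion, so the argument as written only covers the torsionfree case, where $EG = \eub{G}$. Stolz's hypothesis is only injectivity of the assembly from $\eub{G}$-homology, and his proof accordingly works directly with $\eub{G}$-level ($ko$-theoretic and Baas--Sullivan) bordism rather than pulling the obstruction back to $BG$. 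The ``only if'' direction, via Lichnerowicz and Bott periodicity, is fine and matches the discussion the paper places just before the conjecture.
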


\begin{theorem}[\green{Stolz (2002)}]
\label{the:Stolz}
Suppose that the assembly map for the real version of the Baum-Connes Conjecture
$$H_n^G(\eub{G};\bfKO^{\topo}) \to KO_n(C^*_r(G;\IR))$$
is injective for the group $G$.

Then the Stable Gromov-Lawson-Rosenberg Conjecture is true for all closed
$\Spin$-manifolds of dimension $\ge 5$ with $\pi_1(M) \cong G$.
\end{theorem}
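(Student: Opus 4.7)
The plan is to carry out Stolz's bordism-theoretic argument, factoring the analytic index through an equivariant $KO$-homological class that is governed by the Baum-Connes assembly map. The proof splits into an easy necessity direction and a substantial sufficiency direction.

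For necessity, assume $M \times B^k$ carries a positive scalar curvature metric. Since $B$ is simply connected, $\pi_1(M \times B^k) = G$, and the Bochner-Lichnerowicz formula forces the Dirac index of its universal covering to vanish in $KO_{n+8k}(C^*_r(G;\IR))$. Multiplicativity of the index together with the identification
$$\ind_{C^*_r(G;\IR)}(M \times B^k) = \ind_{C^*_r(G;\IR)}(M) \cdot \ind_{C^*_r(\{1\};\IR)}(B)^k,$$
combined with the fact (built into the choice of Bott manifold) that $\ind_{C^*_r(\{1\};\IR)}(B)$ is the Bott generator of $KO_8(\IR)$ and hence that multiplication by $\ind_{C^*_r(\{1\};\IR)}(B)^k$ realizes the $8k$-fold Bott periodicity isomorphism on $KO_*(C^*_r(G;\IR))$, yields $\ind_{C^*_r(G;\IR)}(M) = 0$.

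For sufficiency, set $G = \pi_1(M)$ and assume $\ind_{C^*_r(G;\IR)}(M) = 0$. One first refines the analytic index to an equivariant class $\ind^G(M) \in KO_n^G(\eub{G}) = H_n^G(\eub{G};\bfKO^{\topo})$, built from the $G$-equivariant Dirac operator on $\widetilde{M}$ together with the classifying $G$-map $\widetilde{M} \to \eub{G}$. The construction is designed so that the real Baum-Connes assembly map $A$ of the theorem statement sends $\ind^G(M)$ to $\ind_{C^*_r(G;\IR)}(M)$; injectivity of $A$ therefore yields $\ind^G(M) = 0$. The key non-trivial input is Stolz's theorem: for any closed Spin manifold $N$ of dimension $n \ge 5$ with $\pi_1(N) = G$, the product $N \times B^k$ admits a positive scalar curvature metric for some $k \ge 0$ if and only if $\ind^G(N) = 0$ in $KO_n^G(\eub{G})$. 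Applied to $N = M$, this finishes the argument. The proof of Stolz's theorem is a delicate combination of Spin-bordism computations (via the Anderson-Brown-Peterson splitting of Spin-bordism and Hitchin's $\alpha$-invariant calculations controlling the kernel of the bordism-level index), the Gromov-Lawson-Schoen-Yau surgery theorem that transports positive scalar curvature across bordisms assembled from codimension $\ge 3$ surgeries (this is where the hypothesis $n \ge 5$ enters), and a careful identification of the stable positive scalar curvature obstruction with the equivariant $KO$-homological class $\ind^G$.

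The principal obstacle is Stolz's theorem itself, and in particular the geometric sufficiency half that constructs a psc metric from the vanishing of a $KO$-homological obstruction; this requires both the surgery machinery and substantial Spin-bordism input. A secondary subtle point is ensuring that the refined obstruction really lands in the proper-equivariant group $KO_n^G(\eub{G})$ rather than merely in $KO_n(BG)$, so that injectivity of the Baum-Connes assembly $A$, as opposed to injectivity of a longer composite factoring through $KO_n(BG) \to KO_n^G(\eub{G})$, is indeed enough to conclude.
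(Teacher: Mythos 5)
The paper's proof is a bare citation to Stolz's survey, and your sketch is a faithful account of the argument carried out there: Bochner--Lichnerowicz plus Bott-multiplicativity handles the necessity half, while sufficiency rests on Stolz's bordism theorem (stable positive scalar curvature is equivalent to vanishing of the refined index in $KO_n^G(\eub{G})$) combined with injectivity of the assembly map. You are also right to flag that Stolz's key theorem must be stated over $KO_n^G(\eub{G})$ rather than $KO_n(BG)$ --- since the map $KO_n(BG) \to KO_n^G(\eub{G})$ need not be injective when $G$ has torsion, this formulation is precisely what makes Baum--Connes injectivity, rather than injectivity of a longer composite, the correct hypothesis.
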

\begin{proof}
See~\cite{Stolz(2002)}.
\end{proof}

The requirement $\dim(M) \ge 5$ in Theorem~\ref{the:Stolz} is essential in the
Stable Gromov-Lawson-Rosenberg Conjecture,
since in dimension four
new obstructions, the \red{Seiberg-Witten invariants}, occur.

Since the Baum-Connes Conjecture  is true for finite groups (for the trivial reason
that $\eub{G} = \pt$ for  finite groups $G$),
the Stable Gromov-Lawson Conjecture
holds for finite fundamental groups by Theorem~\ref{the:Stolz}.

\begin{remark}[\blue{The unstable version of the Gromov-Lawson-Rosenberg Conjecture}]
The \red{unstable version} of the Gromov-Lawson-Rosenberg Conjecture
says that $M$ carries a Riemannian metric with positive scalar curvature if and only if
the index $\ind_{C^*_r(\pi_1(M);\IR)}(M)$ vanishes. 
\green{Schick(1998)}~\cite{Schick(1998e)}
has constructed counterexamples to the unstable version using
minimal hypersurface methods due to \green{Schoen and Yau}.
It is not known whether the unstable version is true or false for
finite fundamental groups.
\end{remark}

\begin{question}[\blue{Status}]
For which groups are the Farrell-Jones Conjecture and 
the Baum-Connes Conjecture known to be true? 

What are open interesting cases?
\end{question}

\begin{question}[\blue{Methods of proof}]
What are the methods of proof?
\end{question}

\begin{question}[\blue{Relations}]
What are the relations between the Farrell-Jones Conjecture and the 
Baum-Connes Conjecture?
\end{question}


\section{Summary, status and outlook}

The outline of this section is:

\begin{itemize}
\item We present other versions of the Isomorphism Conjecture;
\item We give a summary about the status of the Farrell-Jones 
and the Baum-Connes Conjecture:
\item We discuss open questions and problems.
\end{itemize}
    
\begin{conjecture}[\blue{Isomorphism Conjecture}]
\label{con:Isomorphism_Conjecture}
Let $\calh^?_*$ be an equivariant homology theory.
It satisfies the
\emphred{Isomorphism Conjecture} for the group $G$ and the family $\calf$
if the projection $\EGF{G}{\calf} \to \pt$ induces for all $n \in \IZ$ a bijection
$$\calh_n^G(\EGF{G}{\calf}) \to \calh_n^G(\pt).$$
\end{conjecture}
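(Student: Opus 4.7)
The plan is to recognize that Conjecture~\ref{con:Isomorphism_Conjecture} is a unifying framework rather than a single assertion to be dispatched in one stroke; its truth depends on the choice of equivariant homology theory $\calh^?_*$, on the group $G$, and on the family $\calf$, so any genuine proof strategy must branch according to these parameters. My overall plan combines three ingredients: (i) a \emphblue{reduction step} shrinking the family $\calf$ to the smallest possible one via the Transitivity Principle (Theorem~\ref{the:transitivity_principle}); (ii) \emphblue{verification for atomic subgroups} in the reduced family; and (iii) an \emphblue{assembly-type argument} that upgrades information on subgroups to the full group.

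First, I would apply the Transitivity Principle: if one can show that $\calh^?_*$ satisfies the conjecture for each $H \in \calf$ with respect to the induced family $\calf|_H \cap \calg$ for some smaller family $\calg \subseteq \calf$, then the relative assembly $\calh^G_n(\EGF{G}{\calg}) \to \calh^G_n(\EGF{G}{\calf})$ is an isomorphism, so it suffices to prove the conjecture for $\calg$. In the $K$-theoretic Farrell-Jones setting this is used to pass from $\VCyc$ to $\VCyc_I$ or, after inverting $|H|$ for $H$ finite, to $\Fin$; in the Baum-Connes setting the reduction lands in $\Fin$ because the conjecture is already known for virtually cyclic groups. In favorable cases (regular coefficient rings, invertible group orders) the problem collapses to proving bijectivity of the assembly $\calh^G_n(\eub G) \to \calh^G_n(\pt)$.

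Second, for the induction step I would distinguish according to the geometry of $G$. For word-hyperbolic groups and for groups acting properly cocompactly on $\CAT(0)$-spaces, one exploits a small geometric model for $\eub G$ (e.g.\ the Rips complex, or the $\CAT(0)$-space itself), builds an associated \emphblue{flow space}, and uses \emphblue{controlled algebra} over it to exhibit an explicit inverse to the assembly map; this is the strategy that proved $\calfj_K(R)$ and $\calfj_L(R)$ for hyperbolic groups. For the Baum-Connes version on a different class of groups I would instead attempt the \emphblue{Dirac/dual Dirac method} of Kasparov--Higson, constructing a $\gamma$-element in equivariant $KK$-theory whose product with the assembly map is the identity, which works whenever $G$ is a-T-menable.

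The hard part will be step (iii): producing the inverse to the assembly map. In the Farrell-Jones case this requires a delicate \emphblue{geometric propagation estimate} showing that chain complexes over $RG$ can be controlled over the flow space up to the singularities arising from virtually cyclic stabilizers, typically axiomatized through notions like transfer reducibility and $N$-transfers. In the Baum-Connes case the principal obstruction is property-$(\tau)$-like phenomena from expander graphs coarsely embedded in $G$, which are known to yield counterexamples to coefficient versions of the conjecture. Consequently, in the generality of Conjecture~\ref{con:Isomorphism_Conjecture} no single method works; my proposal is therefore necessarily a meta-plan, and the technical core of any concrete verification lives in the geometric or analytic construction of the inverse assembly for the chosen class of groups.
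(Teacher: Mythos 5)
The statement you are trying to prove is not a theorem but a \emph{definitional template}: Conjecture~\ref{con:Isomorphism_Conjecture} simply introduces the phrase ``$\calh^?_*$ satisfies the Isomorphism Conjecture for $G$ and $\calf$'' as shorthand for ``the projection $\EGF{G}{\calf} \to \pt$ induces isomorphisms on $\calh^G_n$.'' The paper offers no proof of it and cannot, because in the stated generality it is false: taking $\calh^?_* = H^?_*(-;\bfK^{\topo})$ with coefficients in a suitable $G$-$C^*$-algebra and $\calf = \Fin$ recovers the Baum-Connes Conjecture with coefficients, for which Higson--Lafforgue--Skandalis exhibit counterexamples. The Farrell-Jones and Baum-Connes conjectures are then obtained by specializing the equivariant homology theory and the family, and \emph{those} are the objects to which a proof strategy would be directed, one group class at a time.

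That said, your meta-plan is a fair and accurate synopsis of the landscape the paper surveys: the Transitivity Principle (Theorem~\ref{the:transitivity_principle}) to shrink $\VCyc$ towards $\Fin$ or $\VCyc_I$; controlled algebra over a flow space built from a geometric model of $\eub G$ for the Farrell-Jones side (Bartels--L\"uck--Reich for hyperbolic and, in the announced program, $\CAT(0)$ groups); and the Dirac/dual-Dirac $\gamma$-element method for the Baum-Connes side (Higson--Kasparov for a-T-menable groups). You also correctly flag expanders as the obstruction on the analytic side. So the content is sound, but you should recognize it as an \emph{outline of verification strategies for instances of a conjecture}, not a proof of the boxed statement; the boxed statement is what a ``proof'' would be a proof \emph{of}, for a particular choice of $(\calh^?_*, G, \calf)$.
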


\begin{example}[\blue{The Farrell-Jones and the Baum-Connes Conjecture 
as special cases of the Isomorphism  Conjecture}]
The Farrell-Jones Conjecture for $K$-theory or $L$-theory
respectively with coefficients in $R$ 
(see~\ref{con:FJC_for_K} and~\ref{con:FJC_for_L} respectively) is the Isomorphism
Conjecture~\ref{con:Isomorphism_Conjecture} for $\calh^?_* = H^?_*(-;\bfK_R)$ or $\calh^?_* =
H^?_*(-;\bfL_R^{\langle -\infty \rangle})$ respectively and $\calf =
\VCyc$.  

The Baum-Connes Conjecture~\ref{con:BCC} is the Isomorphism
Conjecture~\ref{con:Isomorphism_Conjecture} 
for $\calh^?_* = K^?_* = H^?_*(-;\bfK^{\topo})$ and
$\calf = \Fin$.
\end{example}

There are functors $\calp$ and $A$ which assign to a space $X$ the
\red{space of pseudo-isotopies} and its \red{$A$-theory}.
Composing it with the functor sending a groupoid to its classifying space
yields functors $\bfP$ and $\bfA$ from $\Groupoids$ to $\Spectra$.
Thus we obtain equivariant homology theories \red{$H^?_*(-;\bfP)$} and
\red{$H^?_*(-;\bfA)$}. They satisfy  $H_n^G(G/H;\bfP) = \pi_n(\calp(BH))$ 
and  $H_n^G(G/H;\bfA) = \pi_n(A(BH))$.

Pseudo-isotopy and $A$-theory are important theories. 
In particular they
are closely related to the \red{space of selfhomeomorphisms} and the
\red{space of selfdiffeomorphisms} of closed manifolds.
For more information about $A$-theory and  pseudoisotopy we refer for instance
to~\cite{Burghelea-Lashof(1977)}, \cite[Section~9]{Dwyer-Weiss-Williams(2003)}), 
\cite{Hatcher(1978)},  \cite{Igusa(1988)}, \cite{Waldhausen(1978)}, 
\cite{Waldhausen(1985)}.

\begin{conjecture}[\blue{The Farrell-Jones Conjecture for pseudo-isotopies and $A$-theory}]
\emphred{The Farrell-Jones Conjecture for pseudo-isotopies and $A$-theory}
respectively is the Isomorphism Conjecture for $H^?_*(-;\bfP)$ and
$H^?_*(-;\bfA)$ respectively for the family $\VCyc$.
\end{conjecture}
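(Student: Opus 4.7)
The statement is essentially a definitional identification: it labels a particular instance of the Isomorphism Conjecture (Conjecture~\ref{con:Isomorphism_Conjecture}) as \emph{the Farrell-Jones Conjecture for pseudo-isotopies and $A$-theory}. So the task is not to prove anything in the strict sense, but to verify that the ingredients are well-defined and that the resulting statement is the ``right'' conjecture. My plan would be to justify the formulation by showing that the equivariant homology theories $H^?_*(-;\bfP)$ and $H^?_*(-;\bfA)$ exist with the stated coefficients, and to check consistency with the previously formulated $K$- and $L$-theoretic versions.

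First I would construct the functors $\bfP, \bfA\colon \Groupoids \to \Spectra$ by composing the classical pseudoisotopy/$A$-theory space functors with the classifying space functor $B\colon \Groupoids \to \mathrm{Spaces}$, then passing to a suitable spectrum-level refinement (this is analogous to the Davis--L\"uck construction of $\bfK_R$ and $\bfL_R^{\langle -\infty\rangle}$). One needs to verify: (i) that equivalences of groupoids are sent to weak equivalences of spectra (which follows from the functoriality and homotopy invariance of $\calp$ and $A$), and (ii) the coefficient formulas $H^G_n(G/H;\bfP) = \pi_n(\calp(BH))$ and $H^G_n(G/H;\bfA) = \pi_n(A(BH))$, which are forced by the general construction of equivariant homology theories from such functors. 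Once this is in place, the Isomorphism Conjecture for the family $\VCyc$ becomes a well-posed statement asking for the bijectivity of the assembly map $H^G_n(\EGF{G}{\VCyc};\bfP) \to \pi_n(\calp(BG))$, and analogously for $\bfA$.

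To give additional evidence that $\VCyc$ is the correct family, I would check the torsionfree reduction: using the Transitivity Principle (Theorem~\ref{the:transitivity_principle}), one should verify that for a torsionfree group the assembly map with family $\VCyc$ reduces to the classical assembly map $H_n(BG;\bfP(\ast)) \to \pi_n(\calp(BG))$, which is the Farrell-Jones assembly map as classically formulated in terms of the Loday-style assembly. This in turn requires that the conjecture be consistent for the group $\IZ$, where Bass-Heller-Swan-type splittings for $A$-theory (due to H\"uttemann-Klein-Vogell-Waldhausen-Williams) and pseudoisotopy play the role that the ordinary Bass-Heller-Swan decomposition played in Example~\ref{exa:K-FJC_for_Z_and_Fin}. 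The main obstacle is that, in contrast with $K$- and $L$-theory where Nil- and UNil-phenomena are reasonably well-understood, the analogous ``Nil''-contributions for $\bfP$ and $\bfA$ on virtually cyclic groups are genuinely more delicate, and verifying that the family $\VCyc$ (rather than a larger family) suffices is the subtle point that motivates fixing the conjecture in this specific form.
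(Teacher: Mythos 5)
This statement is a conjecture, not a theorem, so there is nothing to ``prove''; the paper simply states it after constructing the functors $\bfP, \bfA \colon \Groupoids \to \Spectra$ by composing the pseudoisotopy/$A$-theory functors with the classifying space functor and invoking the general machinery (as for $\bfK_R$ and $\bfL_R^{\langle-\infty\rangle}$) to produce the equivariant homology theories with the stated coefficients. Your proposal correctly identifies this as a definitional identification and reproduces essentially the same construction and well-posedness considerations as the paper's preceding discussion; the extra remarks on the torsionfree reduction via the Transitivity Principle and the $A$-theoretic Bass--Heller--Swan splitting are reasonable supporting context but go beyond what the paper says.
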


\begin{theorem}[\blue{Relating pseudo-isotopy and $K$-theory}]
\label{the:relating_pseudo-isotopy_K-theory}
The rational versions of the $K$-theoretic Farrell-Jones
Conjecture for coefficients in $\IZ$
and of the Farrell-Jones Conjecture  for Pseudoisotopies are equivalent.

In degree $n \le 1$ this is even true integrally.
\end{theorem}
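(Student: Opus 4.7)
The plan is to compare the equivariant homology theories $H^?_*(-;\bfP)$ and $H^?_*(-;\bfK_{\IZ})$ by constructing a natural chain of transformations between the coefficient functors $\bfP,\bfK_{\IZ}\colon\Groupoids\to\Spectra$, and then to transport the Isomorphism Conjecture across this chain using the standard comparison principle for equivariant homology theories.

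For the first step, I would assemble the bridge from $\bfP$ to $\bfK_{\IZ}$ out of three classical inputs, each available at the level of functors on $\Groupoids$: Waldhausen's stable parametrized $h$-cobordism theorem, which identifies $\bfP$ with a shift of the smooth Whitehead spectrum, so that $\bfP(X)\simeq\Omega^{2}\mathbf{Wh}^{\mathrm{Diff}}(X)$; Waldhausen's splitting $\bfA\simeq\Sigma^{\infty}(-)_{+}\vee\mathbf{Wh}^{\mathrm{Diff}}$; and the linearization map $L\colon\bfA\to\bfK_{\IZ}$. Combining these gives a chain
$$\bfP(X)\simeq\Omega^{2}\mathbf{Wh}^{\mathrm{Diff}}(X)\longrightarrow\Omega^{2}\bfA(X)\xrightarrow{\Omega^{2}L}\Omega^{2}\bfK_{\IZ}(\pi_{1}(X)).$$
Waldhausen's rational calculation of $A$-theory, together with Borel's computation of $K_*(\IZ)\otimes\IQ$, then shows that, after quotienting out the stable-homotopy contribution $\Sigma^{\infty}(BG_{+})$ (which is rationally just group homology and hence harmless for the assembly-map comparison), this chain induces rational isomorphisms on homotopy groups when evaluated at any group. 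In the low-degree range corresponding to $n\le 1$ of $\calp$, the Hatcher--Wagoner theorem $\pi_{0}\calp\cong\Wh_{2}$ together with the explicit identifications in adjacent degrees upgrade this to an integral isomorphism, since the homotopy of the intervening sphere and Whitehead spectra is then explicitly understood.

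For the second step, the resulting natural transformation of equivariant homology theories is, by construction, a rational isomorphism (respectively an integral isomorphism in the stated low degrees) when evaluated on every orbit $G/H$ with $H$ virtually cyclic. A standard cellular induction on $G$-$CW$-complexes, using the long exact sequence of a pair, excision, the disjoint-union axiom, and the five lemma, then propagates this isomorphism from orbits to the full classifying space $\EGF{G}{\VCyc}$ and tautologically to $\pt$. Comparing the resulting commutative square of assembly maps, a diagram chase yields that the $\bfP$-assembly map is a rational (respectively low-degree integral) isomorphism precisely when the $\bfK_{\IZ}$-assembly map is. The hard part will be the first step: identifying $\mathbf{Wh}^{\mathrm{Diff}}$ rationally with reduced algebraic $K$-theory is the deep content of Waldhausen's $K$-theory of spaces and requires careful bookkeeping of the sphere-spectrum contribution; it is precisely this bookkeeping that confines the integral statement to $n\le 1$, where Hatcher--Wagoner and explicit Whitehead-group computations take over.
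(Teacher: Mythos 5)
The paper gives no proof of this theorem: it cites \cite[1.6.7]{Farrell-Jones(1993a)} and stops. So there is no argument in the paper to compare against; you have proposed an independent route, and it is worth assessing on its own terms.

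Your sketch assembles the right modern toolkit: Waldhausen's stable parametrized $h$-cobordism theorem identifying $\bfP$ with a double loop of the Whitehead spectrum, the $A$-theory splitting $\bfA \simeq \Sigma^{\infty}(-)_{+}\vee\mathbf{Wh}^{\mathrm{Diff}}$, linearization to $\bfK_{\IZ}$, and the observation that the $\Sigma^{\infty}(-)_{+}$ summand defines an equivariant homology theory whose $\VCyc$-assembly map is tautologically bijective (because $EG\times_{G}\EGF{G}{\VCyc}\to BG$ is a homotopy equivalence). The structure --- shift, split off a tautological piece, compare linearizations, then transport the Isomorphism Conjecture by cellular induction --- is correct and is exactly the skeleton one would use to re-prove the result today.

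Two steps are genuinely harder than your wording suggests and deserve to be flagged as gaps. First, the category mismatch: the Farrell--Jones pseudoisotopy functor is topological, while Waldhausen's splitting produces $\mathbf{Wh}^{\mathrm{Diff}}$; bridging $\mathbf{Wh}^{\mathrm{Diff}}$ and $\mathbf{Wh}^{\mathrm{Top}}$ requires smoothing theory, and one has to check that the correction term (built from $(-)_{+}\wedge \mathbf{Top}/\mathbf{O}$) is again of the homological type with tautological assembly. Second, your assertion that, after discarding the sphere contribution, the chain ``induces rational isomorphisms on homotopy groups when evaluated at any group'' is the real crux and is not self-evident. What one actually needs is that the fiber of the linearization $A(BG)\to K(\IZ G)$, as a functor on the orbit category, defines an equivariant homology theory whose $\VCyc$-assembly map is rationally bijective. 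For $G=\{1\}$ this is Borel plus Waldhausen's rational computation; for general virtually cyclic $G$ one either argues directly (Bass--Heller--Swan for both theories and control of the nil terms), or invokes Dundas--Goodwillie--McCarthy to reduce to relative $TC$ for $\mathbb{S}[G]\to\IZ[G]$ and then establish the rational vanishing. None of that is automatic. Since this machinery largely postdates 1993, the argument Farrell--Jones actually gave at the cited location is more direct and geometric; your route is cleaner in outline but moves the difficulty into the linearization comparison rather than eliminating it. For the integral low-degree statement your appeal to Hatcher--Wagoner together with $\Wh_{2}\cong\pi_{0}\calp$ is the right starting point, but the same two caveats (smoothing theory, control of the sphere contribution integrally, not just rationally) must be tracked.
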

\begin{proof}
See~\cite[1.6.7 on page~261]{Farrell-Jones(1993a)}.
\end{proof}

There are functors \red{$\bfTHH$} and \red{$\bfTC$} which assign to a ring
(or more generally to an $\bfS$-algebra) a spectrum describing its
\red{topological Hochschild homology} and its \red{topological cyclic homology}.
These functors play an important role in $K$-theoretic computations.
Composing it with the functor sending a groupoid to
a kind of group ring yields functors $\bfTHH_R$ and 
$\bfTC_R$ from $\Groupoids$ to $\Spectra$. Thus we obtain equivariant 
homology theories \red{$H^?_*(-;\bfTHH_R)$} and
\red{$H^?_*(-;\bfTC_R)$}.
They satisfy $H_n^G(G/H;\bfTHH_R) = \THH_n(RH)$ and  $H_n^G(G/H;\bfTC_R) = \TC_n(RH)$.

\begin{conjecture}[\blue{The Farrell-Jones Conjecture for topological 
Hochschild homology and cyclic homology}]
\emphred{The Farrell-Jones Conjecture for topological Hochschild
  homology and for topological cyclic
homology}
respectively is the Isomorphism Conjecture for $H^?_*(-;\bfTHH)$ and
$H^?_*(-;\bfTC)$ respectively for the family $\Cyc$ of cyclic subgroups.
\end{conjecture}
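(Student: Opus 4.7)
The final statement is a \emph{formulation} rather than a theorem: it declares what the Farrell--Jones Conjecture for $\bfTHH$ and $\bfTC$ shall mean, namely the Isomorphism Conjecture~\ref{con:Isomorphism_Conjecture} for the equivariant homology theories $H^?_*(-;\bfTHH_R)$ and $H^?_*(-;\bfTC_R)$ with the family $\Cyc$ of cyclic subgroups. A proof proposal therefore amounts to a plan for justifying why $\Cyc$ is the correct family here (as opposed to $\Fin$ used in Baum--Connes or $\VCyc$ used in the $K$- and $L$-theoretic Farrell--Jones setting) and for outlining how one would verify the resulting conjecture for interesting classes of groups.

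The plan is to base the justification on \emph{Burghelea's splitting} of Hochschild homology of group rings, which expresses $\THH_*(RG)$ as a direct sum indexed by conjugacy classes $(g)$ of elements of $G$, with the summand for $(g)$ built from $\THH_*(R)$ together with data of the centralizer $C_G\langle g\rangle$ and of the cyclic subgroup $\langle g\rangle$. First I would check that this splitting is compatible with an Atiyah--Hirzebruch-type (Bredon) spectral sequence converging to $H^G_n(\EGF{G}{\Cyc};\bfTHH_R)$, thereby explaining why cyclic isotropy is exactly what appears on the left-hand side of the assembly map. Second, I would invoke the Transitivity Principle~\ref{the:transitivity_principle}: once the conjecture is phrased with family $\Cyc$, checking it for a cyclic subgroup $C$ is tautological because $\EGF{C}{\Cyc|_C}=C/C$, so the Transitivity Principle reduces the general conjecture to a statement that has at least the expected formal shape.

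The reason $\Cyc$ should suffice, rather than the larger family $\VCyc$ needed for $K$-theory, is that $\bfTHH$ and $\bfTC$ are expected to be free of the $\Nil$-type phenomena that obstructed the family reduction in Example~\ref{exa:K-FJC_for_Z_and_Fin}. Concretely, the analogues of the Waldhausen Mayer--Vietoris and Wang sequences should hold for $\bfTHH$ and $\bfTC$ \emph{without} correction terms when one crosses with $\IZ$ or forms an amalgamated product. The plan for supporting this is to compare the assembly map for $\bfTC$ with the one for $\bfK_R$ via the cyclotomic trace $\bfK_R\to\bfTC_R$, which should intertwine the two assembly maps and thereby relate the larger family $\VCyc$ on the $K$-theory side with the smaller family $\Cyc$ on the $\bfTC$-side through vanishing of the extra Nil contributions after applying $\bfTC$.

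The main obstacle will be making this family reduction rigorous: for every virtually cyclic group $V$ which is not itself cyclic one has to show that the relative assembly map
$$H^V_n(\EGF{V}{\Cyc};\bfTHH_R)\longrightarrow \THH_n(RV)$$
and its $\bfTC$-analogue are isomorphisms. For $V=F\times\IZ$ with $F$ finite cyclic this should follow from a careful analysis of the $S^1$-equivariant homotopy type of $\THH(R[F\times\IZ])$ in the spirit of B\"okstedt--Hsiang--Madsen, while for virtually cyclic groups of type~II one needs a Mayer--Vietoris splitting of $\THH$ and $\TC$ under the amalgamations $F_1\ast_{F_0}F_2$ furnished by the structure theorem for infinite virtually cyclic groups. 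Once these building blocks are in place, the Transitivity Principle would produce the conjecture for all groups in the class for which the $K$-theoretic Farrell--Jones Conjecture is known, by transporting inputs along the cyclotomic trace.
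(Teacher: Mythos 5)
You correctly identify that the statement is a formulation rather than a theorem: it simply specializes the general Isomorphism Conjecture~\ref{con:Isomorphism_Conjecture} to the equivariant homology theories $H^?_*(-;\bfTHH_R)$ and $H^?_*(-;\bfTC_R)$ with the family $\Cyc$. The paper accordingly offers no proof, so there is nothing to compare on that score, and your extended discussion of Burghelea's splitting, the Transitivity Principle, and the absence of $\Nil$-type obstructions is sensible motivation for why $\Cyc$ rather than $\VCyc$ or $\Fin$ is the natural family here.

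One remark on your verification strategy: the final paragraph proposes to deduce the $\bfTHH$/$\bfTC$ conjecture only for groups satisfying the $K$-theoretic Farrell--Jones Conjecture, by transporting along the cyclotomic trace. That is considerably weaker than what the paper goes on to assert immediately afterward, namely the theorem of L\"uck--Reich--Rognes--Varisco that the $\bfTHH$ version holds for \emph{all} groups, unconditionally. So while passing through $\bfK_R$ via the cyclotomic trace is a reasonable way to relate the two conjectures, the expected proof of the $\bfTHH$ statement should not be contingent on the $K$-theoretic input; rather, $\THH$ of group rings splits so cleanly along conjugacy classes that the assembly map over $\Cyc$ can be analyzed directly, without needing to first establish any $K$-theoretic result. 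Keeping the two routes separate would strengthen your plan and match the actual state of knowledge described in the text.
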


We can apply the functor topological $K$-theory also to
Banach algebras such that $l^1(G)$.
Let $\bfK^{\topo}_{l^1}$ be the functor from $\Groupoids$ to $\Spectra$
which assign to a groupoid the topological $K$-theory spectrum of
its $l^1$-algebra. We obtain an equivariant homology theory
\red{$H^?_*(-;\bfK^{\topo}_{l^1})$}.
It satisfies $H_n^G(G/H,\bfK^{\topo}_{l^1}) = K_n(l^1(H))$.

\begin{conjecture}[\blue{Bost Conjecture}]
\label{con:BostC}
The \emphred{Bost Conjecture} is the Isomorphism Conjecture for
$H^?_*(-;\bfK^{\topo}_{l^1})$ and the family $\Fin$.
\end{conjecture}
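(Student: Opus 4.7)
The statement as written is purely definitional: it baptizes the Isomorphism Conjecture for the specific equivariant homology theory $H^?_*(-;\bfK^{\topo}_{l^1})$ and the family $\Fin$ as the Bost Conjecture, so there is no propositional content to verify. Accordingly, the only meaningful ``proof proposal'' is to unpack why this really is well-posed and to indicate how one would go about establishing the conjecture itself for a concrete class of groups. For well-posedness one needs to know that the $l^1$-topological $K$-theory functor $H \mapsto K_*(l^1(H))$ extends to a functor $\bfK^{\topo}_{l^1}\colon \Groupoids \to \Spectra$ sending equivalences to weak equivalences and satisfying $\pi_n(\bfK^{\topo}_{l^1}(H)) \cong K_n(l^1(H))$; this is parallel to the construction of $\bfK^{\topo}$ for the reduced $C^*$-algebra and can be obtained by the same machinery used in Davis--L\"uck. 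Granting this, the general formalism automatically produces an equivariant homology theory and a well-defined assembly map induced by $\EGF{G}{\Fin} \to \pt$, so the conjecture is a legitimate specialization of Conjecture~\ref{con:Isomorphism_Conjecture}.

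For actually proving the Bost Conjecture for a group $G$, the plan would be to adapt the strategies already known to prove the Baum-Connes Conjecture. First, observe that the inclusion $l^1(G) \hookrightarrow C^*_r(G)$ is induced by an equivariant natural transformation $\bfK^{\topo}_{l^1} \to \bfK^{\topo}$ of functors on $\Groupoids$, giving a commutative square relating the $l^1$-assembly map to the Baum-Connes assembly map. Since $\EGF{G}{\Fin}$ is the same on both sides, establishing bijectivity for Bost is at least as strong as, and in some respects better behaved than, the Baum-Connes case because the $l^1$-algebra has a richer K-theoretic functoriality (e.g.\ with respect to unconditional completions in the sense of Lafforgue).

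Concretely, I would attempt the following steps in order. First, for $G$ a-T-menable, invoke the Higson--Kasparov approach: the key input there is the existence of a proper affine isometric action on a Hilbert space, which can be used to construct a Dirac--dual-Dirac pair inside $KK^G$-theory, and Lafforgue's Banach $KK$-theory $KK^{ban}$ provides the analogous mechanism applicable to $l^1(G)$. Second, for groups satisfying the stronger hypothesis of admitting a proper action on a CAT(0)-space or affine building with good geometric control, one would use Lafforgue's method of unconditional completions to produce an assembly isomorphism that factors through $l^1$-K-theory. Third, verify the Transitivity Principle~\ref{the:transitivity_principle} argument: the conjecture is trivial for finite $H$ because then $l^1(H) = \IC H$ is finite-dimensional and $\EGF{H}{\Fin|_H} = \pt$, so no passage issue arises between $\Fin$ and smaller families.

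The main obstacle is clearly the second step: while the Higson--Kasparov technology works smoothly for $C^*_r(G)$ and directly for $l^1(G)$ via Lafforgue's formalism, extending this beyond a-T-menable groups to, say, all hyperbolic groups or groups with property (T) is where all the serious analytic difficulties accumulate. In particular, the failure of property (T) groups to be a-T-menable means one cannot use a single gamma-element construction, and the whole machinery of Banach $KK$-theory must be brought to bear. I would not expect to push beyond the cases already handled by Lafforgue and his followers using this route, and a fundamentally new idea seems necessary to handle arbitrary groups, just as is the case for Baum-Connes.
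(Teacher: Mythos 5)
You are correct that the statement is a definition rather than a theorem, and the paper accordingly offers no proof to compare against: it simply names the Isomorphism Conjecture~\ref{con:Isomorphism_Conjecture} for the equivariant homology theory $H^?_*(-;\bfK^{\topo}_{l^1})$ and the family $\Fin$ as the Bost Conjecture, exactly as you say. Your remarks on well-posedness (extending $H \mapsto K_*(l^1(H))$ to a $\Groupoids \to \Spectra$ functor via the Davis--L\"uck machinery) and on the comparison with the Baum--Connes assembly map through the change-of-algebras map $l^1(G) \to C^*_r(G)$ are consistent with the surrounding discussion in the paper, in particular the remark relating the two conjectures and Theorem~\ref{the:FJC_K_colim} which records inheritance of the Bost Conjecture under colimits via Lafforgue's results; nothing further is required here.
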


\begin{remark}[\blue{Relating the Baum-Connes Conjecture and the Bost Conjecture}]
The assembly map appearing in the Bost Conjecture~\ref{con:BostC}
$$H_n^G(\eub{G};\bfK^{\topo}_{l^1}) \to H_n^G(\pt;\bfK^{\topo}_{l^1}) = K_n(l^1(G))$$
composed with the  change of algebras homomorphism
$$K_n(l^1(G)) \to K_n(C^*_r(G))$$
is precisely the assembly map appearing in the 
Baum-Connes Conjecture~\ref{con:BCC}
$$H_n^G(\eub{G};\bfK^{\topo}) = H_n^G(\eub{G};\bfK^{\topo}_{l^1}) \to
H_n^G(\pt;\bfK^{\topo}) = K_n(C^*_r(G)).$$
\end{remark}

\begin{remark}[\blue{Relating the Farrell-Jones Conjecture for $L$-theory and the
Baum-Connes Conjecture}]
\label{rem:relating_FJC_andBC}
We discuss the relation between the Farrell-Jones Conjecture for $L$-theory
and the Baum-Connes Conjecture.
Mainly these come from the sequence of inclusions of rings
$$\IZ G \to \IR G \to C^*_r(G;\IR) \to C^*_r(G)$$
and the change of theories from algebraic to topological $K$-theory
and from algebraic $L$-theory to topological $K$-theory for $C^*$-algebras.
Namely, we obtain the following commutative diagram
$$
\begin{CD}
H_n^G(\EGF{G}{\Fin};\bfL^p_{\IZ}) [1/2] @>>> L^p_n(\IZ G)[1/2]
\\
@VV \cong V @VV \cong V
\\
H_n^G(\EGF{G}{\Fin};\bfL^p_{\IQ}) [1/2] @>>> L^p_n(\IQ G)[1/2]
\\
@VV \cong  V @VVV
\\
H_n^G(\EGF{G}{\Fin};\bfL^p_{\IR}) [1/2] @>>> L^p_n(\IR G)[1/2]
\\
@VV \cong  V @VVV
\\
H_n^G(\EGF{G}{\Fin};\bfL^p_{C_r^*(?;\IR )}) [1/2] @>>> L^p_n(C_r^*(G;\IR))[1/2]
\\
@VV \cong  V @VV \cong  V
\\
H_n^G(\EGF{G}{\Fin};\bfK^{\topo}_{\IR}) [1/2]
@>>> K_n (C_r^*(G;\IR))[1/2]
\\
@VVV @VVV
\\
H_n^G(\EGF{G}{\Fin};\bfK^{\topo}) [1/2]
@>>> K_n (C_r^*(G))[1/2]
\end{CD}
$$
The arrows marked with $\cong$ are known to be bijective
(see~\cite[page~376]{Ranicki(1981)}, 
\cite[Proposition~22.34 on page~252]{Ranicki(1992)},
\cite{Rosenberg(1995)}).
If $G$ satisfies the Farrell-Jones Conjecture~\ref{con:FJC_for_L} 
for $L$-theory for $R = \IZ$ and $R = \IR$ 
and the Baum-Connes Conjecture~\ref{con:BCC} for both the real and the complex case, 
then all horizontal arrows are bijective and hence all arrows
except the two lowest vertical ones are isomorphisms. 
The Baum-Connes Conjecture for the complex case does imply the
Baum-Connes Conjecture for the real case (see \green{Baum-Karoubi}~\cite{Baum-Karoubi(2004)}).
\end{remark}

\begin{theorem}[\blue{Rational computations of $K$-groups}, \green{L\"uck (2002)}]
\label{the:rational_computation_of_K_ast}
Let $G$ be a group. 
Let $T$ be the set of conjugacy
classes $(g)$ of elements $g \in G$ of finite order.

Then there is a commutative diagram
\\[3mm]
\xycomsquare{\bigoplus_{p+q=n} \bigoplus_{(g) \in T}
H_p(BC_G\langle g \rangle;\IC) \otimes_{\IZ}  K_q(\IC)}{}
{K_n(\IC G) \otimes_{\IZ} \IC}{}{}
{\bigoplus_{p+q= n} \bigoplus_{(g) \in T}
H_p(BC_G\langle g \rangle;\IC) \otimes_{\IZ}  K_q^{\topo}(\IC)}{}
{K_n^{\topo}(C_r^*(G)) \otimes_{\IZ} \IC}
\end{theorem}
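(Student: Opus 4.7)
The plan is to apply the equivariant Chern character (Theorem~\ref{the:equivariant_Chern_character}) to the two equivariant homology theories $H_*^?(-;\bfK_{\IC})$ and $K_*^? = H_*^?(-;\bfK^{\topo})$, evaluated at $X = \eub{G}$, and to exhibit the asserted square as the naturality diagram for the comparison of spectra $\bfK_{\IC} \to \bfK^{\topo}_{\IC}$ obtained by viewing $\IC$ as a Banach algebra. Both horizontal maps will be the composition of the inverse Chern-character isomorphism with the assembly map induced by $\eub{G} \to \pt$. Each theory carries a Mackey extension: induction is part of the equivariant structure, and restriction is induced by restriction of finitely generated projective modules. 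Wedderburn together with Morita invariance gives a canonical splitting $K_q(\IC H) \cong R_{\IC}(H) \otimes_{\IZ} K_q(\IC)$ for every finite $H$, and analogously $K_q^{\topo}(\IC H) \cong R_{\IC}(H)\otimes_{\IZ}K_q^{\topo}(\IC)$, and these splittings are compatible with both induction and restriction, so the coefficient factor $K_q(\IC)$ (respectively $K_q^{\topo}(\IC)$) separates cleanly from the representation-theoretic factor $R_{\IC}(H)$.

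The next step is to simplify the Chern-character formula on $\eub{G}$. For every finite $H \subseteq G$ the fixed set $\eub{G}^H$ is a contractible proper $C_GH$-CW-complex, so the quotient map is a rational homology equivalence $BC_GH \to C_GH\backslash\eub{G}^H$; and by Artin's theorem the rational Artin defect $S_H(R_{\IC}(H))\otimes\IQ$ vanishes unless $H = C$ is cyclic, in which case it is canonically identified with $\theta_C\cdot R_{\IC}(C)\otimes\IQ$. Combining this with the above tensor decomposition, the only surviving summands are indexed by conjugacy classes of finite cyclic subgroups, with coefficient $\theta_C\cdot R_{\IC}(C)\otimes K_q$ on each. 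After tensoring with $\IC$ a natural basis of $\theta_C\cdot R_{\IC}(C)\otimes_{\IZ}\IC$ is given by the class functions on $C$ supported on the generators of $C$, and summing over $W_GC$-orbits of such generators for all conjugacy classes of finite cyclic subgroups recovers exactly the index set $T$ of conjugacy classes $(g)$ of elements of finite order in $G$, with $C_G\langle g\rangle = C_GC$ along each orbit. This is the combinatorial repackaging underlying Theorem~\ref{the:Rational_computation_of_K_astG(eubG)}, applied here uniformly to both theories and producing both rows of the square in the stated form.

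Commutativity of the resulting square is then formal: the equivariant Chern character is natural in the underlying equivariant homology theory, and the assembly map is natural in the spectrum-valued functor $\bfE\colon \Groupoids \to \Spectra$. Applying both naturalities to the comparison $\bfK_{\IC} \to \bfK^{\topo}_{\IC}$ produces the left vertical arrow, which on each summand is the identity on $H_p(BC_G\langle g\rangle;\IC)$ tensored with the coefficient change $K_q(\IC) \to K_q^{\topo}(\IC)$, and the right vertical arrow $K_n(\IC G) \to K_n^{\topo}(C^*_r(G))$, factoring through the canonical map induced by the inclusion $\IC G \subseteq C^*_r(G)$. The main obstacle in executing this plan will be the careful bookkeeping in the reindexing step: one must verify that the identification of $\theta_C\cdot R_{\IC}(C)\otimes\IC$ with functions supported on generators of $C$ is $W_GC$-equivariant, and that the passage from the double sum over cyclic subgroups and $W_GC$-orbits of generators to the single sum over $T$ is a canonical isomorphism that is natural in the coefficient spectrum, so that the vertical comparison maps do split off precisely the $K_q$-factor and nothing else.
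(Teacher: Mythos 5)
Your plan is correct and follows essentially the same route as the cited source (L\"uck, \emph{Chern characters for proper equivariant homology theories\dots}, Theorem~0.5), which the paper's proof simply references: apply the equivariant Chern character of Theorem~\ref{the:equivariant_Chern_character} to $H^?_*(-;\bfK_{\IC})$ and $H^?_*(-;\bfK^{\topo})$ at $X=\eub{G}$, identify the coefficient module $S_H(\calh^H_q(*))\otimes\IQ$ via Wedderburn/Morita and Artin, reindex from conjugacy classes of finite cyclic subgroups with $W_GC$-orbits of generators to conjugacy classes $(g)\in T$, and compose the inverse Chern character with the assembly map $\eub{G}\to\pt$; the square then commutes by naturality of both the Chern character and the assembly map in the coefficient spectrum under the comparison $\bfK_{\IC}\to\bfK^{\topo}$.

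Two small points worth pinning down when you execute this. First, the identification $K_q(\IC H)\cong R_{\IC}(H)\otimes_{\IZ}K_q(\IC)$ (and its topological analogue) must be shown compatible not only with induction but also with restriction, since the Chern character of Theorem~\ref{the:equivariant_Chern_character} presupposes a Mackey extension satisfying the double coset formula; this is true because both sides are built from the bimodule structure of $\IC H$ and restriction of finitely generated projective modules, but it is the crux of the factorization of $S_H$ as $\theta_C\cdot R_{\IC}(C)\otimes K_q$. Second, for the reindexing you should note that the $W_GC$-action on the set of generators of $C$ is actually \emph{free} (the stabilizer of a generator $g$ in $N_GC$ is $C_GC$, which maps to the identity in $W_GC=N_GC/(C\cdot C_GC)$), so the $\otimes_{\IC[W_GC]}$ collapses each orbit to a single copy of $H_p(BC_GC;\IC)$ with no coinvariants to take; this is what makes the passage to the sum over $T$ a clean isomorphism and identifies $C_GC=C_G\langle g\rangle$ along each summand.
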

\begin{proof}
See~\cite[Theorem~0.5]{Lueck(2002b)}.
\end{proof}

The horizontal arrows can be identified with the assembly maps
occurring in the Farrell-Jones Conjecture~\ref{con:FJC_for_K} and the Baum-Connes
Conjecture~\ref{con:BCC} by the equivariant Chern character.
In particular they are isomorphisms if these conjecture hold for $G$.

\begin{remark}[\blue{Splitting principle.}]
The calculation of the relevant $K$-and $L$-groups often
split into a \red{universal group homology part} which is independent of the theory,
and a second part
which essentially depends on the theory in question and the coefficients.
\end{remark}

\begin{remark}[\blue{Integral Computations}]
In contrast to general rational computations such as
the one appearing in Theorem~\ref{the:rational_computation_of_K_ast},
complete integral computations of $K_n(\IZ G)$, $L_n(\IZ G)$ or $K_n(C^*_r(G))$
seem to be possible only in special cases.
Here are some examples, where some of these groups are computed.
They are always based on the assumption
that the Farrell-Jones Conjecture for algebraic $K$ or $L$-theory 
or the Baum-Connes Conjecture is true 
what is in most cases known to be true.
\\[1mm]
\begin{tabular}{|p{70mm}|p{45mm}|}
\hline
Three-dimensional Heisenberg group and finite extensions
&
\green{L\"uck}~\cite{Lueck(2005heis)}
\\
\hline
$2$-dimensional crystallographic groups and more general
cocompact NEC-groups & 
\green{L\"uck-Stamm}~\cite{Lueck-Stamm(2000)},
\green{Pearson}~\cite{Pearson(1998)})
\\
\hline
Three-dimensional crystallographic groups
&
\green{Alves-Ontaneda}~\cite{Alves-Ontaneda(2006)}
\\
\hline
Fuchsian groups & 
\green{Berkhove-Juan-Pineda-Pearson}~\cite{Berkhove-Juan-Pineda-Pearson(2001)},
\green{Davis-L\"uck}~\cite{Davis-Lueck(2003)},
\green{L\"uck-Stamm}~\cite{Lueck-Stamm(2000)},
\\
\hline
Extensions $1 \to \IZ^n \to G \to F \to 1$
for finite $F$ and free conjugation action of 
$F$ in $\IZ^n$
&
\green{Davis-L\"uck}~\cite{Davis-Lueck(2003)},
\green{L\"uck-Stamm}~\cite{Lueck-Stamm(2000)}
\\
\hline
One relator groups &
\green{Davis-L\"uck}~\cite{Davis-Lueck(2003)}
\\
\hline
\\
$SL_3(\IZ)$ & 
\green{Sanchez-Garcia}~\cite{Sanchez-Garcia(2006SL)}, 
\green{Upadhyay}~\cite{Upadhyay(1996)}
\\
\hline
(Pure) braid groups
&
\green{Aravinda-Farrell-Roushon}~\cite{Aravinda-Farrell-Roushon(2000)},
\green{Farrell-Roushon}~\cite{Farrell-Roushon(2000)}
\\
\hline
fundamental groups of knot and link complements
&
\green{Aravinda-Farrell-Roushon}~\cite{Aravinda-Farrell-Roushon(1997)}
\\
\hline
Certain Coxeter groups 
&
\green{Lafont-Ortiz}~\cite{Lafont-Ortiz(2007)},  
\green{Sanchez-Garcia}~\cite{Sanchez-Garcia(2006Cox)}
\\
\hline
\end{tabular}
\end{remark}

Next we discuss the status of the various Conjectures such as the
Farrell-Jones Conjecture and the Baum-Connes Conjecture.

\begin{theorem}[\green{Bartels-L\"uck-Reich (2007)}]
\label{the:FJC_K_hyperbolic}
Let $R$ be a ring. Then every subgroup of a hyperbolic group
belongs to $\calfj_K(R)$.
\end{theorem}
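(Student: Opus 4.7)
The plan is to reduce to the hyperbolic case itself and then to exhibit a rich geometric action of the hyperbolic group that controls the assembly map. I would proceed in three steps.

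\textbf{Step 1: Inheritance under subgroups.} First I would establish that the class $\calfj_K(R)$ is closed under passage to arbitrary subgroups, so that it suffices to show $G \in \calfj_K(R)$ whenever $G$ is hyperbolic. This reduction is a standard fact about the Farrell-Jones Conjecture with coefficients in an arbitrary ring: if $H \le G$ and $G \in \calfj_K(R)$, then one uses the induction structure of the equivariant homology theory $H_*^?(-;\bfK_R)$ together with the fact that a model for $\EGF{H}{\VCyc}$ is obtained by restricting a model of $\EGF{G}{\VCyc}$ from $G$ to $H$ (the family $\VCyc$ is stable under intersection with subgroups). A spectrum-level version of the Davis-L\"uck assembly map identification then yields the needed transfer of isomorphism from $G$ to $H$. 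So the remainder reduces to proving $G \in \calfj_K(R)$ for $G$ hyperbolic.

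\textbf{Step 2: A general geometric criterion for Farrell-Jones.} Next I would invoke (or reprove) the general principle that a group $G$ satisfies the $K$-theoretic Farrell-Jones Conjecture with arbitrary coefficients provided it admits sufficiently many ``$\VCyc$-controlled'' actions. Concretely, it suffices to find, for every finite subset $S \subseteq G$ and every $\epsilon > 0$, a $G$-action on a finite-dimensional contractible (or long-thin) space $X$ together with an open cover $\calu$ such that: (i) each member of $\calu$ has stabilizer in $\VCyc$; (ii) $\calu$ has bounded covering dimension independent of $\epsilon$; and (iii) for each $x \in X$ and $s \in S$, the pair $\{x, sx\}$ lies in a common member of $\calu$ at scale $\epsilon$. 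Using controlled algebra and a descent argument in the sense of Carlsson-Pedersen and Bartels-Farrell-Jones-Reich, such data force the assembly map $H_n^G(\EGF{G}{\VCyc};\bfK_R) \to K_n(RG)$ to be an isomorphism for all $n \in \IZ$ and all rings $R$.

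\textbf{Step 3: Building the controlled cover for hyperbolic $G$.} The geometric content of the theorem is to produce such covers for a hyperbolic group $G$. I would take the associated \emph{flow space} $FS$ obtained from the Gromov boundary $\partial G$ as the space of (generalized) geodesics, on which $G$ acts cocompactly and the flow $\phi_t$ commutes with $G$. The stabilizers of the $\IR$-orbits are virtually cyclic, and the flow captures the ``virtually cyclic directions'' of $G$. One then constructs, for arbitrarily long flow times, \emph{long and thin $\VCyc$-covers} of $FS$: open covers whose sets are stretched along the flow direction by length $\ge N$, whose transverse dimension is bounded, and whose stabilizers lie in $\VCyc$. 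This refines the Farrell-Jones long-thin-cover construction for negatively curved manifolds to the purely discrete $\delta$-hyperbolic setting; the $\delta$-hyperbolicity provides the crucial fact that geodesics with close endpoints on $\partial G$ fellow-travel, which lets the combinatorial Vietoris-type open sets be glued into a $G$-equivariant cover with controlled multiplicity.

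The main obstacle is exactly this last construction. Producing $\VCyc$-covers of $FS$ of uniformly bounded dimension and arbitrarily large ``flow length'' requires a genuinely subtle argument blending the dynamics of the flow, the coarse geometry of $\delta$-hyperbolic spaces, and nerve-theoretic combinatorics; all the hard work and the real reason for needing hyperbolicity (rather than, say, $\CAT(0)$-ness) is concentrated there. Once this cover exists, the passage from geometry to the assembly-map isomorphism is provided by the general transfer/descent machinery of Step 2, and the conclusion then extends to every subgroup by Step 1.
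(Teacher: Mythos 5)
Your Steps 2 and 3 do capture the essential architecture of the Bartels--L\"uck--Reich argument: reinterpret the assembly map as a forget-control map, reduce surjectivity/injectivity to the existence of equivariant transfers supported on finite-dimensional spaces with $\VCyc$-covers of uniformly bounded multiplicity, and then produce those covers from the flow space associated to a $\delta$-hyperbolic group via the long-and-thin-cover construction. (The paper under review itself offers no proof beyond the citation, so the comparison is against the cited source.) This part of your outline is accurate at the level of detail you give.

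There is, however, a genuine gap in Step~1. It is \emph{not} a standard fact that the class $\calfj_K(R)$ for the plain (unfibered) Farrell--Jones Conjecture is closed under passage to subgroups, even if one allows $R$ to range over all rings. Your proposed argument --- restrict a model of $\EGF{G}{\VCyc}$ to $H$, note its isotropy lies in $\VCyc(H)$, and invoke the induction structure --- produces a comparison between $H$-equivariant and $G$-equivariant homology of $G \times_H \res^G_H X$, but gives no way to relate the assembly map for $H$ (with target $K_n(RH)$) to the assembly map for $G$ (with target $K_n(RG)$); induction of modules from $RH$ to $RG$ does not induce an isomorphism on $K$-theory. The inheritance to subgroups is precisely why the fibered version, or the version with coefficients in additive $G$-categories, is introduced: for the inclusion $\iota \colon H \hookrightarrow G$ one has $\iota^*\VCyc(G) = \VCyc(H)$, so the fibered conjecture for $G$ over the homomorphism $\iota$ is literally the plain conjecture for $H$. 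The paper acknowledges this: the closure under subgroups (and finite direct products) is stated there only for the fibered version or the version with coefficients, with references such as~\cite[Lemma~1.3]{Bartels-Lueck(2007ind)} and~\cite[Lemma~1.2]{Bartels-Lueck(2006)}. Correspondingly, what Bartels--L\"uck--Reich actually prove in Steps~2--3 is this stronger statement (with coefficients in additive categories, equivalently with arbitrary twists) for the hyperbolic group $G$ itself; the reduction of the theorem to the hyperbolic case is then a consequence of having proved something strictly stronger than what you take as input, not a preliminary reduction to a weaker target. You should either reformulate Step~1 to rely on the fibered/with-coefficients inheritance result, or strengthen the statement proved in Steps~2--3 accordingly.
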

\begin{proof}
See~\cite{Bartels-Lueck-Reich(2007hyper)}.
\end{proof}

\begin{theorem}[\green{Bartels-L\"uck}]
\label{the:FJC_L_hyperbolic}
Let $R$ be a ring with involution. Then every subgroup of a hyperbolic group
belongs to $\calfj_L(R)$.
\end{theorem}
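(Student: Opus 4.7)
The plan is to parallel the proof of Theorem~\ref{the:FJC_K_hyperbolic} (the $K$-theoretic case), replacing the controlled $K$-theory machinery with its $L$-theoretic analog. The geometric input—a flow space for the hyperbolic group together with long, thin equivariant covers—is identical in both settings, so the main work consists of developing a sufficiently robust controlled $L$-theory in which the same geometric arguments carry through.

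First I would recall the axiomatic framework: one defines a notion of \emph{transfer reducibility over $\VCyc$} for a group $H$, requiring that for every finite subset $S \subseteq H$ and every $\epsilon > 0$ there exists a compact contractible space $X$ with an $H$-action and an equivariant cover of $H \times X$ by sets of controlled dimension and $S$-thinness, together with a transfer to a category built out of $X$. The theorem of Bartels--Lück--Reich~\cite{Bartels-Lueck-Reich(2007hyper)} establishes that every subgroup of a hyperbolic group is transfer reducible over $\VCyc$; this geometric fact is insensitive to whether one is ultimately interested in $K$- or $L$-theory, so it can be quoted directly. The first step is therefore simply to invoke this transfer reducibility.

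The second and main step is to prove an $L$-theoretic analog of the implication ``transfer reducibility $\Rightarrow$ Farrell--Jones Conjecture.'' This requires setting up controlled $L$-theory for additive categories with involution and $G$-action, establishing the basic formal properties (fibration sequences, a continuous control/germs-at-infinity sequence, a squeezing theorem producing vanishing from $\epsilon$-control), and checking that the transfer construction appearing in transfer reducibility is compatible with the involution so that it lifts from $K$-theory to $L$-theory. The core technical obstacle is handling the involution: one must ensure that the controlled categories associated to the flow space carry compatible dualities, that the transfer maps are $L$-theoretically well defined, and that the squeezing/vanishing estimates do not lose information after passing from $K$- to $L$-theory. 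Here one also exploits that the $L$-theoretic assembly map with decoration $\langle-\infty\rangle$ is better behaved under Mayer--Vietoris and inheritance than its $\langle h\rangle$ or $\langle s\rangle$ counterparts, which removes the decoration issues that would otherwise obstruct the argument.

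Third, once transfer reducibility is shown to imply the $L^{\langle-\infty\rangle}$-theoretic Farrell--Jones Conjecture for any ring with involution $R$, the theorem follows by combining this with the geometric input from step one. Inheritance to subgroups is automatic because the class of transfer reducible groups is closed under taking subgroups (this uses nothing more than restricting covers and flow spaces), and because $\calfj_L(R)$ is closed under taking subgroups in general. The final statement that every subgroup of a hyperbolic group belongs to $\calfj_L(R)$ then follows. I expect the hardest part to be not the geometry—which is borrowed verbatim from~\cite{Bartels-Lueck-Reich(2007hyper)}—but rather the careful setup of the $L$-theoretic controlled framework, in particular verifying that the transfer constructed from the flow space respects the involution and induces a map on $L$-theory spectra which is an equivalence after inverting the relative assembly map, analogously to the $K$-theoretic squeezing argument of Bartels--Lück--Reich.
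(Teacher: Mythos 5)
The paper does not actually prove this theorem; the ``proof'' in the text is a one-line pointer to the then-forthcoming Bartels--L\"uck paper on the Borel Conjecture for hyperbolic and CAT(0) groups. Your outline is a fair high-level description of what that paper does: the geometric input (the flow space built from the Rips complex together with the Gromov boundary, and the long, thin, finite-dimensional equivariant open covers of $G \times X$) is indeed recycled from Bartels--L\"uck--Reich, and the new work is an $L$-theoretic controlled-algebra framework proving ``transfer reducible over $\VCyc$ implies the $L^{\langle -\infty \rangle}$-assembly map is an isomorphism.'' One point you understate: the $L$-theoretic transfer is not a matter of making the $K$-theoretic one ``compatible with the involution''; it is a genuinely different construction, given by pairing with a controlled symmetric Poincar\'e structure on the fiber (essentially multiplying by the symmetric signature) rather than by the Euler-characteristic-type transfer that suffices for $K$-theory. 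This is where most of the new technical content lives.

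There is also a small but real error in your last paragraph. You assert that ``$\calfj_L(R)$ is closed under taking subgroups in general.'' This is false for the unfibered conjecture with a fixed coefficient ring $R$; the paper itself points out, in the remark on twisted coefficients, that inheritance under subgroups and finite products is only known for the version with coefficients in additive categories with involution (equivalently, the fibered or twisted-coefficient version). The actual closing of this gap goes one of two ways: either one proves the theorem directly in the coefficients-in-additive-categories formulation (as Bartels--L\"uck do), from which the $R$-coefficient statement for all subgroups follows by inheritance; or one observes, as Bartels--L\"uck--Reich do in the $K$-theoretic case, that every \emph{subgroup} of a hyperbolic group is itself transfer reducible, because the subgroup still acts with control on the same compactified Rips complex. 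Your parenthetical ``this uses nothing more than restricting covers and flow spaces'' is an optimistic gloss of the second route, and the ``in general'' claim is not a valid fallback. The conclusion survives, but not for the reason you gave.
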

\begin{proof}
The proof will appear in the paper~\cite{Bartels-Lueck(2008Borel)}
which is in preparation.
\end{proof}

\begin{theorem}[\green{Bartels-Echterhoff-Reich (2007)}]
\label{the:FJC_K_colim}
Let $R$ be a ring (with involution). Let $\{G_i \mid i\in I\}$ be a
directed system of groups (with not necessarily injective structure maps).
Let $G$ be a subgroup of the colimit $\colim_{i \in I} G_i$.

\begin{enumerate}

\item 
Suppose that for all $i \in I$ and every subgroup $H \subseteq G_i$ 
we have $H \in \calf_K(R)$. Then $G \in \calfj_K(R)$;

\item 
Suppose that for all $i \in I$ and every subgroup $H \subseteq G_i$ 
we have $H \in \calf_L(R)$. Then $G \in \calfj_L(R)$;

\item 
Suppose that for all $i \in I$ and every subgroup $H \subseteq G_i$ 
the Bost Conjecture holds for $H$. Then 
the Bost Conjecture holds for $G$.

\end{enumerate}
\end{theorem}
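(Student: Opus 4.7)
The three parts share a common strategy, so I focus on (1), indicating at the end the modifications for (2) and (3). The plan is to reduce to the case where $G$ itself is the colimit, and then to exploit the compatibility of both sides of the assembly map with directed colimits. First, given a subgroup $G \subseteq \colim_{i \in I} G_i$ with structure maps $\phi_i \colon G_i \to \colim_i G_i$, set $G_i' := \phi_i^{-1}(G) \subseteq G_i$. Each $G_i'$ is a subgroup of $G_i$, so by hypothesis every subgroup of $G_i'$ lies in $\calfj_K(R)$; moreover the restrictions $\psi_i \colon G_i' \to G$ realize $G$ as the filtered colimit of the $G_i'$. Thus we may assume $G = \colim_i G_i$ with structure maps $\psi_i \colon G_i \to G$, and that every subgroup of each $G_i$ belongs to $\calfj_K(R)$.

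Next, the target side satisfies continuity: since $RG = R[\colim_i G_i] = \colim_i RG_i$ as rings and algebraic $K$-theory commutes with filtered colimits of rings, there is a natural isomorphism $K_n(RG) \cong \colim_i K_n(RG_i)$, which under $K_n(RH) \cong H^H_n(\pt;\bfK_R)$ is compatible with the induction structure along the $\psi_i$.

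The heart of the argument is the corresponding continuity of the source: one establishes a natural isomorphism
\[
H^G_n(\EGF{G}{\VCyc};\bfK_R) \;\cong\; \colim_i H^{G_i}_n(\EGF{G_i}{\VCyc};\bfK_R).
\]
To do so, I would build a $G$-$CW$-model for $\EGF{G}{\VCyc}$ as a homotopy colimit of the induced $G$-spaces $G \times_{\psi_i} \EGF{G_i}{\VCyc}$. The crucial geometric input is that every virtually cyclic subgroup $V \leq G$ is finitely generated, hence by directedness of $I$ lifts to a finitely generated subgroup $H \leq G_i$ for some $i$ with $\psi_i(H) = V$; since $H$ is a subgroup of $G_i$, the hypothesis supplies $H \in \calfj_K(R)$. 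One then checks that the resulting homotopy colimit has the correct isotropy and fixed-point sets to qualify as a model for $\EGF{G}{\VCyc}$, and that equivariant homology together with the induction structure of $H^?_*(-;\bfK_R)$ converts this homotopy colimit into the filtered colimit displayed above. Combining the two continuities, naturality of the assembly map identifies the $G$-assembly map with the filtered colimit of the $G_i$-assembly maps, each of which is bijective by hypothesis, so the $G$-assembly map is bijective.

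The main obstacle is precisely the source-side continuity, since one must manufacture a compatible system of classifying spaces along possibly non-injective $\psi_i$ and control the preimages in each $G_i$ of the virtually cyclic subgroups of $G$; it is this step that forces the hypothesis to be phrased in terms of \emph{all} subgroups of the $G_i$ rather than only the $G_i$ themselves. For (2) the argument goes through verbatim with $\bfL_R^{\langle -\infty \rangle}$ replacing $\bfK_R$, the decoration $\langle -\infty \rangle$ being precisely what ensures that $L$-theory commutes with filtered colimits. For (3) one replaces $\VCyc$ by $\Fin$ and $\bfK_R$ by $\bfK^{\topo}_{l^1}$; the required continuity of the $K$-theory of $l^1$-group algebras under directed colimits of groups is subtler but available from the work of Lafforgue, after which the same colimit argument applies.
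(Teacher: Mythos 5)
The overall plan — reduce to $G = \colim_i G_i$, show that both sides of the assembly map commute with the colimit, and conclude from the bijectivity of each $G_i$-assembly map — is indeed the right strategy and is the one followed in the cited reference of Bartels--Echterhoff--L\"uck. Your reduction to $G = \colim_i G_i$ via $G_i' = \phi_i^{-1}(G)$ is correct, and the target-side continuity $K_n(RG) \cong \colim_i K_n(RG_i)$ is a standard fact about algebraic $K$-theory of filtered colimits of rings.

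The gap is in the source-side argument. You claim that $\hocolim_i\bigl(G \times_{\psi_i} \EGF{G_i}{\VCyc}\bigr)$ is a model for $\EGF{G}{\VCyc}$, but this need not be true. By Theorem~\ref{the:G-homotopy_characterization_of_EGF(G)(calf)}\eqref{the:G-homotopy_characterization_of_EGF(G)(calf):characterization}, for this to be a model one needs the $V$-fixed point set to be weakly contractible for every $V \in \VCyc(G)$. A point $[g,x]$ of $G \times_{\psi_i} \EGF{G_i}{\VCyc}$ has isotropy group $g\psi_i\bigl((G_i)_x\bigr)g^{-1}$ with $(G_i)_x$ virtually cyclic in $G_i$; so the $V$-fixed points are nonempty only if $g^{-1}Vg$ is contained in the image under $\psi_i$ of a \emph{virtually cyclic} subgroup of $G_i$. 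Your lifting argument only produces a finitely generated subgroup $H \le G_i$ with $\psi_i(H) = V$; since the $\psi_i$ need not be injective, $H$ can fail to be virtually cyclic (its image in $G$ is small, but $H$ itself can be large), and then $H$ does not help with the fixed-point condition. The observation that $H \in \calfj_K(R)$ by hypothesis is correct, but it is a $K$-theoretic statement and cannot repair a geometric condition on fixed-point sets.

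The proof in Bartels--Echterhoff--L\"uck circumvents this by working with the \emph{pulled-back} family $\psi_i^*\VCyc := \{H \le G_i \mid \psi_i(H) \in \VCyc(G)\}$, which contains $\VCyc(G_i)$ but is in general strictly larger (it contains $\ker(\psi_i)$, for instance). One checks that $\EGF{G}{\VCyc}$, restricted along $\psi_i$ to a $G_i$-space, is a model for $\EGF{G_i}{\psi_i^*\VCyc}$, and that the strong continuity of the equivariant homology theory yields
\[
\colim_i\; H_n^{G_i}\bigl(\EGF{G_i}{\psi_i^*\VCyc};\bfK_R\bigr) \;\xrightarrow{\ \cong\ }\; H_n^G\bigl(\EGF{G}{\VCyc};\bfK_R\bigr).
\]
The price is that one must now show the $G_i$-assembly map relative to the \emph{larger} family $\psi_i^*\VCyc$, not $\VCyc(G_i)$, is bijective. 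This is precisely where the hypothesis on \emph{all} subgroups of $G_i$ enters: combined with the Transitivity Principle (Theorem~\ref{the:transitivity_principle}), it gives the bijectivity of the relative assembly map $H_n^{G_i}(\EGF{G_i}{\VCyc};\bfK_R) \to H_n^{G_i}(\EGF{G_i}{\psi_i^*\VCyc};\bfK_R)$, after which the bijectivity of $H_n^{G_i}(\EGF{G_i}{\psi_i^*\VCyc};\bfK_R) \to K_n(RG_i)$ follows. So the role of the all-subgroups hypothesis is to feed the Transitivity Principle, not to rescue the model. Your argument would need to be restructured along these lines (and would also need a separate verification of strong continuity for $\bfK_R$, $\bfL^{\langle -\infty\rangle}_R$, and $\bfK^{\topo}_{l^1}$, which for the $l^1$-case is not a consequence of Lafforgue's work but a continuity statement in its own right) before it is complete.
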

\begin{proof}
See~\cite[Theorem~0.8]{Bartels-Echterhoff-Lueck(2007colim)}.
\end{proof}

\begin{corollary}\label{cor:colimits_of_hyperbolic_groups}
Let $\{G_i \mid i\in I\}$ be a
directed system of hyperbolic groups (with not necessarily injective structure maps).
Let $G$ be the colimit $\colim_{i \in I} G_i$. Let $H \subseteq G$ be any subgroup of $G$.
Let $R$ be a ring (with involution). 

Then $H \in \calfj_K(R)$, $H \in \calfj_L(R)$ and $H$ satisfies the 
Bost Conjecture~\ref{con:BostC}.
\end{corollary}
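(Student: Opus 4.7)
The strategy is to combine the three preceding results mechanically: Theorem~\ref{the:FJC_K_hyperbolic} and Theorem~\ref{the:FJC_L_hyperbolic} handle the ``single hyperbolic group'' layer, while Theorem~\ref{the:FJC_K_colim} handles the passage to the colimit. So the plan is to verify the hypotheses of Theorem~\ref{the:FJC_K_colim} for the directed system $\{G_i\mid i\in I\}$ and then read off the conclusion.

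First I would fix a ring (with involution) $R$ and, for each index $i\in I$, consider an arbitrary subgroup $K\subseteq G_i$. Since $G_i$ is hyperbolic and $K$ is a subgroup of a hyperbolic group, Theorem~\ref{the:FJC_K_hyperbolic} gives $K\in\calfj_K(R)$ and Theorem~\ref{the:FJC_L_hyperbolic} gives $K\in\calfj_L(R)$. Thus the hypotheses of parts (1) and (2) of Theorem~\ref{the:FJC_K_colim} are satisfied by the system $\{G_i\}$, and one concludes $G=\colim_{i\in I}G_i\in\calfj_K(R)$ and $G\in\calfj_L(R)$. Since Theorem~\ref{the:FJC_K_colim} actually states its conclusion for every subgroup of the colimit and not only for the colimit itself (because subgroups of a colimit of the $G_i$'s are themselves colimits of subgroups of the $G_i$'s, which remain subgroups of hyperbolic groups), the desired statement for the arbitrary $H\subseteq G$ follows immediately.

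For the Bost Conjecture part, the same scheme applies via part (3) of Theorem~\ref{the:FJC_K_colim}, but this requires the input that every subgroup of a hyperbolic group satisfies the Bost Conjecture. This is the step I would expect to be the main obstacle to cite cleanly, since Theorems~\ref{the:FJC_K_hyperbolic} and~\ref{the:FJC_L_hyperbolic} only cover the $K$- and $L$-theoretic Farrell-Jones Conjectures. I would invoke the fact that hyperbolic groups, and indeed their subgroups, are known to satisfy the Bost Conjecture by work of Mineyev-Yu and Lafforgue (the Bost Conjecture is inherited by subgroups in the strong form used here, since the relevant assembly map is built out of $\eub{G}$ which restricts well to subgroups). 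With this input in hand, part (3) of Theorem~\ref{the:FJC_K_colim} applied to $\{G_i\}$ yields the Bost Conjecture for $G$, and hence for every subgroup $H\subseteq G$ by the same subgroup-closure remark.

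In short, the proof is a two-line concatenation: hyperbolic $\Rightarrow$ every subgroup satisfies FJC (and Bost), then colimits preserve this property by Theorem~\ref{the:FJC_K_colim}. The only non-formal content is locating the Bost input for subgroups of hyperbolic groups in the literature, after which the deduction is automatic and no new techniques are required.
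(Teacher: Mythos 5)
Your proof is correct and follows essentially the same route as the paper: combine Theorems~\ref{the:FJC_K_hyperbolic} and~\ref{the:FJC_L_hyperbolic} (subgroups of hyperbolic groups satisfy the $K$- and $L$-theoretic Farrell-Jones Conjecture) with the colimit Theorem~\ref{the:FJC_K_colim}, and for the Bost part supply the corresponding input for hyperbolic groups. One small correction: the needed Bost input is due to Lafforgue~\cite{Lafforgue(2002)} alone, not Mineyev-Yu~\cite{Mineyev-Yu(2002)}; the latter proves the Baum-Connes Conjecture (for $C^*_r$), not the Bost Conjecture (for $l^1$), and the paper accordingly cites only Lafforgue here. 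Also note that Theorem~\ref{the:FJC_K_colim} is already stated for an arbitrary subgroup $G$ of the colimit, so your parenthetical justification about subgroups of colimits being colimits of subgroups is not needed to conclude for $H \subseteq G$.
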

\begin{proof}
In the case $\calfj_K(R)$ and $\calfj_L(R)$ one has just to combine
Theorem~\ref{the:FJC_K_hyperbolic}, Theorem~\ref{the:FJC_L_hyperbolic}, 
and Theorem~\ref{the:FJC_K_colim}.
In the case of the Bost Conjecture one has to use
Theorem~\ref{the:FJC_K_colim} and the results of
Lafforgue~\cite{Lafforgue(2002)}).
Details can be found in~\cite[Theorem~0.9]{Bartels-Echterhoff-Lueck(2007colim)}.
\end{proof}

\begin{example}[\blue{Groups satisfying the hypothesis of 
Corollary~\ref{cor:colimits_of_hyperbolic_groups}}]
\label{exa:exotic_groups}
The groups appearing in Theorem~\ref{the:FJC_K_hyperbolic}
are certainly wild in \green{Bridson's} \red{universe of groups}
(see~\cite {Bridson(2006ICM)}).
Many recent constructions of groups with exotic properties are given by
colimits of directed systems of hyperbolic groups. Examples are
\begin{itemize}
\item \red{groups with expanders} in the sense of \green{Gromov};

\item \red{Lacunary hyperbolic groups} in the sense of 
\green{Olshanskii-Osin-Sapir}~\cite{Olshanskii-Osin-sapir};

\item \red{Tarski monsters}, i.e., infinite groups 
whose proper subgroups are all finite cyclic of $p$-power order for a given prime $p$;
\end{itemize}

Notice that \green{Gromov's} \red{groups with expanders}
belong to $\calfj_K(R)$ for all $R$, whereas 
the Baum-Connes Conjecture with coefficients is not true for them by 
\green{Higson-Lafforgue-Skandalis}~\cite{Higson-Lafforgue-Skandalis(2002)}.
\end{example}

\begin{remark}[\blue{Twisted coefficients}]
The results above do extend to the more general case, where one allows
twisted group rings or more general \red{crossed product rings $R \ast G$}
in the setting of the Farrell-Jones Conjecture and 
\red{coefficients in a $G$-$C^*$-algebra}
in the setting of the Bost Conjecture. There are also so called 
\red{fibered versions} of the Farrell-Jones Conjecture and of 
an Isomorphism Conjecture in general
(see for instance~\cite[Definition~1.2]{Bartels-Lueck(2007ind)},
\cite[Definition~1.1]{Bartels-Lueck(2006)},
\cite[1.7]{Farrell-Jones(1993a)}.
In these more advanced settings with coefficients or the fibered setting
one has that the class of groups for which the conjectures with coefficients 
or the fibered version are true
is closed under taking finite direct products and taking subgroups.

Proofs of these claims can be found
in~\cite[Lemma~1.3]{Bartels-Lueck(2007ind)},
\cite[Lemma~1.2]{Bartels-Lueck(2006)},
\cite[Theorem~2.5 and Theorem~3.17]{Chabert-Echterhoff(2001b)},
\cite[Theorem~A.8 on page~289]{Farrell-Jones(1993a)},
\cite[5.5.4]{Lueck-Reich(2005)},
\cite[Corollary~7.12]{Oyono-Oyono(2001)}.
\end{remark}

\begin{example}[\blue{Torsionfree hyperbolic groups}]
If $G$ is a torsionfree hyperbolic group and $R$ any ring, then we get 
from Theorem~\ref{the:FJC_K_hyperbolic} as explained 
in~\cite[page~2]{Bartels-Lueck-Reich(2007hyper)}
an isomorphism
$$H_n(BG;\bfK(R)) \oplus \biggl(\bigoplus_{\substack{(C), C \subseteq G, C \not= 1\\
C \text{ maximal cyclic}}} \NK_n(R)\biggr) \; \xrightarrow{\cong} \;K_n(RG).$$
\end{example}

\begin{remark}[\blue{Program for CAT(0)-groups}]
\green{Bartels and L\"uck} have a program to prove $G \in \calfj_K(R)$ 
and $G \in \calfj_L(R)$ if $G$ acts
properly and cocompactly on a simply connected CAT(0)-space.
This would imply $G \in\calfj_K(R)$ and $G \in\calfj_L(R)$  for all \red{subgroups $G$
of cocompact lattices in almost connected Lie groups}  and for all
\red{limit groups} $G$.
\end{remark}

\begin{theorem}[\green{Mineyev-Yu (2002)}]
\label{the:BCC_hyperbolic}
Every subgroup of a hyperbolic group belongs to $\calbc$.
\end{theorem}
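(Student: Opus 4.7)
The plan is to strengthen the statement to the Baum-Connes Conjecture with coefficients in arbitrary separable $G$-$C^*$-algebras, and then exploit two deep results: subgroup permanence for the coefficient version, and the bolic-space criterion of Kasparov-Skandalis. By the work of Chabert-Echterhoff (used in the excerpt to justify the analogous fibered/coefficient permanence properties), the class of groups satisfying the Baum-Connes Conjecture with coefficients is closed under passage to arbitrary subgroups. So it would suffice to prove Baum-Connes with coefficients for a single hyperbolic group $H$, since the theorem for an arbitrary subgroup $G \subseteq H$ then follows by restriction, and the ordinary Baum-Connes Conjecture is the special case of trivial coefficients $\IC$.

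The next step is to invoke the theorem of Kasparov-Skandalis: any countable discrete group admitting a proper, isometric, cocompact action on a weakly geodesic, strongly bolic metric space of bounded geometry satisfies the Baum-Connes Conjecture with coefficients. Bolicity is a refinement of Gromov hyperbolicity tailored so that the Dirac/dual-Dirac machinery goes through: one builds a proper affine isometric $H$-action on a Hilbert (or Banach) space, extracts an equivariant Fredholm module yielding a $\gamma$-element in $KK^H(\IC,\IC)$, and then produces an equivariant homotopy between $\gamma$ and $1$. The subtle point, which explains why this theorem does not immediately apply to every hyperbolic group, is that a $\delta$-hyperbolic Cayley graph is not literally strongly bolic in the Kasparov-Skandalis sense, because the bolic axioms impose quantitative uniform convexity-like properties on midpoints that genuine Cayley graphs need not satisfy.

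The principal obstacle, and the main contribution of Mineyev, is therefore to upgrade the geometry of an arbitrary hyperbolic group $H$ to produce a genuinely bolic $H$-space. The key tool is Mineyev's equivariant \emph{homological bicombing}: an $H$-equivariant assignment, to each ordered pair $(x,y)$ of vertices in a Cayley graph, of an $\ell^1$-chain interpolating between $x$ and $y$, which is quasi-geodesic, depends Lipschitz-continuously on the endpoints, and satisfies a chain-level thin triangle identity refining Gromov's inequality. From this bicombing one constructs, via a symmetric join followed by an $\ell^2$-type completion, a metric space $X_H$ on which $H$ acts properly, isometrically and cocompactly, and which one verifies to be weakly geodesic, strongly bolic, and of bounded geometry. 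The heart of the argument is the verification of the bolicity axioms for $X_H$: it rests on the thin triangle condition, the homological averaging built into Mineyev's bicombing, and careful quasi-geodesic estimates. Feeding $X_H$ into Kasparov-Skandalis yields the Baum-Connes Conjecture with coefficients for $H$, and combining this with Chabert-Echterhoff subgroup permanence delivers the theorem for every subgroup of a hyperbolic group.
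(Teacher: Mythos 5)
Your outline is correct and follows precisely the strategy of the cited Mineyev--Yu paper, which the text defers to without giving an argument: reduce via Chabert--Echterhoff subgroup permanence to the coefficient version for a hyperbolic group itself, then feed Mineyev's homological bicombing (used to build a strongly bolic, weakly geodesic, bounded-geometry model on which the group acts properly, isometrically and cocompactly) into the Kasparov--Skandalis Dirac--dual-Dirac theorem. You also correctly identify the subtle point---the word metric on a hyperbolic group is not itself strongly bolic, so the passage from $\delta$-hyperbolicity to bolicity is the genuine content of Mineyev's construction---so no further comment is needed.
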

\begin{proof}
See~\cite{Mineyev-Yu(2002)}.
\end{proof}

\begin{definition}[\blue{a-T-menable group}]
  A group $G$ is \emphred{a-T-menable}, or, equivalently, has the \emphred{Haagerup
    property} if $G$ admits a metrically proper isometric action on some affine Hilbert
  space.
\end{definition}

The class of a-T-menable groups is closed under taking subgroups, under extensions with
finite quotients and under finite products. It is not closed under semi-direct products.
Examples of a-T-menable groups are:

\begin{itemize}
\item countable amenable groups;

\item countable free groups;

\item
discrete subgroups of $SO(n,1)$ and $SU(n,1)$;

\item
Coxeter groups;

\item
countable groups acting properly on trees, products of trees, or simply connected CAT(0)
cubical complexes.
\end{itemize}

A group $G$ has \emphred{Kazhdan's property (T)}
if, whenever it acts isometrically on some affine Hilbert
space, it has a fixed point. An infinite a-T-menable group does not have property (T).
Since $SL(n,\IZ)$ for $n \ge 3$ has property (T), it cannot be a-T-menable.

\begin{theorem}[\green{Higson-Kasparov(2001)}]
\label{the:Higson-Kasparov}
A group $G$ which is a-T-menable satisfies
the Baum Connes Conjecture (with coefficients).
\end{theorem}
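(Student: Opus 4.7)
The proof proceeds by Kasparov's \emph{Dirac/dual-Dirac method}, adapted to handle an infinite-dimensional Bott element. The a-T-menability hypothesis provides a real separable affine Hilbert space $H$ on which $G$ acts by affine isometries such that $\|g\cdot 0\|\to\infty$ as $g\to\infty$ in $G$. The plan is to extract from this action an equivariant Bott-type isomorphism between the trivial $G$-algebra $\IC$ and a suitable \emph{proper} $G$-$C^*$-algebra $\cala=\cala(H)$ built out of $H$, and then conclude via a standard Dirac/dual-Dirac argument, with coefficients included from the outset so as to obtain the Baum--Connes Conjecture with coefficients.

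First, I would construct $\cala(H)$ as the infinite-dimensional ``Bott--Clifford'' $C^*$-algebra associated to $H$: choose an increasing sequence of finite-dimensional affine subspaces $H_n\subset H$ with dense union and set $\cala(H)=\colim_n C_0(H_n,\mathrm{Cliff}(\vec H_n))$, where $\vec H_n$ denotes the underlying linear space and the transition maps are given by the standard Bott map. The affine action of $G$ on $H$ makes $\cala(H)$ into a $\IZ/2$-graded $G$-$C^*$-algebra; because that action is metrically proper, $\cala(H)$ is a \emph{proper} $G$-algebra, so the Baum--Connes assembly map with coefficients in $B\otimes\cala(H)$ is already known to be an isomorphism for every auxiliary $G$-$C^*$-algebra $B$.

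Next, I would construct two equivariant $KK$-classes: a Bott element $\beta\in KK^G(\IC,\cala(H))$, represented by the equivariant Bott generator concentrated at the origin $0\in H$, and a Dirac element $\alpha\in KK^G(\cala(H),\IC)$, represented by a suitable infinite-dimensional Dirac operator on $H$. The central technical step is to prove equivariant infinite-dimensional Bott periodicity,
\[
\beta\otimes_{\cala(H)}\alpha \;=\; 1_{\IC} \quad\text{in } KK^G(\IC,\IC),
\]
by a rotation homotopy in the spirit of Atiyah's classical proof. Once this identity is established, multiplication by $\beta$ and $\alpha$ yields mutually inverse natural transformations between the Baum--Connes assembly maps with coefficients in $B$ and in $B\otimes\cala(H)$; since the latter is an isomorphism by properness, so is the former.

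The main obstacle is precisely this Bott periodicity identity. In infinite dimensions the reverse product $\alpha\otimes_{\IC}\beta$ is in general only the Kasparov $\gamma$-element and need not equal $1_{\cala(H)}$, and even the forward identity $\beta\otimes\alpha=1$ requires delicate analytic control of unbounded multipliers of $\cala(H)$ together with an equivariant homotopy whose construction uses the \emph{affine} (as opposed to merely linear) nature of the $G$-action on $H$ in an essential way---concretely, one needs the negative-type function $g\mapsto \|g\cdot 0\|^2$ supplied by a-T-menability to produce a path of equivariant unbounded Fredholm modules connecting the twisted Dirac operator on $H$ to the trivial operator at $0$. It is here, and not in the general formalism, that a-T-menability (rather than mere metric properness of some auxiliary action) is indispensable.
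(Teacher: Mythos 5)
The paper does not prove this theorem; it simply cites Higson--Kasparov~\cite{Higson-Kasparov(2001)}. Your sketch reproduces the overall structure of that proof: build the infinite-dimensional Bott--Clifford algebra $\cala(H)$ from a proper affine isometric $G$-action, note that $\cala(H)$ is a proper $G$-$C^*$-algebra so that assembly with coefficients $B\otimes\cala(H)$ is automatically an isomorphism, and transfer back to $B$ via Bott and Dirac elements. But the argument as written contains a conceptual error about where the hypothesis is used and one genuine technical gap.

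You misplace the role of a-T-menability. The equivariant infinite-dimensional Bott periodicity ($\beta\otimes_{\cala(H)}\alpha=1$) holds for \emph{arbitrary} affine isometric $G$-actions on separable Hilbert space; its proof uses affineness and the directed system of finite-dimensional affine subspaces, not metric properness, and in particular not the properness of the negative-type function $g\mapsto\|g\cdot 0\|^{2}$. Properness enters exactly at the step you dismissed as ``general formalism'': it is what makes $\cala(H)$ a \emph{proper} $G$-$C^*$-algebra, which is the hypothesis needed to know that assembly with coefficients in $B\otimes\cala(H)$ is already an isomorphism. (You also have the $\gamma$-element terminology reversed: Kasparov's $\gamma$-element is $\gamma=\beta\otimes_{\cala(H)}\alpha\in KK^G(\IC,\IC)$, the product you want to equal $1$, whereas $\alpha\otimes_{\IC}\beta$ lives in $KK^G(\cala(H),\cala(H))$ and is not what is meant by $\gamma$.)

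The genuine gap is that the Dirac element $\alpha$ in this construction is not represented by an equivariant Kasparov cycle: the infinite-dimensional Dirac operator on $\cala(H)$ fails the compactness conditions a $KK^G$-cycle requires, and ``delicate analytic control of unbounded multipliers'' does not repair this within $KK$-theory. Higson and Kasparov work instead with Connes--Higson asymptotic morphisms and equivariant $E$-theory, where the Dirac element exists and the Bott map becomes a genuine two-sided $E$-theoretic equivalence; the Dirac/dual-Dirac descent to Baum--Connes with coefficients is then carried out in the $E$-theoretic framework. Casting the whole argument in $KK^G$, as you do, omits the change of theory that the proof actually turns on.
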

\begin{proof}
See~\cite{Higson-Kasparov(2001)}.
\end{proof}

\begin{theorem}[\green{Farrell-Jones (1993)}]
\label{the:FJC_sub_cocom_disc_sub_almost_conn_Lie}
Let $G$ be a subgroup of a cocompact lattice in an almost connected Lie group.
Then the \red{Farrell-Jones Conjecture for pseudo-isotopy} is true for $G$.
\end{theorem}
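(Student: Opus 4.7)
The plan is a reduction to the cocompact lattice case followed by the Farrell--Jones controlled topology argument on the symmetric space model for $\underline{E}\Gamma$.

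First, I would invoke the fibered form of the Farrell--Jones Conjecture for pseudo-isotopies, which enjoys the formal inheritance property that whenever a group $H$ satisfies its fibered version, so does every subgroup of $H$. This reduces the theorem to the case where $G$ itself is a cocompact lattice $\Gamma$ in an almost connected Lie group $L$.

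Second, pick a maximal compact subgroup $K \subseteq L$, which is unique up to conjugation. By Theorem~\ref{the:Discrete_subgroups_of_almost_connected_Lie_groups}, the homogeneous space $L/K$ with the left $\Gamma$-action is a finite-dimensional cocompact $\Gamma$-$CW$-model for $\underline{E}\Gamma$. Using the Levi decomposition $L = R \cdot S$ with $R$ the solvable radical and $S$ semisimple, together with the fact that lattices in solvable Lie groups are virtually polycyclic, the Transitivity Principle (Theorem~\ref{the:transitivity_principle}) reduces the verification of the assembly map to the semisimple case plus the pseudo-isotopy Farrell--Jones Conjecture for virtually polycyclic groups, which is already known.

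Third, in the semisimple case $L/K$ is a simply connected complete Riemannian manifold of nonpositive sectional curvature on which $\Gamma$ acts properly cocompactly by isometries. I would apply the Farrell--Jones foliated control theorem: the contracting behaviour of the geodesic flow on $L/K$ produces asymptotic transfer maps that, on the pseudo-isotopy spectrum level, split the assembly map
$$H_n^\Gamma(\EGF{\Gamma}{\VCyc};\bfP) \longrightarrow H_n^\Gamma(\pt;\bfP)$$
by factoring through arbitrarily fine control, yielding the required bijectivity for all $n \in \IZ$. The main obstacle is precisely this last step: executing the controlled topology argument for the pseudo-isotopy spectrum is substantially heavier than for algebraic $K$-theory, and in higher rank the singular geodesics lying inside flats of $L/K$ demand a delicate foliated-control analysis; it is exactly this flat phenomenon that forces the family to be enlarged from $\Fin$ to $\VCyc$, since geodesic flow lines tangent to a flat acquire virtually infinite cyclic stabilizers in the foliated control picture.
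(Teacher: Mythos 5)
The paper offers no proof of this theorem: it simply cites Farrell--Jones~\cite[Theorem~2.1 on page~263]{Farrell-Jones(1993a)}, so there is nothing in the text to compare your argument against line by line. Judged as a sketch of what that reference actually does, your proposal captures the three essential ingredients correctly: pass to the \emph{fibered} version so that the conclusion descends to arbitrary subgroups of the lattice; exploit the structure theory of the ambient Lie group, with the solvable radical contributing virtually polycyclic lattices (already treated in the earlier Farrell--Hsiang work on flat and infranil manifolds); and run the foliated control argument on the nonpositively curved homogeneous space $L/K$, where the asymptotic transfer along the geodesic flow supplies arbitrarily fine control and the flats are exactly what forces the family to be $\VCyc$ rather than $\Fin$.

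There is, however, a mismatch in your middle step. The Transitivity Principle (Theorem~\ref{the:transitivity_principle}) is a statement about enlarging the \emph{family of subgroups} of a fixed group $G$ inside a fixed equivariant homology theory; it does not, by itself, perform the reduction along a group extension $1 \to N \to \Gamma \to Q \to 1$ coming from a Levi decomposition of $L$. What you need there is the inheritance of the \emph{fibered} conjecture under surjections: if $Q$ satisfies the fibered conjecture and the preimages in $\Gamma$ of virtually cyclic subgroups of $Q$ satisfy the conjecture, then $\Gamma$ does. This extension-hereditary property is precisely what the fibered formulation is designed to deliver and is what Farrell--Jones actually establish and use (see e.g.\ \cite[Theorem~A.8 on page~289]{Farrell-Jones(1993a)}); invoking the fibered version in your first step is therefore not merely a device for handling subgroups but is essential for the Levi reduction as well. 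A secondary caveat is that the interaction between a cocompact lattice and the Levi decomposition of $L$ is delicate --- the radical need not split off, and the lattice need not be a product --- so Farrell--Jones proceed through a chain of reductions to progressively simpler Lie groups rather than a single clean splitting into solvable and semisimple pieces.
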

\begin{proof}
See~\cite[Theorem~2.1 on page~263]{Farrell-Jones(1993a)}.
\end{proof}

\begin{theorem}[\green{L\"uck-Reich-Rognes-Varisco (2007)}]
The \red{Farrell-Jones Conjecture for topological Hochschild homology} is true for
all groups.
\end{theorem}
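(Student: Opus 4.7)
The key input is the classical Bökstedt decomposition of topological Hochschild homology of a group ring. For any ring (or more generally $S$-algebra) $R$ and any discrete group $G$, the cyclic bar construction $B^{cyc}(G)$ underlying $\THH(RG)$ splits as a disjoint union $\coprod_{[g] \in \con(G)} BC_G(g)$ indexed by conjugacy classes of elements, with the centralizer $C_G(g)$ accounting for the component of $[g]$. This yields a natural equivalence
\[
\THH(RG) \;\simeq\; \bigvee_{[g] \in \con(G)} \Sigma^{\infty}_+ BC_G(g) \wedge \THH(R).
\]
Every conjugacy class $[g]$ is generated by the cyclic subgroup $\langle g \rangle \in \Cyc$, so the labels of this splitting are intrinsically controlled by the family $\Cyc$.

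The plan is to promote this splitting to a statement about the equivariant homology theory $H^?_*(-;\bfTHH_R)$ itself. Concretely, I would construct a natural decomposition of the functor $\bfTHH_R$ on $\Groupoids$ indexed by ``conjugacy classes of loops'' in a groupoid $\calg$, with each summand of the form $\Sigma^\infty_+ B(\calg_{[\gamma]}) \wedge \THH(R)$, where $\calg_{[\gamma]}$ is the centralizer sub-groupoid of a loop $\gamma$. Applied to the transport groupoid of a $G$-$CW$-complex $X$ with isotropy in $\Cyc$, this will give a natural isomorphism
\[
H^G_n(X;\bfTHH_R) \;\cong\; \bigoplus_{[g] \in \con(G)} H_n\bigl(C_G(g)\backslash X^{\langle g\rangle};\,\THH(R)\bigr),
\]
compatible with any $G$-map of such complexes.

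With this in hand, I would apply the decomposition to the map $\EGF{G}{\Cyc} \to \mathrm{pt}$ which induces the assembly map. Because the isotropy on $\EGF{G}{\Cyc}$ is cyclic, the summand indexed by $[g]$ on the source is $H_n\bigl(C_G(g)\backslash \EGF{G}{\Cyc}^{\langle g\rangle};\THH(R)\bigr)$; on the other hand, $\EGF{G}{\Cyc}^{\langle g\rangle}$ is the classifying space for the family $\{H \in \Cyc \mid \langle g\rangle \subseteq H\}$ of cyclic subgroups containing $\langle g\rangle$, which is contractible as an $N_G\langle g\rangle$-space (it contains a $\langle g\rangle$-fixed point). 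Passing to $C_G(g)$-quotients one obtains $BC_G(g)$, and the assembly map becomes the identity on each summand. This proves bijectivity for every $n \in \IZ$ and every group $G$.

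The main obstacle is the middle step: establishing the Bökstedt splitting as a \emph{natural transformation of functors} $\Groupoids \to \Spectra$, and not just an abstract equivalence for each group. This requires care because one must track how conjugacy classes in $G$ behave under arbitrary group homomorphisms $\alpha \colon H \to G$ (not only injections); conjugacy classes can merge, so the summand-wise description is strictly functorial only after one formulates it in a groupoid-theoretic way. Once the cyclic-nerve formalism is set up functorially, the rest of the argument is formal, and no deep geometric input (of the sort required by the $K$-theoretic Farrell-Jones Conjecture) is needed.
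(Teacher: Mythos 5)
The paper gives no in-text proof---it only cites the then-unpublished reference~\cite{Lueck-Reich-Rognes-Varisco(2007)}---so there is nothing to compare against directly; your strategy of promoting B\"okstedt's conjugacy-class splitting of $\THH(RG)$ to a natural decomposition of $\bfTHH_R$ on $\Groupoids$ and then comparing summands of the assembly map is certainly the expected one.

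The concrete gap is that your intermediate formula is wrong: the summands must be formed with Borel homotopy orbits, not strict quotients. You should have $H_n\bigl(EC_G(g)\times_{C_G(g)}X^{\langle g\rangle};\THH(R)\bigr)$, not $H_n\bigl(C_G(g)\backslash X^{\langle g\rangle};\THH(R)\bigr)$. As stated, the formula already fails at $X=\pt$, where it gives $\bigoplus_{[g]}\THH_n(R)$ instead of the B\"okstedt answer $\THH_n(RG)\cong\bigoplus_{[g]}H_n(BC_G(g);\THH(R))$; these disagree whenever some centralizer $C_G(g)$ has non-trivial homology. The same confusion appears in ``passing to $C_G(g)$-quotients one obtains $BC_G(g)$'': the $C_G(g)$-action on $\EGF{G}{\Cyc}^{\langle g\rangle}$ is not free (a point with $G$-isotropy $K\in\Cyc$, $\langle g\rangle\subseteq K$, has $C_G(g)$-isotropy equal to $K$ itself, since a cyclic group containing $g$ lies inside $C_G(g)$), so the strict quotient is not $BC_G(g)$. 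With the Borel construction inserted your argument does close up: $\EGF{G}{\Cyc}^{\langle g\rangle}$ is weakly contractible because $\langle g\rangle\in\Cyc$, by Theorem~\ref{the:G-homotopy_characterization_of_EGF(G)(calf)}\eqref{the:G-homotopy_characterization_of_EGF(G)(calf):characterization} (not because it ``contains a $\langle g\rangle$-fixed point'', which is true of any nonempty fixed subspace), so each $EC_G(g)\times_{C_G(g)}\EGF{G}{\Cyc}^{\langle g\rangle}\to BC_G(g)$ is a weak equivalence. The genuinely hard step you have deferred---establishing the B\"okstedt splitting as a natural transformation of $\Groupoids$-spectra, together with the resulting computation of $H^G_*(-;\bfTHH_R)$---is indeed the technical core of the cited reference.
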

\begin{proof}
See~\cite{Lueck-Reich-Rognes-Varisco(2007)}.
\end{proof}

For more information about the theorems above and further results we refer
to the talks by \green{Bartels}, \green{Rosenthal} and \green{Varisco}.

\begin{remark}[\blue{Borel-Conjecture}]
Recall that the Borel Conjecture~\ref{con:borel} is true for a closed
$n$-dimensional manifold $M$ with fundamental group $G$
if $G$ belongs to both $\calfj_K(\IZ)$ and $\calfj_K(\IZ)$ and
$n \ge 5$ (see Theorem~\ref{the:The_Farrell-Jones_Conjecture_and_the_Borel_Conjecture}).
Recall from Corollary~\ref{cor:colimits_of_hyperbolic_groups} that
any subgroup of a colimit over a directed system of hyperbolic groups
(with not necessarily injective structure maps) satisfy this assumption
and that very exotic groups occur in this way (see 
Example~\ref{exa:exotic_groups}).
\end{remark}

Here are other groups for which the Borel Conjecture has been proved.

\begin{theorem}[\green{Farrell-Jones}]
The \red{Borel Conjecture} and the \red{$L$-theoretic Farrell-Jones Conjecture
with coefficients in $\IZ$} are true for a group $G$ if
one of the following conditions are satisfied:

\begin{itemize}

\item$G$ is the fundamental group of a closed
Riemannian manifold with non-positive curvature;

\item $G$ is the fundamental group of a complete
Riemannian manifold with pinched negative curvature;

\item $G$ is a torsionfree subgroup of $GL(n,\IR)$.

\end{itemize}
\end{theorem}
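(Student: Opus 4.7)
The plan is to deduce the Borel Conjecture from the $K$- and $L$-theoretic Farrell-Jones Conjectures with coefficients in $\IZ$ via Theorem~\ref{the:The_Farrell-Jones_Conjecture_and_the_Borel_Conjecture}, so it suffices to establish both Farrell-Jones Conjectures for each of the three classes of groups. All three classes fit into one geometric framework: $G$ acts freely, properly, and isometrically on a simply connected complete Riemannian manifold $\widetilde{M}$ of non-positive sectional curvature. In case (1) this is the universal cover of the given closed manifold; in case (2) it is the universal cover of the given complete pinched negatively curved manifold; in case (3) one takes $\widetilde{M} = GL(n,\IR)/O(n)$ with its canonical symmetric-space metric, which by Theorem~\ref{the:Discrete_subgroups_of_almost_connected_Lie_groups} applied to $L = GL(n,\IR)$ and the maximal compact subgroup $O(n)$ is a model for $EG = \eub{G}$ for any torsionfree $G \subseteq GL(n,\IR)$.

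The heart of the argument is to construct, in each geometric situation, a \emph{geodesic-flow transfer} that splits the Farrell-Jones assembly maps $H_n^G(\EGF{G}{\VCyc};\bfK_\IZ) \to K_n(\IZ G)$ and $H_n^G(\EGF{G}{\VCyc};\bfL_\IZ^{\langle -\infty\rangle}) \to L_n^{\langle -\infty\rangle}(\IZ G)$. Following Farrell-Jones, the assembly is realized as a forget-control map in controlled algebraic $K$- and $L$-theory over an equivariant compactification of $\widetilde{M}$, and the transfer is constructed by pushing cycles along geodesic rays towards the sphere at infinity: in pinched negative curvature the geodesic flow is Anosov, so large flow-times exponentially contract supports along the stable foliation and yield the required control estimates. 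In the non-positively curved case one has to average over asymptotic geodesics (the focal transfer of Farrell-Jones) to compensate for flat strips and the resulting non-uniqueness of geodesic rays. Case (3) introduces the additional difficulty that $G\backslash\widetilde{M}$ may be non-compact, for instance when $G$ is an arithmetic subgroup of $GL(n,\IR)$; this is handled by invoking a Borel-Serre type partial compactification so as to reduce to a cocompact situation accessible to the controlled-topology machinery.

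To close the induction I would invoke the Transitivity Principle~\ref{the:transitivity_principle} and the fact that any torsionfree virtually cyclic subgroup of $G$ is either trivial or infinite cyclic. For $L$-theory the assembly map of $\IZ$ with respect to the family $\Fin$ is bijective by Theorem~\ref{the:passage_from_VCcy_I_to_VCyc} (which packages Shaneson's splitting), permitting one to pass from $\Fin$ to $\VCyc$ over $G$. For $K$-theory the group $\IZ$ is itself virtually cyclic, so the $\VCyc$-assembly for $\IZ$ is a tautology and the Bass-Heller-Swan Nil-contributions appear as the relative term $H_n^\IZ(\EGF{\IZ}{\VCyc},\eub{\IZ};\bfK_\IZ)$ that the flow transfer captures automatically.

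The hardest step is making the geodesic-flow transfer rigorous as a map of spectra and verifying, via delicate foliated control estimates on $\widetilde{M}$ and its compactification, that its composition with the assembly map is the identity in controlled $K$- and $L$-theory; this is where essentially all the analytic input of the Farrell-Jones program is concentrated. The non-cocompact setting of case (3) compounds the difficulty by forcing the estimates through the Borel-Serre corners. Once both Farrell-Jones Conjectures are in hand, Theorem~\ref{the:The_Farrell-Jones_Conjecture_and_the_Borel_Conjecture} immediately yields the Borel Conjecture in dimensions $\ge 5$, and in dimension $4$ under Freedman's goodness hypothesis.
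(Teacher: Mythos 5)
The paper gives no internal proof here --- the proof is simply a pair of citations to Farrell--Jones~\cite{Farrell-Jones(1993c)} and~\cite{Farrell-Jones(1998)} --- so there is no argument in the paper to line your sketch up against. Your overview of the Farrell--Jones machinery (controlled $K$- and $L$-theory over an equivariant compactification, geodesic-flow/focal transfer as an inverse to the forget-control map, Anosov behaviour in pinched negative curvature, and a Borel--Serre type device to handle non-cocompactness) is a fair description of the strategy that those papers actually execute. Identifying $GL(n,\IR)/O(n)$ as a CAT(0) model for $EG$ in case~(3) via Theorem~\ref{the:Discrete_subgroups_of_almost_connected_Lie_groups} is also correct.

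There is, however, a genuine overclaim in the opening plan that propagates through the rest of the argument. You write that ``it suffices to establish both Farrell-Jones Conjectures for each of the three classes of groups'' and then aim to prove the full $K$-theoretic Farrell--Jones Conjecture~\ref{con:FJC_for_K} alongside the $L$-theoretic one. But the geodesic-flow transfer of Farrell--Jones does \emph{not} yield the full $K$-theoretic FJC for these groups, and indeed (as the paper's own summary tables record) for all three classes the $K$-theoretic assembly is only known to be bijective in the range $n \le 1$ (and rationally), not in all degrees. The theorem you are proving does not assert the $K$-theoretic FJC either --- it only asserts the $L$-theoretic FJC and the Borel Conjecture. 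The resolution is that the decoration argument sketched in the proof of Theorem~\ref{the:The_Farrell-Jones_Conjecture_and_the_Borel_Conjecture} only needs the vanishing of $\Wh(G)$, $\widetilde K_0(\IZ G)$, and $K_n(\IZ G)$ for $n \le -1$, i.e.\ $K$-theoretic information in degrees $\le 1$, via the Rothenberg sequences; it does not need the higher $K$-theoretic assembly to be an isomorphism. So the correct target is the full $L$-theoretic FJC together with the $K$-theoretic FJC in degrees $\le 1$, which is exactly what the transfer technique delivers. As written, your plan requires you to prove something stronger than what the cited technique produces and stronger than what is known; tightening the statement of what you need to establish closes the gap.

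Two smaller points. First, non-cocompactness is not special to case~(3): a complete pinched negatively curved manifold in case~(2) need not be compact either, and Farrell--Jones handle this in~\cite{Farrell-Jones(1998)} under the A-regularity hypothesis rather than by a Borel--Serre compactification, so the discussion of where non-cocompactness enters should be adjusted. Second, the appeal to Theorem~\ref{the:passage_from_VCcy_I_to_VCyc} for the $L$-theoretic passage from $\Fin$ to $\VCyc$ is unnecessary in this torsionfree setting: any infinite virtually cyclic subgroup of a torsionfree group is infinite cyclic, hence of type~I, so over these groups $\VCyc_I$ already equals $\VCyc$ and the only relevant input is the Shaneson splitting built into the $\Fin \to \VCyc_I$ comparison.
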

\begin{proof}
See~\cite{Farrell-Jones(1993c)}, \cite{Farrell-Jones(1998)}.
\end{proof}

For more information we refer  to~\cite[Section~5]{Lueck-Reich(2005)},

In the following table we list prominent classes of groups and state
whether they are known to satisfy the Farrell-Jones
Conjectures~\ref{con:FJC_for_K} and~\ref{con:FJC_for_L} and the
Baum-Connes Conjecture~\ref{con:BCC} or versions of them. Some of the
classes are redundant. A question mark means that the author does not
know about a corresponding result.  A phrase like injectivity or after
inverting $2$ is true means that the corresponding assembly map is
injective or is bijective after inverting $2$. 
The reader should keep in mind that there may exist 
results of which the authors are not aware. The following 
table is an updated version
of the one appearing in~\cite[5.3]{Lueck-Reich(2005)}.
\\[3mm]
\begin{center}
\begin{tabular}{|| p{31mm}| p{27mm}| p{27mm}| p{27mm}||}
\hline \hline
type of group & 
Baum-Connes Conjecture~\ref{con:BCC}
&
Farrell-Jones Conjecture~\ref{con:FJC_for_K}
for $K$-theory
&
Farrell-Jones Conjecture~\ref{con:FJC_for_L}
for $L$-theory
\\
\hline\hline
a-T-menable groups  
& true with coefficients (see Theorem~\ref{the:Higson-Kasparov})
& {?} 
& injectivity is true after inverting $2$ for $R = \IZ$
(see Remark~\ref{rem:relating_FJC_andBC})
\\
\hline
amenable groups 
& true with coefficients (see Theorem~\ref{the:Higson-Kasparov})
& {?} 
& injectivity is true after inverting $2$ for $R = \IZ$
(see Remark~\ref{rem:relating_FJC_andBC})
\\
\hline
elementary amenable groups 
& true with coefficients (see Theorem~\ref{the:Higson-Kasparov})
& true fibered for a ring with finite characteristic $N$ after inverting $N$
(see~\cite[Theorem~0.3]{Bartels-Lueck-Reich(2007appl)})
& true fibered after inverting $2$ for $R = \IZ$
(see~\cite[Lemma~1.12  and Lemma~7.1]{Bartels-Lueck-Reich(2007appl)}
or see~\cite[Theorem~5.2]{Farrell-Linnell(2003a)})
\\
\hline
virtually poly-cyclic
& true with coefficients (see Theorem~\ref{the:Higson-Kasparov})
&  true rationally for $R = \IZ$, true fibered for $R = \IZ$ in the range $n \le 1$
(see~\cite[Remark~5.3]{Lueck-Reich(2005)}.
& true fibered after inverting $2$ for $R = \IZ$
(see~\cite[Lemma~1.12 and Lemma~7.1]{Bartels-Lueck-Reich(2007appl)}
or see~\cite[Theorem~5.2]{Farrell-Linnell(2003a)})
\\
\hline
torsion free virtually solvable subgroups of $GL(n,\IC)$ 
& true with coefficients (see Theorem~\ref{the:Higson-Kasparov})
& true in the range $\le 1$ \cite[Theorem~1.1]{Farrell-Linnell(2003a)}
& true fibered after inverting $2$ \cite[Theorem~5.2]{Farrell-Linnell(2003a)}
\\
\hline
discrete subgroups of Lie groups with finitely many path components 
& injectivity true 
(see~\cite[Theorem~5.9 and Remark~5.11 on page~718]{Lueck-Reich(2005)})
& injectivity is true for the family $\Fin$ and all rings $R$ 
(see~\cite{Bartels-Rosenthal(2006)})
& injectivity is true for the family $\Fin$ and  rings $R$ with vanishing
$K_n(RH)$ for $n \le -2$ and $H\subseteq G$ finite
(see~\cite{Bartels-Rosenthal(2006)})
\\
\hline
subgroups of groups which are discrete cocompact subgroups of 
Lie groups with finitely many path components 
& injectivity is true 
(see~\cite[Theorem~5.9 and Remark~5.11 on page~718]{Lueck-Reich(2005)})
& true rationally,
  true fibered in the range $n \le 1$ 
(see~\cite[1.6.7 on page~261 and Theorem~2.1 on page~263]{Farrell-Jones(1993a)}.)
& injectivity is true for the family $\Fin$  and rings $R$ with vanishing
$K_n(RH)$ for $n \le -2$ and $H\subseteq G$ finite
(see~\cite{Bartels-Rosenthal(2006)})
\\
\hline
linear groups 
& injectivity is true (see~\cite{Guentner-Higson-Weinberger(2005)}) 
& {?} 
& 
injectivity is true after inverting $2$ for $R = \IZ$
(see Remark~\ref{rem:relating_FJC_andBC})
\\
\hline
finitely generated  subgroup of $GL_n(k)$
for a global field $k$ 
& injectivity is true (see~\cite{Guentner-Higson-Weinberger(2005)}) 
& injectivity is true for $R = \IZ$ (see~\cite{Ji(2006torsion)})
& injectivity is true for $R = \IZ$ (see~\cite{Ji(2006torsion)})
\\
\hline
torsion free discrete subgroups of $GL(n,\IR)$ 
& injectivity is true (see~\cite{Guentner-Higson-Weinberger(2005)}) 
& true in the range $n \le 1$ (see~\cite{Farrell-Jones(1998)} and 
also~\cite[Theorem~5.5 on page~722]{Lueck-Reich(2005)})
& true for $R = \IZ$ (see~\cite{Farrell-Jones(1998)} and 
also~\cite[Theorem~5.5 on page~722]{Lueck-Reich(2005)})
\\
\hline\hline
\end{tabular}

\begin{tabular}{|| p{31mm}| p{27mm}| p{27mm}| p{27mm}||}
\hline \hline
type of group 
& 
Baum-Connes Conjecture~\ref{con:BCC}
&
Farrell-Jones Conjecture~\ref{con:FJC_for_K}
for $K$-theory
&
Farrell-Jones Conjecture~\ref{con:FJC_for_L}
for $L$-theory  
\\
\hline\hline
Groups with finite $\eub{G}$ and
finite asymptotic dimension
& injectivity is true with coefficients
(see~\cite[Theorem~1.1]{Higson(2000)},
\cite[Theorem~1,1 and Lemma~4.3]{Higson-Roe(2000a)},
& injectivity is true for the family $\Fin$ and all rings $R$  (see~\cite{Bartels-Rosenthal(2006)})
& injectivity is true for the family $\Fin$ and  rings $R$ with vanishing
$K_n(RH)$ for $n \le -2$ and $H\subseteq G$ finite
(see~\cite{Bartels-Rosenthal(2006)})
\\
\hline\hline
$G$ acts properly and isometrically on a complete Riemannian manifold $M$ with 
non-positive sectional curvature
& rational injectivity is true (see~\cite{Kasparov(1988)})
& {?}
& injectivity is true after inverting $2$ for $R = \IZ$
(see Remark~\ref{rem:relating_FJC_andBC})
\\
\hline
$\pi_1(M)$ for a complete Riemannian manifold $M$ with 
non-positive sectional curvature
& rational injectivity is true (see~\cite{Kasparov(1988)})
& {?}
& injectivity is true for $R = \IZ$ (see~\cite[Corollary~2.3]{Ferry-Weinberger(1991)}
\\
\hline
$\pi_1(M)$ for a complete Riemannian manifold $M$ with 
non-positive sectional curvature which is A-regular
& rational injectivity is true (see~\cite{Kasparov(1988)})
& true in the range $n \le 1$ for $R = \IZ$
(see~\cite[Proposition~0.10 and Lemma~0.12]{Farrell-Jones(1998)})
& true for $R = \IZ$ (see~\cite{Farrell-Jones(1998)})
\\
\hline
$\pi_1(M)$ for a complete Riemannian manifold $M$ with 
pinched negative sectional curvature
& rational injectivity is true (see~\cite{Kasparov(1988)})
& true in the range $n \le 1$ and true rationally for $= \IZ$
(see~\cite[Proposition~0.10 and Lemma~0.12 and page 216]{Farrell-Jones(1998)})
& true for $R = \IZ$ (see~ and 
also~\cite[Theorem~5.5 on page~722]{Lueck-Reich(2005)})
\\
\hline
$\pi_1(M)$ for a closed Riemannian manifold $M$ with 
non-positive sectional curvature
& rational injectivity is true (see~\cite{Kasparov(1988)})
& true fibered in the range $n \le 1$, true rationally for $R = \IZ$
(see~\cite{Farrell-Jones(1991)}).
& true for $R = \IZ$ (see~\cite{Farrell-Jones(1998)} and 
also~\cite[Theorem~5.5 on page~722]{Lueck-Reich(2005)})
\\
\hline
$\pi_1(M)$ for a closed Riemannian manifold $M$ with 
 negative sectional curvature
& true for all subgroups (see~\cite{Mineyev-Yu(2002)})
& true for all coefficients $R$ (see~\cite{Bartels-Reich(2005JAMS)})
& true for $R = \IZ$ (see~\cite{Farrell-Jones(1998)} and 
also~\cite[Theorem~5.5 on page~722]{Lueck-Reich(2005)})
\\
\hline\hline
\end{tabular}


\begin{tabular}{|| p{31mm}| p{27mm}| p{27mm}| p{27mm}||}
\hline \hline
type of group 
& 
Baum-Connes Conjecture~\ref{con:BCC}
&
Farrell-Jones Conjecture~\ref{con:FJC_for_K}
for $K$-theory
&
Farrell-Jones Conjecture~\ref{con:FJC_for_L}
for $L$-theory  
\\
\hline\hline
subgroups of directed colimits of
word hyperbolic groups 
& {?}
& true for all $R$ (see~\cite{Bartels-Lueck-Reich(2007hyper)} 
and~\cite[Theorem~0.9]{Bartels-Echterhoff-Lueck(2007colim)})
& true for all $R$ (see~\cite{Bartels-Lueck(2008Borel)} 
and~\cite[Theorem~0.9]{Bartels-Echterhoff-Lueck(2007colim)})
\\
\hline
subgroups of word hyperbolic groups 
& true (see~\cite{Mineyev-Yu(2002)})
& true for all $R$ (see~\cite{Bartels-Lueck-Reich(2007hyper)})
& true for all $R$ (see~\cite{Bartels-Lueck(2008Borel)})
\\
\hline
one-relator groups 
& true with coefficients (see~\cite{Oyono-Oyono(2001b)})
& rational injectivity is true for $R = \IZ$ or for regular
$R$ with $\IQ \subseteq R$
(see~\cite{Bartels-Lueck(2006)})
& true after inverting $2$ for all $R$
(see~\cite[Proposition~0.9 and Theorem~0.13]{Bartels-Lueck(2006)}),
true after inverting $2$ for $R = \IZ$ fibered
(see~\cite{Roushon(2007)})
\\
\hline
torsion free one-relator groups 
& true with coefficients (see~\cite{Oyono-Oyono(2001b)})
& true for $R$ regular 
\cite[Theorem~19.4 on page~249 and Theorem~19.5  on page~250]{Waldhausen(1978a)}
&  true after inverting $2$ for all $R$
(see~\cite[Corollary~8]{Cappell(1974c)}, \cite[Proposition~0.9 and Theorem~0.13]{Bartels-Lueck(2006)}),
true after inverting $2$ for $R = \IZ$ fibered
(see~\cite{Roushon(2007)})
\\
\hline
$3$-manifold groups 
& {?}
& true fibered for $R= \IZ$ in the range $n \le 1$
(see~\cite[Corollary~4.2]{Roushon(2006)} and~\cite[Corollary~1.1.5]{Roushon(2006polysurf)})
& {?}
\\
\hline
Haken $3$-manifold groups (in particular knot groups) 
& true with coefficients (see~\cite[Theorem~5.23]{Mislin-Valette(2003)})

& true for $R$ regular 
(see~\cite[Theorem~19.4 on page~249 and Theorem~19.5  on page~250]{Waldhausen(1978a)})
& true after inverting $2$ for all $R$ (see~\cite[Corollary~8]{Cappell(1974c)})
\\
\hline
$SL(n,\IZ), n \geq 3$ 
& injectivity is true (see~\cite{Guentner-Higson-Weinberger(2005)}) 
& injecivity is true for the family $\Fin$  and $R = \IZ$ (see~\cite{Ji(2006torsion)})
& injecivity is true for the family $\Fin$ and $R = \IZ$ (see~\cite{Ji(2006torsion)})
\\
\hline 
Artin's braid group $B_n$
& true with coefficients (see~\cite[Theorem~5.25]{Mislin-Valette(2003)}, \cite{Schick(2007fingrext)})
& true for $R = \IZ$ fibered in the range $n \leq 1$, true for $R = \IZ$
rationally (see~\cite{Farrell-Roushon(2000)})
& injectivity is true after inverting $2$ for $R = \IZ$
(see Remark~\ref{rem:relating_FJC_andBC})
\\
\hline 
pure braid group $C_n$
& true with coefficients  (see~\cite[Theorem~5.25]{Mislin-Valette(2003)}, \cite{Schick(2007fingrext)})
& true for $R = \IZ$ in the range $n \le 1$ (see~\cite{Aravinda-Farrell-Roushon(2000)}) 
& true after inverting $2$ for all $R$
(see~\cite[Proposition~0.9 and Theorem~0.13]{Bartels-Lueck(2006)})
\\
\hline
Thompson's group $F$ 
& true with coefficients \cite{Farley(2003)} 
& {?} 
& injectivity is true after inverting $2$ for $R = \IZ$
(see Remark~\ref{rem:relating_FJC_andBC})
\\
\hline\hline
\end{tabular}
\end{center}

\begin{remark}[\blue{Open cases}]
  \label{rem:open_cases}
  We mention some interesting groups or classes of groups for which
  the Conjectures are  still open.

  \begin{itemize}

  \item The Farrell-Jones Conjecture for $K$-theory~\ref{con:FJC_for_K}
    and for $L$-theory~\ref{con:FJC_for_L} and the Baum-Connes
    Conjecture~\ref{con:BCC} are to the authors's knowledge
    open for \red{$SL_n(\IZ)$ for $n \ge 3$},
    \red{mapping class groups} and \red{$\Out(F_n)$};

  \item The Farrell-Jones Conjecture for $K$-theory~\ref{con:FJC_for_K}
    and for $L$-theory~\ref{con:FJC_for_L} are to the author's
    knowledge open for \red{solvable groups} and
    \red{one-relator groups}, whereas the Baum-Connes
    Conjecture~\ref{con:BCC} is known for these groups.

  \item There are certain \red{groups with expanders}
    for which the Baum-Connes Conjecture~\ref{con:BCC} is 
    to the author's knowledge open 
    and the version with coefficients is
    actually false  (see
    \green{Higson-Lafforgue-Skandalis}~\cite{Higson-Lafforgue-Skandalis(2002)}).  
    The Farrell-Jones Conjecture for $K$-theory~\ref{con:FJC_for_K}
    and for $L$-theory~\ref{con:FJC_for_L} are known for these
    groups since they are examples of directed colimits of hyperbolic
    groups.

  \end{itemize}
\end{remark}

\begin{remark}[\blue{Possible candidates for counterexamples}]
  It is not known whether there are counterexamples to the
  Farrell-Jones Conjecture or the Baum-Connes Conjecture.  There seems
  to be no promising candidate of a group $G$ which is a potential
  counterexample to the $K$- or $L$-theoretic Farrell-Jones Conjecture
  or the Bost Conjecture. We cannot name a property or a lack of a certain property
  of a group which may be a reason for this group to be
  counterexample. There are many groups with rather exotic properties
  for which these Conjectures are known to be true.
\end{remark}

\begin{remark}[\blue{The suspicious Baum-Connes Conjecture}]
  The Baum-Connes Conjecture is the one for which it is most likely
  that there may exist a counterexample.  One reason is the \red{existence
  of counterexamples to the version with coefficients} (see
  \green{Higson-Lafforgue-Skandalis}~\cite{Higson-Lafforgue-Skandalis(2002)}).
  Another reason is that $K_n(C^*_r(G))$ has certain \red{failures
    concerning functoriality} which do not occur for
  $K_n^G(\eub{G})$.  For instance $K_n(C^*_r(G))$ is not known to be
  functorial for arbitrary group homomorphisms since the reduced group
  $C^*$-algebra is not functorial for arbitrary group homomorphisms.
  These failures are not present for the Farrell-Jones and the
  Bost Conjecture, i.e.,  for $K_n(RG)$, $L^{\langle - \infty
    \rangle}(RG)$ and $K_n(l^1(G))$.
\end{remark}

\begin{remark}[\blue{Methods of proof}]
  Most of the proofs of the Farrell-Jones Conjecture use methods from
  \red{controlled topology}. Roughly speaking, controlled topology
  means that one considers free modules with a basis and thinks of
  these basis elements as sitting in a metric space. Then a map
  between such modules can be visualized by arrows between these basis
  elements. Control means that these arrows are small.  Our
  homological approach to the assembly map is good for \red{structural
    investigations} but not for proofs. For proofs of the
  Farrell-Jones Conjecture or the Baum-Connes Conjecture it is often
  helpful to get some \red{geometric input}.  In the Farrell-Jones
  setting the door to geometry is opened by interpreting the assembly
  map as a \red{forget control map}.  The task to show for instance
  surjectivity is to manipulate a representative of the $K$-or
  $L$-theory class such that its class is unchanged but one has
  \red{gained control}. This is done by geometric constructions which
  yield \red{contracting maps}.  These constructions are possible if
  some geometry connected to the group is around, such as negative
  curvature.  We refer to the lectures of \green{Bartels} for such
  controlled methods.

  The approach using \red{topological cyclic homology} goes back to
  \green{B\"ockstedt-Hsiang-Madsen}. It is of \red{homotopy theoretic
    nature}.  We refer to the lecture of \green{Varisco} for more
  information about that approach.
  
  The methods of proof for the Baum-Connes Conjecture are of
  \red{analytic nature}.  The most prominent one is the
  \red{Dirac-Dual-Dirac method} based on $KK$-theory due to
  \green{Kasparov}. \red{$KK$-theory} is a bivariant theory together
  with a product.  The assembly map is given by multiplying with a
  certain element in a certain $KK$-group.  The essential idea is to
  construct another element in a dual $KK$-group which implements the
  inverse of the assembly map.
  
  The analytic methods for the proof of the Baum-Connes Conjecture do
  not seem to be applicable to the Farrell-Jones setting.  One would
  hope for a transfer of methods from the Farrell-Jones setting to the
  Baum-Connes Conjecture.  So far not much has happened in this
  direction.
\end{remark}


\def\cprime{$'$} \def\polhk#1{\setbox0=\hbox{#1}{\ooalign{\hidewidth
  \lower1.5ex\hbox{`}\hidewidth\crcr\unhbox0}}}

\end{document}